

\documentclass[12pt,leqno]{article}
\usepackage{amsmath, amssymb, comment, amsthm}
\usepackage{latexsym}
\usepackage{nextpage}
\usepackage{graphicx}
\usepackage{thmtools}
\usepackage{float}
\usepackage{array, multirow, hhline}


\usepackage{tocloft} 
\usepackage{verbatim} 
\usepackage{setspace} 
\usepackage{indentfirst} 
\usepackage[font=normal,labelfont=normal,labelsep=period,tableposition=top,justification=raggedright]{caption} 
\usepackage{flafter} 
\usepackage{mathrsfs} 
\usepackage{ragged2e} 
\usepackage[compact]{titlesec} 

\clubpenalty10000
\widowpenalty10000
\allowdisplaybreaks
\raggedright

\makeatletter
\renewcommand\section{\@startsection {section}{1}{\z@}%
{-3.5ex \@plus -1ex \@minus -.2ex}%
{2.3ex \@plus.2ex}%
{\normalfont\centering}}
\makeatother

\makeatletter
\renewcommand\subsection{\@startsection {subsection}{1}{\z@}%
{-3.5ex \@plus -1ex \@minus -.2ex}%
{0.8pt plus 2pt minus 2pt}%
{\normalfont}}
\makeatother

\makeatletter
\renewcommand\subsubsection{\@startsection {subsubsection}{1}{\z@}%
{-3.5ex \@plus -1ex \@minus -.2ex}%
{0.8pt plus 2pt minus 2pt}%
{\normalfont}}
\makeatother


\setcounter{tocdepth}{2}
\setcounter{secnumdepth}{-1}
\setlength{\parindent}{0.5in}
\setlength{\cftbeforefigskip}{0.3cm}
\setlength{\cftbeforetabskip}{0.3cm}
\setlength{\cftbeforesecskip}{0.3cm}
\setlength{\cftbeforesubsecskip}{0.3cm}
\setlength\headsep{12pt}
\setlength{\cftfignumwidth}{1.5em}
\setlength{\cfttabnumwidth}{1.5em}


\usepackage{fancyhdr}
\fancyhf{}
\rhead{\thepage}
\pagestyle{fancy}
\fancyheadoffset[HR]{0.02in}


\usepackage{anysize}
\marginsize{1.7in}{1in}{1.2in}{1.6in}


\usepackage[all]{nowidow} 
\usepackage{array}
\usepackage[usenames,dvipsnames]{color}
\usepackage{tikz}

\numberwithin{equation}{section}
\numberwithin{figure}{section}
\theoremstyle{plain}
\newtheorem{thm}{\protect\theoremname}
\theoremstyle{plain}
\newtheorem{conjecture}[thm]{\protect\conjecturename}
\theoremstyle{plain}
\newtheorem{lem}[thm]{\protect\lemmaname}
\theoremstyle{plain}
\newtheorem{prop}[thm]{\protect\propositionname}

\newtheorem{definition}[thm]{\protect\definitionname}
\newtheorem{corollary}[thm]{\protect\corollaryname}
\newtheorem{example}[thm]{\protect\examplename}
\usepackage{amstext}
\usepackage{todonotes}
\usepackage{graphicx}

\makeatother

\usepackage{babel}
\providecommand{\conjecturename}{Conjecture}
\providecommand{\lemmaname}{Lemma}
\providecommand{\theoremname}{Theorem}
\providecommand{\propositionname}{Proposition}
\providecommand{\exname}{Exercise}
\providecommand{\problemname}{Problem}
\providecommand{\definitionname}{Definition}
\providecommand{\corollaryname}{Corollary}
\providecommand{\examplename}{Example}

\renewcommand{\liminf}{ \underline{\lim} \,}
\renewcommand{\limsup}{\overline{\lim} \,}

\renewcommand{\Re}[1]{\mbox{\rm Re}(#1)}




\global\long\def\Re{\operatorname{Re}}

\begin{document}

\begin{center}
ABSTRACT
\end{center}

\begin{center}
Zero Distribution of Generated Taylor Polynomials
\end{center}

\doublespacing
Our  goal in this paper is to study the zero distribution of a sequence of polynomials whose coefficients satisfy a three-term recurrence. Equivalently, these polynomials are Taylor polynomials of a rational function with a polynomial denominator of degree 2. We will use complex analysis to find the bi-variate generating function for that sequence of Taylor polynomials. With this generating function, we prove that the zeros of these Taylor polynomials lie on one side of the closed disk centered at the origin. The radius of the disk is exactly the reciprocal of the modulus of the smaller zero of the degree two denominator.  
Finally, we show that the zeros of these Taylor polynomials approach the boundary of the disk above. 

\singlespacing
\vskip 0.3in
\noindent Juhoon Chung

\noindent May 2022 
\thispagestyle{empty}
\newpage

\mbox{}
\thispagestyle{empty}
\newpage

\begin{center}
Zero Distribution of Generated Taylor Polynomials
\end{center}

\vskip 2in

\begin{center}
by
\end{center}

\begin{center}
Juhoon Chung
\end{center}

\vskip 2in

\begin{center}
A project \\[6pt]
submitted in partial \\[6pt]
fulfillment of the requirements for the degree of \\[6pt]
Master of Science in Mathematics \\[6pt]
in the College of Science and Mathematics \\[6pt]
California State University, Fresno
\end{center}

\begin{center}
May 2022 
\end{center}
\setcounter{page}{1}
\pagenumbering{roman}
\thispagestyle{empty}
\newpage

\begin{center}
APPROVED
\end{center}

\begin{center}
For the Department of Mathematics:
\end{center}


\noindent
\begin{quote}
We, the undersigned, certify that the project of the following student meets the required standards of scholarship, format, and style of the university and the student's graduate degree program for the awarding of the master's degree.
\end{quote}

\vskip 0.5in

\begin{center}
Juhoon Chung  
\vskip -0.1in
\makebox[\textwidth]{\hspace{4pc}\hrulefill\hspace{4pc}}
Project Author
\end{center}

\vskip 0.5in

\begin{center}
 \makebox[\textwidth]{\hspace{2pc}\hrulefill\hspace{2pc}}
 \end{center}
 \vskip -0.2in
 \hspace{2pc}
Dr. Khang Tran \hfill Mathematics  \hspace{2pc}


\vskip 0.3in
\begin{center}
 \makebox[\textwidth]{\hspace{2pc}\hrulefill\hspace{2pc}}
 \end{center}
 \vskip -0.2in
 \hspace{2pc}
Dr. Stefaan Delcroix \hfill Mathematics  \hspace{2pc}

\vskip 0.3in
\begin{center}
\makebox[\textwidth]{\hspace{2pc}\hrulefill\hspace{2pc}}
\end{center}
\vskip -0.2in
\hspace{2pc}
Dr. Oscar Vega \hfill Mathematics \hspace{2pc}   

\thispagestyle{empty}
\newpage

\begin{center}
ACKNOWLEDGMENTS
\end{center}
\doublespacing

I do not know how many times my parents were disappointed by me. 
What I can tell is that they always try to encourage me and cheer me.
Throughout working with Dr. Tran, I did not catch up his expectation. He still keeps his respect upon me, and helps me. I appreciate his patience.
I would like to thank Dr.Tran for his intuitive conjecture and skills for this project.

I have felt the passion for their work from every professors I have been taught by ever since I entered Fresno State.
I felt their diligent behavior to their works make me a little humble and push myself to make effort.
\newline\newline\newline

\singlespacing
\thispagestyle{empty}
\newpage

\addtocontents{toc}{\hfill Page\par}

\begin{center}
\tableofcontents
\end{center}
\thispagestyle{empty}
\newpage



\addtocontents{lot}{\hfill Page\par}


\addtocontents{lof}{\hfill Page\par}

\addcontentsline{toc}{section}{LIST OF FIGURES} 
\begin{center}
\listoffigures
\end{center}
\thispagestyle{empty}
\newpage

\doublespacing

\begin{center}
\section{INTRODUCTION}
\end{center}
\setcounter{page}{1}
\pagenumbering{arabic}
\thispagestyle{empty} 


Many famous sequences of numbers in mathematics, such as the Fibonacci numbers, satisfy recurrent relations. An equivalent way to represent those relations is to use ordinary generating functions. For example, it is known that the sequence of Fibonacci numbers has the generating function (see \cite{Paul})
\[
\frac{t}{1-t-t^2}.
\]
 For a rigorous definition of generating function, we refer the reader to the section `Polynomials Generated by $\frac{1}{(at^2 + bt + c)(1-tz)}$'.
Similarly, we can define a recursive sequence of functions from a bi-variate generating function. For example, the famous sequence of Chebyshev polynomials, $U_m(z)$, satisfying the recurrence
\begin{align*}
    U_0(z) = &1\\
    U_1(z) = &2z\\
    U_{n+1}(z) = &2zU_n(z) - U_{n-1}(z), n\geq 1,
\end{align*}
has the generating function (see \cite{Andrews})
\[
\sum_{m=0}^\infty U_m(z) t^m = \frac{1}{1-2zt+t^2}.
\]
A question may occur: ``Is there a pattern in the zeros of such a sequence of functions?"
Many studies have taken place to answer this question.

In 2017, Forg\'{a}cs and Tran in \cite{FT1} found sufficient conditions of $P(t)$ and $r$ for $ \{H_m(z)\}_{m\geq 0}$, for large $m$, generated by 
\[
\frac{1}{P(t) + zt^r},
\]
to have zeros inside an interval $(a,b)$.
They found explicit $a$ and $b$ such that, if we denote $\mathcal{Z}(H_m)$ the set of all the zeros of $H_m(z)$, $\bigcup_{m\in\mathbb{M}}\mathcal{Z}(H_m)$ is dense in $(a,b)$.

In 2020, Forg\'{a}cs and Tran in \cite{FT2}, for $P(t)$ and $Q(t)$ polynomials with positive real zeros, found sufficient condition of $r$ and polynomials $P(t), Q(t)$ for $\{H_m(z)\}_{m\geq 0}$, generated by 
\[
\frac{1}{P(t) + zt^rQ(t)},
\]
to have real zeros of the same sign, for all $m\in\mathbb{N}$.
They showed that $\bigcup_{m\geq 0}\mathcal{Z}(H_m)$ is dense in an explicit real ray.

In 2018, Tran and Zumba in \cite{AT2} found that the zeros of polynomials $\{H_m(z)\}_{m\geq 0}$ generated by
\[
\frac{1}{1 + ct + bt^2 + zt^3}
\]
are real, for all $m\in\mathbb{N}$, if and only if one of the following conditions holds: 

\begin{enumerate}
\item $c= 0$  and  $b\geq 0$\\
\item $c\neq 0$ and $-1\leq \frac{b}{c^2} \leq \frac{1}{3}$.
\end{enumerate}

If the first condition 1 holds, the union of all the zeros $\bigcup_{m\geq 0}\mathcal{Z}(H_m)$ is dense on $(-\infty,\infty)$.
If the second condition 2 holds, $\bigcup_{m\geq 0}\mathcal{Z}(H_m)$ is dense on an explicit interval depending on $b$ and $c$.

In 2020, Tran and Zumba found necessary and sufficient conditions for the reality of the zeros of the sequence of polynomials generated by 
\[
\frac{1}{1 + at + (b + cz)t^2 + (d + ez)t^3},
\]
where $a,b,c,d,e\geq 0$ and $ae\neq0$ (see \cite{AT1}). In the same paper, the authors also found an optimal interval containing the zeros of the generated sequence. Here, the optimality means that the union of all the zeros of the generated sequence forms a dense subset of the given interval.

In this paper, we will study some results similar to those of the above papers. 
Then we will make some new contribution to study the zero distribution of polynomials. 
Our contribution would be related to Taylor polynomials, which are the partial sums of the Taylor series. For example, from the famous Taylor series for $e^x$
\[
e^x = \sum_{k=0}^{\infty}\frac{x^k}{k!},
\]
we can define the sequence of Taylor polynomials $\{P_n(x)\}_{n\geq 0}$ as 
\[
P_n(x) = \sum_{k=0}^{n}\frac{x^k}{k!}.
\]
\begin{figure}[htp]\label{f0.0}
    \centering
    \includegraphics[width=4cm]{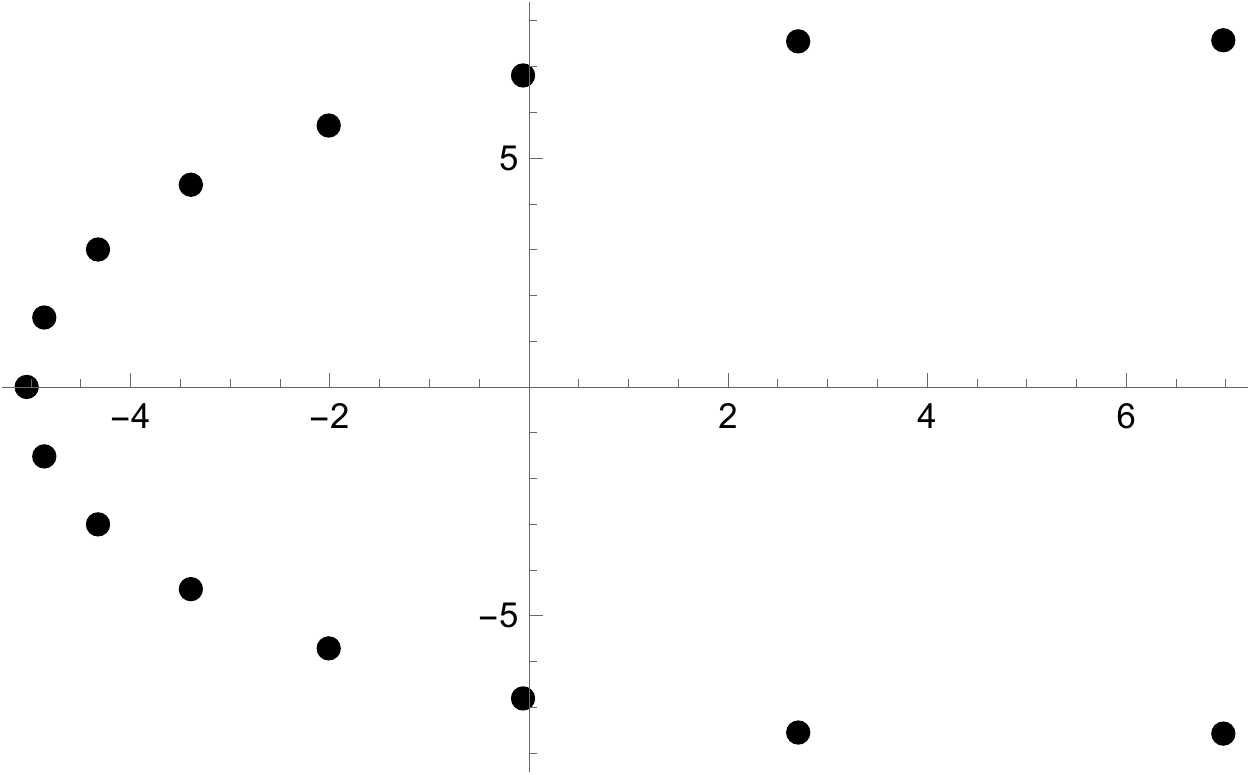}
    \includegraphics[width=4cm]{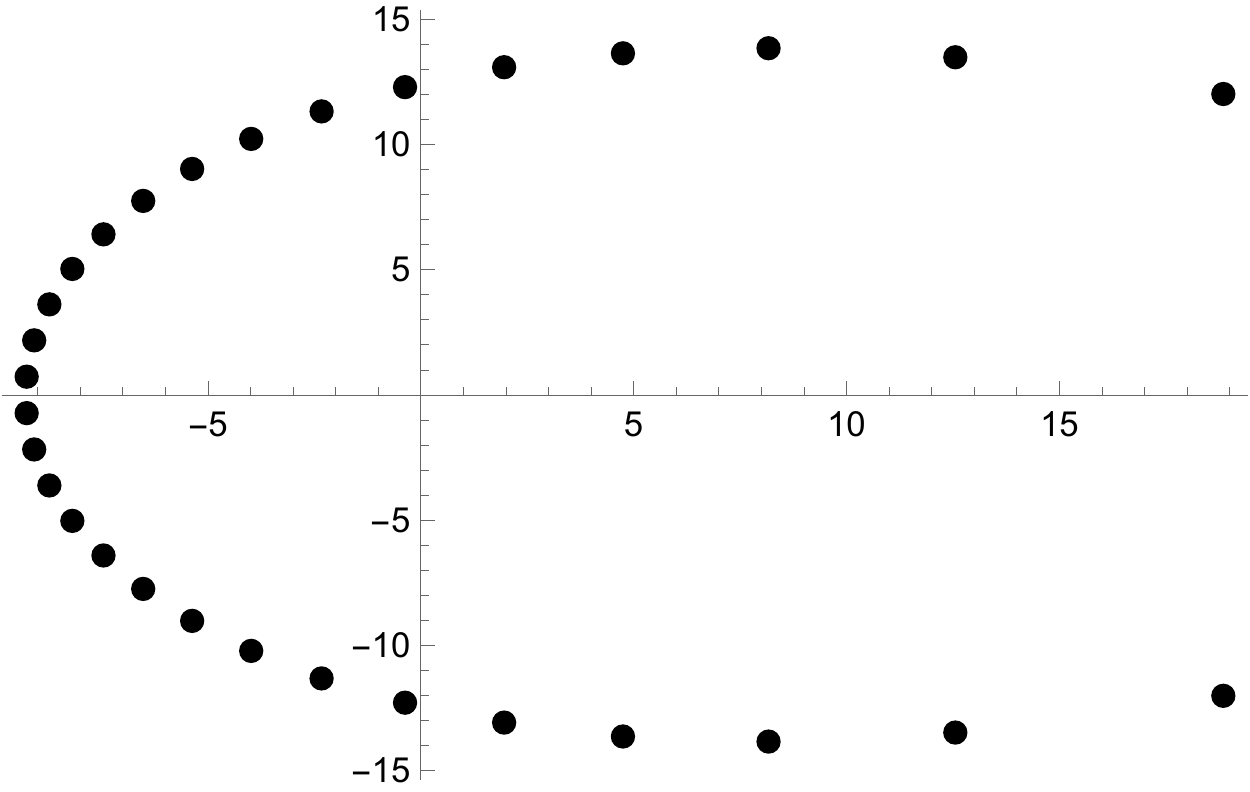}
    \includegraphics[width=4cm]{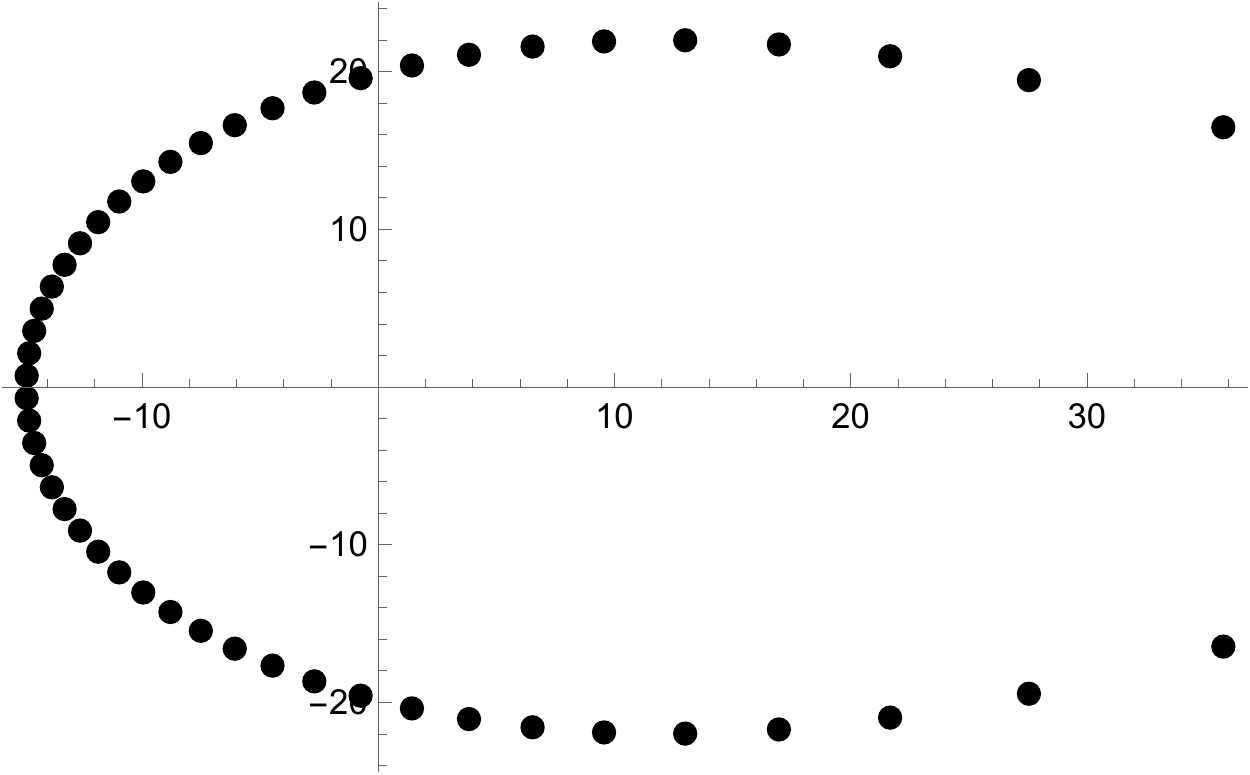}
    \caption{ Zeros of Taylor polynomials of $e^z$ for $N = 15, 30, 50$}
    \label{czego}
\end{figure}
In 1924, Szeg\"{o} studied the zero distribution of the Taylor polynomials of $e^z$ (see \cite{Szego}).
Figure \ref{czego} shows those zeros are about to form a curve.

In this paper, we use generating functions to study Taylor polynomials.
For example,
\[
\frac{e^t}{1 - zt}
\]
is the generating function of the sequence of the reciprocal of the Taylor Polynomials of $e^z$.
The reader may found reasoning for this generating function in Section `Complex Analysis'.

Our main goal is to study the zero distribution of a Taylor polynomial generated by 
\[
\frac{1}{at^2+bt+c}.
\]
In particular, we prove the following results.
\begin{thm}
Let $a,b,c\in\mathbb{R}$, 
where $ac<0$ and $b\neq 0$.
Suppose  $\{P_m(z)\}_{m\geq 0}$ is a sequence of functions of $z$ generated  as follows:
\[
\sum_{m=0}^\infty P_m(z) t^m =\frac{1}{(at^2+bt+c)(1-tz)}.
\]
Then, for all $m\geq 0$, all the zeros of $P_m(z)$ lie on the closed disk centered at the origin with radius $1/|\alpha|$ where $\alpha$ is the smallest (in modulus) zero of $at^2+bt+c$.
\end{thm}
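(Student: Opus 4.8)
The plan is to multiply the given generating function by the reciprocal quadratic $cz^2+bz+a$, which replaces $P_m$ by a monic polynomial of degree $m+2$ with an explicit closed form, and then to locate the zeros of that polynomial by one triangle inequality whose sharpness is governed by an exact identity. In detail: since $ac<0$ and $b\ne 0$, the discriminant $b^2-4ac$ is positive and the product of the roots $c/a$ is negative, so $at^2+bt+c=a(t-\alpha)(t-\beta)$ with $\alpha,\beta$ real, $\alpha\beta<0$, and $0<|\alpha|<|\beta|$ (the last inequality is strict because $b\ne 0$ forbids $\beta=-\alpha$). Multiplying by $cz^2+bz+a$, which is free of $t$, gives
\[
\sum_{m\ge 0}F_m(z)\,t^m=\frac{cz^2+bz+a}{(at^2+bt+c)(1-tz)},\qquad F_m(z):=(cz^2+bz+a)P_m(z).
\]
I would expand the right-hand side by partial fractions in $t$ (simple poles at $t=\alpha$, $t=\beta$, $t=1/z$), simplify the residues using $a(1-\alpha z)(1-\beta z)=cz^2+bz+a$, and read off the coefficient of $t^m$ to obtain
\[
F_m(z)=z^{m+2}-\frac{1-\beta z}{\alpha^{m+1}(\alpha-\beta)}+\frac{1-\alpha z}{\beta^{m+1}(\alpha-\beta)},
\]
a monic polynomial of degree $m+2$ (the same identity also follows directly from the three-term recurrence satisfied by the Taylor coefficients of $1/(at^2+bt+c)$). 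Because $cz^2+bz+a=c(z-1/\alpha)(z-1/\beta)$ with $c\ne 0$, every zero of $P_m$ is a zero of $F_m$, so it suffices to prove that all zeros of $F_m$ lie in $\{\,|z|\le 1/|\alpha|\,\}$.

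Next I would rescale by $w=\alpha z$, turning the target into the closed unit disk. Writing $r:=\beta/\alpha$ (so $r<-1$) and $s:=r^{-m-1}$, a short computation converts the displayed formula into
\[
G_m(w):=\alpha^{m+2}F_m(w/\alpha)=w^{m+2}-L(w),\qquad L(w)=\frac{(r-s)\,w-(1-s)}{r-1}.
\]
Since $|r|>1$ and $m\ge 0$, $s$ is real with $-1<s<1$; in particular $r<s<1$, so $|r-s|=s-r$, $|1-s|=1-s$, $|r-1|=1-r$, and therefore
\[
A:=\frac{|r-s|}{|r-1|},\qquad B:=\frac{|1-s|}{|r-1|},\qquad A+B=\frac{(s-r)+(1-s)}{1-r}=1.
\]
Hence, for $|w|>1$,
\[
|L(w)|\le A|w|+B<A|w|+B|w|=(A+B)|w|=|w|\le|w|^{m+2},
\]
so $G_m(w)=w^{m+2}-L(w)\ne 0$. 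Thus every zero of $G_m$, hence of $F_m$, hence of $P_m$, satisfies $|\alpha z|\le 1$, i.e.\ $|z|\le 1/|\alpha|$, which is the assertion.

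The analytic step is short, but it rests entirely on the exact equality $A+B=1$: any cruder bound on the two coefficients of $L$ degrades the inequality to the trivial $0<1$ and proves nothing. So I expect the real work to be the bookkeeping in Step 1 --- getting the residues into the clean form above, in particular exploiting $\frac{cz^2+bz+a}{1-\alpha z}=a(1-\beta z)$ and keeping the factor from $1-tz=-z(t-1/z)$ straight when extracting the coefficient of $t^m$ at the pole $t=1/z$. Finally, it is worth noting that $G_m$ always vanishes at $w=1$ and $w=1/r$, equivalently $F_m$ vanishes at $z=1/\alpha$ and $z=1/\beta$ (the zeros of $cz^2+bz+a$); this accounts for the gap $\deg F_m-\deg P_m=2$ and indicates that the radius $1/|\alpha|$ cannot be improved, consistent with the subsequent claim that the zeros approach the boundary.
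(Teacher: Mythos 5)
Your proof is correct, and although it ultimately rests on the same polynomial and the same dominance estimate as the paper, the route is genuinely different and noticeably cleaner. After clearing denominators, your $F_m(z)=(cz^2+bz+a)P_m(z)$ is, up to the constant $(\beta-\alpha)\alpha^{m+1}\beta^{m+1}$, exactly the numerator $t_2^{m+1}-t_1^{m+1}+(t_1^{m+2}-t_2^{m+2})z+(t_2-t_1)t_1^{m+1}t_2^{m+1}z^{m+2}$ that the paper obtains in Lemma \ref{l5.1} and Proposition \ref{e5.3}; so the step you defer as bookkeeping is precisely the content of Lemmas \ref{l5.1}--\ref{l5.2} (including the extension of the identity from $z\notin\{0,1/\alpha,1/\beta\}$ to all of $\mathbb{C}$), and is legitimate to take for granted. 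Where you diverge is in locating the zeros: the paper rescales so that $t_1t_2=-1$ (Corollary \ref{cor5.1}), applies Rouch\'e's theorem on every circle $|z|=1/|t_1|+\epsilon$, splits into cases on the parity of $m$ to evaluate $|t_2^{m+1}-t_1^{m+1}|$ and $|t_1^{m+2}-t_2^{m+2}|$, compares against a binomial expansion of $(1/|t_1|+\epsilon)^{m+2}$, and finally lets $\epsilon\downarrow 0$ to reach the closed disk. You instead normalize by $w=\alpha z$, observe the exact identity $A+B=\frac{(s-r)+(1-s)}{1-r}=1$ for the two coefficients of the affine part (the sign analysis via $r<-1$ and $-1<s<1$ absorbs the paper's even/odd cases in one stroke, and $B>0$ strictly since $|s|<1$), and conclude directly that $w^{m+2}-L(w)\neq 0$ for $|w|>1$ from $|L(w)|\le A|w|+B<|w|\le|w|^{m+2}$ --- no Rouch\'e, no $\epsilon$-limit, no parity split. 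What the paper's heavier machinery buys is uniformity with the companion results (the outside-the-disk theorem and the limit-distribution theorem reuse the same Rouch\'e setup); what your argument buys is brevity and a transparent reason why the bound is sharp, which your closing observation that $F_m$ vanishes at $1/\alpha$ and $1/\beta$ makes explicit.
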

\begin{thm}
Let $a,b,c\in\mathbb{R}$ where $ac>0$ and $b^2 - 4ac > 0$.
Suppose  $\{P_m(z)\}_{m\geq 0}$ is a sequence of functions of $z$ generated as follows:
\[
\sum_{m=0}^\infty P_m(z) t^m =\frac{1}{(at^2+bt+c)(1-tz)}.
\]
Then, for all $m\geq 0$, no zero of $P_m(z)$ lie inside the closed disk centered at the origin with radius $1/|\alpha|$ where $\alpha$ is the smallest (in modulus) zero of $at^2+bt+c$.
\end{thm}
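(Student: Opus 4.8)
The plan is to convert the generating‑function identity into an explicit formula for $P_m$, make a linear change of variable turning the disk of radius $1/|\alpha|$ into the unit disk, and then observe that the transformed polynomial has positive, strictly decreasing coefficients, so that the Eneström--Kakeya estimate forces all of its zeros outside the unit disk.

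First I would record a formula for $P_m$. The hypotheses $ac>0$ and $b^2-4ac>0$ make the two roots $\alpha,\beta$ of $at^2+bt+c$ real, nonzero, distinct, and of the same sign, so the smaller‑modulus root satisfies $0<|\alpha|<|\beta|$. Writing $A=1/\alpha$ and $B=1/\beta$ we have $at^2+bt+c=c(1-At)(1-Bt)$, hence
\[
\frac{1}{at^2+bt+c}=\frac1c\sum_{k\ge0}h_k(A,B)\,t^k,\qquad h_k(A,B):=\sum_{i=0}^{k}A^iB^{k-i}=\frac{A^{k+1}-B^{k+1}}{A-B}.
\]
Multiplying by $\frac1{1-tz}=\sum_{\ell\ge0}z^\ell t^\ell$ and extracting the coefficient of $t^m$ yields
\[
P_m(z)=\frac1c\sum_{k=0}^{m}h_k(A,B)\,z^{m-k},
\]
which is essentially the content of the earlier section on polynomials generated by $\frac1{(at^2+bt+c)(1-tz)}$.

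Next I would substitute $z=\zeta/\alpha$ (that is, $z=A\zeta$) and put $\rho:=\alpha/\beta=B/A$, which lies in $(0,1)$ because $\alpha,\beta$ share a sign and $|\alpha|<|\beta|$. Using $A^{j}h_{m-j}(A,B)=A^{m}(1-\rho^{\,m-j+1})/(1-\rho)$ and reindexing $j=m-k$, a short computation gives
\[
P_m\!\left(\frac{\zeta}{\alpha}\right)=\frac{A^{m}}{c(1-\rho)}\,\Phi_m(\zeta),\qquad
\Phi_m(\zeta):=\sum_{j=0}^{m}\bigl(1-\rho^{\,m+1-j}\bigr)\zeta^{j}.
\]
Since $A\neq0$, $c\neq0$ and $\rho\neq1$, the zeros of $P_m$ are precisely the points $\zeta_0/\alpha$ with $\Phi_m(\zeta_0)=0$; so it suffices to prove every zero of $\Phi_m$ has modulus at least $1$. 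The coefficients $c_j:=1-\rho^{\,m+1-j}$ of $\Phi_m$ are positive and strictly decreasing in $j$, because $c_{j-1}-c_j=\rho^{\,m+1-j}(1-\rho)>0$.

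Finally I would run the Eneström--Kakeya argument on $\Phi_m$. From
\[
(1-\zeta)\Phi_m(\zeta)=c_0-\sum_{k=1}^{m}(c_{k-1}-c_k)\zeta^{k}-c_m\zeta^{m+1},
\]
the reverse triangle inequality shows that for $|\zeta|\le1$ one has $|(1-\zeta)\Phi_m(\zeta)|\ge c_0-\sum_{k=1}^{m}(c_{k-1}-c_k)|\zeta|^{k}-c_m|\zeta|^{m+1}$, and the right‑hand side is a strictly decreasing function of $|\zeta|$ on $[0,1]$ that vanishes at $1$; hence it is positive for $|\zeta|<1$, while a look at the equality case on $|\zeta|=1$ forces $\zeta=1$, where $\Phi_m(1)=\sum_j c_j>0$. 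So $\Phi_m$ has no zeros in the closed unit disk, and therefore every zero of $P_m$ lies strictly outside the closed disk of radius $1/|\alpha|$ (the case $m=0$ is vacuous since $P_0\equiv1/c$). I expect the only delicate point to be the change‑of‑variable step, where the index bookkeeping in passing from $\sum h_k z^{m-k}$ to $\Phi_m$ is easy to mishandle; once $\Phi_m$ and the monotonicity of its coefficients are established, the conclusion is immediate.
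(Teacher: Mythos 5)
Your proof is correct, and it establishes the strong form of the claim: every zero of $P_m$ has modulus strictly greater than $1/|\alpha|$, which covers the statement under either reading of ``inside the closed disk.'' The paper reaches this conclusion in two separate steps: Theorem \ref{c5.1} excludes the open ball by a long Rouch\'e argument (comparing the numerator of the normalized $H_m$ with a dominant monomial on circles of radius $|t_2|-\epsilon$ and deriving a contradiction from a hypothetical interior zero), and Theorem \ref{c5.2} excludes the closed disk by normalizing so that $t_1t_2=1$ and invoking the Enestr\"om--Kakeya theorem as a black box, which there requires a three-way case analysis on the sign of $t_2$ and the parity of $m$ (passing to $H_m(-z)$ or $-H_m(-z)$ to obtain positive coefficients). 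Your route is the Enestr\"om--Kakeya one, but your normalization $z=\zeta/\alpha$ with $\rho=\alpha/\beta\in(0,1)$ (valid because $ac>0$ forces $\alpha,\beta$ to share a sign and $b^2-4ac>0$ forces $|\alpha|<|\beta|$) makes the coefficients $1-\rho^{m+1-j}$ automatically positive and strictly decreasing with no case analysis, and your $(1-\zeta)\Phi_m(\zeta)$ computation proves the needed instance of Enestr\"om--Kakeya from scratch, including the boundary circle $|\zeta|=1$, where equality forces $\zeta=\zeta^2$, hence $\zeta=1$, where $\Phi_m(1)>0$. This is cleaner than the paper's treatment and subsumes both of its theorems at once; the only step you should spell out in a full write-up is the justification of the coefficient formula $P_m(z)=\frac1c\sum_{k=0}^m h_k(A,B)z^{m-k}$ (the convergence and Cauchy-product bookkeeping that the paper carries out in Lemmas \ref{l5.0} through \ref{l5.2}), which you correctly defer to the earlier section.
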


Finally, we prove the following theorem about limiting behavior of the zero distribution.
\begin{thm}
Let $a,b,c\in\mathbb{R}\backslash\{0\}$ where $b^2 - 4ac > 0$.
Suppose  $\{P_m(z)\}_{m\geq 0}$ is a sequence of functions of $z$ generated as follows:
\[
\sum_{m=0}^\infty P_m(z) t^m =\frac{1}{(at^2+bt+c)(1-tz)}.
\]
Then 
\[\lim \mathcal{Z}(P_m) = \left\{z\in\mathbb{C}: |z| = \frac{1}{|\alpha|}\right\}
\]
where $\alpha$ is the smallest (in modulus) zero of $at^2+bt+c$.
\end{thm}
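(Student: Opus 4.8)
The plan is to reduce everything to the closed form for $P_m$. Writing $\alpha,\beta$ for the two zeros of $at^2+bt+c$ (real and distinct since $b^2-4ac>0$, with $|\alpha|<|\beta|$ since $b\ne 0$ rules out $\beta=-\alpha$) and putting $A:=1/\alpha$, $B:=1/\beta$ so that $|A|>|B|>0$, partial fractions for $1/(at^2+bt+c)$ give the identity
\begin{equation*}
a(\alpha-\beta)(z-A)(z-B)\,P_m(z)=(B-A)z^{m+2}+A^{m+2}(z-B)-B^{m+2}(z-A)=:Q_m(z).
\end{equation*}
Thus $\mathcal{Z}(P_m)$ is the zero set of the degree-$(m+2)$ polynomial $Q_m$ with the two fixed points $A$ and $B$ removed, and a short computation gives $P_m(A)\ne 0$ and $P_m(B)\ne 0$ for all large $m$, so those exceptional points do not affect the limiting zero set. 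Since $1/|\alpha|=|A|$, it suffices to prove: (I) no $z_0$ with $|z_0|\ne|A|$ is a limit of zeros of the $P_m$, and (II) every $z_0$ with $|z_0|=|A|$ is such a limit.

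For (I) I would normalize and invoke Hurwitz's theorem. On $\{|z|<|A|\}$, dividing $Q_m$ by $A^{m+2}$ kills both the $z^{m+2}$ and the $B^{m+2}$ terms (this is exactly where $|B|<|A|$ is used), so $Q_m(z)/A^{m+2}\to z-B$ locally uniformly, hence $P_m(z)/A^{m+2}\to 1/\bigl(a(\alpha-\beta)(z-A)\bigr)$ locally uniformly to an analytic, nowhere-zero function; Hurwitz then forbids accumulation of zeros of $P_m$ inside the disk. Symmetrically, on $\{|z|>|A|\}$ one has $Q_m(z)/z^{m+2}\to B-A$ locally uniformly, so $P_m(z)/z^{m+2}$ converges locally uniformly to a nowhere-zero function and Hurwitz forbids accumulation there too. (In the regime $ac<0$, Theorem~1 already confines all zeros to $|z|\le|A|$, and for $ac>0$ Theorem~2 confines them to $|z|\ge|A|$, so in each case only one of these two Hurwitz arguments is actually needed.)

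Part (II) is the heart of the matter. Fix $z_0$ with $|z_0|=|A|$ and a small $\epsilon>0$. Writing $Q_m(z)=(B-A)A^{m+2}\bigl[(z/A)^{m+2}-W_m(z)\bigr]$ with
\begin{equation*}
W_m(z):=\frac{(B/A)^{m+2}(z-A)-(z-B)}{B-A}\longrightarrow W_0(z):=\frac{B-z}{B-A}
\end{equation*}
uniformly on $\{|z-z_0|\le\epsilon\}$, and noting $W_0(z_0)\ne 0$ because $|z_0|=|A|>|B|$ forces $z_0\ne B$, I would shrink $\epsilon$ so that $W_m$ is zero-free and bounded away from $0$ and $\infty$ on that closed disk, pick a single-valued analytic branch of $W_m^{1/(m+2)}$ there, and set $T_m(z):=A\,e^{2\pi i k_m/(m+2)}W_m(z)^{1/(m+2)}$ with $k_m$ the nearest integer to $(m+2)(\arg z_0-\arg A)/(2\pi)$. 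Then $|W_m(z)|^{1/(m+2)}\to 1$ and $\tfrac1{m+2}\arg W_m(z)\to 0$ uniformly, which forces $|T_m(z)|\to|A|$ and $\arg T_m(z)\to\arg z_0$ uniformly, i.e.\ $T_m(z)\to z_0$ uniformly on $\{|z-z_0|\le\epsilon\}$; hence for large $m$, $T_m$ maps that closed disk into $\{|z-z_0|<\epsilon/2\}$, and Rouch\'{e}'s theorem (comparing $T_m(z)-z$ with $z_0-z$ on $|z-z_0|=\epsilon$) produces a fixed point $z_m^{*}\in D(z_0,\epsilon)$. Raising $z_m^{*}=T_m(z_m^{*})$ to the power $m+2$ gives $(z_m^{*}/A)^{m+2}=W_m(z_m^{*})$, i.e.\ $Q_m(z_m^{*})=0$; and $z_m^{*}\ne B$ (take $\epsilon<|A|-|B|$) while $z_m^{*}\ne A$ (automatic when $z_0\ne A$; when $z_0=A$ one checks $W_m(A)=1$, so $k_m=0$ would merely return the spurious root $z=A$ and one uses $k_m=1$ instead, for which all the estimates still hold). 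Therefore $z_m^{*}\in\mathcal{Z}(P_m)\cap D(z_0,\epsilon)$ for all large $m$, so $z_0\in\lim\mathcal{Z}(P_m)$; combined with (I) this yields $\lim\mathcal{Z}(P_m)=\{z\in\mathbb{C}:|z|=1/|\alpha|\}$.

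The step I expect to fight with is the branch choice in (II): since $z\mapsto(z/A)^{m+2}$ winds $m+2$ times, the $(m+2)$-nd root is badly multivalued, and $k_m$ must be pinned down precisely enough that $T_m$ converges to the prescribed $z_0$ rather than to some other point of the circle, while also avoiding the fake zero $z=A$ of $Q_m$. Once the uniform convergence $T_m\to z_0$ is secured the fixed-point step is routine, and Part (I) is a standard Hurwitz argument once the correct normalization is chosen.
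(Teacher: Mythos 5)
Your proposal is correct, and it reaches the theorem by a genuinely different route than the paper. Both arguments start from the same closed form relating $P_m$ to the polynomial $Q_m(z)=t_2^{m+1}-t_1^{m+1}+(t_1^{m+2}-t_2^{m+2})z+(t_2-t_1)t_1^{m+1}t_2^{m+1}z^{m+2}$, and both must cope with the two spurious zeros $1/t_1$ and $1/t_2$ of $Q_m$. From there the paper writes $Q_m(z)=\sum_{k=1}^{3}\alpha_k(z)\beta_k(z)^m$ with $\beta_1=t_2$, $\beta_2=t_1$, $\beta_3=t_1t_2z$ and invokes Sokal's limit-distribution theorem (Theorem \ref{t6.0}) as a black box to get $\lim\mathcal{Z}(Q_m)=\{|z|=1/|t_1|\}\cup\{1/t_2\}$, and then spends most of its effort transferring this to $P_m$: it needs Theorem \ref{c5.2} and Lemma \ref{l6.0} to expel the interior point $1/t_2$ from $\limsup\mathcal{Z}(P_m)$, and the closedness of $\liminf$ (Proposition \ref{p6.0}) to recover the boundary point $1/t_1$. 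You instead reprove the relevant special case of that theorem by hand: Hurwitz's theorem applied to the normalizations $Q_m/A^{m+2}$ on $\{|z|<|A|\}$ and $Q_m/z^{m+2}$ on $\{|z|>|A|\}$ rules out limit points off the circle (and disposes of $B=1/t_2$ automatically, since the unique zero of $Q_m$ near $B$ is $B$ itself), while for each point $z_0$ of the circle you manufacture an honest zero of $Q_m$ nearby by choosing a branch of the $(m+2)$-nd root of $W_m$ and extracting a fixed point of $T_m$ via Rouch\'{e}. The two delicate points you flag --- pinning down $k_m$ so that $T_m\to z_0$ uniformly, and dodging the spurious fixed point at $z=A$ by switching to $k_m=1$ when $z_0=A$ --- are handled correctly as written; the only detail worth spelling out is the locally uniform convergence of $P_m/A^{m+2}$ across the removable singularity at $z=B$, which follows from the maximum modulus principle on a small circle about $B$. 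What your approach buys is a self-contained, constructive proof that exhibits zeros of $P_m$ converging to every prescribed point of the circle, with no appeal to the quoted theorem and no separate machinery for the exceptional point $1/t_2$; what it costs is the branch-of-the-root bookkeeping that the paper gets for free from Theorem \ref{t6.0}.
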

For the formal definition of $\lim \mathcal{Z}(P_m)$, we refer the reader to Section `Application of Limit Distribution of Zeros'.

The paper is organized as follows. 
\begin{enumerate}
\item Review concepts in Complex Analysis as a basis for the following sections;\\
\item Study previous theorems relative to the zero distribution of the generated sequence of functions;\\
\item Applying skills learned above and with our methods to prove two main results.
\end{enumerate}
The two main results are the followings
\begin{itemize}
    \item the zero distribution of Taylor Polynomials of rational polynomials on one side of the circle (c.f. Theorem 1, Theorem 2),
    \item the limiting behavior of the zeros of Taylor polynomials to the circle (c.f. Theorem 3).
\end{itemize}
We will prove those two main results in the last two sections, respectively.
\clearpage

\section{COMPLEX ANALYSIS}

In most of theorems from this paper, we will use generating functions to study the zeros of a generated sequence of functions. We will develop rigorous definitions of a bi-variate generating function. In order to achieve this goal, we will review basic complex analysis. We start with the definition of the radius of convergence of a power series. 

\begin{thm}[Proposition 1.3, \cite{John}]\label{t5.1}
For a given power series $\sum_{n=0}^{\infty}a_n(z-a)^n$, define the extended real number $R$, $0\leq R\leq\infty$, by
\[
\frac{1}{R} = \lim\sup |a_n|^{1/n}.
\]
Then
\begin{enumerate}
\item if $|z-a|<R$, the series converges absolutely;\\
\item if $|z-a| >R$, the terms of the series become unbounded and so the series diverges;\\
\item if $0<r<R$ then the series converges uniformly on $\{z;|z|\leq r\}$.
\end{enumerate}
Moreover, $R$ is unique having properties 1 and 1.

The extended real number $R$ is called the radius of convergence of the power series.
\end{thm}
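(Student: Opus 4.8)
The plan is to establish Theorem \ref{t5.1} by the root-test comparison with a geometric series, the classical route to the Cauchy--Hadamard formula. Throughout, write $L = \limsup_n |a_n|^{1/n}$, so that $L = 1/R$ under the conventions $1/0 = \infty$ and $1/\infty = 0$, and recall the two defining features of the limit superior: for every $\eps > 0$ one has $|a_n|^{1/n} < L + \eps$ for all sufficiently large $n$, while $|a_n|^{1/n} > L - \eps$ for infinitely many $n$. These two one-sided estimates are exactly what drive the convergence and divergence halves of the argument, so the first step is to isolate them cleanly.

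First I would prove absolute convergence (item 1). Assuming $|z-a| < R$, I would interpose a radius $r$ with $|z-a| < r < R$, so that $1/r > L$. The upper characterization of the limsup then gives an $N$ with $|a_n| < r^{-n}$ for all $n \geq N$, whence $|a_n(z-a)^n| < (|z-a|/r)^n$ for such $n$. Since $|z-a|/r < 1$, comparison with the convergent geometric series $\sum (|z-a|/r)^n$ yields absolute convergence. Item 3 is obtained by making the same estimate uniform: for $|z-a| \leq r < R$ I would pick $\rho$ with $r < \rho < R$, bound $|a_n(z-a)^n| \leq (r/\rho)^n$ for large $n$ by a summable majorant independent of $z$, and invoke the Weierstrass $M$-test to conclude uniform convergence on $\{z : |z-a| \leq r\}$.

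Next I would treat divergence (item 2). Given $|z-a| > R$, the key is to choose an intermediate radius $\rho$ with $R < \rho < |z-a|$, so that $1/\rho < L$. The lower characterization of the limsup then produces infinitely many indices $n$ with $|a_n| > \rho^{-n}$, and for each such index $|a_n(z-a)^n| > (|z-a|/\rho)^n$. Because $|z-a|/\rho > 1$, this lower bound tends to $\infty$ along the subsequence, so the terms are unbounded and the series cannot converge. I expect this to be the delicate step: merely knowing that the general term fails to approach $0$ is weaker than the stated conclusion that the terms become unbounded, and it is precisely the interposition of $\rho$ strictly between $R$ and $|z-a|$ that upgrades the estimate from ``bounded below by $1$ infinitely often'' to genuine unboundedness.

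Finally, for uniqueness I would argue by squeezing: if some $R'$ also satisfied items 1 and 2, then $R' < R$ would let me pick $z$ with $R' < |z-a| < R$, where item 1 for $R$ forces convergence while item 2 for $R'$ forces divergence, a contradiction; the symmetric choice of $z$ rules out $R' > R$, leaving $R' = R$. The extreme cases $R = 0$ and $R = \infty$ fit the same scheme once the conventions $1/0 = \infty$ and $1/\infty = 0$ are adopted, since then one of items 1 and 2 becomes vacuous and the other covers every admissible $z$.
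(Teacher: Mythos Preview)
Your proof is correct and is precisely the standard Cauchy--Hadamard argument via the root test and Weierstrass $M$-test. Note, however, that the paper does not supply its own proof of this statement: Theorem~\ref{t5.1} is quoted verbatim as Proposition~1.3 from Conway~\cite{John} and used as background, so there is no in-paper proof to compare against. Your argument is essentially the one Conway gives, so nothing is lost.
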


Many of the following theorems are relative to `analytic functions', defined as follows.
\begin{definition}[Definition 2.3, \cite{John}]
Let $G$ be an open connected set. A function $f:G\rightarrow\mathbb{C}$ is analytic if $f$ is continuously differentiable.
\end{definition}

This paper studies theorems related to sequence of functions. 
In several theorems, an explicit form of sequence of functions is unknown.
So we must derive it from the given generating function.
We will find the region where the given generating function is analytic.
Then with the following theorem we can transform the generating function to an infinite series. 
\begin{thm}[Theorem 2.8, \cite{John}]\label{t5.0}
Let $f$ be analytic on $B(R;a)$; then $f(z) = {\displaystyle\sum_{n=0}^{\infty}a_n(z-a)^n}$ for $|z-a|<R$ where $a_n = \frac{1}{n!}f^{(n)}(a)$ and this series has radius of convergence greater than or equal to $R$.
\end{thm}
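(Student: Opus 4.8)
The plan is to compute $P_m(z)$ explicitly, reduce the location of its zeros to an equation in which one term grows like $z^m$ while the others stay bounded, and then extract the limiting circle by a dominant-balance argument together with a Rouch\'e/argument-principle count. Since $b^2-4ac>0$ and $a,b,c\neq0$, the denominator has two distinct nonzero real roots $\alpha,\beta$; because $b\neq0$ we have $\alpha+\beta=-b/a\neq0$, so $\alpha$ and $\beta$ cannot be negatives of one another and therefore $|\alpha|<|\beta|$ strictly, with $\alpha$ the root of smaller modulus. Writing $at^2+bt+c=a(t-\alpha)(t-\beta)$ and expanding each factor of $\frac{1}{a(t-\alpha)(t-\beta)}\cdot\frac{1}{1-tz}$ as a geometric series, I would read off the coefficient of $t^m$ to obtain a closed form for $P_m(z)$. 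Summing the resulting finite geometric sums and clearing denominators, $P_m(z)$ becomes $N_m(z)$ divided by the fixed factor $a(\alpha-\beta)(\alpha z-1)(\beta z-1)$, where
\[
N_m(z)=(\alpha-\beta)z^{m+2}+(\beta z-1)\alpha^{-(m+1)}+(1-\alpha z)\beta^{-(m+1)}.
\]
A direct check shows $N_m(1/\alpha)=N_m(1/\beta)=0$, so these two points are removable and the zeros of the polynomial $P_m$ are exactly the roots of $N_m$ other than $1/\alpha$ and $1/\beta$.

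Next I would normalize by setting $w=\alpha z$, so that the target circle $|z|=1/|\alpha|$ becomes $|w|=1$. Multiplying $N_m(z)=0$ by $\alpha^{m+1}$ and solving for the growing term recasts the zero equation as
\[
w^{m+1}=G_m(w):=\frac{(1-\beta z)-(1-\alpha z)(\alpha/\beta)^{m+1}}{(\alpha-\beta)z}\Big|_{z=w/\alpha},
\]
and since $|\alpha/\beta|<1$ the functions $G_m$ converge uniformly on compact subsets of $\mathbb{C}\setminus\{0\}$ to $g(w)=\dfrac{\alpha-\beta w}{(\alpha-\beta)w}$. The key structural point is that $g$ is analytic and nonvanishing on a neighborhood of $|w|=1$: its only zero $w=\alpha/\beta$ and only pole $w=0$ both have modulus different from $1$. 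Taking moduli in $w^{m+1}=G_m(w)$ gives $|w|^{m+1}=|G_m(w)|$; on any fixed annulus the right-hand side is bounded above and below by positive constants, so $|w|^{m+1}$ is bounded and bounded away from $0$, forcing $|w|\to1$. This yields the inclusion $\lim\mathcal{Z}(P_m)\subseteq\{|z|=1/|\alpha|\}$; to complete it I would rule out zeros escaping to $0$ or $\infty$ (near $w=0$, $N_m$ is dominated by the nonzero constant $-\alpha^{-(m+1)}$, and for $|w|=1+\delta$ the term $w^{m+1}$ dominates everything), and I may invoke Theorems~1 and~2 to confine all zeros to one side of the circle from the outset.

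The reverse inclusion, density of the zeros on the entire circle, is the main obstacle and the heart of Theorem~3. I would fix a closed arc $\{e^{i\theta}:\theta_1\le\theta\le\theta_2\}$ of the unit circle and count the roots of $F_m(w)=w^{m+1}-G_m(w)$ inside the thin annular sector $\{1-\delta\le|w|\le1+\delta,\ \theta_1\le\arg w\le\theta_2\}$ by the argument principle. Along the outer arc the change in $\arg F_m$ is governed by $w^{m+1}$ and is approximately $(m+1)(\theta_2-\theta_1)$, while the contributions from the radial sides and from the inner arc (where $F_m\approx-g$) stay bounded uniformly in $m$; hence the sector contains on the order of $(m+1)(\theta_2-\theta_1)/(2\pi)$ zeros, which is at least one once $m$ is large. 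Since the arc was arbitrary, every point of $\{|z|=1/|\alpha|\}$ is a limit of zeros of $P_m$, giving $\supseteq$. The delicate points I expect to spend the most care on are making the winding-number estimates uniform in $m$ (so that the bounded-contribution claim on the radial sides genuinely holds), handling the excluded points $1/\alpha,1/\beta$ and any zeros landing on a sector boundary, and checking the Rouch\'e comparison for the $m$-dependent perturbation $G_m-g=O((\alpha/\beta)^{m+1})$, all of which are routine individually but must be assembled carefully to conclude exact equality of the two sets.
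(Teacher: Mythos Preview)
Your proposal does not address the stated theorem at all. The statement you were asked to prove is Theorem~\ref{t5.0}, the standard fact that an analytic function on a disk admits a convergent Taylor expansion there; this is quoted directly from Conway's textbook and is not given an independent proof in the paper. What you have written is instead an outline of a proof of the paper's Theorem~\ref{c6.0} (Theorem~3 in the introduction), the result that $\lim\mathcal{Z}(P_m)$ equals the circle $|z|=1/|\alpha|$. These are entirely different statements.

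If the intended target really was Theorem~\ref{c6.0}, then your approach is broadly reasonable but differs from the paper's. The paper writes the numerator $Q_m(z)$ in the form $\sum_{k=1}^{3}\alpha_k(z)\beta_k(z)^m$ and invokes Sokal's theorem (Theorem~\ref{t6.0}) to identify $\lim\mathcal{Z}(Q_m)$ directly as the circle together with the spurious point $1/t_2$; it then uses Lemma~\ref{l6.0} and Theorem~\ref{c5.2} to excise that extra point and pass from $Q_m$ to $P_m$. Your plan replaces the black-box appeal to Sokal's theorem with an explicit argument-principle count on thin annular sectors, which is essentially how one would prove the relevant case of Sokal's theorem by hand. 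That is a legitimate route, but the paper's method is shorter because the heavy lifting is absorbed into the cited result. Either way, none of this bears on Theorem~\ref{t5.0}, which requires only the Cauchy integral formula and termwise integration of a geometric series.
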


If we consider bi-variate function and build infinite series about one variable, the coefficient would depend on the other variable.
So we may consider it as a function about the other variable.
So we shall define the generated sequence of functions as follows.
\begin{definition}
For $z\in\mathbb{C}$, assume that $f(t,z)$ is analytic in $t$ in a neighborhood around the origin. If $P_m(z)$ is the $t^m$-coefficient of $f(t,z)$ expanded as a power series in $t$, then we say the sequence $\{P_m(z)\}$ is generated by $f(t,z)$.
\end{definition}
Although Theorem \ref{t5.0} provide us a way to obtain the $t^m$-coefficient as follows
\[
P_m(z) = \frac{1}{m!}\frac{\partial f^m}{\partial^m t}(0,z),
\] 
we will not use this method to obtain $\{P_m(z)\}_{m\geq 0}$.
In all the three main theorems, we will work with the rational polynomial \[
f(z,t) = \frac{1}{(at^2+bt+c)(1-tz)}.
\]
The partial fractions will transform the generating function $f(z,t)$ into the form 
\[
\frac{1}{1 - pt} + \frac{1}{1 - qt} + \frac{1}{1 - zt}.
\]
for some $p,q\in\mathbb{R}$.
Then we expand each fraction as follows
\[
\sum_{k=0}^{\infty}(at)^k + \sum_{k=0}^{\infty}(bt)^k + \sum_{k=0}^{\infty}(zt)^k.
\]
Then we will combine those three series into one series.
Before combine series, we need to verify if they are absolutely convergent by checking their radius of convergence.
The following theorem allow us to combine these series.
\begin{prop}[Proposition 1.6, \cite{John}]\label{p5.0}
Let $\sum_{n=0}^{\infty}a_n(z-a)^n$ and $\sum_{n=0}^{\infty}b_n(z-a)^n$ be power series with radius of convergence greater than or equal to $r>0$. Put
\[
c_n = \sum_{k=0}^na_kb_{n-k};
\]
Then both power series $\sum_{n=0}^{\infty}(a_n+b_n)(z-a)^n$ and $\sum_{n=0}^{\infty}c_n(z-a)^n$ have radius of convergence greater than or equal to $r$, and 
\begin{align}
\sum_{n=0}^{\infty}(a_n+b_n)(z-a)^n &= \left[\sum_{n=0}^{\infty}a_n(z-a)^n + \sum_{n=0}^{\infty}b_n(z-a)^n\right]\label{pe5.1}\\
\sum_{n=0}^{\infty}c_n(z-a)^n &= \left[\sum_{n=0}^{\infty}a_n(z-a)^n\right]\left[\sum_{n=0}^{\infty}b_n(z-a)^n\right]\nonumber
\end{align}
for $|z-a| < r$.
\end{prop}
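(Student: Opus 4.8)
The plan is to treat the two assertions separately, since the additive statement is elementary while the Cauchy-product statement carries essentially all of the difficulty. The common starting point is Theorem \ref{t5.1}: because each of the two given series has radius of convergence at least $r$, part 1 of that theorem guarantees that both $\sum_{n=0}^\infty |a_n|\,|z-a|^n$ and $\sum_{n=0}^\infty |b_n|\,|z-a|^n$ converge for every $z$ with $|z-a|<r$. Absolute convergence at each interior point is the only consequence of the hypothesis that I will need.

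For the sum, fix $z$ with $|z-a|<r$. The triangle inequality gives
\[
\sum_{n=0}^\infty |a_n+b_n|\,|z-a|^n \le \sum_{n=0}^\infty |a_n|\,|z-a|^n + \sum_{n=0}^\infty |b_n|\,|z-a|^n < \infty,
\]
so $\sum_{n=0}^\infty (a_n+b_n)(z-a)^n$ converges absolutely at every point of the disk $|z-a|<r$. By part 2 of Theorem \ref{t5.1} a series cannot converge at a point strictly outside its disk of convergence, so convergence throughout $|z-a|<r$ forces the radius of convergence of the sum series to be at least $r$ (otherwise, at a point between the two radii the terms would be unbounded, contradicting convergence). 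Identity \eqref{pe5.1} is then just the limit law for sums applied to the partial sums $\sum_{n=0}^N (a_n+b_n)(z-a)^n = \sum_{n=0}^N a_n(z-a)^n + \sum_{n=0}^N b_n(z-a)^n$ as $N\to\infty$.

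For the product, again fix $z$ with $|z-a|<r$ and abbreviate $A_n=a_n(z-a)^n$ and $B_n=b_n(z-a)^n$, so that $\sum A_n$ and $\sum B_n$ converge absolutely, say to $A$ and $B$. A direct computation shows $\sum_{k=0}^n A_k B_{n-k} = (z-a)^n\sum_{k=0}^n a_k b_{n-k} = c_n(z-a)^n$, so the series $\sum_n c_n(z-a)^n$ is precisely the Cauchy product of $\sum A_n$ and $\sum B_n$. The heart of the proof is therefore the classical fact that the Cauchy product of two absolutely convergent series converges absolutely to the product of their sums. I would establish this by viewing the terms as a doubly-indexed family $\{A_i B_j\}_{i,j\ge 0}$: its total absolute mass is $\bigl(\sum_i |A_i|\bigr)\bigl(\sum_j |B_j|\bigr)<\infty$, so the double family is summable and hence may be summed in any order. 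Summing along the square blocks $\{(i,j):i,j\le N\}$ produces $\bigl(\sum A_i\bigr)\bigl(\sum B_j\bigr)=AB$, while summing along the diagonals $\{(i,j):i+j=n\}$ produces $\sum_n c_n(z-a)^n$; equality of the two orderings yields the product formula. Since this holds for every $z$ with $|z-a|<r$, the product series converges on the whole disk, and, exactly as in the additive case, its radius of convergence is at least $r$.

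The one step that deserves care, and which I expect to be the main obstacle, is justifying the interchange of summation order for the double array, that is, that summing by squares and summing by diagonals give the same limit. This is precisely where absolute convergence is indispensable, and the rigorous version is an $\varepsilon$--$N$ estimate comparing the triangular partial sums $\sum_{n=0}^N c_n(z-a)^n$ with the square partial sums $\bigl(\sum_{i=0}^N A_i\bigr)\bigl(\sum_{j=0}^N B_j\bigr)$: their difference consists only of products $A_i B_j$ with $\max(i,j)\le N<i+j$, which is bounded by a tail of the absolutely convergent majorant $\sum_{i,j}|A_i|\,|B_j|$ and hence tends to $0$. Once this reordering lemma is in hand, both radius-of-convergence claims follow immediately from absolute convergence at each interior point, and no analytic input beyond Theorem \ref{t5.1} is required.
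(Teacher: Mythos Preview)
The paper does not actually prove this proposition: it is stated with the attribution ``Proposition 1.6, \cite{John}'' and no proof is given, as it is quoted directly from Conway's textbook. So there is no ``paper's own proof'' to compare your attempt against.

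That said, your argument is correct and is essentially the standard textbook proof (and, in fact, is the one Conway gives). The additive part via the triangle inequality and the limit law for sums is routine; the multiplicative part via the Cauchy product, justified by absolute summability of the double array $\{A_iB_j\}$ and comparison of square versus diagonal partial sums, is exactly the classical approach. Your identification of the reordering step as the only delicate point is accurate, and the $\varepsilon$--$N$ estimate you sketch for bounding the difference between triangular and square partial sums by a tail of $\sum_{i,j}|A_i||B_j|$ is the right way to make it rigorous.
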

So we can combine series into one series by grouping $t^m$-coefficients as Equation \eqref{pe5.1} shows.

\begin{prop}[Proposition 2.5, \cite{John}]\label{p5.1}
Let $f(z) = {\displaystyle\sum_{n=0}^{\infty}a_n(z-a)^n}$ have radius of convergence $R>0$.
Then
\begin{enumerate}
\item 

For each $k\geq 1$ the series 
\begin{equation}\label{pe5.0}
\sum_{n=0}^{\infty}n(n-1)\cdots(n-k+1)a_n(z-a)^{n-k}
\end{equation}
has radius of convergence $R$;

\item The function $f$ is infinitely differentiable on $B(R;a)$ and, furthermore, $f^{(k)}(z)$ is given by the series \eqref{pe5.0}, for all $k\geq 1$ and $|z-a|<R$;

\item For $n\geq 0$,
\[
a_n = \frac{1}{n!}f^{(n)}(a).
\]
\end{enumerate}
\end{prop}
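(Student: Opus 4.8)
The plan is to obtain an explicit closed form for $P_m(z)$, convert the zero-finding problem into a root-counting problem for an auxiliary polynomial of degree $m+2$, and then apply Rouch\'e's theorem and the argument principle to locate those roots asymptotically. First I would factor $at^2+bt+c=a(t-\alpha)(t-\beta)$ with $\alpha,\beta$ the two roots; since $b^2-4ac>0$ they are real and distinct, and since $b,c\neq 0$ neither is $0$ and $\alpha\neq-\beta$, so in fact $|\alpha|<|\beta|$ strictly. Expanding the generating function by partial fractions in $t$,
\[
\frac{1}{a(t-\alpha)(t-\beta)(1-tz)}=\frac{A}{t-\alpha}+\frac{B}{t-\beta}+\frac{C}{1-tz},
\]
and reading off the $t^m$-coefficient of each geometric expansion (justified by Proposition \ref{p5.0}) yields a closed form of the shape $P_m(z)=Cz^{m}-A\alpha^{-(m+1)}-B\beta^{-(m+1)}$, where $A,B,C$ are explicit rational functions of $z$ sharing the denominator $a(1-\alpha z)(1-\beta z)$.

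Clearing that denominator, I would introduce the degree $m+2$ polynomial
\[
F_m(z):=a(1-\alpha z)(1-\beta z)P_m(z)=z^{m+2}-g_m(z),\qquad g_m(z)=\frac{\alpha^{-(m+1)}(1-\beta z)-\beta^{-(m+1)}(1-\alpha z)}{\alpha-\beta},
\]
where $g_m$ is \emph{linear} in $z$. A direct substitution shows $F_m(1/\alpha)=F_m(1/\beta)=0$, so the $m+2$ roots of $F_m$ are exactly the $m$ zeros of $P_m$ together with the two ``extraneous'' points $1/\alpha$ and $1/\beta$; since one checks $P_m(1/\beta)\neq 0$, these are genuinely distinct. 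Thus locating $\mathcal{Z}(P_m)$ is reduced to locating the roots of $F_m$ and discarding $1/\alpha,1/\beta$.

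Next comes the confinement step. Writing $\rho=\alpha/\beta$ (so $|\rho|<1$) gives $g_m(z)=\frac{\alpha^{-(m+1)}}{\alpha-\beta}\big[(1-\beta z)-\rho^{m+1}(1-\alpha z)\big]$, so $|g_m(z)|$ is of exact order $|\alpha|^{-(m+1)}$ on compact sets. Fix $\varepsilon$ with $0<\varepsilon<1-|\rho|$ and set $r_\pm=(1\pm\varepsilon)/|\alpha|$. On the outer circle $|z|=r_+$ the term $|z^{m+2}|=(1+\varepsilon)^{m+2}|\alpha|^{-(m+2)}$ dominates $|g_m(z)|=O(|\alpha|^{-(m+1)})$, so Rouch\'e places all $m+2$ roots of $F_m$ inside $|z|<r_+$. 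On the inner circle $|z|=r_-$ — on which $|\beta z|=|\beta|r_->1$ keeps $|1-\beta z|$ bounded below — the linear $|g_m(z)|$ dominates $|z^{m+2}|=(1-\varepsilon)^{m+2}|\alpha|^{-(m+2)}$, so Rouch\'e shows $F_m$ has exactly one root in $|z|<r_-$; since the extraneous point $1/\beta$ lies in this disk ($1/|\beta|<r_-$) and is a root of $F_m$, it is that unique root. Hence all $m$ genuine zeros of $P_m$ (and the remaining extraneous root $1/\alpha$) lie in the annulus $r_-\le|z|\le r_+$. Letting $\varepsilon\to 0$ proves $\lim\mathcal{Z}(P_m)\subseteq\{z:|z|=1/|\alpha|\}$.

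Finally, for the reverse inclusion I would show the zeros fill the circle densely. The intuition is that near $|z|=1/|\alpha|$ a root satisfies $(m+2)\arg z\equiv\arg g_m(z)\pmod{2\pi}$, and as $z$ runs once around the circle the left side winds $m+2$ times while $\arg g_m$ changes only by $O(1)$; hence there are roughly $m+2$ roots with angular gaps $\sim 2\pi/(m+2)\to 0$. To make this rigorous I would fix an arc, apply the argument principle to $F_m$ on the annular sector $\{r_-\le|z|\le r_+\}$ cut out by that arc, and read off a zero count proportional to $(m+2)$ times the arc length, which diverges, so every sector eventually contains a zero. I expect this last step to be the main obstacle: unlike the clean modulus comparison in the Rouch\'e argument, density requires controlling the change of argument of $F_m=z^{m+2}-g_m$ all the way around the circle and along the radial edges of each sector, i.e.\ showing the subdominant linear term $g_m$ cannot disturb the regular angular spacing forced by $z^{m+2}$. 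Combining the two inclusions yields $\lim\mathcal{Z}(P_m)=\{z:|z|=1/|\alpha|\}$.
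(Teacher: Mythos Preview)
Your proposal does not address the stated Proposition~\ref{p5.1} at all. That proposition is a standard complex-analysis fact about power series --- term-by-term differentiation preserves the radius of convergence, the sum is infinitely differentiable, and the coefficients are recovered as $a_n=f^{(n)}(a)/n!$ --- quoted verbatim from Conway's textbook. The paper does not prove it; it merely cites it as background.

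What you have actually sketched is a proof of Theorem~\ref{c6.0}, the limiting zero-distribution result $\lim\mathcal{Z}(P_m)=\{|z|=1/|\alpha|\}$. If that was your intended target, then your approach is genuinely different from the paper's. The paper writes the numerator $Q_m(z)$ as an exponential sum $\sum_{k=1}^3\alpha_k(z)\beta_k(z)^m$ and invokes Sokal's theorem (Theorem~\ref{t6.0}) to obtain both containment and density simultaneously, then separately argues (via Lemma~\ref{l6.0} and a Rouch\'e estimate) that the extraneous limit point $1/t_2$ does not survive in $\lim\mathcal{Z}(P_m)$. You instead propose a direct Rouch\'e comparison on circles $|z|=(1\pm\varepsilon)/|\alpha|$ for the containment, and an argument-principle count on annular sectors for the density. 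Your containment step is sound and essentially what the paper does inside Lemma~\ref{l6.0}. Your density step is the right idea but, as you note, is where the real work lies: controlling the argument variation of $z^{m+2}-g_m(z)$ along radial edges is exactly the kind of estimate Sokal's theorem packages for free. The paper's route trades that analytic labor for checking Sokal's non-degeneracy hypothesis and then doing some set-theoretic bookkeeping between $\mathcal{Z}(P_m)$ and $\mathcal{Z}(Q_m)$; your route would be more self-contained but requires you to carry out that sector argument in full.
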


In this paper, we study relations between generated sequence of functions and recurrence relation, and the consequence 3 of Proposition \ref{p5.1} confirm the uniqueness of coefficient.
For example, if we are given a relation
\[
1 = \sum_{k=0}^{\infty}[f_k(z)) + zf_{k+1}(z)]t^k
\]
we consider both sides as a series.
For the left hand side we consider it as a series with 0 coefficient for all $t^k$, $k = 1,2,\dots$, except for $k = 0$.
By uniqueness of coefficient, if we compare both sides' coefficient, we obtain as follows
\begin{align*}
    f_0(z) + zf_{1}(z) = &1\\
    f_1(z) + zf_{2}(z) = &0
\end{align*}
\begin{align*}
    f_2(z) + zf_{3}(z) = &0\\
    \vdots
\end{align*}
which yields recurrence relation
\[
f_m(z) + zf_{m+1}(z) = 0
\]
Our Proposition \ref{p5.1} allows us to prove the following proposition.
The following statement of Proposition \ref{p5.2} is a little different from that of Corollary 2.9 in \cite{John}.
We can deduce that the following Proposition \ref{p5.2} and Corollary 2.9 in \cite{John} are equivalent by Theorem \ref{t5.1}.
\begin{prop}\label{p5.2}(Corollary 2.9 \cite{John})
Suppose that $\sum_{n=0}^{\infty}a_nz^n$ is absolutely convergent on $B(R,0)$.
Then $\sum_{n=0}^{\infty}a_nz^n$ is analytic on $B(R,0)$.
\end{prop}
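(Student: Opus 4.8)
\emph{Proof proposal.} The plan is to reduce this statement to the term-by-term differentiation theorem for power series, Proposition \ref{p5.1}, after first translating the hypothesis of absolute convergence into a lower bound on the radius of convergence by means of Theorem \ref{t5.1}. Write $f(z)=\sum_{n=0}^{\infty}a_nz^n$ and let $R'$ denote the radius of convergence of this series, so that $1/R'=\limsup|a_n|^{1/n}$ by Theorem \ref{t5.1}. The definition of analytic we must verify is continuous differentiability on the open connected set $B(R,0)$, so the goal is to exhibit $f$ as differentiable with continuous derivative on that ball.

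The first step is to show $R'\ge R$. Fix any $z_0$ with $|z_0|<R$. By hypothesis the series $\sum_{n=0}^{\infty}a_nz_0^n$ converges (absolutely), so by part 2 of Theorem \ref{t5.1} we cannot have $|z_0|>R'$, since in that case the terms would become unbounded; hence $|z_0|\le R'$. As this holds for every $z_0$ in $B(R,0)$ and $\sup\{|z_0|:|z_0|<R\}=R$, taking the supremum yields $R\le R'$, and therefore $B(R,0)\subseteq B(R',0)$.

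With the radius of convergence now known to satisfy $R'>0$, I would invoke Proposition \ref{p5.1}: by part 2 the function $f$ is differentiable on all of $B(R',0)$, with $f'(z)=\sum_{n=1}^{\infty}na_nz^{n-1}$, and by part 1 this differentiated series again has radius of convergence $R'$. To complete the matching of the definition of analytic, it remains to see that $f'$ is continuous. Since $f'$ is itself represented by a convergent power series on $B(R',0)$, applying Proposition \ref{p5.1} once more to $f'$ shows that $f'$ is differentiable, hence continuous, on $B(R',0)$. Restricting everything to the subset $B(R,0)$, which is open and connected, we conclude that $f$ is continuously differentiable there, i.e. analytic on $B(R,0)$.

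I expect the only genuine subtlety to lie in the first step, namely the careful use of Theorem \ref{t5.1} to pass from the phrasing ``absolutely convergent at every point of $B(R,0)$'' to the inequality $R'\ge R$ for the radius of convergence. This is precisely the equivalence alluded to in the remark preceding the statement, and it is what distinguishes this formulation from the version (Corollary 2.9) in \cite{John}. Once that bridge is established, the analyticity follows immediately from the standard infinite differentiability of power series within their radius of convergence.
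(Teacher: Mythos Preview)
Your proposal is correct and follows essentially the same approach as the paper: first establish that the radius of convergence $R'$ satisfies $R'\ge R$ via Theorem \ref{t5.1}, then invoke Proposition \ref{p5.1} to conclude continuous differentiability on $B(R',0)\supseteq B(R,0)$. The only cosmetic difference is that the paper argues $R'\ge R$ by contradiction (choosing a point in the annulus $B(R,0)\setminus B(R',0)$) whereas you argue it directly by taking a supremum; both are equally valid.
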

\begin{proof}
Let $r$ be the radius of convergence of $\sum_{n=0}^{\infty}a_nz^n$.
First we will prove that $r\geq R$.
Suppose, by contrary, that $r<R$. 
Notice that $B(R,0)\backslash B(r,0)$ is not empty. 
If we choose $z_0\in B(R,0)\backslash B(r,0)$ arbitrarily, we are given in the hypothesis that $\sum_{n=0}^{\infty}a_nz_0^n$ converges absolutely.
Meanwhile,  by Theorem \ref{t5.1}, $\sum_{n=0}^{\infty}a_nz_0^n$ diverges, a contradiction.
Thus $r\geq R$.

Moreover, Proposition \ref{p5.1} induces that $\sum_{n=0}^{\infty}a_nz^n$ is continuously differentiable on $B(r,0)$.
That is, analytic on $B(r,0)$.
Consequently, $\sum_{n=0}^{\infty}a_nz^n$ is analytic on $B(R,0)$.
\end{proof}

If we combine a couple of series into one seires, by Proposition \ref{p5.0}, the combined series is still absolutely convergent.
Then above Proposition \ref{p5.2} comfirms that the combined series is analytic.

All the main theorems in this paper are about the reciprocal of a Taylor polynomial.
We shall define what a Taylor polynomial is, and what is its reciprocal polynomial.
\begin{definition}[p118, \cite{William}]
We shall call 
\[
P_n(z) = \sum_{k=0}^n\frac{f^{(k)}(a)}{k!}(z-a)^k
\]
the Taylor Polynomial of order $n$ generated by $f$ centered at $a$.
\end{definition}

\begin{definition}
Let $f:\mathbb{C}\rightarrow\mathbb{C}$ be a polynomial
\[
f(t) = a_0 + a_1t + \dots + a_nt^n
\]
with $a_n\neq 0$.
Then 
\[
g(t) = a_n + a_{n-1}t + \dots + a_1t^{n-1} + a_0t^n = t^nf\left(\frac{1}{t}\right)
\]
is the reciprocal polynomial of $f$.
\end{definition}
In the first page of this paper, we have seen our generating function
\[
\frac{1}{(at^2+bt+c)(1-tz)}.
\]
Next, we will obtain the relation between a generating function and the corresponding reciprocal Taylor polynomial.
To study the relation, we need to reorder the addends of series so we would review the following theorem.
\begin{thm}[Theorem 6.27, \cite{William}]\label{t5.3}
If $\sum_{k=1}^{\infty}a_k$ converges absolutely, and $\sum_{k=1}^{\infty}b_k$ is any rearrangement of $\sum_{k=1}^{\infty}a_k$, then $\sum_{k=1}^{\infty}b_k$ converges and 
\[
\sum_{k=1}^{\infty}a_k = \sum_{k=1}^{\infty}b_k.
\]
\end{thm}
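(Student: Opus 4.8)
The plan is to prove this directly from the definition of absolute convergence together with the observation that a rearrangement is encoded by a bijection of the index set. Write $b_k = a_{\sigma(k)}$ for some bijection $\sigma : \mathbb{N} \to \mathbb{N}$, and set $A = \sum_{k=1}^{\infty} a_k$, which exists because absolute convergence implies ordinary convergence. I would fix $\varepsilon > 0$ and exploit absolute convergence to control the tail of the series of absolute values.

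First I would invoke absolute convergence of $\sum_{k=1}^{\infty} |a_k|$ to choose $N$ so that $\sum_{k > N} |a_k| < \varepsilon$. The triangle inequality then gives $\left| A - \sum_{k=1}^{N} a_k \right| \le \sum_{k > N} |a_k| < \varepsilon$, so the finite partial sum $\sum_{k=1}^{N} a_k$ already approximates the target value $A$ to within $\varepsilon$.

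The crucial step is to transfer this control to the rearranged series. Since $\sigma$ is a bijection, the finitely many indices $1, 2, \ldots, N$ all appear among $\sigma(1), \sigma(2), \ldots, \sigma(M)$ once $M$ is large enough; concretely I would take $M = \max\{\sigma^{-1}(1), \ldots, \sigma^{-1}(N)\}$. Then for every $m \ge M$, the partial sum $\sum_{k=1}^{m} b_k = \sum_{k=1}^{m} a_{\sigma(k)}$ contains each of $a_1, \ldots, a_N$ exactly once, and every remaining summand carries an index exceeding $N$. Subtracting $\sum_{k=1}^{N} a_k$ therefore cancels those initial terms and leaves only terms $a_j$ with $j > N$, whence $\left| \sum_{k=1}^{m} b_k - \sum_{k=1}^{N} a_k \right| \le \sum_{j > N} |a_j| < \varepsilon$.

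Combining the two estimates through the triangle inequality yields $\left| \sum_{k=1}^{m} b_k - A \right| < 2\varepsilon$ for all $m \ge M$, which is precisely the assertion that $\sum_{k=1}^{\infty} b_k$ converges with sum equal to $A$. I expect the main obstacle to be the bookkeeping in the crucial step: one must argue carefully that no index $\le N$ is omitted from, and none is repeated in, the initial block $\sigma(1), \ldots, \sigma(m)$, which is exactly the place where both injectivity and surjectivity of $\sigma$ are needed. Everything else is routine estimation with the tail bound.
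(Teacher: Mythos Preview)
Your argument is correct and is the standard proof of this classical rearrangement theorem. Note, however, that the paper does not supply its own proof of this statement: it is quoted as Theorem 6.27 from Wade's \textit{Introduction to Analysis} and used as a black box, so there is no in-paper proof to compare against.
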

We shall read the theorem of relation between the generating function and the reciprocal Taylor polynomial.
\begin{thm}
The function $\frac{f(t)}{1-tz}$ is the generating function of the reciprocal polynomial of Taylor Polynomials generated by $f(t)$ centered at $0$.
\end{thm}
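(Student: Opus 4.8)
The plan is to expand both factors of $\dfrac{f(t)}{1-tz}$ as power series in $t$, multiply them with the Cauchy product, and identify the $t^m$-coefficient. First I would write $f(t)=\sum_{k=0}^{\infty}c_k t^k$ with $c_k=\frac{1}{k!}f^{(k)}(0)$; since $f$ is analytic in a neighborhood of the origin, Theorem \ref{t5.0} gives this expansion on some disk $B(R,0)$, and by Theorem \ref{t5.1} the convergence is absolute there. For a fixed $z$, the other factor expands as $\dfrac{1}{1-tz}=\sum_{j=0}^{\infty}z^j t^j$, which converges absolutely for $|t|<1/|z|$ (and for all $t$ when $z=0$). On the smaller of the two disks, both are absolutely convergent power series in $t$.

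Next I would invoke Proposition \ref{p5.0} (the Cauchy product): on that common disk the product equals $\dfrac{f(t)}{1-tz}=\sum_{m=0}^{\infty}d_m(z)\,t^m$, where
\[
d_m(z)=\sum_{k=0}^{m}c_k z^{m-k}.
\]
By consequence 3 of Proposition \ref{p5.1} (uniqueness of power-series coefficients), the sequence generated by $\dfrac{f(t)}{1-tz}$ in the sense of the definition is exactly $\{d_m(z)\}_{m\geq 0}$.

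Finally I would match $d_m(z)$ with a reciprocal polynomial. The Taylor polynomial of order $m$ generated by $f$ centered at $0$ is $P_m(z)=\sum_{k=0}^{m}c_k z^k$, so
\[
z^m P_m(1/z)=z^m\sum_{k=0}^{m}c_k z^{-k}=\sum_{k=0}^{m}c_k z^{m-k}=d_m(z),
\]
which is precisely the reciprocal polynomial of $P_m$ as defined in the text. This finishes the argument.

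I do not expect a genuine obstacle here; the work is essentially bookkeeping (getting the index shift $j=m-k$ right and checking the absolute convergence that licenses the Cauchy product, both already supplied by the cited results). The only point requiring a word of care is the degenerate case $c_m=0$, in which $P_m$ has degree $<m$ and the "reciprocal polynomial" in the strict sense of the definition is the reciprocal of a lower-degree polynomial; I would simply observe that the $t^m$-coefficient of the generating function is still the honest polynomial $\sum_{k=0}^{m}c_k z^{m-k}=z^m P_m(1/z)$, and read the statement with that interpretation, so nothing substantive changes.
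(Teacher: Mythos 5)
Your proposal is correct, but it takes a genuinely different route from the paper. The paper starts from the sequence of Taylor polynomials itself: it writes $\sum_{m}T_m(z)t^m$, uses absolute convergence and the rearrangement theorem (Theorem \ref{t5.3}) to regroup the double sum into $a_0(1+t+\cdots)+a_1z(t+t^2+\cdots)+\cdots=\frac{f(zt)}{1-t}$, and then performs the substitutions $t\mapsto t_0/z$ and $z_0=1/z$ to land on $\frac{f(t_0)}{1-t_0z_0}=\sum_m T_m(1/z_0)z_0^m t_0^m$, recognizing $T_m(1/z_0)z_0^m$ as the reciprocal polynomial. You instead go in the opposite direction: expand $\frac{f(t)}{1-tz}$ directly as a Cauchy product of $\sum_k c_kt^k$ and $\sum_j z^jt^j$ via Proposition \ref{p5.0}, read off the $t^m$-coefficient $\sum_{k=0}^m c_kz^{m-k}=z^mP_m(1/z)$, and invoke uniqueness of coefficients. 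Your version is shorter and cleaner in two respects: it avoids the rearrangement argument entirely, and it avoids the substitution $z_0=1/z$, which in the paper forces the restriction $z\neq 0$ and a somewhat delicate (and, as written, slightly garbled -- the paper's condition $|t|<\max(r,1)\leq 1$ should be a $\min$) discussion of where the geometric series converges. What the paper's route buys in exchange is the intermediate identity $\sum_m T_m(z)t^m=\frac{f(zt)}{1-t}$, the classical generating function for partial sums, which is of independent interest and is alluded to in the introduction. Your remark about the degenerate case $c_m=0$ is a fair caveat that the paper does not address either; both proofs implicitly read ``reciprocal polynomial of $T_m$'' as $z^mT_m(1/z)$ regardless of the actual degree of $T_m$.
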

\begin{proof}
Let
\[
f(z) = \sum_{m=0}^{\infty}a_mz^m
\]
be the Taylor Series of $f$.
Define
\[
T_n(z) := \sum_{m=0}^{n}a_mz^m
\]
be the Taylor Polynomial of $f$ centered at $0$.
Let $z\neq 0$ be arbitrary.
Suppose that $r$ is the radius of convergence of $\sum_{m=0}^{\infty}T_m(z)t^m$.
Observe that, for arbitrary $t$ such that $|t|<\max(r,1)$, the series is absolutely convergent and thus, by Theorem \ref{t5.3}, we may rearrange the order of the series as follows:
\begin{align*}
    &\sum_{m=0}^{\infty}T_m(z)t^m\\
 = &a_0 + (a_0 + a_1z)t + (a_0 + a_1z + a_2z^2)t^2 \cdots\\
    = &a_0(1 + t + t^2 + \cdots) + a_1z(t + t^2 + t^3 + \cdots) + a_2z^2(t^2 + t^3 + \dots) + \dots
\end{align*}
Since $|t| < \max(r,1) \leq 1$, in view of geometric series, it holds that
\begin{align*}
    1 + t + t^2 + \cdots = &\frac{1}{1-t}.
\end{align*}
This implies that 
\begin{align*}
    &a_0(1 + t + t^2 + \cdots) + a_1z(t + t^2 + t^3 + \cdots) + a_2z(t^2 + t^3 + \cdots) + \cdots\\
    = &\frac{a_0}{1-t} + \frac{a_1zt}{1-t} + \frac{a_2z^2t^2}{1-t}+\cdots\\
    = &\frac{1}{1-t}(a_0 + a_1zt + a_2z^2t^2 + \cdots)\\
    = &\frac{1}{1-t}f(zt).
\end{align*}
So we obtain
\[
\sum_{m=0}^{\infty}T_m(z)t^m = \frac{f(zt)}{1-t}.
\]
Let us choose $t_0$ such that $|t_0/z|<r$.
If we substitute $t_0/z$ for $t$, we obtain
\[
\frac{f(t_0)}{1-t_0/z} = \sum_{m=0}^{\infty}T_m(z)\left(\frac{t_0}{z}\right)^m.
\]
Let $z_0 = 1/z$.
Then it yields that 
\[
\frac{f(t_0)}{1-t_0z_0} = \sum_{m=0}^{\infty}T_m(1/z_0)z_0^mt_0^m.
\]
Observe that
\begin{align*}
    T_m(1/z_0)z_0^m = &\left(a_0 + a_1\frac{1}{z_0} + a_2\frac{1}{z_0^2} + \cdots + a_m\frac{1}{z_0^m}\right)z_0^m
\end{align*}
\begin{align*}
    = &a_0z_0^m + a_1z_0^{m-1} + \cdots + a_{m-1}z_0 + a_m
\end{align*}
which is the reciprocal polynomial of $T_m(z_0)$.
Thus 
\[
\frac{f(t_0)}{1-t_0z_0} 
\]
is the generating function of the reciprocal polynomial of $T_m(z)$.
\end{proof}

When we work on our main theorems, we count number of zeros inside a circle and compare this number with generated polynomial.
The following theorem becomes the tool for it.
\begin{thm}\label{rouche}[Rouch\'{e}'s Theorem](Theorem 3, p177,  \cite{Stephen})
Suppose $f$ and $g$ are analytic on an open set containing a piece-wise smooth simple closed curve $\gamma$ and its interior.
If 
\[|f(z)  + g(z)| < |f(z)| \text{, for all } z\in\gamma,\]\
then $f$ and $g$ have the same number of zeros inside $\gamma$ counting multiplicities.
\end{thm}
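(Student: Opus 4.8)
The final statement to be proved is Rouch\'e's Theorem as worded: under the hypotheses on $f,g,\gamma$, the inequality $|f(z)+g(z)|<|f(z)|$ on $\gamma$ forces $f$ and $g$ to have the same number of zeros inside $\gamma$, counted with multiplicity.

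The plan is to prove this by the argument principle, reducing the desired equality of zero-counts to the vanishing of a single winding number. Write $N_f$ and $N_g$ for the number of zeros of $f$ and $g$ inside $\gamma$, counted with multiplicity. The argument principle expresses each of these as a contour integral of a logarithmic derivative,
\[
N_f = \frac{1}{2\pi i}\oint_\gamma \frac{f'(z)}{f(z)}\,dz,
\qquad
N_g = \frac{1}{2\pi i}\oint_\gamma \frac{g'(z)}{g(z)}\,dz,
\]
each equal to the winding number about the origin of the image curve $f\circ\gamma$, respectively $g\circ\gamma$. The whole proof then amounts to showing these two winding numbers agree. Before invoking the formula I would record two consequences of the hypothesis $|f(z)+g(z)|<|f(z)|$ on $\gamma$. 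First, $|f(z)|>|f(z)+g(z)|\ge 0$ shows $f$ has no zero on $\gamma$. Second, if $g(z_0)=0$ for some $z_0\in\gamma$ then $|f(z_0)+g(z_0)|=|f(z_0)|$, contradicting the strict inequality; hence $g$ has no zero on $\gamma$ either. Consequently both contour integrals are legitimate, and the ratio $\phi:=g/f$ is analytic and nowhere zero on a neighborhood of $\gamma$.

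The key step is a winding-number computation. Since $\phi'/\phi = g'/g - f'/f$, subtracting the two integrals gives
\[
N_g-N_f=\frac{1}{2\pi i}\oint_\gamma\frac{\phi'(z)}{\phi(z)}\,dz = n(\phi\circ\gamma,\,0),
\]
the winding number of the image curve $\phi\circ\gamma$ about the origin. Now the hypothesis reads $|1+\phi(z)|=|f(z)+g(z)|/|f(z)|<1$ for $z\in\gamma$, so $\phi\circ\gamma$ is contained in the open disk $\{w:|w+1|<1\}$. That disk lies entirely in the left half-plane $\Re w<0$, which is a simply connected region omitting the origin; there a continuous branch of $\log w$ exists, so any closed curve inside it has winding number $0$ about the origin. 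Therefore $N_g-N_f=0$, i.e.\ $f$ and $g$ have the same number of zeros inside $\gamma$, as claimed.

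The main obstacle is that the argument principle itself --- Cauchy's theorem together with the residue computation that $h'/h$ has residue equal to the order of each zero of $h$ --- is a substantially deeper tool than the power-series facts assembled earlier in this section, so establishing or importing it is the real content one must supply; a secondary point requiring care is that $\gamma$ is only piecewise smooth and that zeros are counted with multiplicity, both of which are absorbed into the standard statement of the argument principle once one confirms that $f,g$ are analytic on an open set containing $\gamma$ and its interior. As an alternative route avoiding the explicit ratio, I could run the ``dog-on-a-leash'' homotopy $H(s,z)=(1-s)f(z)-s\,g(z)$, $s\in[0,1]$, so that $H(0,\cdot)=f$ and $H(1,\cdot)=-g$: the same two observations show $H(s,\cdot)$ never vanishes on $\gamma$, since $H(s,z_0)=0$ would give $g=\lambda f$ at $z_0$ with $\lambda=(1-s)/s\ge 0$, forcing $|f+g|=(1+\lambda)|f|\ge|f|$; hence the integer $N(s)$ of zeros of $H(s,\cdot)$ inside $\gamma$ depends continuously on $s$ and is therefore constant, and $N(0)=N_f$ while $N(1)=N_{-g}=N_g$.
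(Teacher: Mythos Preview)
Your proof is correct and follows the standard argument-principle route to Rouch\'e's Theorem. However, the paper does not provide its own proof of this statement: Theorem~\ref{rouche} is simply quoted from Fisher's textbook \cite{Stephen} and used as a black box in the subsequent sections, so there is no in-paper argument to compare against. Your write-up would serve perfectly well as a self-contained justification; the only caveat is the one you already flag, namely that the argument principle is a genuinely deeper input than the elementary power-series material reviewed earlier in the section.
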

We will study some examples that follow from this theorem. These examples are from \cite{Stephen}.
\begin{example}
We want to show that all the zeros of 
\[p(z) = 3z^3 - 2z^2 + 2iz - 8\]
lie in the annulus $1 < |z| < 2$.
\end{example}

Let 
\begin{align*}
f(z) &= -3z^3 + 8\\
g(z) &= 3z^3 - 2z^2 + 2iz - 8.
\end{align*}
Observe that 
\begin{equation}\label{e2.1}
|f(z) + g(z)| = |-2z^2 + 2iz| \leq 2|z|^2 + 2|z|
\end{equation}
and that 
\begin{equation}\label{e2.2}
|f(z)| = |-3z^3 + 8| \geq 8 - 3|z|^3.
\end{equation}
On the circle $|z| = 1$, in view of Inequality \eqref{e2.1},
\begin{align*}
|f(z) + g(z)|&\leq 2|z|^2 + 2|z| = 4<5= 8 - 3|z|^3\leq |f(z)|
\end{align*}
Meanwhile, the zeros of $f(z) = -3z^3 + 8$ are 
\[
z = (8/3)^{1/3}e^{i2\pi/3}, (8/3)^{1/3}e^{i4\pi/3}, (8/3)^{1/3},
\]
and thus there are no zeros are inside the circle $|z| = 1$.
Then, given that $f,g$ are entire, Rouch\'{e}'s Theorem implies that $3z^3 - 2z^2 + 2iz - 8 = 0$ has no zeros inside the unit circle.\\

For $|z| = 2$, we may deduce similarly as follows
\begin{align*}
|f(z) + g(z)|&\leq 2|z|^2 + 2|z|\\
&= 8 + 4\\
&< 16\\
&=  3|z|^3 - 8 \\
&\leq |-3z^3 + 8|\\
&= |f(z)|.
\end{align*}
Notice that all the zeros 
\[
z = (8/3)^{1/3}e^{i2\pi/3}, (8/3)^{1/3}e^{i4\pi/3}, (8/3)^{1/3}
\]
of $f(z)$ are inside the circle $|z| = 2$.
Thus again, by Rouch\'{e}'s Theorem, $3z^3 - 2z^2 + 2iz - 8 = 0$ has 3 zeros inside the circle $|z| = 2$.
That is, all the zeros are inside the circle $|z| = 2$.
Observe that, for any $z\in\mathbb{C}$ such that $|z| = 1$, 
\begin{align*}
|g(z)| &= |3z^3 - 2z^2 + 2iz - 8|\\
&\geq 8 - |3z^3 - 2z^2 + 2iz|\\
&\geq 8 - 3|z|^3 - 2|z|^2 - 2|z|\\
&= 1.
\end{align*}
This implies that there is no zero on the curve $|z| = 1$.
All these results combine to conclude that all the zeros lie in the annulus $1 < |z| < 2$.

Our next example is also from \cite{Stephen}.
\begin{example}
We want to find the number of the roots of the equation
\[
\frac{z^2-4}{z^2+4} + \frac{2z^2 - 1}{z^2 + 6} = 0
\]
that lie within the unit circle $|z| = 1$.
\end{example}

Observe that 
\begin{align*}
    \frac{z^2-4}{z^2+4} + \frac{2z^2 - 1}{z^2 + 6} &= \frac{(z^2-4)(z^2+6) + (2z^2 -1 )(z^2+4)}{(z^2+4)(z^2+6)}\\
    &= \frac{(z^4 + 2z^2 - 24) + (2z^4 + 7z^2 - 4)}{(z^2+4)(z^2+6)}\\
    &= \frac{3z^4 + 9z^2 - 28}{(z^2+4)(z^2+6)}.
\end{align*}
Let 
\begin{align*}
    f(z) &= \frac{28}{(z^2+4)(z^2+6)}\\
    g(z) &= \frac{3z^4 + 9z^2 - 28}{(z^2+4)(z^2+6)}.
\end{align*}
Observe that, on $|z| = 1$,
\begin{align*}
    |f(z) + g(z)| &= \frac{|3z^4 + 9z^2|}{|(z^2+4)(z^2+6)|}\\
    &\leq \frac{3|z|^4 + 9|z|^2}{|(z^2+4)(z^2+6)|}\\
    &= \frac{12}{|(z^2+4)(z^2+6)|}
\end{align*}
\begin{align*}
    &< \frac{28}{|(z^2+4)(z^2+6)|}\\
    &= \frac{|28|}{|(z^2+4)(z^2+6)|}\\
    &= |f(z)|.
\end{align*}
Notice that $f,g$ are analytic on the open ball $|z|< 3/2$, and that the unit circle $|z| = 1$ is inside that open ball.
Then, given $f(z) =  \frac{28}{(z^2+4)(z^2+6)}$ has no zeros inside the unit circle, Rouch\'{e}'s Theorem implies that $g(z) = \frac{3z^4 + 5z^2 - 28}{(z^2+4)(z^2+6)}$ has no zeros within the unit circle.\\

In this section, we reviewed theorems and concepts from Complex Analysis.
Many theorems related to series will be used when we find explicit form of generated sequence of functions.
We review Rouch\'{e}'s Theorem and studied some examples.
We will see how Rouch\'{e}'s Theorem become useful when we work on our main theorems.

\clearpage
\begin{center}
\section{CONSEQUENCES OF THEOREMS ABOUT ZERO DISTRIBUTIONS}
\end{center}
\thispagestyle{empty} 

This paper studies the zero distribution of generated sequence of functions.
In this section we will study some special cases of studies mentioned in `Introduction' for motivation.
These studies are about the zero distribution of generated sequence of functions.
We first study the zero distribution of the sequence $\{G_m(z)\}_{m\geq 0}$ generated as 
\[
\sum_{m=0}^{\infty}G_{m}(z)t^{m}=\frac{1}{1+t+z t^2}.
\]
In particular, we prove the theorem below.
\begin{thm}\label{ex1}
Let $\{G_m(z)\}$ be the sequence of polynomials generated by 
\[
\sum_{m=0}^{\infty}G_{m}(z)t^{m}=\frac{1}{1+t+z t^2}.
\]
For any $m$, the zeros of $G_m(z)$ lie in the interval $(1/4,\infty)$.
\end{thm}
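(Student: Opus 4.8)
The plan is to find an explicit closed form for $G_m(z)$ by partial fractions, exactly as the paper outlines for the general generating function $\frac{1}{(at^2+bt+c)(1-tz)}$. Here the situation is simpler: $1+t+zt^2$ is a quadratic in $t$ whose roots depend on $z$. Write $1+t+zt^2 = z(t-t_1)(t-t_2)$ where $t_1,t_2$ are the reciprocals of the two roots of the ``characteristic'' equation; equivalently, factor so that $\frac{1}{1+t+zt^2} = \frac{A}{1-r_1 t} + \frac{B}{1-r_2 t}$ where $r_1,r_2$ are the roots of $w^2 + w + z = 0$ after the appropriate reciprocal substitution. Carrying this out, $r_{1,2} = \frac{-1\pm\sqrt{1-4z}}{2}$ and one gets $G_m(z) = A r_1^m + B r_2^m$ with $A = \frac{r_1}{r_1-r_2}$, $B = \frac{-r_2}{r_1-r_2}$ (a Binet-type formula; the $r_i$ and the recurrence $G_{m+1} = -G_m - zG_{m-1}$ are visible directly from $\frac{1}{1+t+zt^2}$).

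Next I would locate the zeros. Setting $G_m(z)=0$ gives $A r_1^m = -B r_2^m$, i.e. $\left(\frac{r_1}{r_2}\right)^m = -\frac{B}{A} = \frac{r_2}{r_1}$, hence $\left(\frac{r_1}{r_2}\right)^{m+1} = 1$. So at a zero, $r_1/r_2$ is a root of unity; in particular $|r_1| = |r_2|$. The key step is then to show that $|r_1| = |r_2|$ forces $z \in (1/4,\infty)$. Since $r_1 r_2 = z$ and $r_1 + r_2 = -1$, if $|r_1|=|r_2|$ then either $r_1,r_2$ are complex conjugates or they are real with opposite-equal absolute values. The latter would force $r_1+r_2=0\neq-1$, so they must be a conjugate pair; writing $r_1=\bar r_2$ we get $r_1+r_2 = -1$ real (automatic) and $z = r_1 r_2 = |r_1|^2 > 0$. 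Conjugate roots of $w^2+w+z$ occur exactly when the discriminant $1-4z<0$, i.e. $z>1/4$. One also checks $z=1/4$ is impossible (then $r_1=r_2=-1/2$, $r_1/r_2=1$, and $G_m(1/4) = A(-1/2)^m + B(-1/2)^m$ with $A+B=1\neq 0$ after taking the confluent limit, so $G_m(1/4)\neq 0$), and $z\le 0$ is impossible since then both roots are real. Hence every zero lies in $(1/4,\infty)$.

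The main obstacle is handling the degenerate cases cleanly: the partial-fraction decomposition breaks down when $r_1 = r_2$ (discriminant zero, $z=1/4$) and the formula $B/A$ must be interpreted carefully when $r_1 = 0$ (i.e. $z=0$), so I would dispose of $z=0$ and $z=1/4$ by direct substitution before running the general argument, and I would double-check the edge sign conventions in the Binet formula. A secondary point worth care is making sure the argument shows zeros lie in $(1/4,\infty)$ and not merely in $[1/4,\infty)$ or $(0,\infty)$ — this is exactly where the strict discriminant inequality and the $z=1/4$ check come in. (For the optimality/density direction, which this theorem does not claim, one would note that as $m\to\infty$ the roots of unity $r_1/r_2 = e^{i\theta}$ with $\theta = 2\pi k/(m+1)$ fill the circle, and $z = |r_1|^2$ with $r_1 = \frac{-1+i\sqrt{4z-1}}{2}$ traces all of $(1/4,\infty)$ as $\theta$ ranges over $(0,\pi)$ — but I would only invoke this if density were asserted.)
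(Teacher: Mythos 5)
Your proposal is correct, but it takes a genuinely different route from the paper. The paper proves the theorem by counting: it first shows (via the recurrence $G_m=-G_{m-1}-zG_{m-2}$) that $\deg G_m\le\lfloor m/2\rfloor$, then parametrizes $z(\theta)=\frac{1}{4\cos^2\theta}$ on $(\pi/2,\pi)$, derives $G_m(z(\theta))=\frac{4\cos^2\theta\,\sin((m+1)\theta)}{r^{m+2}\sin\theta}$, exhibits $\lfloor m/2\rfloor$ distinct real zeros in $(1/4,\infty)$, and invokes the Fundamental Theorem of Algebra to conclude these exhaust all zeros. You instead work with the Binet form $G_m(z)=\frac{r_1^{m+1}-r_2^{m+1}}{r_1-r_2}$, $r_{1,2}$ the roots of $w^2+w+z=0$, and argue directly that any zero forces $(r_1/r_2)^{m+1}=1$, hence $|r_1|=|r_2|$, hence (using $r_1+r_2=-1\neq0$) $r_2=\bar r_1$ and $z=|r_1|^2>1/4$. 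Your route dispenses entirely with the degree bound and the interval-counting, which is a real simplification for the containment statement; the paper's counting argument buys more, namely the explicit list of zeros $z\left(\frac{k\pi}{m+1}\right)$, which is what underlies the density statements elsewhere in the paper. Two small points to tighten in your write-up: (i) the dichotomy ``conjugates or real with opposite-equal absolute values'' is not exhaustive for two complex numbers of equal modulus, but the correct statement follows from $\sin\phi+\sin\psi=0$ when $r_1+r_2$ is real, giving either $r_2=\bar r_1$ or $r_2=-r_1$, and the latter contradicts $r_1+r_2=-1$; and (ii) since you must exclude \emph{complex} zeros of the polynomial $G_m$, you need the Binet identity to hold for all complex $z\notin\{0,1/4\}$, which requires either the recurrence-plus-characteristic-equation argument at each fixed $z$ or an identity-theorem step of the kind the paper uses in its Lemma \ref{l5.1}/Lemma \ref{l5.2}; your handling of the exceptional values $z=0$ and $z=1/4$ by direct evaluation is correct.
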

This theorem is a special case of Theorem 1 in \cite{Tran}. 
Our plan is to show that (i) the degree of $G_m(z)$ is at most $\lfloor\frac{m}{2}\rfloor$ and (ii) $G_m(z)$ has at least $\lfloor\frac{m}{2}\rfloor$ zeros in $(1/4,\infty)$. 
Theorem \ref{ex1} will then follow from the Fundamental Theorem of Algebra. 
\begin{lem}\label{l1.1}
The sequence of polynomials $\{G_m(z)\}_{m\geq 0}$ satisfies the recurrence
\[
G_m(z)=-G_{m-1}(z)-zG_{m-2}(z),
\]
for each integer $m\geq 2$. 
Moreover, the degree of the polynomial $G_m(z)$ is at most $\lfloor\frac{m}{2}\rfloor$.
\end{lem}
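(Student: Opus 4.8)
The plan is to read the recurrence straight off the generating function identity, and then get the degree bound by an induction on $m$ driven by that recurrence.

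First I would fix $z\in\mathbb{C}$ and observe that $1+t+zt^2$ takes the value $1$ at $t=0$, hence is nonzero on a neighborhood of the origin, so $f(t,z)=1/(1+t+zt^2)$ is analytic in $t$ there; by the definition of ``generated by'' and Theorem \ref{t5.1} the expansion $f(t,z)=\sum_{m\geq 0}G_m(z)t^m$ holds for $|t|<r$ with some radius of convergence $r>0$, and the series converges absolutely on that disk. The polynomial $1+t+zt^2$ is itself a (terminating) power series in $t$ of infinite radius of convergence, so Proposition \ref{p5.0} applies: the product $(1+t+zt^2)\sum_{m\geq 0}G_m(z)t^m$ is again a power series in $t$, convergent for $|t|<r$, whose $t^m$-coefficient is $G_m(z)+G_{m-1}(z)+zG_{m-2}(z)$ for $m\geq 2$ (and $G_0(z)$, $G_0(z)+G_1(z)$ for $m=0,1$). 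On the other hand this product is identically $1$, i.e.\ the power series with $t^0$-coefficient $1$ and all other coefficients $0$. By the uniqueness of power series coefficients (consequence 3 of Proposition \ref{p5.1}), comparing $t^m$-coefficients for $m\geq 2$ yields
\[
G_m(z)+G_{m-1}(z)+zG_{m-2}(z)=0,
\]
which is the asserted recurrence; the $m=0$ and $m=1$ comparisons give $G_0(z)=1$ and $G_1(z)=-1$.

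For the degree statement I would induct on $m$. The cases $m=0,1$ hold since $G_0\equiv 1$ and $G_1\equiv -1$ have degree $0=\lfloor 0/2\rfloor=\lfloor 1/2\rfloor$. For the inductive step, assume $\deg G_{m-1}\leq\lfloor (m-1)/2\rfloor$ and $\deg G_{m-2}\leq\lfloor (m-2)/2\rfloor$. Writing $G_m(z)=-G_{m-1}(z)-zG_{m-2}(z)$ gives
\[
\deg G_m\leq\max\bigl\{\deg G_{m-1},\ 1+\deg G_{m-2}\bigr\}\leq\max\bigl\{\lfloor (m-1)/2\rfloor,\ 1+\lfloor (m-2)/2\rfloor\bigr\},
\]
and a one-line check splitting into $m$ even and $m$ odd shows both quantities in the last maximum are at most $\lfloor m/2\rfloor$, completing the induction.

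The only place that needs any care is the justification in the second paragraph that the formal coefficient comparison is legitimate, and that is precisely what the propositions assembled in the Complex Analysis section are for; everything else is routine bookkeeping. So I do not expect a real obstacle in this lemma — it is the easy preliminary that will later be combined (via Rouch\'e's Theorem) with a count of the zeros of $G_m$ in $(1/4,\infty)$ to prove Theorem \ref{ex1}.
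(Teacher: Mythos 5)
Your proposal is correct and follows essentially the same route as the paper: multiply the generating-function identity by $1+t+zt^2$, justify the coefficient comparison via Propositions \ref{p5.0} and \ref{p5.1} to extract $G_0=1$, $G_1=-1$, and $G_m=-G_{m-1}-zG_{m-2}$, then prove the degree bound by induction with an even/odd split. The paper merely writes out the two parity cases of the floor-function arithmetic in more detail; there is no substantive difference.
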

\begin{proof}
We first show that the sequence $\{G_m(z)\}_{m\geq 0}$ satisfies the recurrence 
\[
G_m(z)=-G_{m-1}(z)-zG_{m-2}(z)
\]
with the initial condition $G_0(z)=1$ and $G_1(z)=-1$. 
Let $z\in\mathbb{C}$ be arbitrary.
Notice that $1 + t + zt^2$ does not have 0 as a zero, and hence $\frac{1}{1+t+zt^2}$ is analytic on $B(r,0)$ for some $0<r$ where $r$ is less than both moduli of the zeros of $1+t+zt^2$.
Theorem \ref{t5.0} implies the existence of a radius of convergence, say $R$, of the series $\sum_{m=0}^{\infty}G_{m}(z)t^{m}$.
Let $t\in\mathbb{C}$ such that $|t|<R$.
Then
\[
\sum_{m=0}^{\infty}G_{m}(z)t^{m}=\frac{1}{1+t+zt^2}
\]
holds.
We multiply both sides of 
\[
\sum_{m=0}^{\infty}G_{m}(z)t^{m}=\frac{1}{1+t+zt^2}
\]
by $1+t+zt^2$ to obtain
\[
1 = (1+t+zt^2)\sum_{m=0}^{\infty}G_{m}(z)t^{m}
\]
and then, by Proposition \ref{p5.0}, given that $|t| < R$, it holds that
\begin{align*}
 &(1+t+zt^2)\sum_{m=0}^{\infty}G_{m}(z)t^{m}\\
 = &G_0(z) + (G_0(z) + G_1(z))t + (zG_0(z) + G_1(z) + G_2(z))t^2 +\cdots.
\end{align*}
Consequently, 
\[
1 = G_0(z) + (G_0(z) + G_1(z))t + (zG_0(z) + G_1(z) + G_2(z))t^2 +\cdots.
\]
Because of Proposition \ref{p5.1}, we can equate the coefficients of $t^m$ in both sides, and get
\begin{align*}
    G_0(z) &= 1\\
    G_1(z) &= -1\\
    G_2(z) &= -zG_0(z) - G_1(z)\\
    &\vdots\\
    G_m(z) &= -zG_{m-2}(z) - G_{m-1}(z), m\geq 2\\
    &\vdots
\end{align*}
which yields the recurrence relation and the initial conditions for the sequence $\{G_m(z)\}_{m\geq 0}$:
\begin{align*}
G_0(z) &= 1\\
    G_1(z) &= -1\\
G_m(z) &= -zG_{m-2}(z) - G_{m-1}(z), m\geq 2.
\end{align*}
With this recurrence relation and initial condition, we can infer that $\{G_m(z)\}_{m\geq 0}$ is a sequence of polynomials.\\

The next goal is prove that the degree of $G_m(z)$ is at most  $\lfloor m/2 \rfloor$ by induction on $m$. The initial step for induction follows from $G_0(z) = 1$ and that $G_1(z) = -1$. For the induction step, we assume the degree of $G_k(z)$ is at most $\left\lfloor k/2\right\rfloor$, for $k=1,2,\dots, m-2,m-1$, for some $m\geq 2$. 
We consider two cases: $m$ being even and $m$ being odd.
If $m$ is even, then $m = 2a$, for some $a\in\mathbb{N}$, and consequently the degree of $G_{m-2}(z)$ is at most
\[ \left\lfloor \frac{m-2}{2}\right\rfloor = \left\lfloor \frac{2a-2}{2}\right\rfloor = \left\lfloor a-1\right\rfloor = a-1,
\] 
and the degree of $G_{m-1}(z)$ is 
at most
\[
\left\lfloor \frac{m-1}{2}\right\rfloor = \left\lfloor \frac{2a-1}{2}\right\rfloor = \left\lfloor a-1 + \frac{1}{2}\right\rfloor = a-1.
\]
It follows that the degree of $G_m(z) = -zG_{m-2}(z) - G_{m-1}(z)$ is less than equal to the maximum of $\deg(zG_{m-2}(z))\leq a-1 + 1$ and $\deg(G_{m-1}(z))\leq a-1$, and thus less than equal to 
\[
\max(a-1,a-1 + 1) = a = \left\lfloor m/2\right\rfloor.
\]
This implies that the inductive step holds for the case $m$ is even.

If $m$ is odd, then $m = 2b -1$, for some $b\in\mathbb{N}$.
Observe that the degree of $G_{m-2}(z)$ is at most
\[ 
\left\lfloor \frac{m-2}{2}\right\rfloor = \left\lfloor \frac{2b-1-2}{2}\right\rfloor = \left\lfloor b-2 + \frac{1}{2}\right\rfloor = b-2,
\] 
and that the degree of $G_{m-1}(z)$ is at most
\[
\left\lfloor \frac{m-1}{2}\right\rfloor = \left\lfloor \frac{2b-1-1}{2}\right\rfloor = \left\lfloor b-1 \right\rfloor = b-1.
\]
It implies that  the degree of $G_m(z) = -zG_{m-2}(z) - G_{m-1}(z)$ is less than equal to the maximum of $\deg(zG_{m-2}(z))\leq b-2 + 1$ and $\deg(G_{m-1}(z))\leq b-1$, and thus less than equal to 
\[
\max(b-2 + 1,b-1) = b-1 =  \left\lfloor b-1 + \frac{1}{2}\right\rfloor =  \left\lfloor \frac{2b-1}{2}\right\rfloor = \left\lfloor m/2\right\rfloor.
\]
This concludes that the inductive step holds, and thus our claim holds.
\end{proof}

Now let us prove that $G_m(z)$ has at least $\lfloor\frac{m}{2}\rfloor$ zeros in $(1/4,\infty)$.
The proof relies on the four functions $r(\theta)$, $t_1(\theta)$, $t_2(\theta)$ and $z(\theta)$ defined as follows on $\theta\in\left(\frac{\pi}{2},\pi\right)$,
\begin{equation}
\begin{split}\label{ed5.0}
    r(\theta) &= -2\cos(\theta),\\
    t_1(\theta) &= r(\theta) e^{i\theta},\\
    t_2(\theta) &= r(\theta) e^{-i\theta} ,\\
    z(\theta) &=\frac{1}{4\cos^2(\theta)}.
\end{split}
\end{equation}

\begin{lem}
For each $\theta\in\left(\frac{\pi}{2},\pi\right)$, the two zeros in $t$ of $1+t+z(\theta)t^2$ are $t_1(\theta)$ and $t_2(\theta)$, and they are non-zero and distinct.
\end{lem}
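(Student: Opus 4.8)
The plan is to verify that $t_1(\theta)$ and $t_2(\theta)$ satisfy the Vieta relations for the quadratic $z(\theta)t^2+t+1$; since a quadratic (up to the nonzero leading scalar $z(\theta)$) is determined by the sum and product of its roots, this forces $t_1(\theta)$ and $t_2(\theta)$ to be exactly its two zeros. First I would record the basic sign fact: for $\theta\in\left(\frac{\pi}{2},\pi\right)$ we have $\cos\theta<0$, so $r(\theta)=-2\cos\theta>0$; in particular $r(\theta)\neq 0$, which will be used repeatedly.

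Next I would compute the two elementary symmetric functions of $t_1(\theta),t_2(\theta)$. Using $e^{i\theta}+e^{-i\theta}=2\cos\theta$,
\[
t_1(\theta)+t_2(\theta)=r(\theta)\bigl(e^{i\theta}+e^{-i\theta}\bigr)=-2\cos\theta\cdot 2\cos\theta=-4\cos^2\theta=-\frac{1}{z(\theta)},
\]
and using $e^{i\theta}e^{-i\theta}=1$,
\[
t_1(\theta)\,t_2(\theta)=r(\theta)^2 e^{i\theta}e^{-i\theta}=4\cos^2\theta=\frac{1}{z(\theta)}.
\]
Since $z(\theta)\neq 0$, these are precisely the relations $t_1+t_2=-1/z(\theta)$ and $t_1 t_2=1/z(\theta)$ that characterize the two roots of $z(\theta)t^2+t+1=0$. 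Hence $t_1(\theta)$ and $t_2(\theta)$ are those roots. (Equivalently, one can simply expand $z(\theta)\bigl(t-t_1(\theta)\bigr)\bigl(t-t_2(\theta)\bigr)$ and check, using the two displays above, that it equals $z(\theta)t^2+t+1$.)

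Finally I would dispatch the two qualitative claims. Non-vanishing: from $t_1(\theta)\,t_2(\theta)=4\cos^2\theta$ and $\cos\theta\neq 0$ on $\left(\frac{\pi}{2},\pi\right)$, neither $t_1(\theta)$ nor $t_2(\theta)$ is $0$. Distinctness: if $t_1(\theta)=t_2(\theta)$ then, since $r(\theta)\neq 0$, we would need $e^{i\theta}=e^{-i\theta}$, i.e. $e^{2i\theta}=1$, forcing $\theta\in\pi\mathbb{Z}$, which is impossible for $\theta\in\left(\frac{\pi}{2},\pi\right)$; alternatively the discriminant $1-4z(\theta)=1-\sec^2\theta=-\tan^2\theta$ is nonzero there.

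There is essentially no hard step: the whole argument is a direct Vieta computation. The only thing to be careful about is the sign bookkeeping — confirming $r(\theta)>0$ on the stated interval so that $t_1(\theta),t_2(\theta)$ form a genuine complex-conjugate pair, and keeping $z(\theta)=1/(4\cos^2\theta)$ straight so that $-1/z(\theta)=-4\cos^2\theta$ indeed matches the computed root sum.
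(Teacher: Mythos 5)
Your proposal is correct and follows essentially the same route as the paper: compute $t_1+t_2=-1/z(\theta)$ and $t_1t_2=1/z(\theta)$, conclude via the factorization $z(\theta)(t-t_1)(t-t_2)=1+t+z(\theta)t^2$ that these are the two zeros, and then get non-vanishing from the product and distinctness from $e^{i\theta}\neq e^{-i\theta}$ on $\left(\frac{\pi}{2},\pi\right)$. The only cosmetic difference is in the distinctness step, where the paper argues that equality would force $t_1,t_2$ to be real while $r(\theta)e^{\pm i\theta}$ is not, but this is the same observation.
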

\begin{proof}
We will show that for each $\theta\in\left(\frac{\pi}{2},\pi\right)$
\begin{align*}
    t_1(\theta)+t_2(\theta) &= -\frac{1}{z(\theta)},\\
    t_1(\theta)t_2(\theta) &= \frac{1}{z(\theta)}.
\end{align*}
Indeed,
\begin{align*}
t_1(\theta)+t_2(\theta) &= r(\theta)e^{i\theta} + r(\theta) e^{-i\theta} \\
&= r(\theta)(\cos(\theta) + i\sin(\theta)) + r(\theta)(\cos(\theta) - i\sin(\theta)) \\
&= 2r(\theta) \cos(\theta) \\
&= -4\cos^2(\theta) \\
&= -\frac{1}{z(\theta)}.
\end{align*}
We also have 
\[
t_1(\theta)t_2(\theta) = r(\theta) e^{i\theta} r(\theta) e^{-i\theta} = r(\theta)^2 = 4\cos^2(\theta) = \frac{1}{z(\theta)}.
\]
Consequently, it follows that 
\begin{align*}
    z(\theta)(t - t_1(\theta))(t - t_2(\theta)) &= z(\theta)(t^2 - (t_1(\theta) + t_2(\theta))t + t_1(\theta)t_2(\theta))\\
    &= z(\theta)\left(t^2 +\frac{1}{z(\theta)}t +\frac{1}{z(\theta)}\right)\\
    &= 1+t+z(\theta)t^2
\end{align*}
This concludes that $t_1(\theta)$ and $t_2(\theta)$ are the zeros of $1+t+z(\theta)t^2$.
Notice that if $t_1(\theta) = t_2(\theta)$, then
\[
-\frac{1}{z(\theta)} = t_1(\theta) + t_2(\theta) = 2t_1(\theta)
\]
implies that $t_1(\theta), t_2(\theta)$ are real.
However, observe that, for $\theta\in \left(\frac{\pi}{2},\pi\right)$, $r(\theta) = -2\cos(\theta)$ is not 0  and that $e^{i\theta}$ is not real.
Hence $t_1(\theta) = t_2(\theta)$ does not hold.
Moreover, $t_1(\theta),t_2(\theta)\neq 0$.
\end{proof}

\begin{lem}
$G_m(z)$ has at least $\lfloor\frac{m}{2}\rfloor$ zeros on $(1/4,\infty)$.
\end{lem}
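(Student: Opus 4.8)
The plan is to evaluate $G_m$ along the curve $z=z(\theta)$ introduced in \eqref{ed5.0}, obtain a closed trigonometric form for it, and then count its sign changes. Fix $\theta\in\left(\frac{\pi}{2},\pi\right)$ and abbreviate $r=r(\theta)$, $t_1=t_1(\theta)=re^{i\theta}$, $t_2=t_2(\theta)=re^{-i\theta}$. By the previous lemma, $1+t+z(\theta)t^2=z(\theta)(t-t_1)(t-t_2)$ with $t_1\ne t_2$, $t_1t_2=r^2=1/z(\theta)$, and $|t_1|=|t_2|=r$. For $|t|<r$ I would use the partial fraction decomposition and expand each factor as a geometric series,
\[
\frac{1}{z(\theta)(t-t_1)(t-t_2)}=\frac{1}{z(\theta)(t_1-t_2)}\left(\frac{1}{t-t_1}-\frac{1}{t-t_2}\right)=\frac{1}{z(\theta)(t_1-t_2)}\sum_{m=0}^{\infty}\left(\frac{1}{t_2^{m+1}}-\frac{1}{t_1^{m+1}}\right)t^m,
\]
which is legitimate since $|t|<r=|t_j|$ for $j=1,2$, and the two series may be added using Proposition \ref{p5.0}. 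Comparing $t^m$-coefficients against $\sum_m G_m(z(\theta))t^m$, which is allowed by Proposition \ref{p5.1}, yields $G_m(z(\theta))=\dfrac{1}{z(\theta)(t_1-t_2)}\bigl(t_2^{-(m+1)}-t_1^{-(m+1)}\bigr)$.

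Next I would substitute $t_1=re^{i\theta}$, $t_2=re^{-i\theta}$ and $z(\theta)=1/r^2$ to compute $z(\theta)(t_1-t_2)=\dfrac{2i\sin\theta}{r}$ and $t_2^{-(m+1)}-t_1^{-(m+1)}=\dfrac{2i\sin((m+1)\theta)}{r^{m+1}}$, and hence
\[
G_m(z(\theta))=\frac{\sin\bigl((m+1)\theta\bigr)}{r^m\sin\theta}.
\]
Since $r=-2\cos\theta>0$ and $\sin\theta>0$ for $\theta\in\left(\frac{\pi}{2},\pi\right)$, the zeros of the map $\theta\mapsto G_m(z(\theta))$ on this interval are exactly the numbers $\theta=\dfrac{k\pi}{m+1}$ with $\dfrac{m+1}{2}<k<m+1$, and a short count shows there are precisely $\lfloor m/2\rfloor$ such integers $k$ (namely $n-1$ of them when $m+1=2n$, and $n$ of them when $m+1=2n+1$).

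Finally, $z(\theta)=\dfrac{1}{4\cos^2\theta}$ is continuous and strictly decreasing on $\left(\frac{\pi}{2},\pi\right)$, mapping it bijectively onto $(1/4,\infty)$; so the $\lfloor m/2\rfloor$ distinct parameters $\theta$ found above produce $\lfloor m/2\rfloor$ distinct points of $(1/4,\infty)$ at which $G_m$ vanishes, which is the asserted statement. I expect the only delicate points to be the justification of the term-by-term series manipulations (choosing $|t|$ strictly below the common modulus $r$ and appealing to Propositions \ref{p5.0} and \ref{p5.1} for the combined series and coefficient comparison) together with the elementary but careful bookkeeping that the admissible integers $k$ number exactly $\lfloor m/2\rfloor$ and yield pairwise distinct values of $z(\theta)$.
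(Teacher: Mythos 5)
Your proposal is correct and follows essentially the same route as the paper: partial fractions plus geometric series to get $G_m(z(\theta))=\sin((m+1)\theta)/(r^m\sin\theta)$ (which matches the paper's formula $4\cos^2\theta\sin((m+1)\theta)/(r^{m+2}\sin\theta)$ since $r^2=4\cos^2\theta$), then counting the $\lfloor m/2\rfloor$ zeros of $\sin((m+1)\theta)$ in $(\pi/2,\pi)$ and using the injectivity of $z(\theta)=1/(4\cos^2\theta)$ with range $(1/4,\infty)$. Your explicit monotonicity argument for $z(\theta)$ and your clean enumeration $\theta=k\pi/(m+1)$, $(m+1)/2<k<m+1$, actually tidy up a sign slip in the paper's listing of these angles.
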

\begin{proof}

To prove this lemma, we express $G_m(z(\theta))$ as a function of $\theta$. Since $t_1 = t_1(\theta)$, $t_2 = t_2(\theta)$ are the distinct zeros of $1+t+z(\theta) t^2$, we have, for each $\theta\in(\pi/2,\pi)$,
\[
\frac{1}{1+t+z t^2} = \frac{1}{z(t-t_1)(t-t_2)}
\]
where $z=z(\theta) \ne 0$. By partial fractions, the right side is
\[
\frac{1}{z}\left(\frac{1}{(t-t_1)(t_1-t_2)} + \frac{1}{(t-t_2)(t_2-t_1)}\right).
\]
Meanwhile, observe that, for $t<\min(|t_1|,|t_2|)\ne 0$, geometric series implies that 
\[
\frac{1}{t-t_1} = \frac{1}{t_1}\frac{1}{\frac{t}{t_1} - 1} = -\frac{1}{t_1}\frac{1}{1- \frac{t}{t_1}} = -\frac{1}{t_1}\sum_{n=0}^{\infty}\left(\frac{t}{t_1}\right)^n.
\]
Similarly,
\[
\frac{1}{t-t_2} = -\frac{1}{t_2}\sum_{n=0}^{\infty}\left(\frac{t}{t_2}\right)^n.
\]
We combine identities above and deduce that
\begin{align*}
 &\sum_{m=0}^{\infty}G_{m}(z)t^{m} \\
= &\frac{1}{1+t+zt^2} \\
= &\frac{1}{z}\left(\left(-\frac{1}{t_1}\sum_{m=0}^{\infty}\left(\frac{t}{t_1}\right)^m\right)\frac{1}{(t_1-t_2)} +\left(-\frac{1}{t_2}\sum_{m=0}^{\infty}\left(\frac{t}{t_2}\right)^m\right)\frac{1}{(t_2-t_1)}\right)\\
    = &\frac{1}{z}\left(-\sum_{m=0}^{\infty}\frac{1}{t_1^{m+1}}\frac{1}{(t_1-t_2)}t^m -\sum_{m=0}^{\infty}\frac{1}{t_2^{m+1}}\frac{1}{(t_2-t_1)}t^m\right)\\
    = &\sum_{m=0}^{\infty}-\frac{1}{z}\left(\frac{1}{t_1^{m+1}(t_1-t_2)} + \frac{1}{t_2^{m+1}(t_2-t_1)}\right)t^m.
\end{align*}
Thus by the uniqueness of the power series in $t$ of $(1+t+zt^2)^{-1}$ around the origin, we conclude that
\[
G_m(z) = -\frac{1}{z}\left(\frac{1}{t_1^{m+1}(t_1-t_2)} + \frac{1}{t_2^{m+1}(t_2-t_1)}\right).
\]
We apply Definitions \ref{ed5.0} to get
\begin{align*}
    r &= r(\theta) = -2\cos(\theta),\\
    t_1(\theta) &= r(\theta)e^{i\theta},\\
    t_2(\theta) &= r(\theta)e^{-i\theta},\\
    z(\theta) &= \frac{1}{r(\theta)^2} = \frac{1}{4\cos^2(\theta)},
\end{align*}
and conclude that, for each $\theta\in(\pi/2,\pi)$,
\begin{align*}
    G_m(z(\theta)) &= -\frac{1}{z}\left(\frac{1}{t_1^{m+1}(t_1-t_2)} + \frac{1}{t_2^{m+1}(t_2-t_1)}\right)\\
    &= -\frac{-t_1^{m+1} + t_2^{m+1}}{zt_1^{m+1}t_2^{m+1}(t_1-t_2)}\\
    &= -\frac{-r^{m+1}e^{i(m+1)\theta} + r^{m+1}e^{-i(m+1)\theta}}{\left(\frac{1}{4\cos^2(\theta)}\right)r^{m+1}e^{i(m+1)\theta} r^{m+1}e^{-i(m+1)\theta}(re^{i\theta} - re^{-i\theta})}\\
    &= -4\cos^2(\theta)\frac{-r^{m+1}e^{i(m+1)\theta} + r^{m+1}e^{-i(m+1)\theta}}{r^{2m+2}(i2r\sin(\theta))}\\
	&= 4\cos^2(\theta)\frac{r^{m+1}(e^{i(m+1)\theta} - e^{-i(m+1)\theta})}{r^{2m+3}(i2\sin(\theta))}\\
	&= 4\cos^2(\theta)\frac{(i2\sin((m+1)\theta)}{r^{m+2}(i2\sin(\theta))}\\
    &= \frac{4\cos^2(\theta)\sin((m+1)\theta)}{r^{m+2}\sin(\theta)}.
\end{align*}
Observe that $\theta = \pi +\frac{1}{m+1}\pi, \pi  +\frac{2}{m+1}\pi, \dots, \pi + \frac{\lfloor m/2\rfloor}{m+1}\pi$ are the zeros of  $\sin((m+1)\theta)$ and thus are the zeros of
\[
G_m(z(\theta)) = \frac{4\cos^2(\theta)\sin((m+1)\theta)}{r^{m+2}\sin(\theta)}.
\]
For $\theta = \pi +\frac{1}{m+1}\pi, \pi  +\frac{2}{m+1}\pi, \dots, \pi + \frac{\lfloor m/2\rfloor}{m+1}\pi$, notice that $z(\theta) = \frac{1}{4\cos^2(\theta)}$ yields $\lfloor m/2\rfloor$ number of different values.
Here we have found $\lfloor m/2\rfloor$ number of different values of $z(\theta)$ such that $G_m(z(\theta)) = 0$.
Recall that Lemma \ref{l1.1} says that $G_m(z)$ is of degree at most $\lfloor m/2\rfloor$.
The Fundamental Theorem of Algebra implies that $G_m(z)$ has at most $\lfloor m/2\rfloor$ number of zeros counting multiplicities.
Since we have $\lfloor m/2\rfloor$ number of values of $z(\theta)$ such that $G_m(z(\theta)) = 0$, those $z(\theta)$'s are all the zeros of $G_m(z) = 0$.
Meanwhile, notice that $z(\theta) = \frac{1}{4\cos^2(\theta)}$.
Since $\cos^2(\theta)<1$, it follows that 
\[
z(\theta) = \frac{1}{4\cos^2(\theta)} > \frac{1}{4},
\]
for $\theta = \frac{1}{m+1}\pi, \dots, \frac{\lfloor m/2\rfloor}{m+1}\pi$.
This concludes that all the zeros of $G_m(z)$ lie in the interval $(1/4,\infty)$.
\end{proof}

We shall study another example, which is a spcial case of Theorem 1 in \cite{AT1}.
\begin{thm}\label{t1.2}
Let $\{J_m(z)\}_{m\geq 0}$ be the sequence of polynomials generated by
\[
\frac{1}{1 + t+ zt^3}.
\]
For each $m\geq 0$, all the zeros of $J_m(z)$ lie in the interval $(-\infty,-4/27)$.
\end{thm}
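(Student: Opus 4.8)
The plan is to follow the strategy used for Theorem~\ref{ex1}. First I would show that $\{J_m(z)\}_{m\ge 0}$ is the polynomial sequence determined by $J_0(z)=1$, $J_1(z)=-1$, $J_2(z)=1$ and the recurrence $J_m(z)=-J_{m-1}(z)-zJ_{m-3}(z)$ for $m\ge 3$; this comes, exactly as in Lemma~\ref{l1.1}, from multiplying the identity $\sum_{m\ge 0}J_m(z)t^m=(1+t+zt^3)^{-1}$ by $1+t+zt^3$ on a disk about $t=0$ (Proposition~\ref{p5.0}) and equating $t^m$-coefficients (Proposition~\ref{p5.1}). A short induction on $m$, using $\lfloor(m-3)/3\rfloor+1=\lfloor m/3\rfloor$, then gives $\deg J_m\le\lfloor m/3\rfloor$, and $J_m(0)=(-1)^m\neq 0$ shows $J_m\not\equiv 0$. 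So, by the Fundamental Theorem of Algebra, it suffices to exhibit $\lfloor m/3\rfloor$ distinct zeros of $J_m$ in $(-\infty,-4/27)$.

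Next I would parametrize the curve on which those zeros lie, in the spirit of \eqref{ed5.0}. Looking for $z=z(\theta)$ for which $1+t+z(\theta)t^3$ has roots $t_1=re^{i\theta}$, $t_2=re^{-i\theta}$, $t_3=-2r\cos\theta$ with $r=r(\theta)$, Vieta's formulas (the sum of the roots is $0$, etc.) give
\[
r(\theta)=\frac{1-4\cos^2\theta}{2\cos\theta},\qquad z(\theta)=\frac{4\cos^2\theta}{(1-4\cos^2\theta)^3}.
\]
For $\theta\in(2\pi/3,\pi)$ one checks that $r(\theta)>0$, that $t_1,t_2,t_3$ are distinct and nonzero, and that $\theta\mapsto z(\theta)$ is a strictly increasing bijection of $(2\pi/3,\pi)$ onto $(-\infty,-4/27)$ (after substituting $u=\cos^2\theta$ the monotonicity is a one-line derivative check). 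Then, proceeding exactly as in the closed-form computation for $G_m(z(\theta))$ --- partial fractions of $\bigl(z(t-t_1)(t-t_2)(t-t_3)\bigr)^{-1}$, geometric-series expansion of each $1/(t-t_i)$ for $|t|<\min_i|t_i|=r$, and uniqueness of the power-series coefficients --- I expect
\[
J_m\bigl(z(\theta)\bigr)=\frac{r(\theta)^{-m}}{(1+8\cos^2\theta)\sin\theta}\;\Psi_m(\theta),
\]
\[
\Psi_m(\theta):=6\cos^2\theta\,\sin\bigl((m+1)\theta\bigr)+\sin 2\theta\,\cos\bigl((m+1)\theta\bigr)+\left(\frac{-1}{2\cos\theta}\right)^{\!m}\sin\theta .
\]
Since the prefactor never vanishes on $(2\pi/3,\pi)$, the zeros of $\theta\mapsto J_m(z(\theta))$ there are exactly the zeros of $\Psi_m$.

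To count the zeros of $\Psi_m$ in $(2\pi/3,\pi)$ I would use a sign-alternation argument. At the points $\theta_k=\dfrac{(2k+1)\pi}{2(m+1)}$ one has $\cos((m+1)\theta_k)=0$ and $\sin((m+1)\theta_k)=(-1)^k$, so
\[
\Psi_m(\theta_k)=(-1)^k\,6\cos^2\theta_k+\left(\frac{-1}{2\cos\theta_k}\right)^{\!m}\sin\theta_k .
\]
On $(2\pi/3,\pi)$ we have $-2\cos\theta>1$, so the second term has absolute value less than $1$, whereas $6\cos^2\theta>\tfrac32$; hence $\operatorname{sign}\Psi_m(\theta_k)=(-1)^k$. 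Counting the $k$ with $\theta_k\in(2\pi/3,\pi)$ gives $\lfloor m/3\rfloor+1$ of them (consecutive integers) when $3\nmid m$, so the intermediate value theorem produces $\lfloor m/3\rfloor$ zeros of $\Psi_m$; when $3\mid m$ there is one fewer such $k$, and I would supply the missing zero from the interval between $2\pi/3$ and the smallest $\theta_k$, using the direct evaluation $\Psi_m(2\pi/3)=\tfrac{3\sqrt3}{2}>0$ against the negative sign of $\Psi_m$ at that smallest $\theta_k$. Because $z(\theta)$ is injective, these $\lfloor m/3\rfloor$ zeros lie in disjoint $\theta$-intervals and give $\lfloor m/3\rfloor$ distinct points of $(-\infty,-4/27)$ where $J_m$ vanishes; with $\deg J_m\le\lfloor m/3\rfloor$ this forces $\deg J_m=\lfloor m/3\rfloor$ and shows that every zero of $J_m$ lies in $(-\infty,-4/27)$.

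The recurrence, the degree bound, and the Vieta-and-monotonicity bookkeeping are routine. The real work is the closed form for $\Psi_m$ --- since the cubic denominator gives a genuine three-term partial fraction, one must assemble the conjugate pair $t_1,t_2$ into the trigonometric block $6\cos^2\theta\sin((m+1)\theta)+\sin 2\theta\cos((m+1)\theta)$ while tracking the real root's contribution $\bigl(\tfrac{-1}{2\cos\theta}\bigr)^m\sin\theta$ --- together with the exact zero count, in particular the case $3\mid m$ and the verification that the real-root term never cancels a sign change. That last point is where I expect to have to be most careful, though the crude bound $\bigl|\bigl(\tfrac{-1}{2\cos\theta}\bigr)^m\sin\theta\bigr|<1<6\cos^2\theta$ on $(2\pi/3,\pi)$ should make it go through.
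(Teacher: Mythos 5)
Your proposal is correct and follows essentially the same route as the paper: the recurrence and degree bound $\deg J_m\le\lfloor m/3\rfloor$, the trigonometric parametrization $t_1=re^{i\theta}$, $t_2=re^{-i\theta}$, $t_3=-2r\cos\theta$ with $z(\theta)=4\cos^2\theta/(1-4\cos^2\theta)^3$ on $(2\pi/3,\pi)$, a partial-fraction closed form for $J_m(z(\theta))$ (your $\Psi_m$ equals the paper's numerator up to the nonzero factor $(-2\cos\theta)^{-m}$), an intermediate-value sign count, monotonicity of $z(\theta)$, and the Fundamental Theorem of Algebra. The only difference is bookkeeping: you sample at $\theta_k=(2k+1)\pi/(2(m+1))$ and patch the case $3\mid m$ with the endpoint value $\Psi_m(2\pi/3)=3\sqrt3/2$, whereas the paper samples where $\sin((m+2)\theta)=\pm1$ and runs a six-case analysis modulo $6$; both yield the required $\lfloor m/3\rfloor$ zeros.
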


Our first goal is to prove that, for each $m\in\mathbb{N}$, $J_m(z)$ is of degree at most $\lfloor m/3\rfloor$.

\begin{lem}\label{l2.1}
For each $m\geq 0$, $J_m(z)$ is a polynomial of degree at most $\lfloor m/3\rfloor$.
\end{lem}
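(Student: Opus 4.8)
The plan is to reuse the two-step strategy from Lemma \ref{l1.1}: first extract a three-term recurrence for $\{J_m(z)\}_{m\ge 0}$ from the generating function, and then run an induction on $m$ to bound the degree.

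For the first step, I would observe that $1+t+zt^3$ does not vanish at $t=0$, so $\frac{1}{1+t+zt^3}$ is analytic on some disk $B(r,0)$ about the origin, and by Theorem \ref{t5.0} the series $\sum_{m\ge 0}J_m(z)t^m$ has a positive radius of convergence. Multiplying the identity $\sum_{m\ge 0}J_m(z)t^m=\frac{1}{1+t+zt^3}$ through by $1+t+zt^3$, expanding the product as a single power series in $t$ by Proposition \ref{p5.0}, and equating $t^m$-coefficients by Proposition \ref{p5.1}, one obtains
\begin{align*}
J_0(z) &= 1,\\
J_1(z) &= -1,\\
J_2(z) &= 1,\\
J_m(z) &= -J_{m-1}(z) - zJ_{m-3}(z), \quad m\ge 3.
\end{align*}
In particular this recurrence, together with the initial values, shows each $J_m(z)$ is a polynomial in $z$.

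For the second step, I would induct on $m$. The base cases $m=0,1,2$ are immediate, since $J_0,J_1,J_2$ are constants and $\lfloor m/3\rfloor=0$ for $m\in\{0,1,2\}$. For the inductive step, fix $m\ge 3$ and assume $\deg J_k(z)\le\lfloor k/3\rfloor$ for all $k<m$. Then $\deg J_{m-1}(z)\le\lfloor (m-1)/3\rfloor\le\lfloor m/3\rfloor$, while $\deg\bigl(zJ_{m-3}(z)\bigr)\le 1+\lfloor (m-3)/3\rfloor = 1+\bigl(\lfloor m/3\rfloor-1\bigr)=\lfloor m/3\rfloor$, where I use the identity $\lfloor (m-3)/3\rfloor=\lfloor m/3\rfloor-1$, valid for every integer $m$. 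From the recurrence $J_m(z)=-J_{m-1}(z)-zJ_{m-3}(z)$ it follows that $\deg J_m(z)\le\lfloor m/3\rfloor$, completing the induction.

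I do not expect a genuine obstacle here; the only point needing a moment's care is the floor computation $\lfloor (m-3)/3\rfloor=\lfloor m/3\rfloor-1$, which can be verified directly by writing $m=3q+s$ with $s\in\{0,1,2\}$, or by splitting into the three residue classes of $m$ modulo $3$ in the same spirit as the even/odd case split in Lemma \ref{l1.1}. Everything else is routine bookkeeping with the recurrence coming from the generating function.
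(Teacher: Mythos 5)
Your proposal is correct and follows essentially the same route as the paper: derive the recurrence $J_m(z)=-J_{m-1}(z)-zJ_{m-3}(z)$ with initial values $J_0=1$, $J_1=-1$, $J_2=1$ from the generating function, then induct on $m$ to bound the degree. Your use of the identity $\lfloor (m-3)/3\rfloor=\lfloor m/3\rfloor-1$ merely compresses the paper's three-case split modulo $3$ into one line; the argument is otherwise identical and complete.
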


\begin{proof}
We first show that the sequence $\{J_m(z)\}$ satisfies the recurrence 
\[
J_m(z) = -J_{m-1}(z) - zJ_{m-3}(z)
\]
with the initial condition 
\begin{align*}
J_0(z) &= 1, &J_1(z) &= -1, &J_2(z) &= 1.
\end{align*}
Let $z\in\mathbb{C}$ be arbitrary.
Notice that $1 + t + zt^3$ does not have 0 as a zero, and hence $\frac{1}{1+t+zt^3}$ is analytic on $B(r,0)$ for some $r>0$ where $r$ is less than all the moduli of each of the zeros of $1+t+zt^3$.
Then Theorem \ref{t5.0} implies the existence of radius of convergence, say $R$, of the series $\sum_{m=0}^{\infty}J_{m}(z)t^{m}$.
Let $t\in\mathbb{C}$ such that $|t|<R$.
Then
\[
\sum_{m=0}^{\infty}J_{m}(z)t^{m}=\frac{1}{1+t+zt^3}
\]
holds.
We multiply both sides of 
\[
\sum_{m=0}^{\infty}J_{m}(z)t^{m}=\frac{1}{1+t+zt^3}
\]
by $1+t+zt^3$ to obtain
\[
1 = (1+t+zt^3)\sum_{m=0}^{\infty}J_{m}(z)t^{m}
\]
and then, by Proposition \ref{p5.0}, given that $|t| < R$, it holds that

\begin{align*}
1 &= (1 + t+ zt^3)\left(\sum_{m=0}^{\infty}J_m(z)t^m\right)\\
&= J_0(z) + J_0(z)t + zJ_0(z)t^3 + J_1(z)t + J_1(z)t^2 + zJ_1(z)t^4 +\cdots\\
&= J_0(z) + \left[J_1(z) + J_0(z)\right]t + \left[J_2(z) + J_1(z)\right]t^2 + \left[J_3(z) + J_2(z) + zJ_0(z)\right]t^3 +\\
&\cdots + \left[J_m(z) + J_{m-1}(z) + zJ_{m-3}(z)\right]t^m+\cdots
\end{align*}
Under Proposition \ref{p5.1}, we may equate the coefficients of $t^m$ in both sides, and it yields the following reccurence relation 
\begin{align*}
J_m(z) &= -J_{m-1}(z) - zJ_{m-3}(z)\text{, for }m\geq 3
\end{align*}
with the initial condition
\begin{align*}
J_0(z) &= 1\\
J_1(z) &= -1\\
J_2(z) & = 1.
\end{align*}
With this recurrence relation, we may deduce that $\{J_m(z)\}_{m\geq 0}$ is a sequence of polynomials.

The next step is to prove that $J_m(z)$ has degree at most $\lfloor m/3\rfloor$.
Observe that this claim holds for $J_0(z),J_1(z)$ and $J_2(z)$.
Now let us take the induction step.
Suppose that the degrees of $J_{m-3}(z), J_{m-1}(z)$ are at most $\lfloor(m-3)/3\rfloor$ and $\lfloor(m-1)/3\rfloor$ respectively, for some $m\geq 3$.
There are 3 cases:  $m \equiv 0$ (mod 3), or $m \equiv 1$ (mod 3) or $m \equiv 2$ (mod 3).\\

If  $m \equiv 0$ (mod 3), then $m = 3k$, for some $k\in\mathbb{N}$.
Notice that 
\[
\left\lfloor\frac{m-3}{3}\right\rfloor = \left\lfloor\frac{3k-3}{3}\right\rfloor = \left\lfloor k-1\right\rfloor = k-1
\]
and that 
\[
\left\lfloor\frac{m-1}{3}\right\rfloor = \left\lfloor\frac{3k-3 + 2}{3}\right\rfloor = \left\lfloor k-1 + \frac{2}{3}\right\rfloor = k-1.
\]
It follows that the degree of $J_m(z) = -zJ_{m-3}(z) - J_{m-1}(z)$ is at most
\[
\max\left(\left\lfloor\frac{m-3}{3}\right\rfloor +1, \left\lfloor\frac{m-1}{3}\right\rfloor\right) = \max\left(k-1 +1, k-1\right) = k =  \left\lfloor\frac{3k}{3}\right\rfloor = \left\lfloor\frac{m}{3}\right\rfloor.
\]

If $m \equiv 1$ (mod 3), then $m = 3k + 1$, for some $k\in\mathbb{N}$.
Notice that 
\[
\left\lfloor\frac{m-3}{3}\right\rfloor = \left\lfloor\frac{3k+1-3}{3}\right\rfloor = \left\lfloor k-1 + \frac{1}{3}\right\rfloor = k-1
\]
and that
\[
\left\lfloor\frac{m-1}{3}\right\rfloor = \left\lfloor\frac{3k+1-1}{3}\right\rfloor = \left\lfloor k\right\rfloor = k.
\]
Then the degree of $J_m(z) = -zJ_{m-3}(z) - J_{m-1}(z)$ is at most
\[
\max\left(\left\lfloor\frac{m-3}{3}\right\rfloor +1, \left\lfloor\frac{m-1}{3}\right\rfloor\right) = k = \left\lfloor k + \frac{1}{3}\right\rfloor = \left\lfloor\frac{3k + 1}{3}\right\rfloor = \left\lfloor\frac{m}{3}\right\rfloor.
\]

If $m \equiv 2$ (mod 3), then $m = 3k + 2$, for some $k\in\mathbb{N}$.
Notice that 
\[
\left\lfloor\frac{m-3}{3}\right\rfloor = \left\lfloor\frac{3k+2-3}{3}\right\rfloor = \left\lfloor k-1 + \frac{2}{3}\right\rfloor = k-1
\]
and that 
\[
\left\lfloor\frac{m-1}{3}\right\rfloor = \left\lfloor\frac{3k+2-1}{3}\right\rfloor = \left\lfloor k + \frac{1}{3}\right\rfloor = k,
\]
Then the degree of $J_m(z) = -zJ_{m-3}(z) - J_{m-1}(z)$ is at most
\[
\max\left(\left\lfloor\frac{m-3}{3}\right\rfloor +1, \left\lfloor\frac{m-1}{3}\right\rfloor\right) = k = \left\lfloor k + \frac{2}{3}\right\rfloor = \left\lfloor\frac{3k + 2}{3}\right\rfloor = \left\lfloor\frac{m}{3}\right\rfloor.
\]
This induces that, for all cases, $J_m(z)$ is of degree at most $\left\lfloor\frac{m}{3}\right\rfloor$, which implies that the inductive step hold.
It concludes that our claim holds.
\end{proof}

\begin{lem}\label{l1.2.1}
Let 
\begin{align}
r=r(\theta) &= \frac{1 - 4\cos^2(\theta)}{2\cos(\theta)},&t_1&=t_1(\theta) = r(\theta)e^{i\theta},\label{d1.1}\\
q=q(\theta) &= -2\cos(\theta),&t_2&=t_2(\theta) = r(\theta)e^{-i\theta},\label{d1.2}\\
z=z(\theta) &= -\frac{1}{r(\theta)^3q(\theta)},&t_3&=t_3(\theta) = r(\theta)q(\theta),\label{d1.3}
\end{align}
where $\theta\in(2\pi/3,\pi)$.
Then for each $\theta\in(2\pi/3,\pi)$, $t_1(\theta),t_2(\theta),t_3(\theta)$ are distinct and non-zero.
And, for all $t\in\mathbb{C}$, it holds that
\[
1+ t + zt^3 = z(t-t_1)(t-t_2)(t-t_3).
\]
\end{lem}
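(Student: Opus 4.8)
The plan is to prove the factorization by showing directly that $t_1(\theta),t_2(\theta),t_3(\theta)$ are the three roots of the cubic $zt^3+t+1$; the displayed identity then follows at once, since both sides are degree-three polynomials in $t$ with the same leading coefficient $z$ and the same zeros. The first step is to check that every quantity in \eqref{d1.1}--\eqref{d1.3} is well defined and non-zero on $\theta\in(2\pi/3,\pi)$. On this interval $\cos\theta\in(-1,-1/2)$, so $q(\theta)=-2\cos\theta\in(1,2)$ is non-zero; moreover $1-4\cos^2\theta\in(-3,0)$ while $2\cos\theta<0$, so $r(\theta)>0$. Hence $r(\theta),q(\theta)\neq 0$, the expression $z(\theta)=-1/(r(\theta)^3q(\theta))$ makes sense, and $z(\theta)\neq 0$. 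Note also that $r$ and $q$ are real, so $t_3=rq$ is real and $t_1,t_2$ form a complex conjugate pair.

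Next I would assemble the cubic from its roots. Since $t_1+t_2=r(e^{i\theta}+e^{-i\theta})=2r\cos\theta=-rq$ and $t_1t_2=r^2e^{i\theta}e^{-i\theta}=r^2$, we get $(t-t_1)(t-t_2)=t^2+rqt+r^2$. Multiplying by $(t-t_3)=(t-rq)$ and expanding gives
\[
(t-t_1)(t-t_2)(t-t_3)=t^3+r^2(1-q^2)t-r^3q.
\]
Multiplying through by $z$, the asserted identity $1+t+zt^3=z(t-t_1)(t-t_2)(t-t_3)$ reduces to the two scalar equations $z(-r^3q)=1$ and $zr^2(1-q^2)=1$. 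The first is exactly the definition $z=-1/(r^3q)$ in \eqref{d1.3}. Substituting that value of $z$ into the second turns it into $q^2-1=rq$, i.e. $r=(q^2-1)/q$; since $q=-2\cos\theta$ this reads $r=(4\cos^2\theta-1)/(-2\cos\theta)=(1-4\cos^2\theta)/(2\cos\theta)$, which is precisely the definition \eqref{d1.1} of $r(\theta)$. So both scalar equations hold and the factorization is established.

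It remains to record distinctness and non-vanishing, which I would do last. We already have $r,q\neq 0$, so $t_1=re^{i\theta}$, $t_2=re^{-i\theta}$, and $t_3=rq$ are all non-zero. For distinctness: $t_1=t_2$ would force $e^{2i\theta}=1$, impossible for $\theta\in(2\pi/3,\pi)$; and $t_1=t_3$ or $t_2=t_3$ would force $e^{\pm i\theta}=q\in\mathbb{R}$, impossible since $\sin\theta\neq 0$ on this interval.

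The computation is entirely routine; the only point worth emphasizing is that it is the specific value of $r(\theta)$ in \eqref{d1.1}---and nothing else---that makes the coefficient of $t$ in the expanded cubic come out to exactly $1$, so the lemma is really a reverse-engineering of Vieta's relations for $zt^3+t+1$. I do not anticipate any genuine obstacle beyond careful sign bookkeeping on the interval $(2\pi/3,\pi)$.
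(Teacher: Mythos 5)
Your proof is correct and follows essentially the same route as the paper: both verify the factorization by computing the elementary symmetric functions of $t_1,t_2,t_3$ (you group the conjugate pair first, the paper computes $t_1+t_2+t_3=0$, $\sum t_it_j=1/z$, $t_1t_2t_3=-1/z$ directly) and then match coefficients, with the same distinctness argument at the end. The sign bookkeeping ($r>0$, $q\in(1,2)$ on $(2\pi/3,\pi)$) and the reduction to $r=(q^2-1)/q$ all check out.
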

\begin{proof}
Notice that 
\[
z(t-t_1)(t-t_2)(t-t_3) = z(t^3 - (t_1 + t_2 + t_3)t^2 + (t_1t_2 + t_1t_3 + t_2t_3)t - t_1t_2t_3).
\]
We observe that 
\begin{align*}
t_1 + t_2 + t_3 &= r(\theta)e^{i\theta} + r(\theta)e^{-i\theta} + r(\theta)q(\theta)\\
&= r(\theta)\left(\cos(\theta) + i\sin(\theta) + \cos(\theta) - i\sin(\theta) - 2\cos(\theta)\right)\\
&= r(\theta)(2\cos(\theta) - 2\cos(\theta))\\
&= 0,
\end{align*}
and that
\begin{align*}
t_1t_2 + t_2t_3 + t_3t_1 = &re^{i\theta}re^{-i\theta} + re^{-i\theta}r(-2\cos(\theta)) + r(-2\cos(\theta))re^{i\theta}\\
= &r(\theta)^2 + (-2\cos(\theta))r(\theta)^2(e^{-i\theta} + e^{i\theta})\\
= &r(\theta)^2 + (-2\cos(\theta))r(\theta)^2(2\cos(\theta))\\
= &r(\theta)^2 - 4\cos^2(\theta)r(\theta)^2\\
= &r(\theta)^2(1-4\cos^2(\theta))\\
= &r(\theta)^2\frac{1-4\cos^2(\theta)}{2\cos(\theta)}2\cos(\theta)\\
= &r(\theta)^3(2\cos(\theta))\\
= &r(\theta)^3(-q(\theta))\\
= &\frac{1}{z(\theta)}.
\end{align*}
Notice that
\[
t_1t_2t_3 =  r(\theta)e^{i\theta} r(\theta)e^{-i\theta} r(\theta)q(\theta) =  r(\theta)^3q(\theta) = -\frac{1}{z(\theta)}.
\]
Above deductions imply that 
\begin{align*}
z(t-t_1)(t-t_2)(t-t_3) &= z(t^3 - (t_1 + t_2 + t_3)t^2 + (t_1t_2 + t_1t_3 + t_2t_3)t - t_1t_2t_3)\\
&= z(\theta)\left(t^3 - 0t^2+ \frac{1}{z(\theta)}t - \left(-\frac{1}{z(\theta)}\right)\right)\\
&= 1 + t + z(\theta)t^3.
\end{align*}
Let $\theta\in(2\pi/3,\pi)$ be arbitrary.
It follows that $\cos(\theta)\in \left(-1,-\frac{1}{2}\right)$, and this induces that $r=r(\theta) = \frac{1 - 4\cos^2(\theta)}{2\cos(\theta)}\neq0$.
Since $\sin\theta\neq0$, $e^{i\theta} = \cos\theta + i\sin\theta$, $e^{-i\theta} = \cos\theta - i\sin\theta$ are distinct and not real.
With $q(\theta) = -2\cos(\theta)$ being real and non-zero, we may infer that $q(\theta),e^{i\theta},e^{-i\theta}$ are distinct.
Consequently, $t_1(\theta),t_2(\theta),t_3(\theta)$ are distinct and non-zero.
\end{proof}

The next lemma gives an explicit formula for $J_m(z(\theta))$.
\begin{lem}\label{l1.2.2}
With $z(\theta)$, $q(\theta)$, $r(\theta)$, $t_k(\theta)$, $1\le k \le 3$, defined as in Lemma \ref{l1.2.1}, for each $\theta\in (2\pi/3,\pi)$ we have
\begin{align*}
J_m(z(\theta)) = \frac{q^{m+1}[(-2\cos\theta)\sin(m+1)\theta-\sin(m+2)\theta] + \sin\theta}{z(t_1-t_2)(t_2-t_3)(t_3-t_1)t_1^{m+1}t_2^{m+1}t_3^{m+1}}2ir^{2m+3}(\theta).
\end{align*}
\end{lem}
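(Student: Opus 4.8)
The plan is to extract the $t^m$-coefficient of $(1+t+z(\theta)t^3)^{-1}$ from a partial fraction decomposition and then simplify the resulting closed form. Fix $\theta\in(2\pi/3,\pi)$ and abbreviate $z=z(\theta)$, $t_k=t_k(\theta)$, $r=r(\theta)$, $q=q(\theta)$. By Lemma \ref{l1.2.1} the numbers $t_1,t_2,t_3$ are distinct and nonzero and $1+t+zt^3=z(t-t_1)(t-t_2)(t-t_3)$, so partial fractions give
\[
\frac{1}{1+t+zt^3}=\frac{1}{z}\sum_{k=1}^{3}\frac{1}{\prod_{j\ne k}(t_k-t_j)}\cdot\frac{1}{t-t_k}.
\]
First I would expand each factor as $\frac{1}{t-t_k}=-\frac{1}{t_k}\sum_{n\ge 0}(t/t_k)^n$, valid on $|t|<\min_k|t_k|$ by the geometric series, and then combine the three power series into a single power series in $t$ using Proposition \ref{p5.0}. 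Matching the $t^m$-coefficient of the result against that of $\sum_m J_m(z)t^m$ and invoking uniqueness of Taylor coefficients (Proposition \ref{p5.1}) yields
\[
J_m(z(\theta))=-\frac{1}{z}\left(\frac{1}{t_1^{m+1}(t_1-t_2)(t_1-t_3)}+\frac{1}{t_2^{m+1}(t_2-t_1)(t_2-t_3)}+\frac{1}{t_3^{m+1}(t_3-t_1)(t_3-t_2)}\right).
\]

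Next I would bring these three fractions over the common denominator $z\,t_1^{m+1}t_2^{m+1}t_3^{m+1}(t_1-t_2)(t_2-t_3)(t_3-t_1)$. The only care needed is with signs: for each $k$, the product $\prod_{j\ne k}(t_k-t_j)$ equals $-1$ times the Vandermonde product $(t_1-t_2)(t_2-t_3)(t_3-t_1)$ divided by the unique one of the differences $t_1-t_2$, $t_2-t_3$, $t_3-t_1$ not involving $t_k$. After this substitution the numerator collapses to
\[
t_2^{m+1}t_3^{m+1}(t_2-t_3)+t_1^{m+1}t_3^{m+1}(t_3-t_1)+t_1^{m+1}t_2^{m+1}(t_1-t_2).
\]
Now I would substitute $t_1=re^{i\theta}$, $t_2=re^{-i\theta}$, $t_3=rq$ from \eqref{d1.1}--\eqref{d1.3}. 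Since $t_1^{m+1}t_2^{m+1}=r^{2m+2}$ and $t_1-t_2=2ir\sin\theta$, the last summand equals $2ir^{2m+3}\sin\theta$; the first two summands share the factor $r^{2m+3}q^{m+1}$, and using $e^{i\phi}-e^{-i\phi}=2i\sin\phi$ they combine into $2ir^{2m+3}q^{m+1}\bigl(q\sin((m+1)\theta)-\sin((m+2)\theta)\bigr)$. Recalling $q=-2\cos\theta$ and pulling out the common factor $2ir^{2m+3}$ from the numerator gives exactly the asserted formula for $J_m(z(\theta))$.

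The genuinely delicate portions are the two bits of bookkeeping just described: tracking signs when the three partial-fraction terms are put over a common denominator, and the trigonometric collapse of the first two summands after the substitution. Neither is conceptually hard, and everything else is already in hand from Section ``Complex Analysis'': convergence of the geometric expansions on a small disc about the origin, legitimacy of termwise addition of power series (Proposition \ref{p5.0}), and uniqueness of power-series coefficients (Proposition \ref{p5.1}). I would also note that $J_m$ is a genuine polynomial in $z$ by Lemma \ref{l2.1} and that $z(\theta)\ne 0$, so evaluating $J_m$ at the real number $z(\theta)$ is unambiguous and coincides with the coefficient extracted above.
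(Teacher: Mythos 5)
Your proposal is correct and follows essentially the same route as the paper: partial fractions over the three distinct roots $t_1,t_2,t_3$, geometric-series expansion on $|t|<\min_k|t_k|$, extraction of the $t^m$-coefficient by uniqueness, combination over the common denominator $z(t_1-t_2)(t_2-t_3)(t_3-t_1)t_1^{m+1}t_2^{m+1}t_3^{m+1}$, and the trigonometric simplification via $e^{i\phi}-e^{-i\phi}=2i\sin\phi$ with $q=-2\cos\theta$. The sign bookkeeping you describe for $\prod_{j\ne k}(t_k-t_j)$ and the collapse of the first two numerator terms both check out and reproduce the paper's computation exactly.
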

\begin{proof}
Let $\theta\in(2\pi/3,\pi)$ be arbitrary. 
We may deduce that  $q(\theta)$, $r(\theta)\neq 0$, and consequently $t_1(\theta),t_2(\theta),t_3(\theta)\neq 0$.
Using partial fractions yields that, for $|t|<\min(|t_1|,|t_2|,|t_3|)$, 
\begin{align}
\frac{1}{1 + t + zt^3} = &\frac{1}{z(t-t_1)(t-t_2)(t-t_3)}\nonumber\\
\begin{split}\label{e3.1}
= &\frac{1}{z}\cdot\frac{1}{(t-t_1)(t_2-t_1)(t_3-t_1)}+ \frac{1}{z}\cdot\frac{1}{(t-t_2)(t_1-t_2)(t_3-t_2)} \\
&+ \frac{1}{z}\cdot\frac{1}{(t-t_3)(t_1-t_3)(t_2-t_3)}.
\end{split}
\end{align}
Since $|t|<\min(|t_1|,|t_2|,|t_3|)$, we may deduce, with geometric series, that expression \eqref{e3.1} is equal to
\begin{align*}
&-\sum_{m=0}^{\infty}\frac{1}{z(t_2-t_1)(t_3-t_1)t_1^{m+1}}t^m  -\sum_{m=0}^{\infty}\frac{1}{z(t_1-t_2)(t_3-t_2)t_2^{m+1}}t^m \\ &-\sum_{m=0}^{\infty}\frac{1}{z(t_1-t_3)(t_2-t_3)t_3^{m+1}}t^m
\end{align*}
If we combine above series into one series, the coefficient of $t^m$ in the series is
\begin{align*}
    &-\frac{1}{z(t_2-t_1)(t_3-t_1)t_1^{m+1}} - \frac{1}{z(t_1-t_2)(t_3-t_2)t_2^{m+1}} - \frac{1}{z(t_1-t_3)(t_2-t_3)t_3^{m+1}}\\
    = &-\frac{-t_2^{m+1}t_3^{m+1}(t_2-t_3) - t_1^{m+1}t_3^{m+1}(t_3-t_1) - t_1^{m+1}t_2^{m+1}(t_1-t_2)}{z(t_1-t_2)(t_2-t_3)(t_3-t_1)t_1^{m+1}t_2^{m+1}t_3^{m+1}}\\
    = &\frac{t_2^{m+2}t_3^{m+1} - t_2^{m+1}t_3^{m+2} + t_1^{m+1}t_3^{m+2} - t_1^{m+2}t_3^{m+1} + t_1^{m+2}t_2^{m+1} - t_1^{m+1}t_2^{m+2}}{z(t_1-t_2)(t_2-t_3)(t_3-t_1)t_1^{m+1}t_2^{m+1}t_3^{m+1}}\\
    =& \frac{t_3^{m+2}(t_1^{m+1} - t_2^{m+1}) +t_3^{m+1}(t_2^{m+2}- t_1^{m+2}) + t_1^{m+2}t_2^{m+1} - t_1^{m+1}t_2^{m+2}}{z(t_1-t_2)(t_2-t_3)(t_3-t_1)t_1^{m+1}t_2^{m+1}t_3^{m+1}}.
\end{align*}
Recall that from Equations \eqref{d1.1}, \eqref{d1.2}, \eqref{d1.3} that 
\begin{align*}
r=r(\theta) &= \frac{1 - 4\cos^2(\theta)}{2\cos(\theta)},&t_1&=t_1(\theta) = r(\theta)e^{i\theta},\\
q=q(\theta) &= -2\cos(\theta),&t_2&=t_2(\theta) = r(\theta)e^{-i\theta},\\
z=z(\theta) &= -\frac{1}{r(\theta)^3q(\theta)},&t_3&=t_3(\theta) = r(\theta)q(\theta).
\end{align*}
These imply that 
\begin{align*}
&\sum_{m=0}^{\infty}\left(\frac{t_3^{m+2}(t_1^{m+1} - t_2^{m+1}) +t_3^{m+1}(t_2^{m+2}- t_1^{m+2}) + t_1^{m+2}t_2^{m+1} - t_1^{m+1}t_2^{m+2}}{z(t_1-t_2)(t_2-t_3)(t_3-t_1)t_1^{m+1}t_2^{m+1}t_3^{m+1}}\right)t^m\\
= &\sum_{m=0}^{\infty}\left(\frac{q^{m+2}(2i\sin(m+1)\theta) +q^{m+1}(-2i\sin(m+2)\theta) + 2i\sin\theta}{z(t_1-t_2)(t_2-t_3)(t_3-t_1)t_1^{m+1}t_2^{m+1}t_3^{m+1}}\right)r^{2m+3}t^m\\
= &\sum_{m=0}^{\infty}\left(\frac{q^{m+1}[(-2\cos\theta)\sin(m+1)\theta-\sin(m+2)\theta] + \sin\theta}{z(t_1-t_2)(t_2-t_3)(t_3-t_1)t_1^{m+1}t_2^{m+1}t_3^{m+1}}\right)2ir^{2m+3}t^m.
\end{align*}
This induces that 
\begin{align*}
\sum_{m=0}^{\infty}J_m(z(\theta))t^m = &\frac{1}{1 + t + z(\theta)t^3} 
\end{align*}
\begin{align*}
= &\sum_{m=0}^{\infty}\left(\frac{q^{m+1}[(-2\cos\theta)\sin(m+1)\theta-\sin(m+2)\theta] + \sin\theta}{z(t_1-t_2)(t_2-t_3)(t_3-t_1)t_1^{m+1}t_2^{m+1}t_3^{m+1}}\right)2ir^{2m+3}t^m
\end{align*}
which concludes that, for all $\theta\in (2\pi/3,\pi)$,
$$J_m(z(\theta)) = \frac{q^{m+1}[(-2\cos\theta)\sin(m+1)\theta-\sin(m+2)\theta] + \sin\theta}{z(t_1-t_2)(t_2-t_3)(t_3-t_1)t_1^{m+1}t_2^{m+1}t_3^{m+1}}2ir^{2m+3}.\qedhere$$
\end{proof}

With all the lemmas at our disposal, we will prove Theorem \ref{t1.2}. 
\begin{proof}
To proceed, we note that the numerator of
\[\frac{q^{m+1}[(-2\cos\theta)\sin(m+1)\theta-\sin(m+2)\theta] + \sin\theta}{z(t_1-t_2)(t_2-t_3)(t_3-t_1)t_1^{m+1}t_2^{m+1}t_3^{m+1}},
\]
is $0$ when 
$$2\cos\theta\sin(m+1)\theta+\sin(m+2)\theta = \frac{\sin\theta}{q^{m+1}}.$$
Since, for $\theta\in(2\pi/3,\pi)$, 
\[|q(\theta)| = |-2\cos\theta| > 1\]
 and 
\[
0 < \sin\theta < 1,
\] 
we have
\[
\left| \frac{\sin\theta}{q^{m+1}}\right| < 1,
\] 
for all $\theta\in(2\pi/3,\pi)$.\\

Meanwhile, notice that
\begin{align*}
&2\cos\theta\sin(m+1)\theta+\sin(m+2)\theta \\
= &\sin((m+1)\theta + \theta) + \sin((m+1)\theta - \theta) + \sin(m+2)\theta\\
= &2\sin(m+2)\theta + \sin m\theta.
\end{align*}
When $\theta = \frac{4k - 3}{2(m+2)}\pi\in(2\pi/3,\pi), k \in\mathbb{N}$, we have 
\[
2\sin(m+2)\theta + \sin m\theta = 2 + \sin m\theta \geq 1.
\]
Similarly, if $\theta = \frac{4k - 1}{2(m+2)}\pi, k\in \mathbb{N}$, then
\[
2\sin(m+2)\theta + \sin m\theta = -2 + \sin m\theta \leq -1.
\]
It follows that, at $\theta = \frac{4k - 3}{2(m+2)}\pi\in(2\pi/3,\pi), k \in\mathbb{N}$,
\[
2\cos\theta\sin(m+1)\theta+\sin(m+2)\theta - \frac{\sin\theta}{q^{m+1}} >0
\]
 and, at  $\theta = \frac{4k - 1}{2(m+2)}\pi\in(2\pi/3,\pi), k\in \mathbb{N}$,
\[
2\cos\theta\sin(m+1)\theta+\sin(m+2)\theta - \frac{\sin\theta}{q^{m+1}} <0.
\]

Then, for all $k\in\mathbb{N}$ with
$$\left(\frac{4k - 3}{2(m+2)}\pi, \frac{4k - 1}{2(m+2)}\pi\right)\subset (2\pi/3,\pi)$$ or 
$$ \left(\frac{4k - 1}{2(m+2)}\pi, \frac{4k +1}{2(m+2)}\pi\right) \subset (2\pi/3,\pi),$$ the Intermediate Value Theorem induces that 
the equation in $\theta$
\begin{equation}\label{e1.2.1}
q^{m+1}[(-2\cos\theta)\sin(m+1)\theta-\sin(m+2)\theta] + \sin\theta = 0
\end{equation}
has a root on the corresponding interval.\\

Our next goal is to show that Equation \eqref{e1.2.1} has at least $\left\lfloor\frac{m}{3}\right\rfloor$ solutions on the interval $(2\pi/3,\pi)$.
Observe that
\[
\frac{2}{3}\pi <\frac{4k+1}{2(m+2)}\pi \Leftrightarrow 2\cdot2(m + 2) < 3\cdot (4k+1) \Leftrightarrow \frac{m}{3} + \frac{5}{12} < k.
\]
Notice that
\begin{align*}
\frac{4k+3}{2(m+2)}\pi <\pi \Leftrightarrow k < \frac{m}{2} + \frac{1}{4}.
\end{align*}
These inequalities combine to induce that
\begin{equation}\label{ine1.2.1}
k\in\left(\frac{m}{3} + \frac{5}{12}, \frac{m}{2} + \frac{1}{4}\right)\Rightarrow \left(\frac{4k+1}{2(m+2)}\pi, \frac{4k+3}{2(m+2)}\pi\right)\subset \left(\frac{2}{3}\pi, \pi\right).
\end{equation}
Likewise, observe that 
\[
\frac{2}{3}\pi < \frac{4k+3}{2(m+2)}\pi\Leftrightarrow \frac{m}{3} - \frac{1}{12} < k,
\]
and
\[
\frac{4k+5}{2(m+2)}\pi < \pi\Leftrightarrow k < \frac{m}{2} - \frac{1}{4}.
\]
We get that
\begin{equation}\label{ine1.2.2}
k\in\left(\frac{m}{3} - \frac{1}{12},\frac{m}{2} - \frac{1}{4}\right)\Rightarrow\left(\frac{4k+3}{2(m+2)}\pi, \frac{4k+5}{2(m+2)}\pi\right)\subset \left(\frac{2}{3}\pi, \pi\right).
\end{equation}

Now we will check 6 cases about $m$: $m \equiv 0,1,2,3,4,5 \pmod{6}$.
\begin{enumerate}
   
 \item If $m \equiv 0 \pmod{6}$, then $m = 6n$, for some $n\geq 0$.
We apply \eqref{ine1.2.1} to infer that, for
\[
2n+ \frac{5}{12}< k < 3n + \frac{1}{4},
\]
that is, for $k = 2n+1,2n + 2, \dots, 3n$, the intervals satisfy 
\[
\left(\frac{4k+1}{2(m+2)}\pi, \frac{4k+3}{2(m+2)}\pi\right)\subset \left(\frac{2}{3}\pi, \pi\right).
\]
On the other hand, notice that \eqref{ine1.2.2} induces that, for
\[
2n - \frac{1}{12}< k < 3n - \frac{1}{4},
\]
that is, for $k = 2n, 2n+1, \dots, 3n - 1$, the intervals satisfy 
\[
\left(\frac{4k+3}{2(m+2)}\pi, \frac{4k+5}{2(m+2)}\pi\right)\subset \left(\frac{2}{3}\pi, \pi\right).
\]
To sum up, we have 
\[
(3n - 2n) + (3n-1 - (2n - 1)) = 2n = \left\lfloor 2n\right\rfloor = \left\lfloor \frac{6n}{3}\right\rfloor = \left\lfloor \frac{m}{3}\right\rfloor
\]
number of distinct intervals contained in $\left(\frac{2}{3}\pi, \pi\right)$.
Consequently, there are $\left\lfloor \frac{m}{3}\right\rfloor$ zeros.\\

\item 
If $m \equiv 1 \pmod{6}$, then $m = 6n + 1$ for some $n\geq 0$.
We apply \eqref{ine1.2.1} to infer that, for
\[
2n+ \frac{9}{12}< k < 3n + \frac{3}{4},
\]
that is, for $k = 2n+1,2n + 2, \dots, 3n$, the intervals satisfy 
\[
\left(\frac{4k+1}{2(m+2)}\pi, \frac{4k+3}{2(m+2)}\pi\right)\subset \left(\frac{2}{3}\pi, \pi\right).
\]
On the other hand, notice that \eqref{ine1.2.2} induces that, for
\[
2n + \frac{3}{12}< k < 3n + \frac{1}{4},
\]
that is, for $k = 2n + 1, 2n+2, \dots, 3n$, the intervals satisfy 
\[
\left(\frac{4k+3}{2(m+2)}\pi, \frac{4k+5}{2(m+2)}\pi\right)\subset \left(\frac{2}{3}\pi, \pi\right).
\]
To sum up, we have 
\[
(3n - 2n) + (3n - 2n) = 2n = \left\lfloor 2n + \frac{1}{3}\right\rfloor = \left\lfloor \frac{6n + 1}{3}\right\rfloor = \left\lfloor \frac{m}{3}\right\rfloor
\]
number of distinct intervals contained in $\left(\frac{2}{3}\pi, \pi\right)$.
Consequently, there are $\left\lfloor \frac{m}{3}\right\rfloor$ zeros.\\

\item 
If $m \equiv 2 \pmod{6}$, then $m = 6n + 2$ for some $n\geq 0$.
We apply \eqref{ine1.2.1} to infer that, for
\[
2n+ \frac{13}{12}< k < 3n + \frac{5}{4},
\]
that is, for $k = 2n+2,2n + 3, \dots, 3n + 1$, the intervals satisfy 
\[
\left(\frac{4k+1}{2(m+2)}\pi, \frac{4k+3}{2(m+2)}\pi\right)\subset \left(\frac{2}{3}\pi, \pi\right).
\]
On the other hand, notice that \eqref{ine1.2.2} induces that ,for
\[
2n + \frac{7}{12}< k < 3n + \frac{3}{4},
\]
that is, for $k = 2n + 1, 2n+2, \dots, 3n$, the intervals satisfy 
\[
\left(\frac{4k+3}{2(m+2)}\pi, \frac{4k+5}{2(m+2)}\pi\right)\subset \left(\frac{2}{3}\pi, \pi\right).
\]
To sum up, we have 
\[
[3n + 1 - (2n + 1)] + (3n - 2n) = 2n = \left\lfloor 2n + \frac{2}{3}\right\rfloor = \left\lfloor \frac{6n + 2}{3}\right\rfloor = \left\lfloor \frac{m}{3}\right\rfloor
\]
number of distinct intervals contained in $\left(\frac{2}{3}\pi, \pi\right)$.
Consequently, there are $\left\lfloor \frac{m}{3}\right\rfloor$ zeros.\\

\item 
If $m \equiv 3 \pmod{6}$, then $m = 6n + 3$ for some $n\geq 0$.
We apply \eqref{ine1.2.1} to infer that, for
\[
2n+ \frac{17}{12}< k < 3n + \frac{7}{4},
\]
that is, for $k = 2n+2,2n + 3, \dots, 3n + 1$, the intervals satisfy 
\[
\left(\frac{4k+1}{2(m+2)}\pi, \frac{4k+3}{2(m+2)}\pi\right)\subset \left(\frac{2}{3}\pi, \pi\right).
\]
On the other hand, notice that \eqref{ine1.2.2} induces that, for
\[
2n + \frac{11}{12}< k < 3n + \frac{5}{4},
\]
that is, for $k = 2n + 1, 2n+2, \dots, 3n + 1$, the intervals satisfy 
\[
\left(\frac{4k+3}{2(m+2)}\pi, \frac{4k+5}{2(m+2)}\pi\right)\subset \left(\frac{2}{3}\pi, \pi\right).
\]
To sum up, we have 
\[
[3n + 1 - (2n + 1)] + [3n + 1 - (2n)] = 2n + 1 = \left\lfloor 2n + 1\right\rfloor = \left\lfloor \frac{6n + 3}{3}\right\rfloor = \left\lfloor \frac{m}{3}\right\rfloor
\]
number of distinct intervals contained in $\left(\frac{2}{3}\pi, \pi\right)$.
Consequently, there are $\left\lfloor \frac{m}{3}\right\rfloor$ zeros.\\

\item 
If $m \equiv 4 \pmod{6}$, then $m = 6n + 4$ for some $n\geq 0$.
We apply \eqref{ine1.2.1} to infer that, for
\[
2n+ \frac{21}{12}< k < 3n + \frac{9}{4},
\]
that is, for $k = 2n+2,2n + 3, \dots, 3n + 2$, the intervals satisfy 
\[
\left(\frac{4k+1}{2(m+2)}\pi, \frac{4k+3}{2(m+2)}\pi\right)\subset \left(\frac{2}{3}\pi, \pi\right).
\]
On the other hand, notice that \eqref{ine1.2.2} induces that, for
\[
2n + \frac{15}{12}< k < 3n + \frac{7}{4},
\]
that is, for $k = 2n + 2, 2n+3, \dots, 3n + 1$, the intervals satisfy
\[
\left(\frac{4k+3}{2(m+2)}\pi, \frac{4k+5}{2(m+2)}\pi\right)\subset \left(\frac{2}{3}\pi, \pi\right).
\]
To sum up, we have 
\begin{align*}
    [3n + 2 - (2n + 1)] + [3n + 1 - (2n + 1)] = &2n + 1 \\
    = &\left\lfloor 2n + 1 + \frac{1}{3}\right\rfloor \\
    = &\left\lfloor \frac{6n + 4}{3}\right\rfloor \\
    = &\left\lfloor \frac{m}{3}\right\rfloor
\end{align*}
number of distinct intervals contained in $\left(\frac{2}{3}\pi, \pi\right)$.
Consequently, there are $\left\lfloor \frac{m}{3}\right\rfloor$ zeros.\\

\item 
If $m \equiv 5 \pmod{6}$, then $m = 6n + 5$ for some $n\geq 0$.
We apply \eqref{ine1.2.1} to infer that, for
\[
2n+ \frac{25}{12}< k < 3n + \frac{11}{4},
\]
that is, for $k = 2n+3,2n + 4, \dots, 3n + 2$, the intervals satisfy 
\[
\left(\frac{4k+1}{2(m+2)}\pi, \frac{4k+3}{2(m+2)}\pi\right)\subset \left(\frac{2}{3}\pi, \pi\right).
\]
On the other hand, notice that \eqref{ine1.2.2} induces that, for
\[
2n + \frac{19}{12}< k < 3n + \frac{9}{4},
\]
that is, for $k = 2n + 2, 2n+3, \dots, 3n + 2$, the intervals satisfy 
\[
\left(\frac{4k+3}{2(m+2)}\pi, \frac{4k+5}{2(m+2)}\pi\right)\subset \left(\frac{2}{3}\pi, \pi\right).
\]
To sum up, we have 
\begin{align*}
[3n + 2 - (2n + 2)] + [3n + 2 - (2n + 1)] = &2n + 1 \\
= &\left\lfloor 2n + 1 + \frac{2}{3}\right\rfloor \\
= &\left\lfloor \frac{6n + 5}{3}\right\rfloor \\
= &\left\lfloor \frac{m}{3}\right\rfloor
\end{align*}
number of distinct intervals contained in $\left(\frac{2}{3}\pi, \pi\right)$.
Consequently, there are $\left\lfloor \frac{m}{3}\right\rfloor$ zeros.\\
\end{enumerate}

Here we observe that $\left(\frac{2}{3}\pi, \pi\right)$ contains at least $\left\lfloor \frac{m}{3}\right\rfloor$ distinct zeros for all the cases $m\equiv 0,1\dots, 5\pmod{6}$.\\

The last thing to check is that $z(\theta)$ is injective, as a function about $\theta$, on $\left(\frac{2}{3}\pi, \pi\right)$.
Let us consider the notation \eqref{d1.3} of Lemma \ref{l1.2.1},
\[
z(\theta) = \frac{4\cos^2\theta}{(1-4\cos^2\theta)^3}.
\]
If we take the derivative of the function $z(\theta)$ above, we obtain 
\begin{align*}
    z(\theta)' &= -8\frac{\cos\theta\sin\theta}{(1-4\cos^2\theta)^3} - 3\cdot 8\cos\theta\sin\theta\frac{4\cos^2\theta}{(1-4\cos^2\theta)^4}\\
    &= -8\cos\theta\sin\theta\left(\frac{1-4\cos^2\theta + 12\cos^2\theta}{(1-4\cos^2\theta)^4}\right)\\
    &= -8\cos\theta\sin\theta\left(\frac{1+8\cos^2\theta}{(1-4\cos^2\theta)^4}\right).
\end{align*}
Notice that 
\[
z(\theta)'= -8\cos\theta\sin\theta\left(\frac{1+8\cos^2\theta}{(1-4\cos^2\theta)^4}\right)
\]
is positive on $\left(\frac{2}{3}\pi, \pi\right)$.
Thus $z(\theta)$ is strictly increasing on $\left(\frac{2}{3}\pi, \pi\right)$.
This concludes that 
$$J_m(z(\theta)) = \frac{q^{m+1}[(-2\cos\theta)\sin(m+1)\theta-\sin(m+2)\theta] + \sin\theta}{z(t_1-t_2)(t_2-t_3)(t_3-t_1)t_1^{m+1}t_2^{m+1}t_3^{m+1}}$$
has at least $\left\lfloor \frac{m}{3}\right\rfloor$ different $z(\theta)$ on $\left(\frac{2}{3}\pi, \pi\right)$ such that $J_m(z(\theta)) = 0$.
Recall that Lemma \ref{l2.1} states that $J_m(z)$ is  of degree at most $\left\lfloor \frac{m}{3}\right\rfloor$.
Then the Fundamental Theorem of Algebra implies that $J_m(z)$ has at most $\left\lfloor \frac{m}{3}\right\rfloor$ zeros.
Thus all the zeros of $J_m(z)$ is of the form 
\[
z = z(\theta) = \frac{4\cos^2\theta}{(1-4\cos^2\theta)^3},
\]
where $\theta\in \left(\frac{2}{3}\pi, \pi\right)$.
Recall that $z(\theta)$ is increasing on $\left(\frac{2}{3}\pi, \pi\right)$.
Thus the supremum of $z(\theta)$ is 
\[
z(\pi) = -\frac{4}{27}.
\]

This concludes that our claim holds.
\end{proof}


\clearpage


\section{POLYNOMIALS GENERATED BY $\displaystyle{\frac{1}{(at^2+bt+c)(1-tz)}}$}
The remainder of the paper provides original results about the zeros of a sequence of Taylor polynomials.
In this section and the following sections, we will study the pattern of zeros of sequence of functions generated as 
\[
\sum_{m=0}^{\infty}P_m(z)t^m = \frac{1}{(at^2+bt+c)(1-tz)}
\]
This section is contributed to find the explicit form of generated sequence of functions $\{P_m(z)\}_{m\geq 0}$.
First we need to prove that $P_m(z)$ is a polynomial, for all $m\geq 0$.

\begin{lem}\label{l5.0}
Let $a,b,c\in\mathbb{R}$ and $c\neq 0$.
Let  $\{P_m(z)\}_{m\geq 0}$ be a sequence of functions of $z$ generated as follows:
\[
\sum_{m=0}^{\infty}P_m(z) t^m = \frac{1}{(at^2+bt + c)(1-zt)}.
\]
Then $\{P_m(z)\}_{m\geq 0}$ is a sequence of polynomials in $z$ with the following recurrence relation 
\[
P_m(z) = \frac{1}{c}\left(azP_{m-3} - (a-bz)P_{m-2}(z) - (b-cz)P_{m-1}(z)\right),
\]
with the initial conditions
\begin{align*}
P_0(z) &= \frac{1}{c},\\
P_1(z) &= -\frac{1}{c^2}(b-cz),\\
P_2(z) &= \frac{1}{c^3}(b-cz)^2   -\frac{1}{c^2}(a-bz).
\end{align*}
\end{lem}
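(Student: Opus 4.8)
The plan is to mimic the approach already used for Lemma~\ref{l1.1} and Lemma~\ref{l2.1}: turn the generating-function identity into an identity between power series in $t$, and then read off the $t^m$-coefficients.

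First I would note that since $c \neq 0$, the denominator $(at^2+bt+c)(1-zt)$ does not vanish at $t=0$ for any fixed $z \in \mathbb{C}$, so $f(z,t) = \frac{1}{(at^2+bt+c)(1-zt)}$ is analytic in $t$ on some disc $B(R,0)$ with $R>0$; by Theorem~\ref{t5.0} the series $\sum_{m\ge0}P_m(z)t^m$ converges there and equals $f(z,t)$ for $|t|<R$. Next I would expand the polynomial factor as
\[
(at^2+bt+c)(1-zt) = c + (b-cz)t + (a-bz)t^2 - az\,t^3 ,
\]
multiply the identity $\sum_{m\ge0}P_m(z)t^m = f(z,t)$ through by this polynomial, and invoke Proposition~\ref{p5.0} to justify that the left-hand side is again a single power series in $t$ whose $t^m$-coefficient is
\[
c\,P_m(z) + (b-cz)P_{m-1}(z) + (a-bz)P_{m-2}(z) - az\,P_{m-3}(z)
\]
for $m\ge 3$, with the corresponding truncated expressions for $m=0,1,2$, while the right-hand side is the constant $1$.

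Then, using the uniqueness of power-series coefficients (Proposition~\ref{p5.1}), I would equate coefficients of $t^m$ on both sides. The $t^0$-coefficient gives $c\,P_0(z)=1$, hence $P_0(z)=1/c$; the $t^1$-coefficient gives $c\,P_1(z)+(b-cz)P_0(z)=0$, hence $P_1(z)=-(b-cz)/c^2$; the $t^2$-coefficient gives $c\,P_2(z)+(b-cz)P_1(z)+(a-bz)P_0(z)=0$, which after substituting $P_0$ and $P_1$ yields $P_2(z)=\frac{1}{c^3}(b-cz)^2-\frac{1}{c^2}(a-bz)$; and for $m\ge 3$ the vanishing of the $t^m$-coefficient gives exactly the stated recurrence after dividing by $c$. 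Finally, a routine induction on $m$ with this recurrence — the base cases $P_0,P_1,P_2$ being polynomials, and the inductive step being immediate because $zP_{m-3}(z),P_{m-2}(z),P_{m-1}(z)$ are polynomials — shows that every $P_m(z)$ is a polynomial in $z$.

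I do not expect a genuine obstacle here; the argument is essentially bookkeeping built on the complex-analysis tools assembled earlier. The only points requiring care are the algebra of expanding $(at^2+bt+c)(1-zt)$ and keeping track of the sign of the $t^3$-term (it is $-az$, not $+az$), and treating the truncated relations for $m=1,2$ separately from the general $m\ge 3$ recurrence. The verification that $P_2(z)$ takes the displayed closed form is a short explicit substitution that I would carry out in full.
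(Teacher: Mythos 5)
Your proposal is correct and follows essentially the same route as the paper's proof: expand the denominator, multiply the power-series identity through by it, combine the series via Proposition \ref{p5.0}, equate $t^m$-coefficients via Proposition \ref{p5.1} to obtain the initial conditions and the recurrence, and conclude polynomiality by induction. The only difference is cosmetic — the paper spends extra effort pinning down the exact value of the radius of convergence via a case analysis on $a$, $b$, $z$, whereas you only need (and only claim) some positive radius, which suffices for the coefficient comparison.
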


\begin{proof}
Let $z\in\mathbb{C}$ be arbitrary.
We will first find the radius of convergence.
Let us first consider the cases $a=b=z = 0$.
Observe that
\[
\frac{1}{(at^2+bt + c)(1-zt)} = \frac{1}{c}.
\]
Thus $\frac{1}{(at^2+bt + c)(1-zt)}$ is entire.
Hence, by Theorem \ref{t5.0}, the radius of convergence is infinity.

For the rest of the cases, we will find an explicit $R>0$ such that $\frac{1}{(at^2+bt + c)(1-zt)}$ is analytic on $B(R,0)$ and $(at^2+bt + c)(1-zt)$ has a zero on $|w| = R$.

If $a=b=0$ and $z\neq 0$, observe that 
\[
\frac{1}{(at^2+bt + c)(1-zt)} = \frac{1}{c(1-zt)}
\]
is analytic on $B(|1/z|;0)$.
Notice that $1/z$ is the only zero of $(at^2+bt + c)(1-zt)$. 
Thus $R = |1/z|$.

Now consider the cases: $a\neq 0$ or $b\neq 0$.
If $a = 0$ and $b\neq 0$ then 
\[
\frac{1}{(at^2+bt + c)(1-zt)} = \frac{1}{(bt + c)(1-zt)}.
\]
which is analytic on $B(R;0)$  where 
\[
R =\left\{\begin{array}{ll}\min(\left|\frac{c}{b}\right|, |1/z|) &\text{ if } z\neq 0\\ \left|\frac{c}{b}\right| &\text{ if } z = 0\end{array}\right.
\]
If $a\neq 0$, let $t_1,t_2$ be the zeros of $at^2+bt + c$.
Notice that $\frac{1}{(at^2+bt + c)(1-zt)}$ is analytic on $B(R;0)$ where 
\[
R =\left\{\begin{array}{ll}\min(|t_1|, |t_2|, |1/z|) &\text{ if } z\neq 0\\ \min(|t_1|, |t_2|) &\text{ if } z = 0\end{array}\right.
\]
Thus in all the cases we obtain an open ball, say $B(R;0)$ in which  $\frac{1}{(at^2+bt + c)(1-zt)}$ is analytic and $(at^2+bt + c)(1-zt)$ has a zero on $|w| = R$.

In fact, we are now going to prove that $R$ is the radius of convergence, $r$, of the series $\sum_{m=0}^\infty P_m(z) t^m$.
By Theorem \ref{t5.0}, $r \ge R$. Suppose, by contradiction, that $r>R$.
By Proposition \ref{p5.2}, $\sum_{m=0}^\infty P_m (z) t^m$ is analytic as a function in $t$ on $B(r,0)$.
Since $(at^2+bt + c)(1-zt)$ has a root on $|w| = R$, we may infer that $(at^2+bt + c)(1-zt)$ has a root on $\overline{B}(R,0)$, and let us denote the root by $z_0$.
Here we have $|z_0| = R < r$, and hence it holds that $z_0\in B(r,0)$.
Since $\sum_{m=0}^\infty P_m(z) t^m$ is analytic on $B(r,0)$, it holds that 
\[
\sum_{m=0}^\infty P_m(z) z_0^m  = z_1,
\]
for some $z_1\in\mathbb{C}$.
However, we have that $z_0$ is a root of $(at^2+bt + c)(1-zt)$ and thus 
\[
\sum_{m=0}^\infty P_m(z) t^m = \frac{1}{(at^2+bt + c)(1-zt)}
\]
is not defined at $z = z_0$.
So we encounter a contradiction.
Thus we may conclude that $R$ is the radius of convergence of $\sum_{m=0}^\infty P_m(z) t^m$.
This implies that 
\[
\sum_{m=0}^{\infty}P_m(z) t^m = \frac{1}{(at^2+bt + c)(1-zt)}
\]
holds for $t\in B(R;0)$. 

Let $t\in B(R;0)$ be arbitrary.
Now let us multiply both sides by 
\[
(at^2+bt + c)(1-zt) = -azt^3 + (a-bz)t^2 + (b - cz)t + c.
\]
It follows that 
\[
(-azt^3 + (a-bz)t^2 + (b - cz)t + c)\sum_{m=0}^{\infty}P_m(z) t^m = 1.
\]
If we distribute
\[
(-azt^3 + (a-bz)t^2 + (b - cz)t + c)\sum_{m=0}^{\infty}P_m(z) t^m,
\]
we obtain
\begin{align*} &-azt^3\sum_{m=0}^{\infty}P_m(z) t^m + (a-bz)t^2\sum_{m=0}^{\infty}P_m(z) t^m  + (b - cz)t\sum_{m=0}^{\infty}P_m(z) t^m \\
&+ c\sum_{m=0}^{\infty}P_m(z) t^m.
\end{align*}
We shall reorder the index of above series as follows:
\begin{align*}
&\sum_{m=3}^{\infty}-azP_{m-3}(z) t^{m} + \sum_{m=2}^{\infty}(a-bz)P_{m-2}(z) t^m + \sum_{m=1}^{\infty}(b-cz)P_{m-1}(z) t^m\\
 &+ \sum_{m=0}^{\infty}cP_m(z) t^m.
\end{align*}
We next pull a couple of terms to make all the series start from the same index, and we obtain
\begin{equation}\label{e4.1}
\begin{split}
1 =&\sum_{m=3}^{\infty}-azP_{m-3}(z) t^{m}\\ 
&+ (a-bz)P_{0}(z) t^2  + \sum_{m=3}^{\infty}(a-bz)P_{m-2}(z) t^m\\
&+ (b-cz)P_{0}(z) t^1  + (b-cz)P_{1}(z) t^2+ \sum_{m=3}^{\infty}(b-cz)P_{m-1}(z) t^m\\
&+ cP_0(z) + cP_1(z) t + cP_2(z) t^2 + \sum_{m=3}^{\infty}cP_m(z) t^m.
\end{split}
\end{equation}

Since $|t|$ is less than the radius of convergence of the power series  $\sum_{m=0}^{\infty}P_m(z) t^m$, by Theorem \ref{t5.1}, the power series $\sum_{m=0}^{\infty}P_m(z) t^m$ converges absolutely.
It follows that $\sum_{m=0}^{\infty}|P_m(z)| |t|^m$ converges.

We already fixed $t$ and we consider it as a constant.
Hence
\begin{align*}
|-azt^3|\sum_{m=0}^{\infty}|P_m(z)| |t|^m &=  \sum_{m=0}^{\infty}|-azP_m(z) t^{m+3}|,\\
 |(a-bz)t^2|\sum_{m=0}^{\infty}|P_m(z)| |t|^m &=   \sum_{m=0}^{\infty}|(a-bz)P_m(z)t^{m+2}|, \\
|(b - cz)t|\sum_{m=0}^{\infty}|P_m(z)| |t|^m  &= \sum_{m=0}^{\infty} |(b - cz)P_m(z)t^{m+1}|,\\
|c|\sum_{m=0}^{\infty}|P_m(z)| |t|^m &= \sum_{m=0}^{\infty}|cP_m(z)t^m|.
\end{align*}
Thus 
\[
\sum_{m=3}^{\infty}-azP_{m-3}(z) t^{m},\sum_{m=2}^{\infty}(a-bz)P_{m-2}(z) t^m , \sum_{m=1}^{\infty}(b-cz)P_{m-1}(z) t^m, \sum_{m=0}^{\infty}cP_m(z) t^m
\]
are absolutely convergent on $B(R;0)$.
By Proposition \ref{p5.2}, all 
\[
\sum_{m=3}^{\infty}-azP_{m-3}(z) t^{m},\sum_{m=2}^{\infty}(a-bz)P_{m-2}(z) t^m , \sum_{m=1}^{\infty}(b-cz)P_{m-1}(z) t^m, \sum_{m=0}^{\infty}cP_m(z) t^m
\]
 are analytic.
Then, by Theorem \ref{t5.0}, all 
\[
\sum_{m=3}^{\infty}-azP_{m-3}(z) t^{m},\sum_{m=2}^{\infty}(a-bz)P_{m-2}(z) t^m , \sum_{m=1}^{\infty}(b-cz)P_{m-1}(z) t^m, \sum_{m=0}^{\infty}cP_m(z) t^m
\]
have radius of convergence greater than or equal to $R$.
Hence $R$ is less than or equal to each of the radii of convergence of all the series:
\[
\sum_{m=3}^{\infty}-azP_{m-3}(z) t^{m},\sum_{m=2}^{\infty}(a-bz)P_{m-2}(z) t^m , \sum_{m=1}^{\infty}(b-cz)P_{m-1}(z) t^m , \sum_{m=0}^{\infty}cP_m(z) t^m
\]
Since we chose $t$ so that $|t| < R$, by Proposition \ref{p5.0}, we can combine those series as follows: 
\begin{align*}
    &\sum_{m=3}^{\infty}-azP_{m-3}(z) t^{m} + \sum_{m=3}^{\infty}(a-bz)P_{m-2}(z) t^m + \sum_{m=3}^{\infty}(b-cz)P_{m-1}(z) t^m \\
&+ \sum_{m=3}^{\infty}cP_m(z) t^m\\
= &\sum_{m=3}^{\infty}[-azP_{m-3}(z) + (a-bz)P_{m-2}(z) + (b-cz)P_{m-1}(z) + cP_m(z) ] t^{m}.
\end{align*}
Then Equation \eqref{e4.1} yields that
\begin{align*}
1= &cP_0(z) \\
&+\left[cP_1(z)  +(b-cz)P_{0}(z)\right] t\\
&+ \left[cP_2(z) + (b-cz)P_{1}(z)  +(a-bz)P_{0}(z)\right] t^2\\
&+\sum_{m=3}^{\infty}[-azP_{m-3}(z) + (a-bz)P_{m-2}(z) + (b-cz)P_{m-1}(z) + cP_m(z) ] t^{m}.
\end{align*}
By Proposition \ref{p5.1}, both sides have the same coefficient for $t^m$, for each $m\geq 0$, so if we consider both sides as infinite series, we obtain that 
\begin{align}
cP_0(z) &= 1\label{e5.6}\\
cP_1(z) +(b-cz)P_{0}(z) &= 0\label{e5.7}\\
cP_2(z) + (b-cz)P_{1}(z) +(a-bz)P_{0}(z)  &= 0\label{e5.8}\\
-azP_{m-3}(z) + (a-bz)P_{m-2}(z) + (b-cz)P_{m-1}(z) + cP_m(z)  &= 0.\label{e5.9}
\end{align}
Since we chose $z\in \mathbb{C}$ arbitrarily, this holds for all $z\in\mathbb{C}$.

Now we will prove, by induction, that $P_m(z)$ is a polynomial, for all $m\geq 0$.
Let us take the base step.
From Equation \eqref{e5.6}, we obtain 
\[
P_0(z) = \frac{1}{c}.
\]
So $P_0(z)$ is a polynomial.
Equation \eqref{e5.7} yields that 
\[
P_1(z) = -\frac{1}{c}(b-cz)P_{0}(z) = -\frac{1}{c^2}(b-cz)
\]
so $P_1(z)$ is a polynomial.
It follows that both $(b-cz)P_{1}(z)$ and $(a-bz)P_{0}(z)$ are polynomials.
Then with Equation \eqref{e5.8} we obtain 
\[
P_2(z) = -\frac{1}{c}\left( (b-cz)P_{1}(z) +(a-bz)P_{0}(z)\right) =   \frac{1}{c^3}(b-cz)^2   -\frac{1}{c^2}(a-bz)
\]
and we can observe that $-\frac{1}{c}(b-cz)P_{1}(z)   -\frac{1}{c}(a-bz)P_{0}(z)$ is polynomial in $z$.

Now we will set the induction hypothesis.
Assume that $P_{m-3}(z)$, $P_{m-2}(z)$, $P_{m-1}(z)$ are polynomials.
Then, by Equation \eqref{e5.9}, we obtain 
\[
P_m(z) = \frac{1}{c}\left(azP_{m-3} - (a-bz)P_{m-2}(z) - (b-cz)P_{m-1}(z)\right)
\]
which is a polynomial.
This concludes that $P_m(z)$ is a polynomial, for all $m=0,1,2,3,\dots$.
\end{proof}
This confirms that $P_m(z)$ is entire.
Before we find explicit form of $P_m(z)$, for the fluency of proof in some theorems, here we show an algebraic property.


\begin{prop}\label{e5.3}
\begin{align}
    &t_2^{m+1} - t_1^{m+1} + (t_1^{m+2} - t_2^{m+2})z + (t_2-t_1)t_1^{m+1}t_2^{m+1}z^{m+2}\label{e5.10}\\
    = &\left(\sum_{k=0}^{m}(t_2^{m+1-k} - t_1^{m+1-k})t_1^kt_2^kz^k\right)(t_1z-1)(t_2z-1).\nonumber
\end{align}
\end{prop}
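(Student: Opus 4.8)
The plan is to verify the identity by a direct manipulation of the right-hand side, treating $t_1,t_2$ as formal parameters and $z$ as the variable. First I would expand the product $(t_1z-1)(t_2z-1) = t_1t_2z^2 - (t_1+t_2)z + 1$. Then I would multiply the sum $S \deq \sum_{k=0}^{m}(t_2^{m+1-k}-t_1^{m+1-k})t_1^kt_2^kz^k$ by each of the three terms $t_1t_2z^2$, $-(t_1+t_2)z$, and $1$ separately, obtaining three sums indexed by shifted powers of $z$. The goal is then to collect the coefficient of $z^j$ for each $j$ from $0$ to $m+2$ and show it matches the left-hand side, which has nonzero coefficients only at $z^0$ (namely $t_2^{m+1}-t_1^{m+1}$), at $z^1$ (namely $t_1^{m+2}-t_2^{m+2}$), and at $z^{m+2}$ (namely $(t_2-t_1)t_1^{m+1}t_2^{m+1}$).

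The key step is a telescoping cancellation in the ``middle'' range $2 \le j \le m$. Writing $a_k \deq t_2^{m+1-k}-t_1^{m+1-k}$, the coefficient of $z^j$ coming from the three pieces is, schematically, $t_1t_2 \cdot a_{j-2} \cdot (t_1t_2)^{j-2} - (t_1+t_2)\cdot a_{j-1}\cdot (t_1t_2)^{j-1} + a_j \cdot (t_1t_2)^j$, i.e. $(t_1t_2)^{j-1}\bigl[a_{j-2} - (t_1+t_2)a_{j-1} + t_1t_2\, a_j\bigr]$ after factoring. I would then check the bracket vanishes: expanding $a_{j-2} - (t_1+t_2)a_{j-1} + t_1t_2 a_j$ using $a_k = t_2^{m+1-k}-t_1^{m+1-k}$, the $t_2$-terms give $t_2^{m+3-j} - (t_1+t_2)t_2^{m+2-j} + t_1t_2 \cdot t_2^{m+1-j} = t_2^{m+1-j}(t_2^2 - (t_1+t_2)t_2 + t_1t_2) = 0$, and symmetrically the $t_1$-terms cancel. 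This is exactly the statement that $a_k$ satisfies the two-term-shifted linear recurrence with characteristic polynomial $(x-t_1)(x-t_2)$, which is immediate since each of $t_1^{m+1-k}$ and $t_2^{m+1-k}$ does.

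What remains is bookkeeping at the four boundary indices $j=0,1,m+1,m+2$, where the three pieces do not all contribute and the telescoping is incomplete. At $j=0$ only the ``$+1$'' piece contributes $a_0 = t_2^{m+1}-t_1^{m+1}$, matching the left side. At $j=1$ the ``$+1$'' and ``$-(t_1+t_2)z$'' pieces contribute $a_1 - (t_1+t_2)a_0$, which should simplify to $t_1^{m+2}-t_2^{m+2}$; a short computation confirms this. At $j=m+2$ only the $t_1t_2z^2$ piece contributes $t_1t_2 \cdot a_m \cdot (t_1t_2)^m = (t_1t_2)^{m+1}(t_2 - t_1)$, matching. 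At $j=m+1$ the $t_1t_2z^2$ and $-(t_1+t_2)z$ pieces contribute $(t_1t_2)^{m}\bigl[t_1t_2\, a_{m-1} - (t_1+t_2)t_1t_2\, a_m\bigr]$ wait --- more carefully, $t_1t_2\, a_{m-1}(t_1t_2)^{m-1} - (t_1+t_2)a_m (t_1t_2)^m$, and one checks this equals $0$, consistent with the left side having no $z^{m+1}$ term. I expect the main obstacle to be purely organizational: keeping the index shifts aligned across the three sums and not mishandling the edge cases, since the underlying algebra (one application of the Newton-like recurrence for $a_k$) is routine. An alternative, possibly cleaner, route is to factor the left-hand side directly: group it as $\bigl(t_2^{m+1}-t_1^{m+1}\bigr) - z\bigl(t_2^{m+2}-t_1^{m+2}\bigr) + (t_2-t_1)(t_1t_2)^{m+1}z^{m+2}$ and recognize, via the finite geometric-type identity $t_2^{n}-t_1^{n} = (t_2-t_1)\sum_{i=0}^{n-1}t_1^i t_2^{n-1-i}$, that dividing by $(t_1z-1)(t_2z-1)$ yields exactly $S$; I would present whichever of the two is shorter after drafting.
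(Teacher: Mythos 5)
Your proof is correct, and although both arguments are direct algebraic verifications, yours is organized along a genuinely different (and cleaner) line than the paper's. The paper works from the left-hand side of \eqref{e5.10}: it first disposes of $m=0,1$ by inspection, then inserts the auxiliary zero-sum identities \eqref{e5.11}--\eqref{e5.15}, regroups the result into four sums, re-indexes them to a common starting index, and factors out $(t_2z-1)$ and then $(t_1z-1)$ in two successive steps. You instead expand the right-hand side and match coefficients of $z^j$, and the entire middle range $2\le j\le m$ vanishes at once because $a_k=t_2^{m+1-k}-t_1^{m+1-k}$ satisfies $a_{j-2}-(t_1+t_2)a_{j-1}+t_1t_2\,a_j=0$, the recurrence whose characteristic polynomial is $(x-t_1)(x-t_2)$. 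That single structural observation replaces the paper's five ad hoc auxiliary identities and makes it transparent why only the $z^0$, $z^1$, and $z^{m+2}$ coefficients survive; the verifications at $j=0,1,m+1,m+2$ that you sketch all check out. The only point to tidy in a final write-up is the bookkeeping for very small $m$ (e.g.\ for $m=0$ the indices $j=1$ and $j=m+1$ coincide and only the $-(t_1+t_2)z$ piece contributes there), but the index ranges you set up already cover this, so it is a presentational matter rather than a gap. Your alternative route --- factoring the left side via $t_2^{n}-t_1^{n}=(t_2-t_1)\sum_{i=0}^{n-1}t_1^{i}t_2^{n-1-i}$ and dividing by $(t_1z-1)(t_2z-1)$ --- would also work and is close in spirit to how the identity is actually consumed in Lemma \ref{l5.1}.
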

\begin{proof}
One can check this holds for $m = 0,1$.
So assume $m\geq 2$.
With some algebraic calculation, we may deduce the following
\begin{align}
     t_1t_2(t_2^m-t_1^m) - t_2(t_2^{m+1} - t_1^{m+1}) - t_1(t_2^{m+1} - t_1^{m+1}) &= t_1^{m+2} - t_2^{m+2}\label{e5.11}\\
     \begin{split}
         t_1^kt_2^{m+1} - t_1^{m+1}t_2^k  - t_1^{k-1}t_2^{m+2} + t_1^{m+1}t_2^k  - t_1^kt_2^{m+1}\\
 + t_1^{m+2}t_2^{k-1} +t_1^{k-1}t_2^{m+2} - t_1^{m+2}t_2^{k-1} &= 0
     \end{split}\label{e5.12}\\
    (t_2^2 - t_1^2)t_1^mt_2^m - (t_2-t_1)t_1^mt_2^{m+1} - (t_2-t_1)t_1^{m+1}t_2^m &= 0.\label{e5.13}
\end{align}
Equations \eqref{e5.12} and \eqref{e5.13} induce that 
\begin{align}
    \begin{split}
        \sum_{k=2}^m\left(t_1^kt_2^{m+1} - t_1^{m+1}t_2^k\right)z^k + \sum_{k=2}^m\left(- t_1^{k-1}t_2^{m+2} + t_1^{m+1}t_2^k\right)z^k&\\
    +\sum_{k=2}^m\left(- t_1^kt_2^{m+1} + t_1^{m+2}t_2^{k-1}\right)z^k + \sum_{k=2}^m\left(t_1^{k-1}t_2^{m+2} - t_1^{m+2}t_2^{k-1}\right)z^k &= 0
    \end{split}\label{e5.14}\\
    [(t_2^2 - t_1^2)t_1^mt_2^m - (t_2-t_1)t_1^mt_2^{m+1} - (t_2-t_1)t_1^{m+1}t_2^m]z^{m+1} &= 0\label{e5.15},
\end{align}
respectively.
If we substitute Equation \eqref{e5.11} for the coefficient of $z$ in Equation \eqref{e5.10} and  add Equations \eqref{e5.14} and \eqref{e5.15}, then
\begin{align}
    &(t_2^{m+1} - t_1^{m+1}) + (t_1^{m+2} - t_2^{m+2})z + (t_2-t_1)t_1^{m+1}t_2^{m+1}z^{m+2} \nonumber\\
\begin{split}\label{e5.19}
    = &t_2^{m+1} - t_1^{m+1} +\left[t_1t_2(t_2^m-t_1^m) - t_2(t_2^{m+1} - t_1^{m+1}) - t_1(t_2^{m+1} - t_1^{m+1})\right]z\\
    &+ \sum_{k=2}^m\left(t_1^kt_2^{m+1} - t_1^{m+1}t_2^k\right)z^k + \sum_{k=2}^m\left(- t_1^{k-1}t_2^{m+2} + t_1^{m+1}t_2^k\right)z^k\\
    &+\sum_{k=2}^m\left(- t_1^kt_2^{m+1} + t_1^{m+2}t_2^{k-1}\right)z^k + \sum_{k=2}^m\left(t_1^{k-1}t_2^{m+2} - t_1^{m+2}t_2^{k-1}\right)z^k\\
&+\left[(t_2^2 - t_1^2)t_1^mt_2^m - (t_2-t_1)t_1^mt_2^{m+1} - (t_2-t_1)t_1^{m+1}t_2^m\right]z^{m+1}\\
&+(t_2-t_1)t_1^{m+1}t_2^{m+1}z^{m+2}.
\end{split}
\end{align}   
Observe that
\begin{itemize}
\item ${\displaystyle(t_2^{m+1} - t_1^{m+1}) +  (t_1t_2(t_2^m-t_1^m)z + \sum_{k=2}^m\left(t_1^kt_2^{m+1} - t_1^{m+1}t_2^k\right)z^k}$\\
${\displaystyle = \sum_{k=0}^m\left(t_1^kt_2^{m+1} - t_1^{m+1}t_2^k\right)z^k}$
\item
${\displaystyle -t_2(t_2^{m+1} - t_1^{m+1})z  - (t_2-t_1)t_1^mt_2^{m+1}z^{m+1} + \sum_{k=2}^{m}\left(- t_1^{k-1}t_2^{m+2} + t_1^{m+1}t_2^k\right)z^k}$\\
${\displaystyle=  \sum_{k=1}^{m+1}\left(- t_1^{k-1}t_2^{m+2} + t_1^{m+1}t_2^k\right)z^k}$
\item
${\displaystyle-t_1(t_2^{m+1} - t_1^{m+1})z  - (t_2-t_1)t_1^{m+1}t_2^mz^{m+1} + \sum_{k=2}^{m}\left( -t_1^kt_2^{m+1} + t_1^{m+2}t_2^{k-1}\right)z^k}$\\
${\displaystyle= \sum_{k=1}^{m+1}\left( -t_1^kt_2^{m+1} + t_1^{m+2}t_2^{k-1}\right)z^k}$
\item
${\displaystyle(t_2-t_1)t_1^{m+1}t_2^{m+1}z^{m+2} + (t_2^2 - t_1^2)t_1^mt_2^mz^{m+1} + \sum_{k=2}^m\left(t_1^{k-1}t_2^{m+2} - t_1^{m+2}t_2^{k-1}\right)z^k}$\\
${\displaystyle= \sum_{k=2}^{m+2}\left(t_1^{k-1}t_2^{m+2} - t_1^{m+2}t_2^{k-1}\right)z^k.}$
\end{itemize}
Then we may infer that Expression \eqref{e5.19} is equivalent to the following
\begin{align*}
&\left(\sum_{k=0}^{m}(t_2^{m+1-k} - t_1^{m+1-k})t_1^kt_2^{k}z^{k}\right) -\left(\sum_{k=1}^{m+1}(t_2^{m+2-k} - t_1^{m+2-k})t_1^{k-1}t_2^{k}z^{k}\right)\\
& - \left(\sum_{k=1}^{m+1}(t_2^{m+2-k} - t_1^{m+2-k})t_1^{k}t_2^{k-1}z^{k}\right)+ \left(\sum_{k=2}^{m+2}(t_2^{m+3-k} - t_1^{m+3-k})t_1^{k-1}t_2^{k-1}z^{k}\right)\\
= &\left(\sum_{k=2}^{m+2}(t_2^{m+3-k} - t_1^{m+3-k})t_1^{k-1}t_2^{k-1}z^{k}\right) -\left(\sum_{k=1}^{m+1}(t_2^{m+2-k} - t_1^{m+2-k})t_1^{k}t_2^{k-1}z^{k}\right)\\
& - \left(\sum_{k=1}^{m+1}(t_2^{m+2-k} - t_1^{m+2-k})t_1^{k-1}t_2^{k}z^{k}\right)+ \left(\sum_{k=0}^{m}(t_2^{m+1-k} - t_1^{m+1-k})t_1^kt_2^{k}z^{k}\right).
\end{align*}
If we make each sums start from the index 0, above expression is equal to the following
\begin{align*}
&\left(\sum_{k=0}^{m}(t_2^{m+1-k} - t_1^{m+1-k})t_1^{k+1}t_2^{k+1}z^{k+2}\right) -\left(\sum_{k=0}^{m}(t_2^{m+1-k} - t_1^{m+1-k})t_1^{k+1}t_2^kz^{k+1}\right)\\
&- \left(\sum_{k=0}^{m}(t_2^{m+1-k} - t_1^{m+1-k})t_1^kt_2^{k+1}z^{k+1}\right) + \left(\sum_{k=0}^{m}(t_2^{m+1-k} - t_1^{m+1-k})t_1^kt_2^{k}z^{k}\right).
\end{align*}
We may factor out $t_2z$ from the first and the third sums, and then above expression is equal to
\begin{align*}
&t_2z\left(\sum_{k=0}^{m}(t_2^{m+1-k} - t_1^{m+1-k})t_1^{k+1}t_2^{k}z^{k+1}\right) -\left(\sum_{k=0}^{m}(t_2^{m+1-k} - t_1^{m+1-k})t_1^{k+1}t_2^kz^{k+1}\right)\\
&- t_2z\left(\sum_{k=0}^{m}(t_2^{m+1-k} - t_1^{m+1-k})t_1^kt_2^{k}z^{k}\right) + \left(\sum_{k=0}^{m}(t_2^{m+1-k} - t_1^{m+1-k})t_1^kt_2^{k}z^{k}\right).
\end{align*}
If we factor out the common multiplicands 
$\left(\sum_{k=0}^{m}(t_2^{m+1-k} - t_1^{m+1-k})t_1^{k+1}t_2^kz^{k+1}\right)$ of the first two summands and $\left(\sum_{k=0}^{m}(t_2^{m+1-k} - t_1^{m+1-k})t_1^kt_2^{k}z^{k}\right)$ of the latter two summands,
above expression is equal to 
\begin{equation}\label{e5.16}
\begin{split}
    &\left(\sum_{k=0}^{m}(t_2^{m+1-k} - t_1^{m+1-k})t_1^{k+1}t_2^kz^{k+1}\right)(t_2z-1) \\
    &- \left(\sum_{k=0}^{m}(t_2^{m+1-k} - t_1^{m+1-k})t_1^kt_2^{k}z^{k}\right)(t_2z-1).
\end{split}
\end{equation}
Notice that 
\[
\sum_{k=0}^{m}(t_2^{m+1-k} - t_1^{m+1-k})t_1^{k+1}t_2^kz^{k+1} = t_1z\sum_{k=0}^{m}(t_2^{m+1-k} - t_1^{m+1-k})t_1^{k}t_2^kz^{k}.
\]
Then expression \eqref{e5.16} is equal to
\begin{align*}
    &t_1z\left(\sum_{k=0}^{m}(t_2^{m+1-k} - t_1^{m+1-k})t_1^kt_2^{k}z^{k}\right)(t_2z-1) \\
    &- \left(\sum_{k=0}^{m}(t_2^{m+1-k} - t_1^{m+1-k})t_1^kt_2^{k}z^{k}\right)(t_2z-1).
\end{align*}
If we factor out the common multiplicand 
$\left(\sum_{k=0}^{m}(t_2^{m+1-k} - t_1^{m+1-k})t_1^kt_2^{k}z^{k}\right)(t_2z-1)$,
above expression is equal to 
\begin{align*}
    &\left(\sum_{k=0}^{m}(t_2^{m+1-k} - t_1^{m+1-k})t_1^kt_2^kz^k\right)(t_1z-1)(t_2z-1).
\end{align*}
It concludes that 
\begin{equation}
\begin{split}
    &t_2^{m+1} - t_1^{m+1} + (t_1^{m+2} - t_2^{m+2})z + (t_2-t_1)t_1^{m+1}t_2^{m+1}z^{m+2}\\
    = &\left(\sum_{k=0}^{m}(t_2^{m+1-k} - t_1^{m+1-k})t_1^kt_2^kz^k\right)(t_1z-1)(t_2z-1).\qedhere
\end{split}
\end{equation}
\end{proof}

The following theorem shows the explicit form of $P_m(z)$ for $z \in \mathbb{C}\backslash\left\{0,\frac{1}{t_1},\frac{1}{t_2}\right\}$.

\begin{lem}\label{l5.1}
Let $a,t_1,t_2\in\mathbb{R}$ where $at_1t_2\neq 0$ and $t_1\neq t_2$.
Let  $\{P_m(z)\}_{m\geq 0}$ be a sequence of functions of $z$ generated as follows:
\[
\sum_{m=0}^{\infty}P_m(z) t^m = \frac{1}{a(t-t_1)(t-t_2)(1-zt)}.
\]
Then, for all $z\in\mathbb{C}\backslash\left\{0,\frac{1}{t_1},\frac{1}{t_2}\right\}$,
\[
P_m(z) =  \frac{1}{a(t_2-t_1)t_1^{m+1}t_2^{m+1}}\sum_{k=0}^{m}(t_2^{m+1-k} - t_1^{m+1-k})t_1^kt_2^kz^k.
\]
\end{lem}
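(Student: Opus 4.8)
The plan is to compute the $t^m$-coefficient of $\dfrac{1}{a(t-t_1)(t-t_2)(1-zt)}$ directly, by partial fractions followed by a geometric-series expansion in $t$, and then to collapse the resulting closed form using Proposition \ref{e5.3}. Fix $z\in\mathbb{C}\backslash\{0,1/t_1,1/t_2\}$. Since $at_1t_2\neq 0$ and $1/z$ is finite and nonzero, the three poles in $t$ of the generating function, namely $t_1$, $t_2$, and $1/z$, are all nonzero; hence (as in the radius-of-convergence argument of Lemma \ref{l5.0} and by Theorem \ref{t5.0}) the function is analytic on the disk $|t|<\rho$ with $\rho=\min(|t_1|,|t_2|,1/|z|)$, and the power series $\sum_m P_m(z)t^m$ converges there.

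First I would write the partial fraction decomposition with respect to $t$. Combining $\dfrac{1}{a(t-t_1)(t-t_2)}=\dfrac{1}{a(t_1-t_2)}\bigl(\dfrac{1}{t-t_1}-\dfrac{1}{t-t_2}\bigr)$ with the splitting of each $\dfrac{1}{(t-t_j)(1-zt)}$ — legitimate because $z\neq 1/t_j$ forces $1-zt_j\neq 0$ — gives
\[
\frac{1}{a(t-t_1)(t-t_2)(1-zt)}=\frac{1}{a(t_1-t_2)}\left[\frac{1}{1-zt_1}\left(\frac{1}{t-t_1}+\frac{z}{1-zt}\right)-\frac{1}{1-zt_2}\left(\frac{1}{t-t_2}+\frac{z}{1-zt}\right)\right].
\]
Next I would expand each simple fraction as a power series in $t$ valid for $|t|<\rho$, namely $\dfrac{1}{t-t_j}=-\sum_{n\ge 0}t^n/t_j^{n+1}$ and $\dfrac{z}{1-zt}=\sum_{n\ge 0}z^{n+1}t^n$. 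By Proposition \ref{p5.0} these series may be added termwise, and by the uniqueness of power-series coefficients (Proposition \ref{p5.1}) the $t^m$-coefficient is $P_m(z)$:
\[
P_m(z)=\frac{1}{a(t_1-t_2)}\left[\frac{1}{1-zt_1}\left(z^{m+1}-\frac{1}{t_1^{m+1}}\right)-\frac{1}{1-zt_2}\left(z^{m+1}-\frac{1}{t_2^{m+1}}\right)\right].
\]

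Finally I would clear denominators: put the bracket over the common denominator $(1-zt_1)(1-zt_2)$ and multiply numerator and denominator by $t_1^{m+1}t_2^{m+1}$. A short computation shows the resulting numerator is, up to an overall sign, the left-hand side of Proposition \ref{e5.3}, i.e. $t_2^{m+1}-t_1^{m+1}+(t_1^{m+2}-t_2^{m+2})z+(t_2-t_1)t_1^{m+1}t_2^{m+1}z^{m+2}$. Invoking equation \eqref{e5.10}, this factors as $\bigl(\sum_{k=0}^{m}(t_2^{m+1-k}-t_1^{m+1-k})t_1^kt_2^kz^k\bigr)(t_1z-1)(t_2z-1)$, and since $(t_1z-1)(t_2z-1)=(1-zt_1)(1-zt_2)$ is nonzero — this is precisely where $z\neq 1/t_1,1/t_2$ is used — it cancels against the denominator. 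Using $\frac{1}{a(t_1-t_2)}=-\frac{1}{a(t_2-t_1)}$ to absorb the leftover sign yields exactly
\[
P_m(z)=\frac{1}{a(t_2-t_1)t_1^{m+1}t_2^{m+1}}\sum_{k=0}^{m}(t_2^{m+1-k}-t_1^{m+1-k})t_1^kt_2^kz^k,
\]
which is the claim. The only delicate point is the bookkeeping in this last step: verifying that the cleared numerator is verbatim (up to sign) the expression appearing in \eqref{e5.10}, since everything else is routine partial-fraction and geometric-series manipulation already licensed by the results of Section `Complex Analysis'.
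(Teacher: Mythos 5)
Your proposal is correct and follows essentially the same route as the paper: partial fractions in $t$, geometric-series expansion to read off the $t^m$-coefficient, recombination over the common denominator $(t_1z-1)(t_2z-1)$, and cancellation via Proposition \ref{e5.3}. The only difference is cosmetic --- you obtain the partial-fraction coefficients by two nested two-term splittings rather than by solving the $3\times 3$ linear system for $A$, $B$, $C$ as the paper does, and the resulting decomposition and final numerator agree with the paper's.
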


\begin{proof}
Let $z\in\mathbb{C}\backslash\left\{0,\frac{1}{t_1},\frac{1}{t_2}\right\}$ be arbitrary. Using partial fractions, we solve for $A$, $B$, and $C$ from the equation below
\[
\frac{1}{a(1-zt)(t_1-t)(t_2-t)} = \frac{A}{t_1-t} + \frac{B}{t_2-t} + \frac{C}{1-zt},
\]
which yields 
\begin{align*}
&\frac{1}{a(t_1- t)(t_2 - t)(1 - zt)} \\
= &\frac{A}{t_1-t} + \frac{B}{t_2-t} + \frac{C}{1-zt} \\
= &\frac{A(t_2 - t)(1-zt) + B(t_1-t)(1-zt) + C(t_1-t)(t_2-t)}{(t_1-t)(t_2-t)(1-zt)}\\
=  &\frac{A[zt^2 - (zt_2 + 1)t + t_2] + B[zt^2 - (zt_1 + 1)t + t_1] + C[t^2 -(t_1+t_2)t+t_1t_2]}{(t_1-t)(t_2-t)(1-zt)}\\
= &\frac{(Az + Bz + C)t^2 + [ - A(zt_2 + 1) - B(zt_1 + 1)-(t_1+t_2)C]t + At_2 + Bt_1 + Ct_1t_2}{(t_1-t)(t_2-t)(1-zt)}.\\
\end{align*}
This yields the following 3 equations:
\begin{align}
    Az + Bz + C &= 0,\label{e4.6}\\
    A(zt_2 + 1) + B(zt_1 + 1) + (t_1+t_2)C &= 0 ,\label{e4.7}\\
    At_2 + Bt_1 + Ct_1t_2 &= \frac{1}{a}.\label{e4.8}
\end{align}
If we multiply Equation \eqref{e4.6} by $t_1+t_2$ and subtract Equation \eqref{e4.7} from it, we get
\begin{equation}\label{e4.9}
    A(t_1z - 1) + B(t_2z-1) = 0.
\end{equation}
If we multiply Equation \eqref{e4.6} by $t_1t_2$ and subtract Equation \eqref{e4.8} from it, we get
\begin{equation}\label{e4.10}
    A(t_1t_2z - t_2) + B(t_1t_2z - t_1) = -\frac{1}{a}.
\end{equation}
Multiplying Equation \eqref{e4.9} by $t_2$ and subtracting Equation \eqref{e4.10} from it yields 
\begin{equation}\label{e4.11}
B(t_2^2z - t_2 - t_1t_2z + t_1) = \frac{1}{a}.
\end{equation}
Observe that 
\begin{align*}
    t_2^2z - t_2 - t_1t_2z + t_1 &= t_2(t_2z - 1) - t_1(t_2z-1)\\
    &= (t_2-t_1)(t_2z-1).
\end{align*}
Then we can rewrite Equation \eqref{e4.11} as
\[
B(t_2-t_1)(t_2z-1) = \frac{1}{a}.
\]
It follows that 
\[
B = \frac{1}{a(t_2-t_1)(t_2z-1)}.
\]
Also, Equation \eqref{e4.9} induces
\[
A = -B\frac{t_2z-1}{t_1z-1} =  -\frac{1}{a(t_2-t_1)(t_2z-1)}\frac{t_2z-1}{t_1z-1} = -\frac{1}{a(t_2-t_1)(t_1z-1)}.
\]
On the other hand, Equation \eqref{e4.6} yields that 
\begin{align*}
    C &= -Az - Bz \\
    &= -\left(-\frac{1}{a(t_2-t_1)(t_1z-1)}\right)z -\left(\frac{1}{a(t_2-t_1)(t_2z-1)}\right)z\\
    &= \frac{(t_2z-1) - (t_1z-1)}{a(t_2-t_1)(t_1z-1)(t_2z-1)}z\\
    &= \frac{z^2}{a(t_1z-1)(t_2z-1)}.
\end{align*}

Hence we have shown that
\begin{align}
    A = &-\frac{1}{a(t_2-t_1)(t_1z-1)}\label{e5.0}\\
    B = &\frac{1}{a(t_2-t_1)(t_2z-1)}\label{e5.1}\\
    C = &\frac{z^2}{a(t_1z-1)(t_2z-1)}.\label{e5.2}
\end{align}
Meanwhile, recall that
\begin{align*}
\sum_{m=0}^\infty P_m(z) t^m = &\frac{1}{a(t - t_1)(t - t_2)(1-tz)} \\
= &\frac{A}{t_1-t} + \frac{B}{t_2-t} + \frac{C}{1-zt}.
\end{align*}
Observe that, for $t<\min(|t_1|, |t_2|, |z|)$, we can apply geometric series as follows:
\begin{align*}
\frac{A}{t_1-t} + \frac{B}{t_2-t} + \frac{C}{1-zt} = &A\frac{1}{t_1}\frac{1}{1-\frac{t}{t_1}} + B\frac{1}{t_2}\frac{1}{1-\frac{t}{t_2}} + C\frac{1}{1-zt} \\
    = &A\sum_{m=0}^\infty \frac{t^m}{t_1^{m+1}} + B\sum_{m=0}^\infty\frac{t^m}{t_2^{m+1}} + C\sum_{m=0}^\infty t^mz^m.
\end{align*}
Moewover, Theorem \ref{t5.1} implies that 
\[
A\sum_{m=0}^\infty \frac{t^m}{t_1^{m+1}} + B\sum_{m=0}^\infty\frac{t^m}{t_2^{m+1}} + C\sum_{m=0}^\infty t^mz^m
\]
is absolutely convergent for $t\in B(\min(|t_1|, |t_2|, |z|);0)$. 
Then Proposition \ref{p5.0} induces that 
\begin{align*}
    A\sum_{m=0}^\infty \frac{t^m}{t_1^{m+1}} + B\sum_{m=0}^\infty\frac{t^m}{t_2^{m+1}} + C\sum_{m=0}^\infty t^mz^m&= \sum_{m=0}^\infty t^m\left(\frac{A}{t_1^{m+1}} + \frac{B}{t_2^{m+1}} + Cz^m\right).
\end{align*}
Consequently, if we denote $f(t) = {\displaystyle\frac{1}{a(t-t_1)(t-t_2)(1-zt)}}$, by Proposition \ref{p5.1}, we may deduce that 
\[
\sum_{m=0}^\infty P_m(z) t^m = \sum_{m=0}^\infty\frac{1}{m!}f^{(m)}(0)t^m = \sum_{m=0}^\infty t^m\left(\frac{A}{t_1^{m+1}} + \frac{B}{t_2^{m+1}} + Cz^m\right).
\]
This implies that 
\[
P_m(z) = \frac{A}{t_1^{m+1}} + \frac{B}{t_2^{m+1}} + Cz^m.
\]
In view of \eqref{e5.0}, \eqref{e5.1}, \eqref{e5.2}, we can observe that 
\begin{align*}
P_m(z)\nonumber = &\frac{A}{t_1^{m+1}} + \frac{B}{t_2^{m+1}} + Cz^m\\
    = &-\frac{1}{a(t_2-t_1)(t_1z-1)}\frac{1}{t_1^{m+1}}\\
    &+ \frac{1}{a(t_2-t_1)(t_2z-1)}\frac{1}{t_2^{m+1}}\\
    &+ \frac{z^2}{a(t_1z-1)(t_2z-1)}z^m\\
    = &\frac{-(t_2z-1)t_2^{m+1} + (t_1z-1)t_1^{m+1} + (t_2-t_1)t_1^{m+1}t_2^{m+1}z^{m+2}}{a(t_2-t_1)(t_1z-1)(t_2z-1)t_1^{m+1}t_2^{m+1}}\nonumber\\
    = &\frac{t_2^{m+1} - t_1^{m+1} + (t_1^{m+2} - t_2^{m+2})z + (t_2-t_1)t_1^{m+1}t_2^{m+1}z^{m+2}}{a(t_2-t_1)(t_1z-1)(t_2z-1)t_1^{m+1}t_2^{m+1}}.\label{e5.4}
\end{align*}
Then, by Proposition \ref{e5.3},
\begin{align*}
    P_m(z) = &\frac{t_2^{m+1} - t_1^{m+1} + (t_1^{m+2} - t_2^{m+2})z + (t_2-t_1)t_1^{m+1}t_2^{m+1}z^{m+2}}{a(t_2-t_1)(t_1z-1)(t_2z-1)t_1^{m+1}t_2^{m+1}}\\
    = &\frac{1}{a(t_2-t_1)t_1^{m+1}t_2^{m+1}}\sum_{k=0}^{m}(t_2^{m+1-k} - t_1^{m+1-k})t_1^kt_2^kz^k.\qedhere
\end{align*}
\end{proof}
Here we found how $P_m(z)$ is defined except at the points $\left\{0,\frac{1}{t_1},\frac{1}{t_2}\right\}$.
We can find $P_m(z)$ for $z=  0,\frac{1}{t_1},\frac{1}{t_2}$ with the following theorem.

\begin{thm}[Corollary 3.8, \cite{John}]\label{t5.2}
Let $G$ be an open connected set.
If $f$ and $g$ are analytic on $G$ then $f\equiv g$ iff $\{z\in G; f(z) = g(z)\}$ has a limit point in $G$.
\end{thm}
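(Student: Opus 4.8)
The reverse implication is immediate: $G$ is a non-empty open set, so every one of its points is a limit point of $G$ itself; hence if $f\equiv g$ the coincidence set equals $G$ and certainly has a limit point in $G$. The plan is therefore to prove the forward implication. Set $h:=f-g$, which is analytic on $G$, and let $Z=\{z\in G:h(z)=0\}$; we are given a limit point $a\in Z'\cap G$ and must conclude $h\equiv 0$ on $G$. The standard device is to introduce
\[
A=\{z\in G: h^{(n)}(z)=0 \text{ for all } n\ge 0\},
\]
and to show that $A$ is non-empty, relatively closed in $G$, and relatively open in $G$; since $G$ is connected this forces $A=G$, and in particular $h\equiv 0$, i.e. $f\equiv g$.

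First I would establish non-emptiness by showing $a\in A$. Choose $r>0$ with $B(r;a)\subseteq G$; by Theorem \ref{t5.0}, $h(z)=\sum_{n=0}^\infty c_n(z-a)^n$ on $B(r;a)$ with $c_n=h^{(n)}(a)/n!$. Suppose for contradiction that not all $c_n$ vanish, and let $m$ be the least index with $c_m\ne 0$. Then $h(z)=(z-a)^m\phi(z)$ on $B(r;a)$, where $\phi(z)=\sum_{k\ge 0}c_{m+k}(z-a)^k$ has radius of convergence at least $r$ (Proposition \ref{p5.1}, after reindexing, or directly from Theorem \ref{t5.1}) and $\phi(a)=c_m\ne 0$. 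Being analytic, $\phi$ is continuous, so $\phi$ is non-vanishing on some ball $B(\rho;a)$ with $0<\rho\le r$; hence $h$ has no zero in $B(\rho;a)\setminus\{a\}$, contradicting that $a$ is a limit point of $Z$. Therefore $c_n=0$ for every $n$, i.e. $h^{(n)}(a)=0$ for all $n$, so $a\in A$.

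Next I would check that $A$ is closed in $G$: by Proposition \ref{p5.1} every derivative $h^{(n)}$ is again given by a convergent power series, hence is analytic and in particular continuous on $G$, so each set $\{z\in G: h^{(n)}(z)=0\}$ is relatively closed, and $A$ is their intersection. For openness, take any $b\in A$ and $r>0$ with $B(r;b)\subseteq G$; Theorem \ref{t5.0} gives $h(z)=\sum_{n=0}^\infty \frac{h^{(n)}(b)}{n!}(z-b)^n=0$ for all $z\in B(r;b)$, so $h\equiv 0$ on $B(r;b)$, whence every $h^{(n)}\equiv 0$ there as well and $B(r;b)\subseteq A$. With $A$ non-empty, open, and closed in the connected set $G$, we conclude $A=G$, so $h\equiv 0$ on $G$, i.e. $f\equiv g$.

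The main obstacle is the non-emptiness step, i.e. the order-of-vanishing argument: factoring $(z-a)^m$ out of the Taylor expansion at $a$ and invoking the isolated-zeros phenomenon to contradict that $a$ is a limit point of $Z$. Once that is in place the remainder is a routine clopen-set connectedness argument together with continuity of the derivatives, for which the excerpt already supplies the needed machinery (Theorem \ref{t5.0}, Proposition \ref{p5.1}, Theorem \ref{t5.1}).
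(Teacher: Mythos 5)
Your proof is correct: the clopen-set argument on $A=\{z\in G:h^{(n)}(z)=0\ \forall n\}$, with the order-of-vanishing factorization $h(z)=(z-a)^m\phi(z)$ to get non-emptiness from the limit point, is the standard identity-theorem proof (essentially Conway's own). The paper does not prove this statement at all — it is quoted as Corollary 3.8 of \cite{John} and used as a black box — so there is nothing to compare against beyond noting that your argument matches the cited source's; the only (cosmetic) caveat is that the reverse implication tacitly uses that the region $G$ is non-empty.
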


Now we will use Theorem \ref{t5.2} to prove that 
\[
P_m(z) = \frac{1}{a(t_2-t_1)t_1^{m+1}t_2^{m+1}}\sum_{k=0}^{m}(t_2^{m+1-k} - t_1^{m+1-k})t_1^kt_2^kz^k,
\]
for all $z\in\mathbb{C}$.


\begin{lem}\label{l5.2}
Let $a,t_1,t_2\in\mathbb{R}$ where $at_1t_2\neq 0$ and $t_1\neq t_2$.
Let  $\{P_m(z)\}_{m\geq 0}$ be a sequence of functions of $z$ generated as follows:
\[
\sum_{m=0}^{\infty}P_m(z) t^m = \frac{1}{a(t-t_1)(t-t_2)(1-zt)}.
\]
Then, for all $z\in\mathbb{C}$,
\[
P_m(z) =  \frac{1}{a(t_2-t_1)t_1^{m+1}t_2^{m+1}}\sum_{k=0}^{m}(t_2^{m+1-k} - t_1^{m+1-k})t_1^kt_2^kz^k.
\]
\end{lem}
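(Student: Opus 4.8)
The plan is to upgrade the identity from Lemma \ref{l5.1}, which holds on $\mathbb{C}\backslash\left\{0,\frac{1}{t_1},\frac{1}{t_2}\right\}$, to an identity valid on all of $\mathbb{C}$ by invoking the identity theorem for analytic functions, Theorem \ref{t5.2}.

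First I would observe that both sides of the claimed identity are entire functions of $z$. For the right-hand side this is immediate: $\frac{1}{a(t_2-t_1)t_1^{m+1}t_2^{m+1}}\sum_{k=0}^{m}(t_2^{m+1-k} - t_1^{m+1-k})t_1^kt_2^kz^k$ is a polynomial in $z$, since the hypotheses $at_1t_2\neq 0$ and $t_1\neq t_2$ guarantee that its leading constant is well-defined. For the left-hand side I would apply Lemma \ref{l5.0}: setting $b := -a(t_1+t_2)$ and $c := at_1t_2$, we have $a(t-t_1)(t-t_2) = at^2+bt+c$ with $c\neq 0$, so Lemma \ref{l5.0} applies and tells us $P_m(z)$ is a polynomial in $z$, hence entire. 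Let $Q_m(z)$ denote the right-hand side.

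Next, $\mathbb{C}$ is an open connected set on which $P_m$ and $Q_m$ are analytic. By Lemma \ref{l5.1}, the set $\{z\in\mathbb{C} : P_m(z) = Q_m(z)\}$ contains $\mathbb{C}\backslash\left\{0,\frac{1}{t_1},\frac{1}{t_2}\right\}$, which plainly has a limit point in $\mathbb{C}$ (it is the plane with three points deleted). Theorem \ref{t5.2} then yields $P_m\equiv Q_m$ on $\mathbb{C}$, which is exactly the assertion.

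I expect no genuine obstacle: the argument is a routine application of the identity theorem. The only points requiring care are (i) checking that the hypothesis $c\neq 0$ of Lemma \ref{l5.0} is met under the present assumptions, and (ii) recording that $P_m(z)$ is a bona fide polynomial — hence entire — even though the generating-function identity itself only holds for $t$ in a $z$-dependent neighborhood of the origin; both are dealt with by the bookkeeping in the first step. As an alternative to Theorem \ref{t5.2}, one could simply note that $P_m - Q_m$ is a polynomial vanishing on an infinite set and is therefore identically zero; either route closes the proof.
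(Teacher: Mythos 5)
Your proposal is correct and follows essentially the same route as the paper: invoke Lemma \ref{l5.0} (after rewriting $a(t-t_1)(t-t_2)$ in the form $at^2+bt+c$ with $c=at_1t_2\neq 0$) to see that $P_m$ is a polynomial, note that the right-hand side is also a polynomial, and then extend the identity of Lemma \ref{l5.1} from $\mathbb{C}\backslash\{0,1/t_1,1/t_2\}$ to all of $\mathbb{C}$ via the identity theorem (Theorem \ref{t5.2}). Your expansion of the coefficients ($b=-a(t_1+t_2)$, $c=at_1t_2$) is in fact slightly more careful than the paper's, and your remark that a polynomial identity holding off a finite set extends automatically is a valid shortcut.
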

\begin{proof}
From Lemma \ref{l5.1}, we obtain that 
\[
P_m(z) = \frac{1}{a(t_2-t_1)t_1^{m+1}t_2^{m+1}}\sum_{k=0}^{m}(t_2^{m+1-k} - t_1^{m+1-k})t_1^kt_2^kz^k,
\]
for all $z\in\mathbb{C}\backslash\left\{0,\frac{1}{t_1},\frac{1}{t_2}\right\}$.
Meanwhile, given that $at_1t_2\neq 0$, we may rewrite 
\[
\frac{1}{a(t-t_1)(t-t_2)(1-zt)} = \frac{1}{(at^2 - (t_1+t_2)t + t_1t_2)(1-zt)}
\]
where $t_1t_2\neq 0$.
Then, by Lemma \ref{l5.0}, $P_m(z)$ is a polynomial and thus entire.
Notice that 
\[
\frac{1}{a(t_2-t_1)t_1^{m+1}t_2^{m+1}}\sum_{k=0}^{m}(t_2^{m+1-k} - t_1^{m+1-k})t_1^kt_2^kz^k
\]
is a polynomial and thus entire. We have that 
\[
P_m(z) = \frac{1}{a(t_2-t_1)t_1^{m+1}t_2^{m+1}}\sum_{k=0}^{m}(t_2^{m+1-k} - t_1^{m+1-k})t_1^kt_2^kz^k,
\]
for all $z\in\mathbb{C}\backslash\left\{0,\frac{1}{t_1},\frac{1}{t_2}\right\}.$
Thus $\mathbb{C}\backslash\left\{0,\frac{1}{t_1},\frac{1}{t_2}\right\}$ is contained in
\[
\left\{z\in\mathbb{C}; P_m(z) = \frac{1}{a(t_2-t_1)t_1^{m+1}t_2^{m+1}}\sum_{k=0}^{m}(t_2^{m+1-k} - t_1^{m+1-k})t_1^kt_2^kz^k\right\}.
\]
Notice that  $\mathbb{C}\backslash\left\{0,\frac{1}{t_1},\frac{1}{t_2}\right\}$ has a limit point, and thus, by Theorem \ref{t5.2}, 
\[
P_m(z) = \frac{1}{a(t_2-t_1)t_1^{m+1}t_2^{m+1}}\sum_{k=0}^{m}(t_2^{m+1-k} - t_1^{m+1-k})t_1^kt_2^kz^k,
\]
for all $z\in\mathbb{C}$.
\end{proof}
Now we have explicit form of $P_m(z)$ and we may deduce the following corollary.

\begin{corollary}\label{cor5.0}
Suppose $ac\ne0$ and $b^2-4ac\neq0$, and $r\neq 0$. Let $\left\{ P_{m}(z)\right\} _{m\geq 0}$
and $\left\{ H_{m}(z)\right\} _{m\geq 0}$ be two sequences
of polynomials defined by their generating functions:
\begin{align*}
\sum_{m=0}^{\infty}P_{m}(z)t^{m} & =\frac{1}{(at^{2}+bt+c)(1-tz)},\\
\sum_{m=0}^\infty H_m(z) t^m &=\frac{1}{\left[a\left(\frac{t}{r}\right)^2+b\left(\frac{t}{r}\right)+c\right]\left[1-tz\right]}.
\end{align*}
Then it holds that 
\[
H_m(z) = \frac{P_m(rz)}{r^m}.
\]
\end{corollary}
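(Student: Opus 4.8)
The plan is to obtain the identity by a direct change of variables in the generating function for $\{P_m(z)\}$, together with the uniqueness of power-series coefficients. Fix $z\in\mathbb{C}$ arbitrary. Since $ac\neq 0$ we have $c\neq 0$, so the denominator $(at^{2}+bt+c)(1-tz)$ does not vanish at $t=0$; hence $\frac{1}{(at^{2}+bt+c)(1-tz)}$ is analytic in $t$ on some disk $B(R,0)$ and, on that disk, $\sum_{m=0}^{\infty}P_{m}(z)t^{m}=\frac{1}{(at^{2}+bt+c)(1-tz)}$ (this is the convergence argument already carried out in the proof of Lemma \ref{l5.0}). Replacing $z$ by $rz$, which is legitimate because $z$ was arbitrary, gives $\sum_{m=0}^{\infty}P_{m}(rz)t^{m}=\frac{1}{(at^{2}+bt+c)(1-trz)}$ on the corresponding disk.

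Next I would substitute $t\mapsto t/r$. Since $r\neq 0$, this is an analytic change of variable sending a neighborhood of $0$ to a neighborhood of $0$; it turns the left-hand side into $\sum_{m=0}^{\infty}P_{m}(rz)r^{-m}t^{m}=\sum_{m=0}^{\infty}\frac{P_{m}(rz)}{r^{m}}t^{m}$, while the right-hand side becomes $\frac{1}{\left(a(t/r)^{2}+b(t/r)+c\right)\left(1-(t/r)(rz)\right)}=\frac{1}{\left[a(t/r)^{2}+b(t/r)+c\right]\left[1-tz\right]}$, which is exactly the generating function defining $\{H_{m}(z)\}$. Thus on a neighborhood of $t=0$ we have $\sum_{m=0}^{\infty}\frac{P_{m}(rz)}{r^{m}}t^{m}=\sum_{m=0}^{\infty}H_{m}(z)t^{m}$, and by the uniqueness of the coefficients of a convergent power series (Proposition \ref{p5.1}) it follows that $H_{m}(z)=\frac{P_{m}(rz)}{r^{m}}$ for every $m\geq 0$, which is the claim. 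The only subtlety is ensuring both series genuinely represent their rational functions on a common disk around $0$ before equating coefficients; this is handled as in Lemma \ref{l5.0}, with the change of variable merely rescaling the radius of convergence by $|r|$.

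Alternatively — and this is where the hypothesis $b^{2}-4ac\neq 0$ enters — one can bypass the substitution and compute directly. Writing $at^{2}+bt+c=a(t-t_{1})(t-t_{2})$ with $t_{1}\neq t_{2}$ and $t_{1}t_{2}\neq 0$, the denominator appearing in the definition of $H_{m}$ factors as $\frac{a}{r^{2}}(t-rt_{1})(t-rt_{2})$, so Lemma \ref{l5.2} applied with $(a,t_{1},t_{2})$ replaced by $(a/r^{2},rt_{1},rt_{2})$ produces a closed form for $H_{m}(z)$; collecting the powers of $r$ (the denominator contributes $r^{2m+1}$ and the $k$-th numerator term contributes $r^{m+1+k}$) reduces it to $r^{-m}$ times the closed form for $P_{m}$ evaluated at $rz$. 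I expect the only obstacle, and a very minor one, to be this bookkeeping of powers of $r$ in the second approach; the substitution argument has essentially no obstacle beyond the convergence remark above.
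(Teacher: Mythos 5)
Your proposal is correct, and your primary argument takes a genuinely different route from the paper's. The paper proves the corollary exactly as in your ``alternative'': it notes that $rt_1, rt_2$ are the roots of $\frac{a}{r^2}t^2+\frac{b}{r}t+c$, applies the closed form of Lemma \ref{l5.2} to both sequences, and collects powers of $r$ (your bookkeeping is right: the denominator picks up $r^{2m+1}$ and the $k$-th numerator term picks up $r^{m+1+k}$, leaving $r^{k-m}z^k=r^{-m}(rz)^k$). Your main argument --- replace $z$ by $rz$ in the generating identity for $P_m$, substitute $t\mapsto t/r$, and invoke uniqueness of Taylor coefficients at $0$ --- is more elementary and arguably cleaner: it never touches the explicit formula, and it does not actually use the hypothesis $b^2-4ac\neq 0$ at all (only $c\neq 0$ and $r\neq 0$ are needed to guarantee analyticity at $t=0$ and a legitimate rescaling of the disk of convergence), so it proves a slightly more general statement. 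What the paper's route buys is that it stays entirely inside machinery already established (Lemma \ref{l5.2}) and produces the explicit formula for $H_m$ along the way, which is then reused in Corollary \ref{cor5.1}; what your route buys is brevity and weaker hypotheses. Both are valid proofs of the stated corollary.
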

\begin{proof}
Let us denote the zeros of $at^2+bt+c$ by $t_1,t_2$.
We may deduce that $rt_1,rt_2$ are the zeros of $\frac{a}{r^2}t^2+\frac{b}{r}t+c$.
Then Lemma \ref{l5.2} induces that 
\[
P_m(z) =\frac{1}{a(t_2-t_1)t_1^{m+1}t_2^{m+1}}\sum_{k=0}^{m}(t_2^{m+1-k} - t_1^{m+1-k})t_1^kt_2^kz^k.
\]
On the other hand, observe that
\[
\frac{1}{\left[a\left(\frac{t}{r}\right)^2+b\left(\frac{t}{r}\right)+c\right]\left[1-\left(\frac{t}{r}\right)\left(rz\right)\right]} = \frac{1}{\left[\frac{a}{r^2}t^2+\frac{b}{r}t+c\right]\left[1-zt\right]}
\]
With Lemma \ref{l5.2}, we can infer that $H_m(z)$ is
\begin{align*}
&\frac{1}{\frac{a}{r^2}(rt_2-rt_1)r^{m+1}t_1^{m+1}r^{m+1}t_2^{m+1}}\sum_{k=0}^{m}({t_2}^{m+1-k} - {t_1}^{m+1-k})r^{m+1-k}{(rt_1)}^k{(rt_2)}^kz^k\\
= &\frac{1}{r^{m}a(t_2-t_1)t_1^{m+1}t_2^{m+1}}\sum_{k=0}^{m}({t_2}^{m+1-k} - {t_1}^{m+1-k}){t_1}^k{t_2}^k(rz)^k\\
= &\frac{P_m(rz)}{r^m}
\end{align*}
This concludes that 
\[
H_m(z) = \frac{P_m(rz)}{r^m}.\qedhere
\]
\end{proof}

With this corollary, we may trace the zeros with the simpler generating function as described in the following corollary.
\begin{corollary}\label{cor5.1}
Suppose $ac\ne0$. Let $\left\{ P_{m}(z)\right\} _{m\geq 0}$
and $\left\{ H_{m}(z)\right\} _{m\geq 0}$ be two sequences
of polynomials defined by their generating functions:
\begin{align*}
\sum_{m=0}^{\infty}P_{m}(z)t^{m} & =\frac{1}{(at^{2}+bt+c)(1-tz)},\\
\sum_{m=0}^\infty H_m(z) t^m &=\frac{1}{\left(\frac{ac}{|ac|}t^2 + \frac{b}{\sqrt{|ac|}}t + 1\right)(1-tz)}.
\end{align*}
Then each zero of $P_m(z)$ is a zero of $H_m(z)$ multiplied by $\left|\frac{\sqrt{|ac|}}{c}\right|$.
\end{corollary}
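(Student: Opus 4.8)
The plan is to exhibit an explicit monomial rescaling of $t$ carrying the generating function of $\{P_m\}$ to that of $\{H_m\}$, and then read off the zero correspondence from the resulting identity between the two sequences. Set $r = \sqrt{|ac|}/c$; this is well defined and nonzero because $ac \ne 0$ forces $c \ne 0$, and $r^{2} = |ac|/c^{2}$. A direct substitution then gives the algebraic identity
\[
\frac{a}{r^{2}}t^{2} + \frac{b}{r}t + c \;=\; c\left(\frac{ac}{|ac|}t^{2} + \frac{b}{\sqrt{|ac|}}t + 1\right),
\]
since $a/r^{2} = ac^{2}/|ac| = c\cdot ac/|ac|$ and $b/r = bc/\sqrt{|ac|} = c\cdot b/\sqrt{|ac|}$. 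Hence the generating function defining $\{H_m\}$ is exactly $c$ times $\dfrac{1}{\left[a(t/r)^{2} + b(t/r) + c\right](1-tz)}$.

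Next I would invoke Corollary \ref{cor5.0} with this value of $r$: since $ac \ne 0$ and $r \ne 0$ (and in every situation where this corollary is applied one also has $b^{2}-4ac \ne 0$; in the remaining case $b^{2}=4ac$ with $b\ne0$ one simply repeats the short argument of that corollary, substituting $t \mapsto t/r$ and $z \mapsto rz$ in $\sum_{m} P_m(z)t^{m}$ and comparing $t^{m}$-coefficients via Proposition \ref{p5.1}), the function $\dfrac{1}{\left[a(t/r)^{2}+b(t/r)+c\right](1-tz)}$ generates the sequence $\{P_m(rz)/r^{m}\}_{m\ge 0}$. Combining this with the identity above and the uniqueness of power-series coefficients (Proposition \ref{p5.1}), we conclude
\[
H_m(z) \;=\; \frac{c}{r^{m}}\,P_m(rz), \qquad m \ge 0 .
\]

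Finally, since $c/r^{m} \ne 0$, for any $z_{0}$ we have $P_m(z_{0}) = 0$ if and only if $P_m\!\left(r\cdot \tfrac{z_{0}}{r}\right) = 0$, i.e. if and only if $H_m\!\left(\tfrac{z_{0}}{r}\right) = 0$; thus the zero set of $P_m$ is exactly $r$ times the zero set of $H_m$, so each zero of $P_m$ is a zero of $H_m$ multiplied by $r = \sqrt{|ac|}/c$. Passing to moduli (equivalently, noting $r>0$ precisely when $c>0$) yields the stated scaling factor $\left|\sqrt{|ac|}/c\right|$. The only genuine bookkeeping is matching the constant $c$ and the sign of $r$; the potentially delicate point — legitimizing the substitution $t\mapsto t/r$ inside a power series whose disk of convergence depends on $z$ — is already discharged inside Corollary \ref{cor5.0}, so it poses no new obstacle here.
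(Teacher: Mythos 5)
Your proof is correct and follows essentially the same route as the paper: both rest on the substitution $t\mapsto t/r$ with $r=\sqrt{|ac|}/c$ and an appeal to Corollary \ref{cor5.0}, the only difference being that you absorb the constant factor $c$ directly into the denominator identity, whereas the paper first passes through the intermediate sequence $G_m$ generated by $\frac{1}{(\frac{a}{c}t^2+\frac{b}{c}t+1)(1-tz)}$ and uses Lemma \ref{l5.2} to get $P_m=G_m/c$. Your explicit remarks about the case $b^2=4ac$ and about the sign of $r$ versus the absolute value in the statement address imprecisions that are also present in the paper's own proof, so they are welcome rather than problematic.
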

\begin{proof}
If we consider $t_1, t_2$, the zeros of $at^2 + bt + c$, then $at^2 + bt + c = a(t-t_1)(t- t_2)$.
Given that $ac\neq 0$, we obtain $c\neq 0$.
Thus we may divide both sides by $c$, and it yields the following:
\[
\frac{a}{c}(t- t_1)(t- t_2) = \frac{1}{c}(at^2 + bt + c) = \frac{a}{c}t^2 + \frac{b}{c}t + 1.
\]
This implies that $\frac{a}{c}t^2 + \frac{b}{c}t + 1$ has the same zeros as $at^2 + bt + c$.
Now let $\{G_m(z)\}_{m\geq 0}$ be a sequence of functions of $z$ generated as follows:
\[
\sum_{m=0}^\infty G_m(z) t^m =\frac{1}{\left(\frac{a}{c}t^2 + \frac{b}{c}t + 1\right)(1-tz)}.
\]
By Lemma \ref{l5.2}, we may deduce that
\begin{equation}\label{e5.17}
P_m(z) = \frac{G_m(z)}{c}.
\end{equation}
Recall that $\{H_m(z)\}_{m\geq 0}$ is a sequence of functions of $z$ generated as follows:
\[
\sum_{m=0}^\infty H_m(z) t^m =\frac{1}{\left(\frac{a}{c}\left(\frac{t}{\frac{\sqrt{|ac|}}{c}}\right)^2 + \frac{b}{c}\frac{t}{\frac{\sqrt{|ac|}}{c}} + 1\right)(1-tz)}.
\]
Then Corollary \ref{cor5.0} induces that
\begin{equation}\label{e5.18}
H_m(z) = \frac{G_m\left(\frac{\sqrt{|ac|}}{c}z\right)}{\left(\frac{\sqrt{|ac|}}{c}\right)^m},
\end{equation}
and Equations \eqref{e5.17} and \eqref{e5.18} yield that
\[
P_m(z) = \frac{G_m(z)}{c} = \frac{1}{c}\left(\frac{\sqrt{|ac|}}{c}\right)^mH_m\left(\frac{z}{\frac{\sqrt{|ac|}}{c}}\right)
\]
So, each zero of $P_m(z)$ is a zero of $H_m(z)$ multiplied by $\frac{\sqrt{|ac|}}{c}$.
\end{proof}

\clearpage

\begin{center}
\section{ZERO DISTRIBUTION OF $\{P_m(z)\}_{m\geq 0}$ ABOUT A CIRCLE}
\end{center}
\thispagestyle{empty} 

In this section, we will prove two of our main theorems.
Let us have a look on some examples.

\begin{figure}[htp]
    \centering
    \includegraphics[height=5cm]{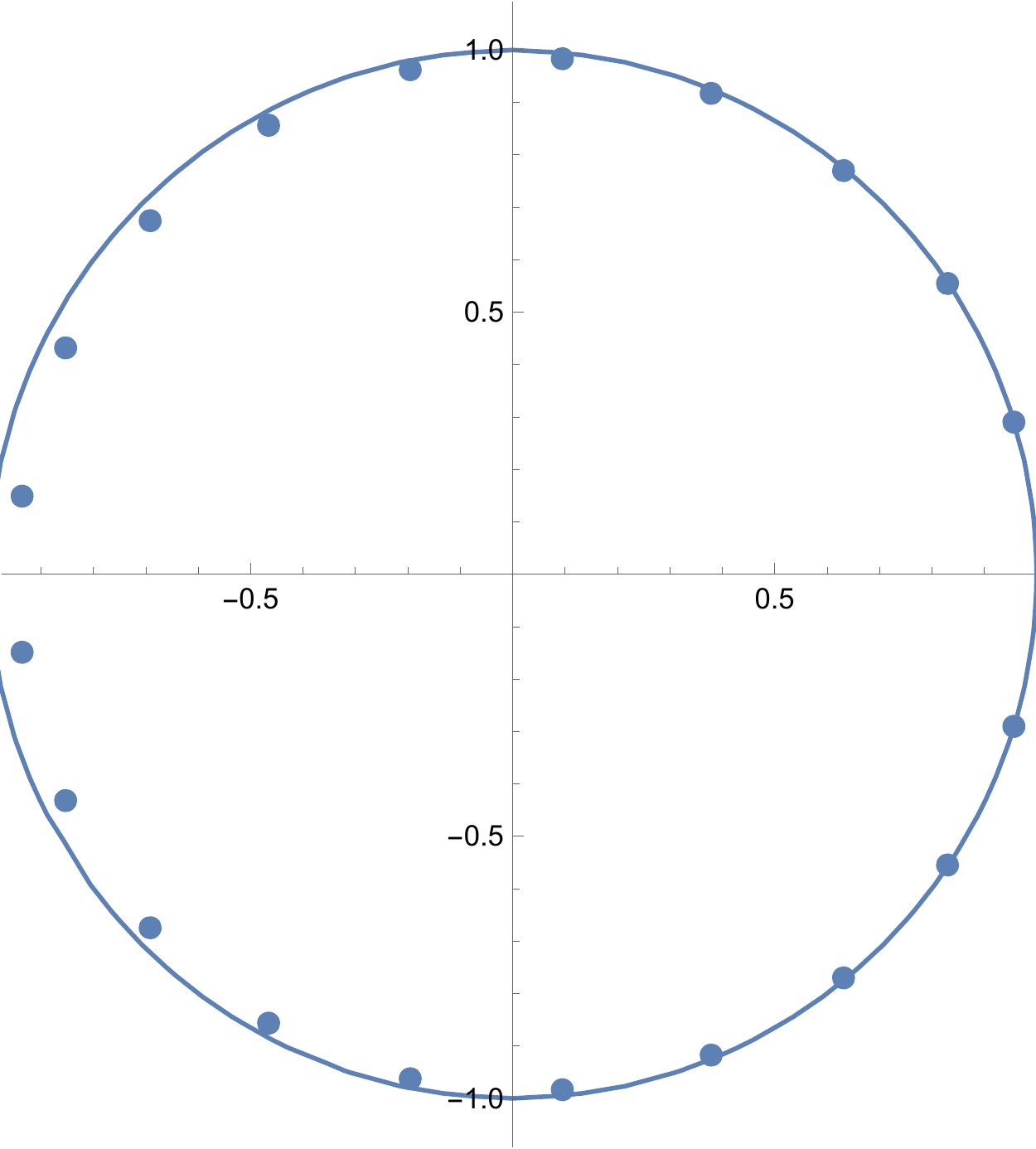}
    \includegraphics[height=5cm]{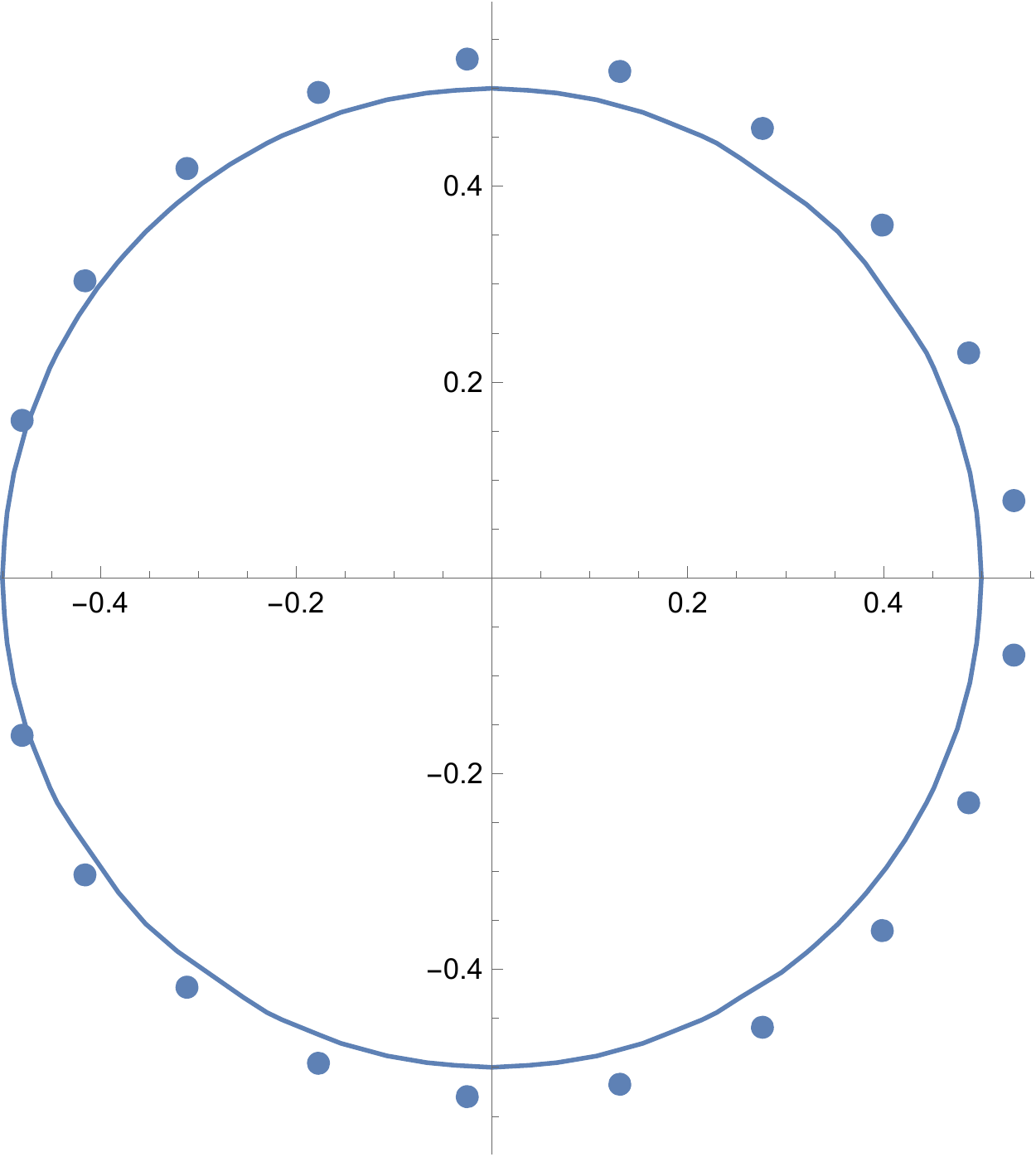}
    \includegraphics[height=5cm]{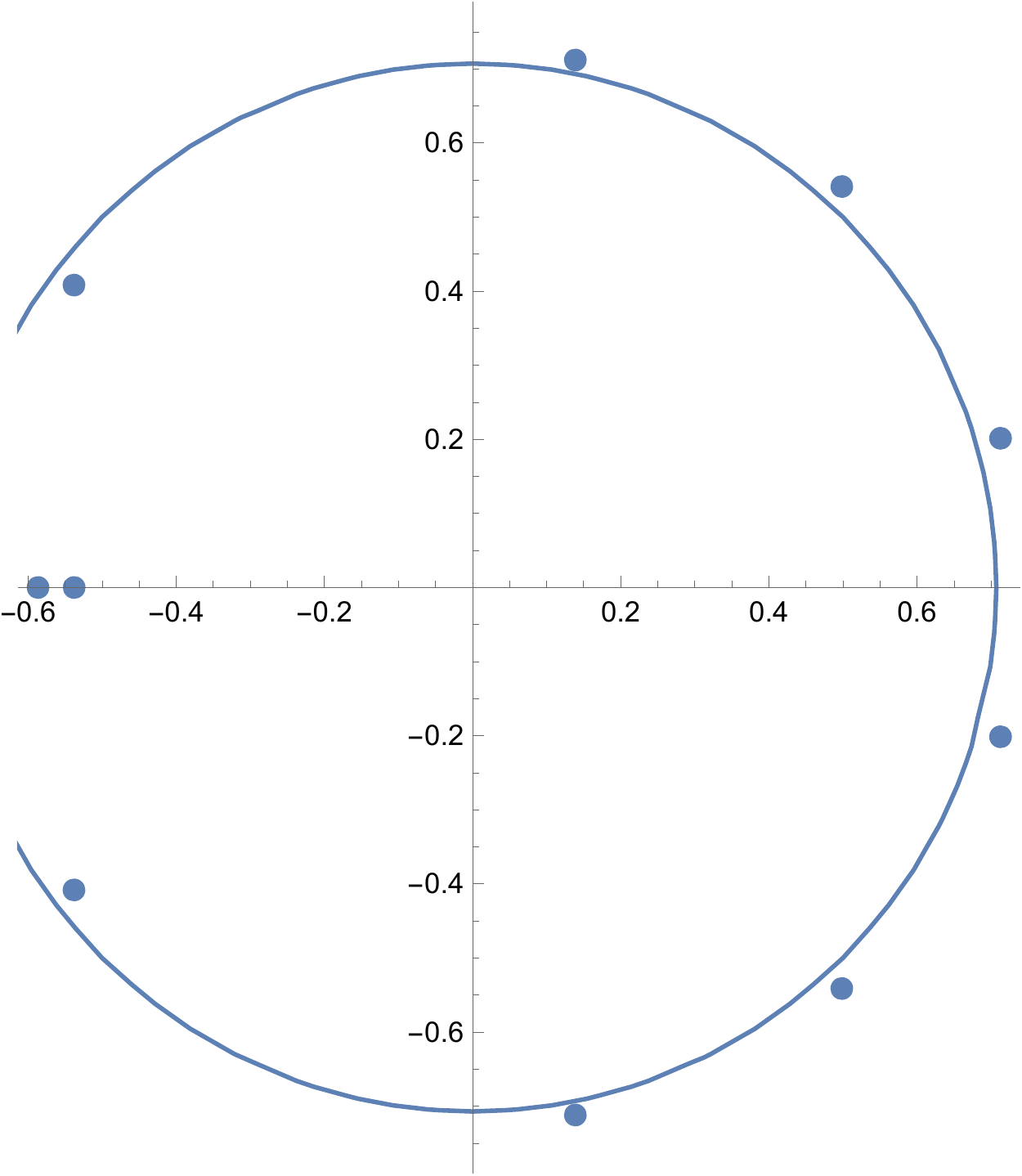}
    \caption{ Zeros inside of the corresponding circles}
    \label{f0.2}
\end{figure}

Figure \ref{f0.2} shows the zeros of polynomials generated by
\[
\frac{1}{(t^2+1t-2)(1-tz)}\text{ and }\frac{1}{(t^2+5t + 6)(1-tz)}\text{ and }\frac{1}{(t^2+t + 2)(1-tz)}
\]
respectively.
The left one has all the zeros inside the circle with radius $1$.
Notice that the roots of $t^2+1t-2 = (t-1)(t+2)$ are $1$ and $-2$.
The middle figure has all the zeros outside the circle with radius $\frac{1}{2}$.
Notice that the roots of $t^2+5t + 6 = (t+2)(t+3)$ are $-2$ and $-3$.
The right figure has some zeros outside and some zeros inside the circle with radius $\frac{1}{\sqrt{2}}$.
Notice that the roots of $t^2+t + 2 = 0$ are $\frac{-1\pm\sqrt{7}i}{2}$ and the moduli are both $\sqrt{2}$.

Let us first prove a theorem about the zeros inside the circle.
\begin{thm}\label{c5.0}
Let $a,b,c\in\mathbb{R}$. 
Let $ac<0$ and $b\neq 0$.
Let  $\{P_m(z)\}_{m\geq 0}$ be a sequence of functions of $z$ generated as follows:
\[
\sum_{m=0}^\infty P_m(z) t^m =\frac{1}{(at^2+bt+c)(1-tz)}.
\]
Then all the zeros of $P_m(z)$ lie on the closed disk centered at 0 with radius $1/|\alpha|$ on the complex plane $\mathbb{C}$ where $\alpha$ is the smallest (in modulus) zero of $at^2+bt+c$.
\end{thm}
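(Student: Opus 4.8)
My plan is to reduce the statement about the zeros of $P_m$ to the zero-localization of a single explicit trinomial, and then finish with an elementary modulus estimate (equivalently, a Rouch\'e comparison). First I would fix notation. Since $ac<0$ we have $a\neq0\neq c$ and $b^2-4ac=b^2+4|ac|>0$, and the product of the zeros of $at^2+bt+c$ equals $c/a<0$; hence that quadratic has two real, distinct, nonzero zeros $t_1,t_2$ of opposite sign, and since $t_1+t_2=-b/a\neq0$ we get $|t_1|\neq|t_2|$. Label them so that $|t_1|<|t_2|$, so $\alpha=t_1$ and the target radius is $1/|t_1|$, and set $\lambda:=t_1/t_2\in(-1,0)$. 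By Lemma~\ref{l5.2} (for the coefficients of $P_m$) together with Proposition~\ref{e5.3} (to factor), $P_m$ is a polynomial of degree $m$ with
\[
a(t_2-t_1)t_1^{m+1}t_2^{m+1}(t_1z-1)(t_2z-1)P_m(z)=N_m(z),
\]
where $N_m(z):=(t_2-t_1)(t_1t_2)^{m+1}z^{m+2}+(t_1^{m+2}-t_2^{m+2})z+(t_2^{m+1}-t_1^{m+1})$ has degree $m+2$. Thus the zeros of $N_m$ are precisely those of $P_m$ together with $1/t_1$ (of modulus $1/|t_1|$) and $1/t_2$ (of modulus less than $1/|t_1|$), so it suffices to prove that every zero of $N_m$ lies in $|z|\le 1/|t_1|$.

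The crucial step would be the substitution $z=w/t_1$ followed by a rescaling: I would multiply $N_m(w/t_1)$ by the nonzero constant $t_1/\big((1-\lambda)t_2^{m+2}\big)$ and use $t_1=\lambda t_2$; a direct computation should collapse everything to
\[
\frac{t_1}{(1-\lambda)\,t_2^{m+2}}\,N_m\!\left(\frac{w}{t_1}\right)=w^{m+2}-(1-\sigma)\,w-\sigma ,\qquad \sigma:=\frac{-\lambda(1-\lambda^{m+1})}{1-\lambda}.
\]
Because $\lambda\in(-1,0)$, each of $-\lambda$, $1-\lambda^{m+1}$, $1-\lambda$ is positive and $\lambda^{m+2}<1$, so $0<\sigma<1$. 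Hence $N_m$ has all its zeros in $|z|\le1/|t_1|$ if and only if the trinomial $g(w):=w^{m+2}-(1-\sigma)w-\sigma$, with $\sigma\in(0,1)$, has all its zeros in $|w|\le1$.

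This last fact is immediate: if $g(w)=0$ and $|w|>1$, then $w^{m+2}=(1-\sigma)w+\sigma$, so
\[
|w|^{m+2}=\bigl|(1-\sigma)w+\sigma\bigr|\le(1-\sigma)|w|+\sigma<(1-\sigma)|w|+\sigma|w|=|w|\le|w|^{m+2},
\]
a contradiction (the strict step uses $\sigma>0$ and $|w|>1$, the last step uses $m+2\ge2$). Equivalently, one may apply Rouch\'e's Theorem (Theorem~\ref{rouche}) on $|w|=R$ for each $R>1$, comparing $g$ with $w^{m+2}$, since there $|(1-\sigma)w+\sigma|\le(1-\sigma)R+\sigma<R\le R^{m+2}$. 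Either way $g$ has all zeros in $|w|\le1$; reversing the substitution, every zero of $P_m$ satisfies $|z|\le 1/|t_1|=1/|\alpha|$. The only genuine work here is the normalization step: one must pick the substitution $z=w/t_1$ and exactly the right scaling constant so that the parity-of-$m$ effects cancel and $g$ emerges with $\sigma\in(0,1)$, which requires careful sign bookkeeping; after that, the localization of the zeros of $g$ costs nothing, so I anticipate no real obstacle.
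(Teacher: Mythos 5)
Your proposal is correct, and it shares the paper's essential reduction while executing the decisive step quite differently. Like the paper, you use Lemma \ref{l5.2} together with Proposition \ref{e5.3} to confine the zeros of $P_m$ to those of the trinomial $N_m(z)=(t_2-t_1)(t_1t_2)^{m+1}z^{m+2}+(t_1^{m+2}-t_2^{m+2})z+(t_2^{m+1}-t_1^{m+1})$, and both arguments ultimately pit that trinomial against its leading term. The paper, however, first rescales the generating function (Corollary \ref{cor5.1}) so that $t_1t_2=-1$, then applies Rouch\'e on the circle $|z|=1/|t_1|+\epsilon$ for every $\epsilon>0$, which forces a case split on the parity of $m$ and the signs of $t_1,t_2$ plus a binomial-expansion bound to control the $\epsilon$-terms. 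You instead normalize the trinomial itself: the substitution $z=w/t_1$ followed by multiplication by $t_1/\bigl((1-\lambda)t_2^{m+2}\bigr)$ does collapse $N_m$ to $w^{m+2}-(1-\sigma)w-\sigma$ with $\sigma=\frac{-\lambda(1-\lambda^{m+1})}{1-\lambda}$, and your claims that $\lambda=t_1/t_2\in(-1,0)$ (from $ac<0$ and, for strictness, $b\neq0$) and that $\sigma\in(0,1)$ both check out, the latter because $\sigma=\frac{-\lambda+\lambda^{m+2}}{1-\lambda}$ and $\lambda^{m+2}<1$. After that, your one-line triangle-inequality contradiction for $|w|>1$ replaces the paper's entire casework and treats even and odd $m$ uniformly, and the roots $1/t_1,1/t_2$ of the extraneous factor are correctly accounted for. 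What your route buys is brevity and the elimination of the $\epsilon$-limit and parity analysis; what the paper's normalization buys is a form of $H_m$ that is reused verbatim in Theorems \ref{c5.1} and \ref{c5.2} and Lemma \ref{l6.0}, so its heavier setup is amortized across the later results.
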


\begin{proof}
We will consider $\{H_m(z)\}_{m\geq 0}$, the sequence of functions of $z$ generated as follows;
\[
\sum_{m=0}^\infty H_m(z) t^m =\frac{1}{\left(-t^2 + \frac{b}{\sqrt{-ac}}t + 1\right)(1-tz)}.
\]
Using the Quadratic Formula, we may deduce that the zeros of $-t^2 + \frac{b}{\sqrt{-ac}}t + 1$ are (note that $b^2-4ac>0$ since $ac<0$)
\[
\frac{-\frac{b}{\sqrt{-ac}}\pm\sqrt{-\frac{b^2}{ac} + 4}}{-2} = \frac{1}{\sqrt{-ac}}\frac{-b\pm\sqrt{b^2 - 4ac}}{-2}.
\]
Let us denote those zeros by $t_1,t_2$ where $|t_1|\leq|t_2|$.
Let $m\in\mathbb{N}\cup\{0\}$ be arbitrary.

Now we will prove that all the roots of $H_m(z)$ lie on the closed disk $G$ centered at 0 with radius $1/|t_1|$.
Here the proof of Lemma \ref{l5.1} may be applied to induce that 
\[
H_m(z) = \frac{t_2^{m+1} - t_1^{m+1} + (t_1^{m+2} - t_2^{m+2})z + (t_2-t_1)t_1^{m+1}t_2^{m+1}z^{m+2}}{-(t_2-t_1)(t_1z-1)(t_2z-1)t_1^{m+1}t_2^{m+1}}.
\]
Our goal is to use Rouch\'{e}'s Theorem to show that all the zeros of the numerator $t_2^{m+1} - t_1^{m+1} + (t_1^{m+2} - t_2^{m+2})z + (t_2-t_1)t_1^{m+1}t_2^{m+1}z^{m+2}$ are on the closed disk $|z|\leq 1/|t_1|$.\\

For the rest of the proof, we shall use the following notation:
\begin{align*}
    p(z) &= -(t_2-t_1)t_1^{m+1}t_2^{m+1}z^{m+2}\\
    q(z) &= t_2^{m+1} - t_1^{m+1} + (t_1^{m+2} - t_2^{m+2})z + (t_2-t_1)t_1^{m+1}t_2^{m+1}z^{m+2}.
\end{align*}\\

Let $\epsilon>0$ be arbitrary.
Let $z$ be  arbitrary such that $|z|= 1/|t_1| + \epsilon$.
Notice that there are 2 cases: either $m$ is even or odd.
We will prove that 
\[
|p(z) + q(z)| \leq |t_2|^{m+1}  + \frac{|t_2|^{m+2}}{|t_1|}+ (|t_2|^{m+2}+|t_1||t_2|^{m+1}) \cdot \epsilon
\]
in both cases.
\begin{enumerate}
    \item $m$ is even\\
    By Vieta's Formula, $t_1t_2 = -1$.
    Then one of $t_1,t_2$ is negative.
If $t_1<0$, given $m$ is odd, then $t_1^{m+1}>0$.
Hence it implies that
    \[
        |t_2^{m+1} - t_1^{m+1}|= t_2^{m+1} - t_1^{m+1} = |t_2^{m+1}| - |t_1^{m+1}|.
    \]
    Likewise, if $t_2<0$, given $m$ is even, $t_2^{m+1}>0$, and consequently, we may deduce that
    \[
        |t_2^{m+1} - t_1^{m+1}|= t_2^{m+1} - t_1^{m+1} = |t_2^{m+1}| + |t_1^{m+1}|.
    \]
    In either cases $t_1>0$, or $t_1<0$, we obtain
    \begin{equation}\label{e2.4}
        |t_2^{m+1} - t_1^{m+1}|= |t_2^{m+1}| + |t_1^{m+1}|.
    \end{equation}
  Similarly, if $t_1<0$, then, given $m$ is even, $t_1^{m+2}>0$, and consequently, we may deduce that
    \[
        |t_1^{m+2} - t_2^{m+2}| = t_2^{m+2} - t_1^{m+2} = |t_2^{m+2}|-|t_1^{m+2}|.
    \]
    Likewise, if $t_2<0$, given $m$ is even, $t_2^{m+2}>0$, and consequently, we may deduce that 
    \[
        |t_1^{m+2} - t_2^{m+2}| =t_2^{m+2} - t_1^{m+2} =|t_2^{m+2}|-|t_1^{m+2}|.
    \]
    Considering these conditions, we obtain, in either cases $t_1>0$, or $t_1<0$, that 
    \begin{equation}\label{e2.3}
        |t_1^{m+2} - t_2^{m+2}| = |t_2^{m+2}|-|t_1^{m+2}|.
    \end{equation}

    By sub-additivity,
    \begin{align*}
        |p(z) + q(z)| = &|t_2^{m+1} - t_1^{m+1} + (t_1^{m+2} - t_2^{m+2})z|\\
        \leq &|t_2^{m+1} - t_1^{m+1}| + |t_1^{m+2} - t_2^{m+2}|\cdot|z|\\
        = &|t_2^{m+1} - t_1^{m+1}| +|t_1^{m+2} - t_2^{m+2}|\cdot\left|\frac{1}{|t_1|} +\epsilon\right|.
    \end{align*}
    With sub-additivity and Equations \eqref{e2.3} and \eqref{e2.4}, we continue our deduction
    \begin{align*}
&|t_2^{m+1} - t_1^{m+1}| +|t_1^{m+2} - t_2^{m+2}|\cdot\left|\frac{1}{|t_1|} +\epsilon\right|\\
        = &|t_2^{m+1}| + |t_1^{m+1}| +(|t_2^{m+2}| - |t_1^{m+2}|) \cdot\left(\frac{1}{|t_1|} +\epsilon\right)\\
        = &|t_2|^{m+1}  + |t_1|^{m+1} + \frac{|t_2|^{m+2}}{|t_1|} - |t_1|^{m+1}+ (|t_2|^{m+2} - |t_1|^{m+2}) \cdot \epsilon\\
        = &|t_2|^{m+1}  + \frac{|t_2|^{m+2}}{|t_1|}+ (|t_2|^{m+2} - |t_1|^{m+2}) \cdot \epsilon\\
        \leq &|t_2|^{m+1}  + \frac{|t_2|^{m+2}}{|t_1|}+ (|t_2|^{m+2}+|t_1||t_2|^{m+1}) \cdot \epsilon.
    \end{align*}
    
    \item   $m$ is odd\\
    
    By Vieta's Formula, $t_1t_2 = -1$.
    Then one of $t_1,t_2$ is negative.
    If $t_1<0$, then, given $m$ is odd, $t_1^{m+1}>0$, and consequently, we may deduce that
    \[
        |t_2^{m+1} - t_1^{m+1}|= t_2^{m+1} - t_1^{m+1} = |t_2^{m+1}| - |t_1^{m+1}|.
    \]
    Likewise, if $t_2<0$, given $m$ is odd, $t_2^{m+1}>0$, and consequently, we may deduce that
    \[
        |t_2^{m+1} - t_1^{m+1}|= t_2^{m+1} - t_1^{m+1} = |t_2^{m+1}| - |t_1^{m+1}|.
    \]
    Considering these conditions, we obtain, in either cases $t_1>0$, or $t_1<0$, that 
    \begin{equation}\label{e2.5}
        |t_2^{m+1} - t_1^{m+1}|= |t_2^{m+1}| - |t_1^{m+1}|.
    \end{equation}
Similarly, if $t_1<0$, then, given $m$ is odd, $t_1^{m+2}<0$.
Consequently, 
    \[
        |t_1^{m+2} - t_2^{m+2}| = t_2^{m+2} - t_1^{m+2} = |t_2^{m+2}| + |t_1^{m+2}|.
    \]
    Likewise, if $t_2<0$, given $m$ is odd, $t_2^{m+2}<0$.
Then
    \[
        |t_1^{m+2} - t_2^{m+2}| = -t_2^{m+2} + t_1^{m+2} =|t_2|^{m+2} + |t_1|^{m+2}.
    \]
    In either cases $t_1>0$, or $t_1<0$, we obtain 
    \begin{equation}\label{e2.6}
        |t_1^{m+2} - t_2^{m+2}| = |t_2^{m+2}| + |t_1^{m+2}|.
    \end{equation}
    We may deduce that, by sub-additivity,
    \begin{align*}
        |p(z) + q(z)| = &|t_2^{m+1} - t_1^{m+1} + (t_1^{m+2} - t_2^{m+2})z|\\
        \leq &|t_2^{m+1} - t_1^{m+1}| +|t_1^{m+2} - t_2^{m+2}|\cdot\left|\frac{1}{|t_1|} +\epsilon\right|.
    \end{align*}
    By Equations \eqref{e2.5} and \eqref{e2.6} and sub-additivity imply that
    \begin{align*}
&|t_2^{m+1} - t_1^{m+1}| +|t_1^{m+2} - t_2^{m+2}|\cdot\left|\frac{1}{|t_1|} +\epsilon\right|\\
        \leq &|t_2^{m+1}| - |t_1^{m+1}| +(|t_2^{m+2}|+|t_1^{m+2}|) \cdot\left(\frac{1}{|t_1|} +\epsilon\right)\\
&|t_2^{m+1}| - |t_1^{m+1}| +(|t_2^{m+2}|+|t_1^{m+2}|) \cdot\left(\frac{1}{|t_1|} +\epsilon\right)\\
        = &|t_2|^{m+1}  - |t_1|^{m+1} + \frac{|t_2|^{m+2}}{|t_1|} + |t_1|^{m+1}+ (|t_2|^{m+2}+|t_1|^{m+2}) \cdot \epsilon\\
        = &|t_2|^{m+1}  + \frac{|t_2|^{m+2}}{|t_1|}+ (|t_2|^{m+2}+|t_1|^{m+2}) \cdot \epsilon.
    \end{align*}
    Since $|t_1|\leq |t_2|$ it follows that
    \begin{align*}
&|t_2|^{m+1}  + \frac{|t_2|^{m+2}}{|t_1|}+ (|t_2|^{m+2}+|t_1|^{m+2}) \cdot \epsilon\\
        \leq &|t_2|^{m+1}  + \frac{|t_2|^{m+2}}{|t_1|}+ (|t_2|^{m+2}+|t_1||t_2|^{m+1}) \cdot \epsilon.
    \end{align*}
\end{enumerate}

Here we obtained, in both cases, either $m$ is even or odd, that
\[
|p(z) + q(z)| \leq |t_2|^{m+1}  + \frac{|t_2|^{m+2}}{|t_1|}+ (|t_2|^{m+2}+|t_1||t_2|^{m+1}) \cdot \epsilon.
\]
Since $1\leq \left(m+2\atop{1}\right)$, it follows that
\begin{align*}
    &|t_2|^{m+1}  + \frac{|t_2|^{m+2}}{|t_1|}+ (|t_2|^{m+2}+|t_1||t_2|^{m+1}) \cdot \epsilon \\
    \leq &|t_2|^{m+1}  + \frac{|t_2|^{m+2}}{|t_1|}+ (|t_2|^{m+2}+|t_1||t_2|^{m+1}) \cdot \epsilon\left(m+2\atop{1}\right).
\end{align*}
Simplifying $|t_2|^{m+1}  + \frac{|t_2|^{m+2}}{|t_1|}$ yields that 
\begin{align*}
&|t_2|^{m+1}  + \frac{|t_2|^{m+2}}{|t_1|}+ (|t_2|^{m+2}+|t_1||t_2|^{m+1}) \cdot \epsilon\left(m+2\atop{1}\right)\\
    = &\frac{|t_1||t_2|^{m+1}  +|t_2|^{m+2}}{|t_1|}+ (|t_2|^{m+2}+|t_1||t_2|^{m+1}) \cdot \epsilon\left(m+2\atop{1}\right).
\end{align*}
We may factor out $(|t_2|^{m+2}+|t_1||t_2|^{m+1})$ and get
\begin{align*}
&\frac{|t_1||t_2|^{m+1}  +|t_2|^{m+2}}{|t_1|}+ (|t_2|^{m+2}+|t_1||t_2|^{m+1}) \cdot \epsilon\left(m+2\atop{1}\right)\\
    = &|t_2|^{m+1}(|t_1|  +|t_2|)\left(\frac{1}{|t_1|}+ \epsilon\left(m+2\atop{1}\right)\right).
\end{align*}
Multiplying and dividing by $|t_1|^{m+1}$, we obtain
\begin{align*}
&|t_2|^{m+1}(|t_1|  +|t_2|)\left(\frac{1}{|t_1|}+ \epsilon\left(m+2\atop{1}\right)\right)\\
    = &|t_1|^{m+1}|t_2|^{m+1}(|t_1|  +|t_2|)\left(\frac{1}{|t_1|^{m+2}}+ \frac{1}{|t_1|^{m+1}}\epsilon\left(m+2\atop{1}\right)\right).
\end{align*}
We may infer that the last multiplicand is a part of binomial expansion $\left(\frac{1}{|t_1|}+ \epsilon\right)^{m+2}$.
So it follows that
\begin{align*}
&|t_1|^{m+1}|t_2|^{m+1}(|t_1|  +|t_2|)\left(\frac{1}{|t_1|^{m+2}}+ \frac{1}{|t_1|^{m+1}}\epsilon\left(m+2\atop{1}\right)\right)\\
    < &|t_1|^{m+1}|t_2|^{m+1}(|t_1|  +|t_2|)\left(\frac{1}{|t_1|}+ \epsilon\right)^{m+2}.
\end{align*}
Recall that $t_1t_2 = -1$, thus they have opposite sign, which implies that $|t_1|  +|t_2| = |t_1 - t_2|$.
This continues our deduction as follows
\begin{align*}
&|t_1|^{m+1}|t_2|^{m+1}(|t_1|  +|t_2|)\left(\frac{1}{|t_1|}+ \epsilon\right)^{m+2}\\
    = &|t_1^{m+1}t_2^{m+1}||t_1  -t_2|\left|\frac{1}{|t_1|}+ \epsilon\right|^{m+2}\\
    = &\left|(t_1^{m+1}t_2^{m+1})(t_1  -t_2)\left(\frac{1}{|t_1|}+ \epsilon\right)^{m+2}\right|\\
    = &|p(z)|.
\end{align*}
Then, by Rouch\'{e}'s Theorem (Theorem \ref{rouche}), all the zeros of $q(z)$ lie inside the circle $|z| = \frac{1}{|t_1|} + \epsilon$.
This implies that all the zeros of $H_m(z)$ lie on the closed disk $|z| \leq \frac{1}{|t_1|}$.

Now let $z'$ be an arbitrary root of $P_m(z)$. 
Applying Corollary \ref{cor5.1}, it follows that 
\[
|z'|\leq \left|\frac{\sqrt{|ac|}}{c}\frac{1}{|t_1|}\right| = \frac{2|a|}{-|b| + \sqrt{b^2-4ac}} = \frac{1}{|\alpha|}.
\]
This implies that all the roots of $P_m(z)$ lie on the closed disk centered at 0 with radius $\frac{1}{|\alpha|}$.
\end{proof}
The next theorem propose the zeros outside of the circle
\begin{thm}\label{c5.1}
Let $a,b,c\in\mathbb{R}$. 
Suppose $ac>0$ and $b^2 - 4ac > 0$.
Let  $\{P_m(z)\}_{m\geq 0}$ be a sequence of functions of $z$ generated as follows:
\[
\sum_{m=0}^\infty P_m(z) t^m =\frac{1}{(at^2+bt+c)(1-tz)}.
\]
Then no zeros of $P_m(z)$ lie inside the open ball centered at the origin with radius $1/|\alpha|$ where $\alpha$ is the smallest (in modulus) zero of $at^2+bt+c$.
\end{thm}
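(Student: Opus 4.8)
The plan is to follow the same template as the proof of Theorem \ref{c5.0}. First I would reduce, via Corollary \ref{cor5.1} exactly as there, to the normalized sequence $\{H_m(z)\}$ generated by $1/\bigl((t^2+\tfrac{b}{\sqrt{ac}}t+1)(1-tz)\bigr)$, whose denominator $t^2+\tfrac{b}{\sqrt{ac}}t+1$ has two real, distinct zeros $t_1,t_2$ (distinct because $b^2-4ac>0$) with $t_1t_2=1$; since $ac>0$ this product is $+1$, so the two zeros have the same sign, and after ordering $|t_1|\le|t_2|$ we get $0<|t_1|<1<|t_2|=1/|t_1|$. It then suffices to show that $H_m$ has no zero in the open disk $|z|<1/|t_1|$, because the statement for $P_m$ follows by unwinding the scaling of Corollary \ref{cor5.1} just as in Theorem \ref{c5.0}.

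By Lemma \ref{l5.2}, $H_m(z)$ is a nonzero constant times $Q(z):=\sum_{k=0}^m(t_2^{m+1-k}-t_1^{m+1-k})z^k$, and by Proposition \ref{e5.3} (using $t_1t_2=1$, so $t_1^kt_2^k=1$) one has $Q(z)(z-t_1)(z-t_2)=\widetilde N(z)$ with $\widetilde N(z):=(t_2-t_1)z^{m+2}-(t_2^{m+2}-t_1^{m+2})z+(t_2^{m+1}-t_1^{m+1})$. So I need $Q$ to have no zero with $|z|<1/|t_1|=|t_2|$. Since the factor $(z-t_1)$ contributes to $\widetilde N$ the zero $t_1$, which lies inside that disk, and $(z-t_2)$ contributes $t_2$, which lies on its boundary, this is equivalent to: for every sufficiently small $\epsilon>0$, $\widetilde N$ has exactly one zero, counted with multiplicity, in $|z|<|t_2|-\epsilon$.

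This I plan to obtain from Rouch\'{e}'s Theorem (Theorem \ref{rouche}) applied on the circle $|z|=|t_2|-\epsilon$, comparing $\widetilde N$ with $f(z):=(t_2^{m+2}-t_1^{m+2})z-(t_2^{m+1}-t_1^{m+1})$, whose single zero $z^{\ast}=(t_2^{m+1}-t_1^{m+1})/(t_2^{m+2}-t_1^{m+2})$ satisfies $|z^{\ast}|<|t_1|$ (a one-line check from $|t_1||t_2|=1$ and $|t_1|<1$), hence lies inside the circle. Since $f(z)+\widetilde N(z)=(t_2-t_1)z^{m+2}$, the hypothesis of Rouch\'{e}'s Theorem becomes $|t_2-t_1|\,(|t_2|-\epsilon)^{m+2}<|f(z)|$ on $|z|=|t_2|-\epsilon$; using that $t_1,t_2$ have the same sign to rewrite all the relevant absolute values as $|t_2|^j-|t_1|^j$, the right side is bounded below by $(|t_2|^{m+2}-|t_1|^{m+2})(|t_2|-\epsilon)-(|t_2|^{m+1}-|t_1|^{m+1})$. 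At $\epsilon=0$ both quantities equal $|t_2|^{m+3}-|t_2|^{m+1}$, and the left one has a strictly more negative $\epsilon$-derivative there — precisely because $t_2^{m+2}-t_1^{m+2}=(t_2-t_1)\sum_{i=0}^{m+1}t_2^{m+1-i}t_1^i<(m+2)(t_2-t_1)t_2^{m+1}$ — so the strict inequality holds for all small $\epsilon>0$. Rouch\'{e}'s Theorem then gives that $\widetilde N$ has exactly one interior zero. If $Q$ had a zero $z_0$ with $|z_0|<|t_2|$, then choosing $\epsilon$ small enough that $|z_0|<|t_2|-\epsilon$, that $|t_1|<|t_2|-\epsilon$, that the displayed inequality holds, and (perturbing $\epsilon$ slightly, since $\widetilde N$ has finitely many zeros) that $\widetilde N$ has no zero on the circle, we would find two interior zeros of $\widetilde N$, namely $t_1$ and $z_0$ — a contradiction. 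Hence $H_m$, and therefore $P_m$, has no zero in the stated open disk.

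The main obstacle is the equality of the two sides of the Rouch\'{e} inequality at $\epsilon=0$ (which is unavoidable, since the zeros actually crowd onto the circle $|z|=1/|\alpha|$ in the limit). This is what prevents a single application of Rouch\'{e}'s Theorem on the critical circle itself, and is the reason the argument has to be run on every slightly smaller circle and then combined with the built-in interior zero $z=t_1$ of $\widetilde N$. The only computation that needs genuine care is the binomial-type estimate $t_2^{m+2}-t_1^{m+2}<(m+2)(t_2-t_1)t_2^{m+1}$, which is what makes the left side of the inequality decay faster than the lower bound for the right side; everything else is bookkeeping parallel to the proof of Theorem \ref{c5.0}.
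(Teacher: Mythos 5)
Your proposal is correct, and it follows the paper's overall skeleton (reduce to the normalized $H_m$ via Corollary \ref{cor5.1}, pass to the numerator polynomial from Proposition \ref{e5.3}, run Rouch\'{e} on circles $|z|=|t_2|-\epsilon$, and derive a contradiction from the built-in zero $t_1$ plus the hypothetical zero of $H_m$), but the key Rouch\'{e} decomposition is genuinely different. The paper compares $\widetilde N$ against $q(z)=(t_2^{m+2}-t_1^{m+2})z-(t_2-t_1)z^{m+2}$, i.e.\ it discards the constant term, so that $\widetilde N+q$ is the constant $t_2^{m+1}-t_1^{m+1}$; this forces a separate argument that the $m+1$ nonzero roots of $q$ all lie outside the disk (so $q$ has exactly one interior zero, at the origin), and the inequality $|q|>|t_2|^{m+1}-|t_1|^{m+1}$ on $|z|=|t_2|-\epsilon$ is verified by fully expanding $(|t_2|-\epsilon)^{m+2}$ and choosing $\epsilon$ below an explicit threshold $\zeta$. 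You instead discard the top-degree term and compare against the linear polynomial $f(z)=(t_2^{m+2}-t_1^{m+2})z-(t_2^{m+1}-t_1^{m+1})$, whose unique zero is located inside by a one-line computation, and you get the inequality $|t_2-t_1|(|t_2|-\epsilon)^{m+2}<|f|$ from equality at $\epsilon=0$ plus a strict first-derivative comparison (equivalent to $|t_2|^{m+2}-|t_1|^{m+2}<(m+2)(|t_2|-|t_1|)|t_2|^{m+1}$). Your route is the more economical of the two: the auxiliary zero count is trivial and the calculus argument replaces the paper's binomial bookkeeping; the paper's version, in exchange, produces an explicit admissible range for $\epsilon$ rather than an unquantified ``sufficiently small.'' Two small points to watch when writing it up: state the derivative inequality in terms of the moduli $|t_1|,|t_2|$ (as signed quantities it can reverse when $t_1,t_2<0$), and note that the case $z_0=t_1$ is covered because $\widetilde N=Q(z)(z-t_1)(z-t_2)$ then has a zero of multiplicity at least two at $t_1$, so the count ``at least two versus exactly one'' still applies.
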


\begin{proof}
We will consider $\{H_m(z)\}_{m\geq 0}$, the sequence of functions of $z$ generated as follows:
\[
\sum_{m=0}^\infty H_m(z) t^m =\frac{1}{\left(t^2 + \frac{b}{\sqrt{ac}}t + 1\right)(1-tz)}.
\]
Notice that the zeros of $t^2 + \frac{b}{\sqrt{ac}}t + 1$ are 
\[
\frac{-\frac{b}{\sqrt{ac}}\pm\sqrt{\frac{b^2}{ac} - 4}}{2} = \frac{1}{\sqrt{ac}}\frac{-b\pm\sqrt{b^2 - 4ac}}{2}.
\]
Let us denote those zeros by $t_1,t_2$ where $|t_1|\leq|t_2|$.
Let $m\in\mathbb{N}\cup\{0\}$ be arbitrary.
Our first goal is to prove that $H_m(z)$ has no root on the open ball $G$ centered at 0 with radius $1/|t_1| = |t_2|$.
Suppose, by contrary, that $H_m(z)$ has a root $\beta$ on the open ball $G$.
Given $ac>0$ and $b^2 - 4ac>0$, it induces that $|-b| = \sqrt{b^2} > \sqrt{b^2-4ac}$, and then 
\[
|-b\pm\sqrt{b^2-4ac}| \geq |-b| - |\sqrt{b^2-4ac}| > 0.
\]
This induces that $t_1$ and $t_2$  are not 0, and $b^2-4ac> 0$ implies that $t_1,t_2$ are distinct.
Lemma \ref{l5.2} implies that
\[
H_m(z) =  \frac{1}{(t_2-t_1)t_1^{m+1}t_2^{m+1}}\sum_{k=0}^{m}(t_2^{m+1-k} - t_1^{m+1-k})t_1^kt_2^kz^k.
\]
By Proposition \ref{e5.3},
\begin{align*}
&\frac{t_2^{m+1} - t_1^{m+1} + (t_1^{m+2} - t_2^{m+2})z + (t_2-t_1)t_1^{m+1}t_2^{m+1}z^{m+2}}{(t_2-t_1)(t_1z-1)(t_2z-1)t_1^{m+1}t_2^{m+1}}\\
= &\frac{1}{(t_2-t_1)t_1^{m+1}t_2^{m+1}}\sum_{k=0}^{m}(t_2^{m+1-k} - t_1^{m+1-k})t_1^kt_2^kz^k.
\end{align*}
By Vieta's Formula, we obtain $t_1t_2 = 1$.
Then it follows that
\begin{align*}
H_m(z) =  &\frac{1}{(t_2-t_1)}\sum_{k=0}^{m}(t_2^{m+1-k} - t_1^{m+1-k})z^k \\
= &\frac{t_2^{m+1} - t_1^{m+1} + (t_1^{m+2} - t_2^{m+2})z + (t_2-t_1)z^{m+2}}{(t_2-t_1)(t_1z-1)(t_2z-1)}
\end{align*}
for $z\in\mathbb{C}\backslash\left\{0,\frac{1}{t_1},\frac{1}{t_2}\right\}$.

Now let us denote 
\begin{align*}
p(z) = &t_2^{m+1} - t_1^{m+1} + (t_1^{m+2} - t_2^{m+2})z + (t_2-t_1)z^{m+2},\\
q(z) = &-(t_1^{m+2} - t_2^{m+2})z - (t_2-t_1)z^{m+2}.
\end{align*}
We will prove that $q(z)$ has only one zero in the open ball centered at the origin with radius $|t_2|$, and then we will apply Rouch\'{e}'s Theorem (c.f. Theorem \ref{rouche}) for these two polynomials to find a contradiction.

Notice that 
\[
q(z) = -(t_1^{m+2} - t_2^{m+2})z - (t_2-t_1)z^{m+2} = -z[(t_1^{m+2} - t_2^{m+2}) + (t_2-t_1)z^{m+1}].
\]
Thus $z = 0$ is one root of $q(z) = 0$, and the other roots are the solutions of 
\[
-(t_1^{m+2} - t_2^{m+2}) - (t_2-t_1)z^{m+1} = 0.
\]

We are to prove that $|t_1|\neq |t_2|$.
Suppose, by contradiction, that $|t_1| = |t_2|$.
Since $t_1,t_2$ are the zeros of $t^2 + \frac{b}{\sqrt{ac}}t + 1$, by Vieta's Formula, $t_1t_2 = 1$.
Then $|t_1| = |t_2|$ implies that 
\[
1 = |t_1t_2| = |t_1||t_2| = |t_1|^2.
\]
This induces that $|t_1| = 1$.
We may infer that
\[
t_1 = t_2 = 1\text{ or }t_1 = t_2 = -1.
\]
Notice that, by Vieta's Formula, 
\[
-\frac{b}{\sqrt{ac}} = t_1 + t_2 = \pm 2.
\]
Then 
\[
\frac{b^2}{ac} = 4,
\]
which yields that
\[
b^2 - 4ac = 0.
\]
This contradicts against our condition $b^2 - 4ac > 0$.
Thus our contrary supposition is false.
So it holds that $|t_1|\neq |t_2|$.
Given that $|t_1|\leq|t_2|$, it follows that 
\begin{equation}\label{e5.22}
|t_1|< |t_2|.
\end{equation}
Since $t_1t_2 = 1$ and $|t_1|<|t_2|$, it follows that $|t_1|^{m+1} <1< |t_2|^{m+1}$. 
It induces that 
\[
|t_2|^{m+2} - |t_1|^{m+2} > |t_2|^{m+2} - |t_2|^{m+1}|t_1|.
\]
This yields that
\begin{equation}\label{e5.20}
\frac{|t_2|^{m+2} - |t_1|^{m+2}}{|t_2| - |t_1|} > |t_2|^{m+1}.
\end{equation}
Meanwhile, for a zero $z_0$ of the equation $-(t_1^{m+2} - t_2^{m+2}) - (t_2-t_1)z^{m+1} = 0$, we can deduce that 
\begin{equation}\label{e5.21}
|z_0|^{m+1} = \left|\frac{t_2^{m+2} - t_1^{m+2}}{t_2-t_1}\right|.
\end{equation}
In view of Inequality \eqref{e5.22}, $|t_1| < |t_2|$, with that $t_1$ and $t_2$ have the same sign, it induces that $|t_2^{m+2} - t_1^{m+2}| = |t_2|^{m+2} - |t_1|^{m+2}$ and that $|t_2-t_1| = |t_2| - |t_1|$.
It implies that 
\[
\left|\frac{t_2^{m+2} - t_1^{m+2}}{t_2-t_1}\right| = \frac{|t_2|^{m+2} - |t_1|^{m+2}}{|t_2| - |t_1|}.
\]
Then Inequality \eqref{e5.20} and Equation \eqref{e5.21} yield that 
\[
|z_0|^{m+1} > |t_2|^{m+1}.
\]
This yields that $|z_0| > |t_2|$, which implies that all the zeros of 
\[
-(t_1^{m+2} - t_2^{m+2}) - (t_2-t_1)z^{m+1}
\]
are outside of the circle centered at the origin with radius $|t_2|$.
This concludes that 
\[
q(z) = -(t_1^{m+2} - t_2^{m+2})z - (t_2-t_1)z^{m+2}
\]
has only one root, $z_0 = 0$, in the open ball centered at the origin with radius $|t_2|$.

Let us denote
\begin{align*}
\sigma := &(m+2)|t_2|^{m+1} - (|t_2|^{m+1} + |t_2|^{m}|t_1| +   \cdots + |t_2||t_1|^{m} + |t_1|^{m+1}),\\
\zeta:=  &\frac{\sigma}{\sum_{k=2}^{m+2}\left({m+2}\atop{k}\right)|t_2|^{m+2-k}}.
\end{align*}
Inequality \eqref{e5.22} implies that $|t_2|^{m+2} > |t_2|^{m+1-k}|t_1|^k$ for any $k = 0,1,2,\dots,m+1$.
This implies that $\sigma>0$.
Then since $\sum_{k=2}^{m+2}\left({m+2}\atop{k}\right)|t_2|^{m+2-k} > 0$, it follows that $\zeta > 0$.
Let $\epsilon>0$ be arbitrary such that 
\begin{equation}\label{e5.5}
\epsilon<\min\left\{1,|t_2|,|t_2| - |t_1|, |t_2| - |\beta|,\zeta\right\}.
\end{equation}
Observe that
\begin{align*}
     \sigma - \epsilon\sum_{k=2}^{m+2}(-\epsilon)^{k-2}\left({m+2}\atop{k}\right)|t_2|^{m+2-k} \geq \sigma - \left|\epsilon\sum_{k=2}^{m+2}(-\epsilon)^{k-2}\left({m+2}\atop{k}\right)|t_2|^{m+2-k}\right|.
\end{align*}
By sub-additivity,
\begin{align*}
\sigma - \left|\epsilon\sum_{k=2}^{m+2}(-\epsilon)^{k-2}\left({m+2}\atop{k}\right)|t_2|^{m+2-k}\right| \geq \sigma - \epsilon\sum_{k=2}^{m+2}\epsilon^{k-2}\left({m+2}\atop{k}\right)|t_2|^{m+2-k}.
\end{align*}
Since $\epsilon < 1$, it follows that 
\begin{align*}
\sigma - \epsilon\sum_{k=2}^{m+2}\epsilon^{k-2}\left({m+2}\atop{k}\right)|t_2|^{m+2-k}    \geq \sigma - \epsilon\sum_{k=2}^{m+2}\left({m+2}\atop{k}\right)|t_2|^{m+2-k}.
\end{align*}
In view of $\epsilon<\zeta = \frac{\sigma}{\sum_{k=2}^{m+2}\left({m+2}\atop{k}\right)|t_2|^{m+2-k}}$, we can infer that
\begin{align*}
&\sigma - \epsilon\sum_{k=2}^{m+2}\left({m+2}\atop{k}\right)|t_2|^{m+2-k}\\
    > &\sigma -\frac{\sigma}{\sum_{k=2}^{m+2}\left({m+2}\atop{k}\right)|t_2|^{m+2-k}}\sum_{k=2}^{m+2}\left({m+2}\atop{k}\right)|t_2|^{m+2-k}\\
    = &\sigma - \sigma\\
    = &0.
\end{align*}
Consequently, we obtain
\begin{equation}\label{e5.23}
\sigma - \epsilon\sum_{k=2}^{m+2}(-\epsilon)^{k-2}\left({m+2}\atop{k}\right)|t_2|^{m+2-k} > 0.
\end{equation}
Let $\gamma$ be a circle with radius $|t_2| - \epsilon$ centered at $0$.
Let $z_0\in\gamma$ be arbitrary.
Notice that
\[
    |q(z_0)| = |-(t_1^{m+2} - t_2^{m+2})z_0 - (t_2-t_1)z_0^{m+2}|.
\]
By sub-additivity,
\begin{align*}
&|-(t_1^{m+2} - t_2^{m+2})z_0 - (t_2-t_1)z_0^{m+2}|    \geq &|-(t_1^{m+2} - t_2^{m+2})z_0| - |(t_2-t_1)z_0^{m+2}|.
\end{align*}
In view of $|z_0| = |t_2| - \epsilon$, it follows that
\begin{align*}
&|-(t_1^{m+2} - t_2^{m+2})z_0| - |(t_2-t_1)z_0^{m+2}|\\
    = &(|t_2|^{m+2} - |t_1|^{m+2})(|t_2| - \epsilon) - (|t_2| - |t_1|)(|t_2| - \epsilon)^{m+2}\\
    = &|t_2|^{m+3} - |t_1|^{m+1} - \epsilon|t_2|^{m+2} + \epsilon|t_1|^{m+2} \\
&- (|t_2| - |t_1|)\sum_{k=0}^{m+2}(-\epsilon)^{k}\left({m+2}\atop{k}\right)|t_2|^{m+2-k}.
\end{align*}
By pulling out the first two addends in the sums and then expanding those two addends, we get
\begin{align}
&|t_2|^{m+3} - |t_1|^{m+1} - \epsilon|t_2|^{m+2} + \epsilon|t_1|^{m+2} \\
&- (|t_2| - |t_1|)\sum_{k=0}^{m+2}(-\epsilon)^{k}\left({m+2}\atop{k}\right)|t_2|^{m+2-k}\nonumber\\
    = &|t_2|^{m+3} - |t_1|^{m+1} - \epsilon|t_2|^{m+2} + \epsilon|t_1|^{m+2}\nonumber\\
    &- (|t_2| - |t_1|)\left(|t_2|^{m+2} - \epsilon(m+2)|t_2|^{m+1} + \sum_{k=2}^{m+2}(-\epsilon)^{k}\left({m+2}\atop{k}\right)|t_2|^{m+2-k}\right)\nonumber\\
\begin{split}\label{e5.24}
   = &|t_2|^{m+3} - |t_1|^{m+1} - \epsilon(|t_2|^{m+2} -|t_1|^{m+2}) -|t_2|^{m+3} + |t_2|^{m+1} \\
&+ \epsilon(|t_2| - |t_1|)(m+2)|t_2|^{m+1} - (|t_2| - |t_1|)  \sum_{k=2}^{m+2}(-\epsilon)^{k}\left({m+2}\atop{k}\right)|t_2|^{m+2-k}.
\end{split}
\end{align}
After canceling $|t_2|^{m+3}$, we may factor as $\epsilon(|t_2|^{m+2} -|t_1|^{m+2}) = \epsilon(|t_2| - |t_1|)(|t_2|^{m+1} + \cdots + |t_1|^{m+1})$, then we can observe that Expression \eqref{e5.24} is equal to
\begin{align*}
    &|t_2|^{m+1} - |t_1|^{m+1} - \epsilon(|t_2| - |t_1|)(|t_2|^{m+1} + \cdots + |t_1|^{m+1}) \\
    &  + \epsilon(|t_2| - |t_1|)(m+2)|t_2|^{m+1} - \epsilon^2(|t_2| - |t_1|)  \sum_{k=2}^{m+2}(-\epsilon)^{k-2}\left({m+2}\atop{k}\right)|t_2|^{m+2-k}.
\end{align*}
If we factor out $\epsilon(|t_2| - |t_1|)$ from the last three addends, then above expression is equal to 
\begin{align*}
    &|t_2|^{m+1} - |t_1|^{m+1} + \epsilon(|t_2| - |t_1|)\left(\sigma- \epsilon\sum_{k=2}^{m+2}(-\epsilon)^{k-2}\left({m+2}\atop{k}\right)|t_2|^{m+2-k}\right).
\end{align*}
Then, in view of Inequality \eqref{e5.23}, it follows that
\begin{align*}
&|t_2|^{m+1} - |t_1|^{m+1} + \epsilon(|t_2| - |t_1|)\left(\sigma- \epsilon\sum_{k=2}^{m+2}(-\epsilon)^{k-2}\left({m+2}\atop{k}\right)|t_2|^{m+2-k}\right)\\
    > &|t_2|^{m+1} - |t_1|^{m+1}\\
    = &|p(z_0) + q(z_0)|.
\end{align*}
By Rouch\'{e}'s Theorem (c.f. Theorem \ref{rouche}), $p(z)$ and $q(z)$ have the same number of zeros in the open ball centered at the origin with radius $|t_2|-\epsilon$.
Meanwhile, recall from condition \eqref{e5.5} that 
\[
\epsilon<\min\left\{1,|t_2|,|t_2| - |t_1|, |t_2| - |\beta|,\zeta\right\}.
\]
It induces that $\epsilon<|t_2|-|t_1|$ and $\epsilon<|t_2|-\beta$.
It follows that $|t_2| - \epsilon > |t_1|$, and that $|t_2| - \epsilon > |\beta|$.
Hence the open ball contains $t_1$ and $\beta$.
Notice that $\beta$ is a zero of $H_m(z)$ and consequently $\beta$ is a zero of $p(z)$.
Meanwhile, if we substitute $t_1$ for $z$ in $H_m(z)$, we may infer that $t_1$ is not a zero of $H_m(z)$.
But $t_1$ is a zero of $p(z)$.
So $\beta$ and $t_1$ are distinct zeros of $p(z)$.
Then $p(z)$ has more roots than $q(z)$ has in the open ball, which contradicts against the deduction made by Rouch\'{e}'s Theorem.
So our contrary supposition is false.
This concludes that $H_m(z)$ has no root on the open ball centered at 0 with radius $1/|t_1|$.

Now let $z'$ be arbitrary root of $P_m(z)$. 
Applying Corollary \ref{cor5.1}, we obtain 
\[
|z'|\geq \frac{\sqrt{|ac|}}{|c|}\frac{1}{|t_1|} = \frac{2|a|}{|b| - \sqrt{b^2-4ac}} = \frac{1}{|\alpha|}.
\]
This concludes that no root of $P_m(z)$ lie on the open ball centered at 0 with radius $\frac{1}{|\alpha|}$.
\end{proof}

\thispagestyle{empty} 

The authors of \cite{Kakeya} introduce the history of studies about zeros of polynomials.
Gauss and Cauchy first started to study this area. 
The research of this area developed throughout the history and introduced in \cite{Kakeya}. We will use the following theorem in this section.
\begin{thm}[Enestr\"{o}m-Kakeya's Theorem, Theorem 1.4, \cite{Kakeya}]\label{kakeya}
If $p(z) = \sum_{k=0}^ma_kz^k$ is a polynomial of degree $m$ with real and positive coefficients, then all the zeros of $p(z)$ lie in the annulus $r\leq |z|\leq R$ where 
\[
r = \min_{0\leq k\leq m-1}\frac{a_k}{a_{k+1}}\text{ and }R = \max_{0\leq k\leq m-1}\frac{a_k}{a_{k+1}}.
\]
\end{thm}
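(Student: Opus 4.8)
The plan is to reduce the general statement to the classical monotone case $0<a_0\le a_1\le\cdots\le a_m$, where one shows directly that every zero lies in the closed unit disc, and then to recover the two bounds $R$ and $r$ by applying that special case to two auxiliary polynomials built from $p$ (one rescaled, one reversed).

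For the monotone case, I would use the standard device of multiplying by $1-z$, which makes the successive differences of the coefficients telescope: $(1-z)p(z)=a_0+\sum_{k=1}^{m}(a_k-a_{k-1})z^k-a_mz^{m+1}$, where now $a_0\ge 0$ and every $a_k-a_{k-1}\ge 0$. If $|z|>1$, then bounding the absolute value of the degree-$m$ part (a polynomial with nonnegative coefficients whose sum is, telescopically, $a_m$) by $a_m|z|^m$ yields $|(1-z)p(z)|\ge a_m|z|^{m+1}-a_m|z|^m=a_m|z|^m(|z|-1)>0$; since $1-z\ne 0$ there as well, $p$ has no zero with $|z|>1$, i.e. every zero satisfies $|z|\le 1$.

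To obtain the upper bound in the theorem, I would apply this to $q(z):=p(Rz)=\sum_{k=0}^m(a_kR^k)z^k$. The choice $R=\max_{0\le k\le m-1}a_k/a_{k+1}$ gives $a_{k-1}/a_k\le R$ for every $k$, which is exactly the assertion that the coefficients $a_kR^k$ form a positive nondecreasing sequence; hence every zero of $q$ lies in $|z|\le 1$, and therefore every zero of $p$ lies in $|z|\le R$. For the lower bound, I would apply the upper bound just established to the reversed polynomial $\widehat p(z):=z^mp(1/z)=\sum_{j=0}^m a_{m-j}z^j$: its consecutive coefficient ratios are the numbers $a_{k+1}/a_k$, whose maximum is $1/r$, so every zero of $\widehat p$ lies in $|z|\le 1/r$; since the zeros of $\widehat p$ are exactly the reciprocals of the zeros of $p$ (both polynomials have nonzero constant term, so no zero is $0$), every zero of $p$ satisfies $|z|\ge r$. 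Combining the two bounds gives $r\le|z|\le R$ for all zeros of $p$.

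The only genuinely load-bearing idea is the multiplication by $1-z$: it converts the monotonicity hypothesis into the form ``nonnegative polynomial of degree $m$ minus a single top-degree term'', after which the estimate on $|z|>1$ is immediate. I expect that to be the single real step; the remainder is routine bookkeeping, namely the rescaling $z\mapsto Rz$ that normalizes the increasing direction and the passage to $\widehat p$ that trades an outer bound for an inner one. The points to verify along the way are merely that $a_m\ne 0$ (so that $p$ and $\widehat p$ both have degree $m$) and that $a_0\ne 0$ (so that $0$ is not a zero of $p$ and reciprocation is legitimate), both of which are immediate from the positivity of the coefficients.
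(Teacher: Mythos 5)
Your proof is correct. Note that the paper does not prove this statement at all: it is quoted verbatim as Theorem 1.4 of the cited survey of Gardner and Govil and used as a black box, so there is no in-paper argument to compare against. What you have written is the standard proof of the Enestr\"{o}m--Kakeya theorem, and each step checks out: the telescoping identity $(1-z)p(z)=a_0+\sum_{k=1}^{m}(a_k-a_{k-1})z^k-a_mz^{m+1}$ together with the bound $\bigl|a_0+\sum_{k=1}^{m}(a_k-a_{k-1})z^k\bigr|\le a_m|z|^m$ for $|z|\ge 1$ (valid because the nonnegative coefficients sum to $a_m$) settles the monotone case; the substitution $z\mapsto Rz$ correctly converts the hypothesis $a_{k-1}/a_k\le R$ into monotonicity of $a_kR^k$ and yields $|z|\le R$; and the reversal $\widehat p(z)=z^mp(1/z)$, legitimate since $a_0,a_m>0$, turns the outer bound into the inner bound $|z|\ge r$. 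The argument is complete as stated.
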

We use this theorem to prove the following main theorem of this section.

\begin{thm}\label{c5.2}
Let $a,b,c\in\mathbb{R}\backslash\{0\}$. 
Suppose $b^2 - 4ac > 0$ and $ac>0$.
Let  $\{P_m(z)\}_{m\geq 0}$ be a sequence of functions of $z$ generated as follows:
\[
\sum_{m=0}^\infty P_m(z) t^m =\frac{1}{(at^2+bt+c)(1-tz)}.
\]
Then no zeros of $P_m(z)$ lie in the closed disk centered at the origin with radius $1/|\alpha|$, where $\alpha$ is a zero of $at^2+bt+c$ with greater modulus.
\end{thm}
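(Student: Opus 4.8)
The plan is to reuse the normalization from Theorems~\ref{c5.0} and~\ref{c5.1}. By Corollary~\ref{cor5.1} (its hypothesis $ac\ne0$ holds), it suffices to work with the sequence $\{H_m(z)\}$ generated by $\frac{1}{(\frac{ac}{|ac|}t^2+\frac{b}{\sqrt{|ac|}}t+1)(1-tz)}$, which, since $ac>0$, equals $\frac{1}{(t^2+\frac{b}{\sqrt{ac}}t+1)(1-tz)}$. Let $t_1,t_2$ denote its zeros with $|t_1|\le|t_2|$. By Vieta's formula $t_1t_2=1$, and $b^2-4ac>0$ together with $ac>0$ forces $t_1,t_2$ to be real, distinct, and of the same sign, whence $|t_1|<1<|t_2|$. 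Applying Lemma~\ref{l5.2} and using $t_1t_2=1$ to collapse $t_1^kt_2^k$ and $t_1^{m+1}t_2^{m+1}$ to $1$, one obtains
\[
H_m(z)=\frac{1}{t_2-t_1}\sum_{k=0}^{m}\bigl(t_2^{m+1-k}-t_1^{m+1-k}\bigr)z^{k}.
\]
The goal is to apply the Enestr\"{o}m--Kakeya Theorem (Theorem~\ref{kakeya}) to $H_m$ and read off that all its zeros have modulus strictly larger than $|t_1|=1/|t_2|$.

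Because Theorem~\ref{kakeya} requires positive coefficients, I would first reduce to the case of positive roots. If $b/\sqrt{ac}>0$, then $t_1,t_2<0$; in that subcase I pass to $\widehat H_m(z)$ generated by $\frac{1}{(t^2-\frac{b}{\sqrt{ac}}t+1)(1-tz)}$, whose roots are $-t_1,-t_2>0$, and a short computation with Lemma~\ref{l5.2} (again using $t_1t_2=1$) gives $\widehat H_m(z)=(-1)^mH_m(-z)$, so $H_m$ and $\widehat H_m$ have zeros of equal modulus and it suffices to treat $\widehat H_m$. Thus I may assume $0<t_1<1<t_2$. Then $H_m$ has degree exactly $m$ and every coefficient $a_k=(t_2^{m+1-k}-t_1^{m+1-k})/(t_2-t_1)$ is strictly positive, with $a_m=1$, so Theorem~\ref{kakeya} applies.

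The core estimate is the consecutive ratio: for $0\le k\le m-1$, writing $\ell=m-k\ge1$,
\[
\frac{a_k}{a_{k+1}}=\frac{t_2^{\ell+1}-t_1^{\ell+1}}{t_2^{\ell}-t_1^{\ell}}=t_1+\frac{t_2^{\ell}(t_2-t_1)}{t_2^{\ell}-t_1^{\ell}}>t_1,
\]
the extra term being positive since $t_2>t_1>0$. Hence the inner radius $r=\min_{0\le k\le m-1}a_k/a_{k+1}$ (a finite minimum of quantities each strictly above $t_1$) satisfies $r>t_1$, and Theorem~\ref{kakeya} gives $|z|\ge r>t_1$ for every zero $z$ of $H_m$; in particular $H_m$ has no zero in the \emph{closed} disk $|z|\le|t_1|$. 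Transferring back by Corollary~\ref{cor5.1}, every zero $z'$ of $P_m$ equals a zero of $H_m$ times $\left|\sqrt{|ac|}/c\right|=\sqrt{ac}/|c|$, so $|z'|>\tfrac{\sqrt{ac}}{|c|}|t_1|$; since $|t_1|=\frac{|b|-\sqrt{b^2-4ac}}{2\sqrt{ac}}$ and $\bigl(|b|-\sqrt{b^2-4ac}\bigr)\bigl(|b|+\sqrt{b^2-4ac}\bigr)=4ac$, this bound equals $\frac{|b|-\sqrt{b^2-4ac}}{2|c|}=\frac{2|a|}{|b|+\sqrt{b^2-4ac}}=1/|\alpha|$ with $\alpha$ the root of $at^2+bt+c$ of greater modulus, which is the assertion.

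The step most in need of care is the sign reduction, i.e., verifying $\widehat H_m(z)=(-1)^mH_m(-z)$ and tracking the scaling constant; both are routine given Lemma~\ref{l5.2} and Corollary~\ref{cor5.1}, and nothing else looks delicate. One may also observe that the statement is a formal consequence of Theorem~\ref{c5.1}, because the closed disk of radius $1/|\alpha|$ (for $\alpha$ the larger-modulus root) lies strictly inside the open disk of radius $1/|\alpha_{\min}|$ appearing there; the Enestr\"{o}m--Kakeya argument above is the self-contained route and in fact yields the stronger bound $|z|>r$, with $r\to|t_2|$ as $m\to\infty$.
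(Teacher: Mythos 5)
Your overall strategy is exactly the paper's: normalize via Corollary \ref{cor5.1} to $H_m$ with roots satisfying $t_1t_2=1$, reduce the negative-root case by the substitution $z\mapsto -z$, and apply the Enestr\"{o}m--Kakeya Theorem \ref{kakeya} to the coefficients $a_k=(t_2^{m+1-k}-t_1^{m+1-k})/(t_2-t_1)$. The problem is the key ratio estimate. You prove only $a_k/a_{k+1}>t_1$, hence that the zeros of $H_m$ have modulus $>|t_1|$, which transfers to $|z'|>1/|\alpha|$ with $\alpha$ the \emph{larger}-modulus root. The paper's proof establishes the much stronger bound
\[
\frac{a_k}{a_{k+1}}=\frac{|t_2^{m+1-k}-t_1^{m+1-k}|}{|t_2^{m-k}-t_1^{m-k}|}>\frac{1}{|t_1|}=|t_2|,
\]
so that the zeros of $H_m$ lie strictly outside $|z|=1/|t_1|$ and the zeros of $P_m$ strictly outside the closed disk of radius $1/|\alpha_{\min}|$, where $\alpha_{\min}$ is the \emph{smallest}-modulus root of $at^2+bt+c$. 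The word ``greater'' in the theorem statement is a slip: the later applications (the case $ac>0$ of Lemma \ref{l6.0}, and the exclusion of $1/t_2$ from $\limsup\mathcal{Z}(P_m)$ in Theorem \ref{c6.0}) invoke Theorem \ref{c5.2} precisely in the form ``no zeros inside $|z|=1/|\alpha_{\min}|$,'' and this is what the paper's argument delivers. Your own observation that your version is a formal consequence of Theorem \ref{c5.1} (closed small disk inside open large ball) should have been the warning sign that the weak reading carries no new content.

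The good news is that your decomposition already contains the fix in one line: with $0<t_1<1<t_2$ and $\ell=m-k\ge 1$,
\[
\frac{t_2^{\ell+1}-t_1^{\ell+1}}{t_2^{\ell}-t_1^{\ell}}>t_2
\iff t_2^{\ell+1}-t_1^{\ell+1}>t_2^{\ell+1}-t_2t_1^{\ell}
\iff t_2t_1^{\ell}>t_1^{\ell+1}
\iff t_2>t_1,
\]
which holds; so the inner Enestr\"{o}m--Kakeya radius exceeds $t_2=1/|t_1|$, and transferring back by Corollary \ref{cor5.1} gives $|z'|>\frac{2|a|}{|b|-\sqrt{b^2-4ac}}=1/|\alpha_{\min}|$. (The paper reaches the same inequality by multiplying $|t_2^{m+1-k}-t_1^{m+1-k}|$ by $|t_1|$ and using $t_1t_2=1$ together with $|t_1|<1$.) Your sign reduction $\widehat H_m(z)=(-1)^mH_m(-z)$ and the final algebraic identification of the radius are correct as written.
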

\begin{proof}
We will consider $\{H_m(z)\}_{m\geq 0}$, the sequence of functions of $z$ generated as follows:
\[
\sum_{m=0}^\infty H_m(z) t^m =\frac{1}{\left(\frac{ac}{|ac|}t^2 + \frac{b}{\sqrt{|ac|}}t + 1\right)(1-tz)}.
\]
Notice that the zeros of $\frac{ac}{|ac|}t^2 + \frac{b}{\sqrt{|ac|}}t + 1$ are 
\[
\frac{-\frac{b}{\sqrt{|ac|}}\pm\sqrt{\frac{b^2}{|ac|} - 4\frac{ac}{|ac|}}}{2\frac{ac}{|ac|}} = \frac{1}{\sqrt{|ac|}}\frac{-b\pm\sqrt{b^2 - 4ac}}{2\frac{ac}{|ac|}}.
\]
Let us denote those zeros by $t_1,t_2$ where $|t_1|\leq|t_2|$.

We are to prove that $|t_1|\neq |t_2|$.
Suppose, by contradiction, that $|t_1| = |t_2|$.
Since $t_1,t_2$ are the zeros of $t^2 + \frac{b}{\sqrt{ac}}t + 1$, by Vieta's Formula, $t_1t_2 = 1$.
Then $|t_1| = |t_2|$ implies that 
\[
1 = |t_1t_2| = |t_1||t_2| = |t_1|^2.
\]
This induces that $|t_1| = 1$.
We may infer that
\[
t_1 = t_2 = 1\text{ or }t_1 = t_2 = -1.
\]
Notice that, by Vieta's Formula, 
\[
-\frac{b}{\sqrt{ac}} = t_1 + t_2 = \pm 2.
\]
Then 
\[
\frac{b^2}{ac} = 4,
\]
which yields that
\[
b^2 - 4ac = 0.
\]
This contradicts our condition $b^2 - 4ac > 0$.
Thus our supposition is false.
So it holds that $|t_1|\neq |t_2|$.
Given that $|t_1|\leq|t_2|$, it follows that 
\begin{equation}\label{e7.1}
|t_1| < |t_2|.
\end{equation}
Recall the explicit form of $H_m(z)$, by Lemma \ref{l5.2},
\[
H_m(z) =  \frac{1}{\frac{ac}{|ac|}(t_2-t_1)t_1^{m+1}t_2^{m+1}}\sum_{k=0}^{m}(t_2^{m+1-k} - t_1^{m+1-k})t_1^kt_2^kz^k, m\geq 0.
\]
By Vieta's Formula, $t_1t_2 = \frac{|ac|}{ac} = 1$.
Then 
\[
H_m(z) =  \frac{1}{t_2-t_1}\sum_{k=0}^{m}(t_2^{m+1-k} - t_1^{m+1-k})z^k, m\geq 0.
\]
Let us choose $m\geq 1$ arbitrarily.
We are to prove that $\frac{|t_2^{m+1-k} - t_1^{m+1-k}|}{|t_2^{m-k} - t_1^{m-k}|}>\frac{1}{|t_1|}$ for $k = 1,2,\dots, m-1$.
Let $0\leq k \leq m-1$ be arbitrary.
With $t_1t_2 = 1$, we may deduce that
\begin{align*}
    |t_2^{m+1-k} - t_1^{m+1-k}|\cdot|t_1| = &|t_2^{m+1-k}t_1 - t_1^{m+2-k}|\\
    = &|t_2^{m-k} - t_1^{m+2-k}|.
\end{align*}
Since $t_1,t_2$ have the same sign, it can be deduced that 
\begin{align*}
   |t_2^{m-k} - t_1^{m+2-k}| = &|t_2|^{m-k} - |t_1|^{m+2-k}.
\end{align*}
Inequality \eqref{e7.1}, in view of $t_1t_2 = 1$, induces that $|t_1|<1$.
It follows that 
\begin{align*}
    |t_2|^{m-k} - |t_1|^{m+2-k} >|t_2|^{m-k} - |t_1|^{m-k} = |t_2^{m-k} - t_1^{m-k}|.
\end{align*}
Consequently we obtain 
\[
|t_2^{m+1-k} - t_1^{m+1-k}|\cdot|t_1| > |t_2^{m-k} - t_1^{m-k}|,
\]
which yields that 
\begin{equation}\label{e7.4}
\frac{|t_2^{m+1-k} - t_1^{m+1-k}|}{|t_2^{m-k} - t_1^{m-k}|}>\frac{1}{|t_1|}.
\end{equation}

With this Inequality \eqref{e7.4}, we will prove that all the zeros of $H_m(z)$ are outside of the circle $|z| = \frac{1}{|t_1|}$.
Denote, for each $0\leq k\leq m$, the coefficient of $z^k$ by $a_k$ as follows 
\[
a_k :=  \frac{t_2^{m+1-k} - t_1^{m+1-k}}{t_2-t_1}.
\]
If $t_2>0$, then, under Inequality \eqref{e7.1}, 
\[
t_2-t_1 = |t_2| - t_1 \geq |t_2| - |t_1| > 0
\]
and
\[
t_2^{m+1-k} - t_1^{m+1-k} = |t_2|^{m+1-k} - t_1^{m+1-k} \geq |t_2|^{m+1-k} - |t_1|^{m+1-k}> 0.
\]
Hence $a_k$ is positive.
So $H_m(z)$ is a polynomial with all positive coefficients.
Thus 
\[
\frac{a_k}{a_{k+1}} = \frac{|a_k|}{|a_{k+1}|} = \frac{|t_2^{m+1-k} - t_1^{m+1-k}|}{|t_2^{m-k} - t_1^{m-k}|}.
\]
Here Inequality \eqref{e7.4} implies that 
\[
\min_{0\leq k\leq m-1}\frac{a_k}{a_{k+1}} =\min_{0\leq k\leq m-1}\frac{|t_2^{m+1-k} - t_1^{m+1-k}|}{|t_2^{m-k} - t_1^{m-k}|} > \frac{1}{|t_1|}.
\]
Then Theorem \ref{kakeya} induces that all the zeros of $H_m(z)$ are outside of the circle $|z| = \frac{1}{|t_1|}$.\\

If $t_2<0$, since one of $m-k$ and $m+1-k$ is odd the the other is even, one of $t_2^{m-k}$ and $t_2^{m + 1-k}$ is positive and the other is negative.
We may infer that 
\begin{equation}\label{e7.3}
\frac{a_k}{a_{k+1}} =  \frac{t_2^{m+1-k} - t_1^{m+1-k}}{t_2^{m-k} - t_1^{m-k}} < 0.
\end{equation}
Then the sign of $a_k$ alternate, since otherwise, there would exist $a_{k'},a_{k'+1}$ with the same sign, and it would yield that
\[
\frac{a_{k'}}{a_{k'+1}} >0
\]
which contradicts against Inequality \eqref{e7.3}.
Notice that
\[
a_0 = \frac{t_2^{m+1} - t_1^{m+1}}{t_2 - t_1}.
\]
Notice that 
\begin{equation}\label{e7.2}
t_2 - t_1 = -|t_2| - t_1 \leq -|t_2| + |t_1| < 0.
\end{equation}
If $m$ is odd, then $m+1$ is even.
It follows that
\[
t_2^{m+1} - t_1^{m+1} = |t_2|^{m+1} - |t_1|^{m+1} > 0.
\]
Considering Inequality \eqref{e7.2}, it holds that $a_0 < 0$.
We may infer that
\[
-H_m(-z) 
\]
have all positive coefficients.
Denote the coefficient of $z^k$ of $-H_m(-z)$ by $b_k$.
Notice that
\[
b_k =  \frac{t_2^{m+1-k} - t_1^{m+1-k}}{t_2-t_1}(-1)^{k+1}.
\]
We may deduce that
\[
\frac{b_k}{b_{k+1}} = \frac{|b_k|}{|b_{k+1}|} = \frac{|a_k|}{|a_{k+1}|} = \frac{|t_2^{m+1-k} - t_1^{m+1-k}|}{|t_2^{m-k} - t_1^{m-k}|}.
\]
Here Inequality \eqref{e7.4} implies that 
\[
\min_{0\leq k\leq m-1}\frac{b_k}{b_{k+1}} =\min_{0\leq k\leq m-1}\frac{|t_2^{m+1-k} - t_1^{m+1-k}|}{|t_2^{m-k} - t_1^{m-k}|} > \frac{1}{|t_1|}.
\]
Then Theorem \ref{kakeya} induces that all the zeros of $-H_m(-z)$ are outside of the circle $|z| = \frac{1}{|t_1|}$.
We may infer that all the zeros of $H_m(z)$ are outside of the circle $|z| = \frac{1}{|t_1|}$.

If $m$ is even, then $m+1$ is odd.
It follows that
\[
t_2^{m+1} - t_1^{m+1} = -|t_2|^{m+1} + |t_1|^{m+1} < 0.
\]
Considering Inequality \eqref{e7.2}, it holds that $a_0 > 0$
We may infer that
\[
H_m(-z)
\]
have all positive coefficients.
Denote the coefficient of $z^k$ of $H_m(-z)$ by $c_k$.
Notice that 
\[
c_k =  \frac{t_2^{m+1-k} - t_1^{m+1-k}}{t_2-t_1}(-1)^k.
\]
We may deduce that
\[
\frac{c_k}{c_{k+1}} = \frac{|c_k|}{|c_{k+1}|} =\frac{|a_k|}{|a_{k+1}|} = \frac{|t_2^{m+1-k} - t_1^{m+1-k}|}{|t_2^{m-k} - t_1^{m-k}|}.
\]
Here Inequality \eqref{e7.4} implies that 
\[
\min_{0\leq k\leq m-1}\frac{c_k}{c_{k+1}} =\min_{0\leq k\leq m-1}\frac{|t_2^{m+1-k} - t_1^{m+1-k}|}{|t_2^{m-k} - t_1^{m-k}|} > \frac{1}{|t_1|}.
\]
Then Enestr\"{o}m- Kakeya's Theorem \ref{kakeya} induces that all the zeros of $H_m(-z)$ are outside of the circle $|z| = \frac{1}{|t_1|}$.
We may infer that all the zeros of $H_m(z)$ are outside of the circle $|z| = \frac{1}{|t_1|}$.

This concluds that, in all cases, the zeros of $H_m(z)$ are outside of the circle $|z| = \frac{1}{|t_1|}$.
\end{proof}
\clearpage
\section{APPLICATION OF LIMIT DISTRIBUTION OF ZEROS}

In the previous section, we showed that the zeros of $P_m(z)$ are related to the circle centered at the origin with radius $1/|\alpha|$ where $\alpha$ is one of the roots of $at^2+bt+c$ with smallest modulus.
In this section we will study the `convergence' of zeros.
\begin{figure}[htp]
    \centering
    \includegraphics[height=4cm]{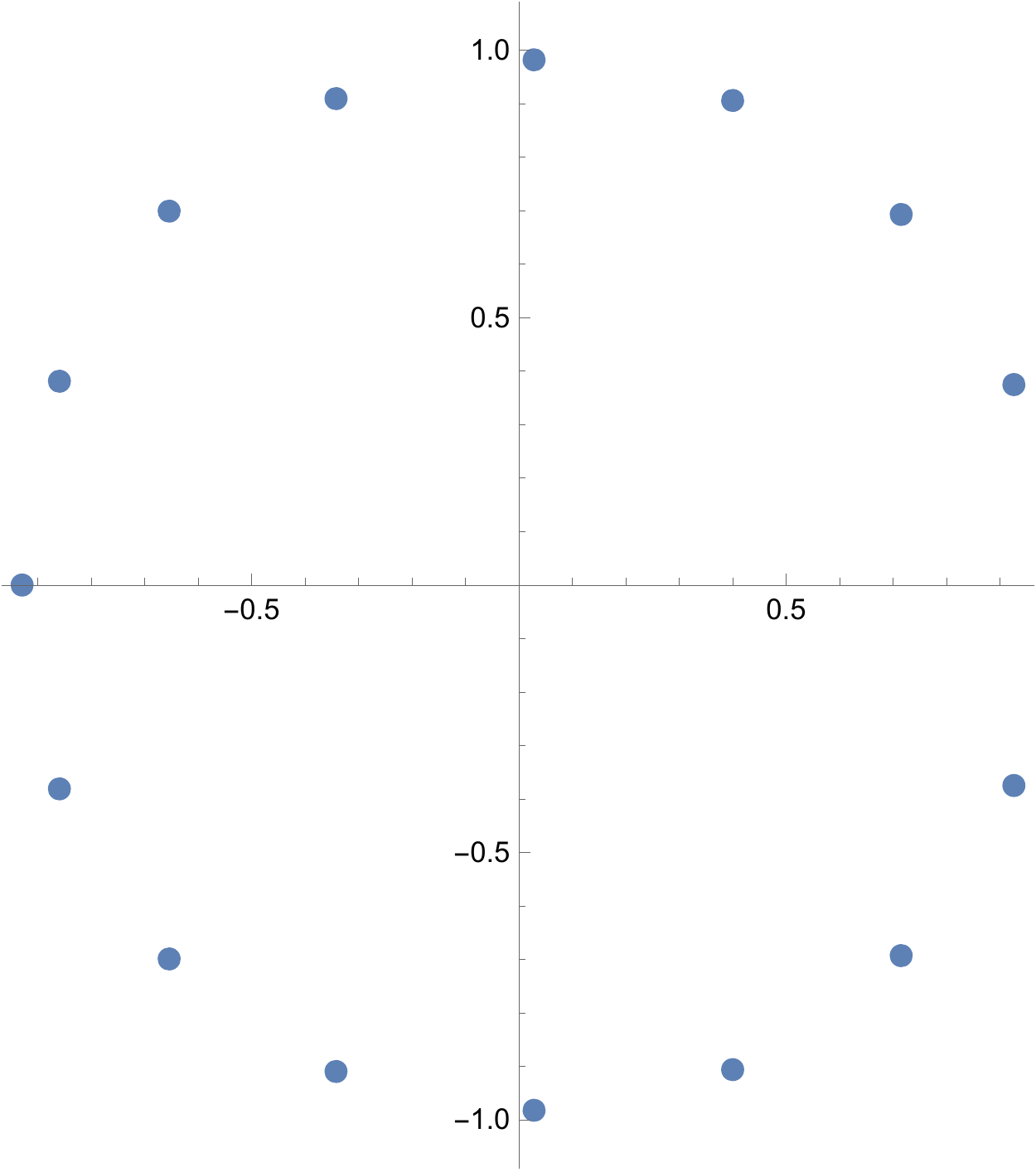}
    \includegraphics[height=4cm]{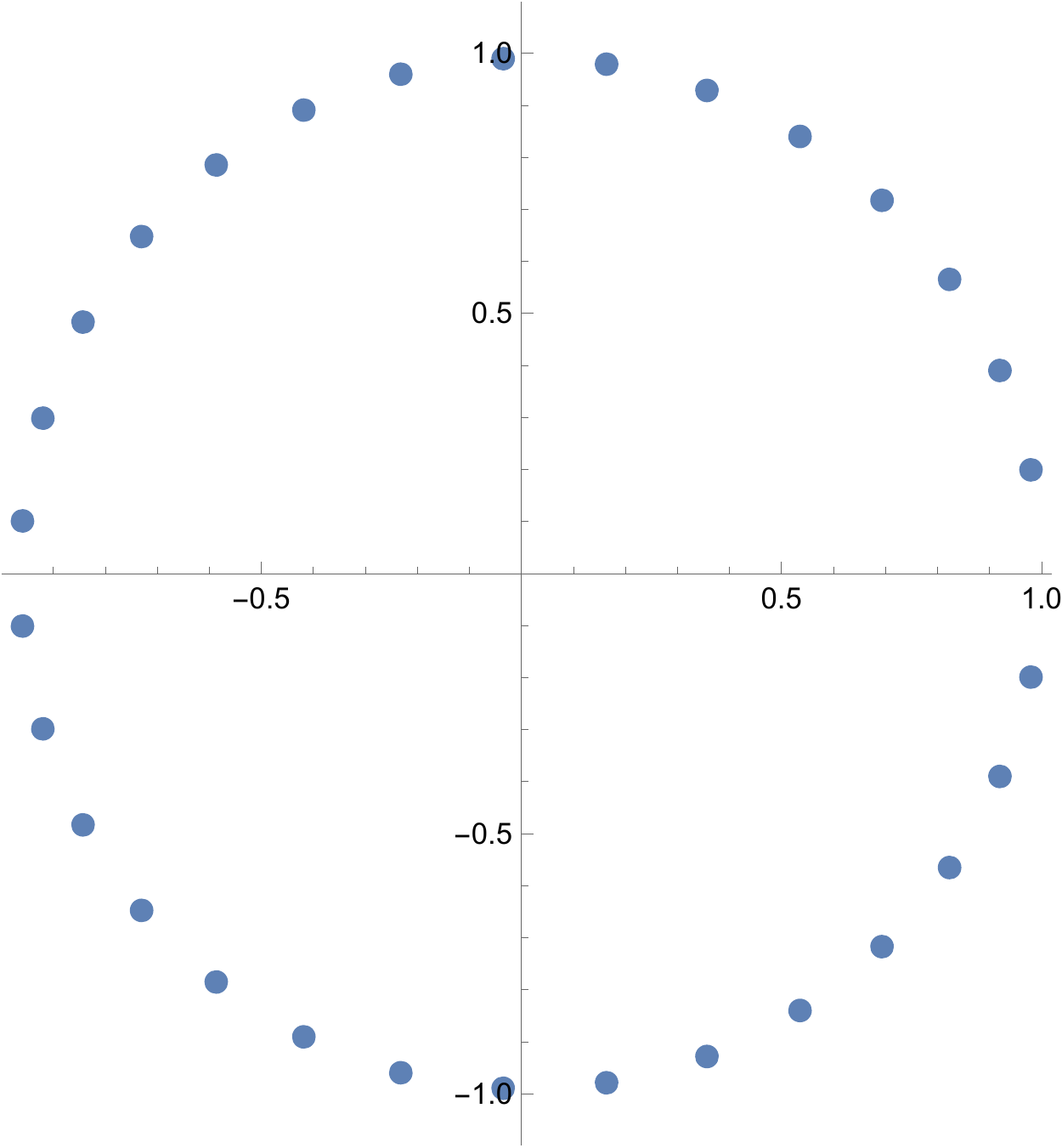}
    \includegraphics[height=4cm]{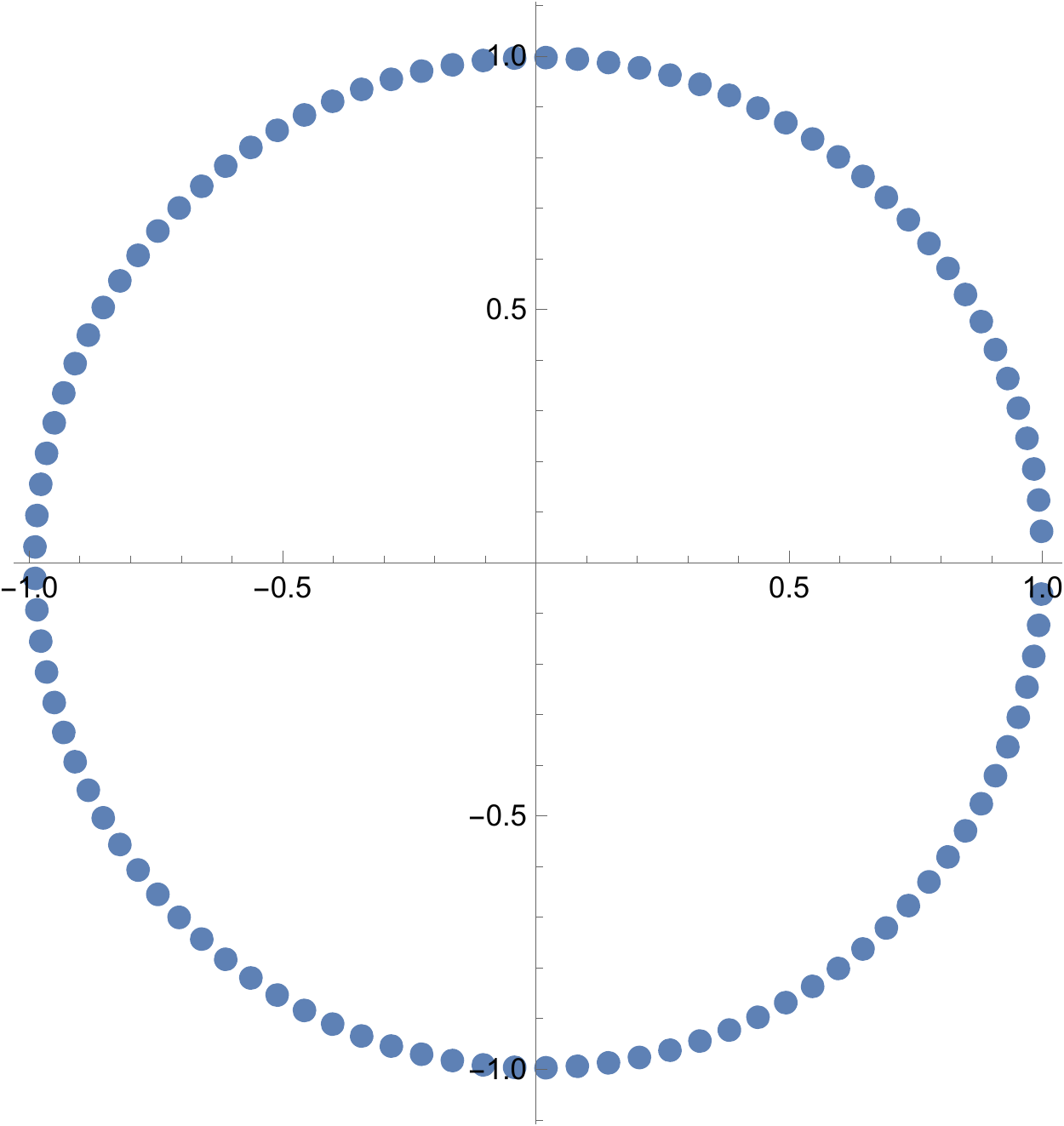}
    \caption{ Zeros of $P_m(z)$ for $m = 15, 30, 100$}
    \label{convergence}
\end{figure}
Figure \ref{convergence} shows the zeros of $P_{15}(z), P_{30}(z),P_{100}(z)$ generated as
\[
\sum_{m=0}^{\infty}P_m(z)t^m = \frac{1}{(t^2 + t-2)(1-zt)}.
\]
The zeros are becoming denser and shaping a circle $|z| = 1$.
The main theorem of this section proves that the zeros converges to the circle.
We now rigorously define what `convergence' is.
First, let us define the set of zeros.
\begin{definition}[(1.5), \cite{Sokal}]
Suppose $\{f_m\}_{m\geq 0}$ analytic in a region (an open and connected set) $D$.
We denote, for each $m\geq 0$, $\mathcal{Z}(f_m)$ the set of zeros of $f_m$ in $D$.
\end{definition}

There are two kind of `limit' of zeros.
\begin{definition}[(1.7), \cite{Sokal}]
We denote $\limsup \mathcal{Z}(f_m)$ the set of $z^*\in D$ such that 
\[
\forall r>0, \forall N\in\mathbb{N}, \exists m>N; B(r,z^*)\cap \mathcal{Z}(f_m)\neq\emptyset.
\]
\end{definition}

\begin{definition}[(1.6), \cite{Sokal}]
We denote $\liminf \mathcal{Z}(f_m)$ to be the set of $z^*\in D$ such that 
\[
\forall r>0, \exists N_r\in\mathbb{N};\forall m>N_r, B(r,z^*)\cap \mathcal{Z}(f_m)\neq\emptyset.
\]
\end{definition}
The two limits above have the following property.
\begin{prop}\label{p6.1}
Suppose $\{f_m\}_{m\geq 0}$ analytic in a region $D$.
Then
\[
\liminf \mathcal{Z}(f_m)\subseteq \limsup \mathcal{Z}(f_m)
\]
\end{prop}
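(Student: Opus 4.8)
The plan is to unwind the two definitions and chase quantifiers; no analysis is needed, only the observation that the ``$\liminf$'' condition is uniform in the tail index while the ``$\limsup$'' condition only asks for infinitely many indices. Both sets are by definition subsets of $D$, so the only thing to verify is the defining property of $\limsup \mathcal{Z}(f_m)$ for a point already known to lie in $\liminf \mathcal{Z}(f_m)$.

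First I would fix an arbitrary $z^* \in \liminf \mathcal{Z}(f_m)$, together with an arbitrary $r > 0$ and an arbitrary $N \in \mathbb{N}$; the goal is to produce some $m > N$ with $B(r,z^*) \cap \mathcal{Z}(f_m) \neq \emptyset$. Applying the definition of $\liminf \mathcal{Z}(f_m)$ to this same $r$, I obtain an index $N_r \in \mathbb{N}$ such that $B(r,z^*) \cap \mathcal{Z}(f_m) \neq \emptyset$ for every $m > N_r$.

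The key step is then simply to choose $m > \max(N, N_r)$. Such an $m$ exists because $\mathbb{N}$ is unbounded, and it satisfies both $m > N$ (so it is an admissible witness for the $\limsup$ condition) and $m > N_r$ (so that $B(r,z^*) \cap \mathcal{Z}(f_m) \neq \emptyset$ by the choice of $N_r$). Since $r > 0$ and $N \in \mathbb{N}$ were arbitrary, this shows $z^* \in \limsup \mathcal{Z}(f_m)$, and since $z^*$ was an arbitrary element of $\liminf \mathcal{Z}(f_m)$ we conclude $\liminf \mathcal{Z}(f_m) \subseteq \limsup \mathcal{Z}(f_m)$.

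I do not expect any genuine obstacle here: the statement is a purely formal consequence of the logical shape of the two definitions, the point being that ``for all sufficiently large $m$'' implies ``for infinitely many $m$.'' The only thing to be careful about is bookkeeping of the quantifier order — making sure $N_r$ is extracted \emph{after} fixing $r$ but the choice of $m$ is made \emph{after} both $N$ and $N_r$ are in hand — and noting that membership in $D$ is inherited automatically.
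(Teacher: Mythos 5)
Your proof is correct and follows essentially the same route as the paper: extract $N_r$ from the $\liminf$ definition after fixing $r$, then pick an index exceeding both $N$ and $N_r$. (If anything, your choice of $m>\max(N,N_r)$ is slightly more careful than the paper's $n_0=\max(N_r,N)$, which is not strictly greater than $N_r$ as the definition technically requires.)
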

\begin{proof}
Let $z^*\in \liminf \mathcal{Z}(f_m)$ be arbitrary.
Let $r>0$ and $N\in\mathbb{N}$ be arbitrary.
Under the definition of $\liminf \mathcal{Z}(f_m)$, there exists $N_r\in\mathbb{N}$ such that, for any $m>N_r$, it holds that $B(r,z^*)\cap \mathcal{Z}(f_m)\neq\emptyset$.
Then, for $n_0 = \max(N_r,N)$ we obtain $B(r,z^*)\cap \mathcal{Z}(f_{n_0})\neq\emptyset$.
This implies that $z^*\in\limsup \mathcal{Z}(f_m)$.
Thus, 
\[
\liminf \mathcal{Z}(f_m)\subseteq \limsup \mathcal{Z}(f_m).\qedhere
\]
\end{proof}
When those two limits are the same we define a new limit.
\begin{definition}
If $\liminf \mathcal{Z}(f_m) = \limsup \mathcal{Z}(f_m)$, then we define 
\[
\lim \mathcal{Z}(f_m) := \liminf \mathcal{Z}(f_m) = \limsup \mathcal{Z}(f_m).
\]
\end{definition}
The main theorem in this section will find the set $\lim \mathcal{Z}(P_m)$.
To confirm the existence of $\lim \mathcal{Z}(P_m)$, and to find the elements of $\lim \mathcal{Z}(P_m)$ the following theorem provides a convenient tool for us.
\begin{thm}\label{t6.0}[Theorem 1.5, \cite{Sokal}]
Suppose $\{f_m\}_{m\geq 0}$ is a sequence of functions analytic in an open connected set (region) $D$.
Suppose that there exists $n\in\mathbb{N}$ and $\alpha_k(z),\beta_k(z)$, $1\le k \le n$, analytic in $D$ such that 
\[
f_m(z) = \sum_{k=1}^n\alpha_k(z)\beta_k(z)^m \text{, for all } m\in \mathbb{N},
\]
and there does not exist $i\neq j$ such that $\beta_i(z) = w\beta_j(z)$ for any $z\in D$, for some $w$ with $|w| = 1$.
Then $\liminf \mathcal{Z}(f_m) = \limsup \mathcal{Z}(f_m)$.
Furthermore, $z^*\in\lim \mathcal{Z}(f_m)$ if and only if one of the following holds
\begin{itemize}
    \item $\exists i\neq j$ such that $|\beta_i(z^*)| = |\beta_j(z^*)|\geq |\beta_k(z^*)|$, for all $k = 1,2,\dots,n$, and $\alpha_i(z^*), \alpha_j(z^*)\neq 0$
    \item
    $\exists i$ such that $|\beta_i(z^*)| > |\beta_k(z^*)|$, for all $k = 1,2,\dots,n$, $k\neq i$, and $\alpha_i(z^*)= 0$
\end{itemize}
\end{thm}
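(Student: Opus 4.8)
The plan is to reduce the whole statement to a local analysis around a candidate point $z^*$, separating the summands according to the modulus of the $\beta_k$. Throughout I would discard any index with $\alpha_k\equiv 0$, set $M(z)=\max_k|\beta_k(z)|$, and let $S(z)$ be the set of indices attaining this maximum. The two tools to lean on are Hurwitz's theorem, to transfer zeros (or the absence of zeros) of a uniform limit to the approximating sequence, and the argument principle together with Rouch\'e's Theorem (Theorem \ref{rouche}), to count zeros inside small circles. The strategy is to prove three things: points with a strictly dominant term and nonvanishing coefficient are \emph{not} limit points; the two bullet conditions each \emph{force} $z^*$ into $\liminf\mathcal{Z}(f_m)$; and these inclusions pinch $\liminf$ and $\limsup$ together.

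First, the easy direction. Suppose there is a unique $i$ with $|\beta_i(z^*)|>|\beta_k(z^*)|$ for all $k\neq i$. By continuity this strict domination persists on a small closed disk $\bar B(\delta,z^*)$, where $\beta_i(z^*)\neq 0$, and I would factor
$$ f_m(z)/\beta_i(z)^m = \alpha_i(z) + \sum_{k\neq i}\alpha_k(z)\left(\beta_k(z)/\beta_i(z)\right)^m. $$
Each ratio has modulus $<1$ uniformly on the disk, so the sum tends to $0$ uniformly and $f_m/\beta_i^m\to\alpha_i$ uniformly on $\bar B(\delta,z^*)$. If $\alpha_i(z^*)\neq 0$, the left side is zero-free near $z^*$ for large $m$, so $z^*\notin\limsup\mathcal{Z}(f_m)$; this excludes every point lying outside the two bullets but having a unique dominant term. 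If instead $\alpha_i(z^*)=0$ (the second bullet), then $\alpha_i$ has an isolated zero at $z^*$, and Hurwitz's theorem applied to $f_m/\beta_i^m\to\alpha_i$ yields zeros of $f_m$ converging to $z^*$, so $z^*\in\liminf\mathcal{Z}(f_m)$.

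Second, the hard direction (first bullet): two co-dominant indices $i\neq j$ with $|\beta_i(z^*)|=|\beta_j(z^*)|=M(z^*)\ge|\beta_k(z^*)|$ and $\alpha_i(z^*),\alpha_j(z^*)\neq 0$. After dividing by $\beta_i^m$, the terms with $|\beta_k(z^*)|<M(z^*)$ again vanish uniformly, so it suffices to produce, for all large $m$, a zero of
$$ g_m(z) = \alpha_i(z) + \alpha_j(z)\,u(z)^m + (\text{uniformly small}), \qquad u := \beta_j/\beta_i, $$
in an arbitrarily small disk about $z^*$, where $|u(z^*)|=1$. This is exactly where the hypothesis is used: since no $\beta_i\equiv w\beta_j$ with $|w|=1$, the function $u$ is \emph{not} a unimodular constant, hence non-constant analytic, so the real-analytic curve $\{|u| = 1\}$ genuinely passes through $z^*$. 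On a small circle $|z-z^*|=\delta$ this curve splits the circle into an arc where $|u|>1$ and an arc where $|u|<1$. On the first arc the term $\alpha_j u^m$ dominates (this needs $\alpha_j(z^*)\neq 0$) and its argument winds $\Theta(m)$ times as $z$ traverses the arc; on the second arc $\alpha_i$ dominates (using $\alpha_i(z^*)\neq 0$) and contributes only bounded winding. I would make this precise with Rouch\'e's Theorem to show the winding number of $g_m$ around the circle grows like $m$, so $g_m$, and therefore $f_m$, has $\Theta(m)$ zeros inside $B(\delta,z^*)$; as $\delta$ is arbitrary, $z^*\in\liminf\mathcal{Z}(f_m)$.

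Finally I would assemble the pieces: the non-accumulation step shows $\limsup\mathcal{Z}(f_m)$ is contained in the set described by the two bullets, while the two accumulation steps show that set is contained in $\liminf\mathcal{Z}(f_m)$; together with $\liminf\subseteq\limsup$ from Proposition \ref{p6.1}, all three sets coincide, yielding both $\liminf\mathcal{Z}(f_m)=\limsup\mathcal{Z}(f_m)$ and the stated characterization of $\lim\mathcal{Z}(f_m)$. The main obstacle is the co-dominant case: turning the heuristic ``the argument of $u^m$ sweeps rapidly'' into a rigorous lower bound on the winding number uniformly in $m$, controlling the crossing points of $\{|u| = 1\}$ with the small circle, handling the possibility that $u'(z^*)=0$ (a branch point of the modulus curve) and more than two co-dominant indices, and verifying that the neglected small terms cannot destroy the count — all of which I would manage through Rouch\'e-type estimates on suitably chosen arcs.
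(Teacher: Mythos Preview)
The paper does not prove this theorem at all: it is stated with the citation ``[Theorem 1.5, \cite{Sokal}]'' and immediately followed by an example of its use, so there is no ``paper's own proof'' to compare your attempt against. Your outline is in fact a faithful sketch of the standard Beraha--Kahane--Weiss argument that Sokal records; the division into the strictly-dominant case (handled by Hurwitz) and the co-dominant case (handled by a winding-number count for $u=\beta_j/\beta_i$) is exactly the right architecture.

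That said, as a self-contained proof your sketch has a real gap in the exclusion direction. You only show $z^*\notin\limsup\mathcal{Z}(f_m)$ when there is a \emph{unique} dominant index with $\alpha_i(z^*)\neq 0$. You never treat the case where two (or more) indices are co-dominant but one of the associated $\alpha$'s vanishes, so that the first bullet fails; such a point must also be excluded from $\limsup\mathcal{Z}(f_m)$ for the ``only if'' direction, and your Hurwitz argument does not cover it because $f_m/\beta_i^m$ no longer converges. The obstacles you list at the end (more than two co-dominant terms, $u'(z^*)=0$, uniform control of the winding lower bound) are genuine and require work, but the missing exclusion case is the structural hole.
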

Before get into our main theorem, let us study an example of this.
\begin{example}
Let us define, for each $m\in\mathbb{N}$,
\[
f_m(z) := (z-1)^m + z^m + (z+1)^m.
\]
Then 
\[
\lim \mathcal{Z}(f_m) = \{z\in\mathbb{C}: \Re(z) = 0\}.
\]
\end{example}
\begin{figure}[htp]
    \centering
    \includegraphics[height=6cm]{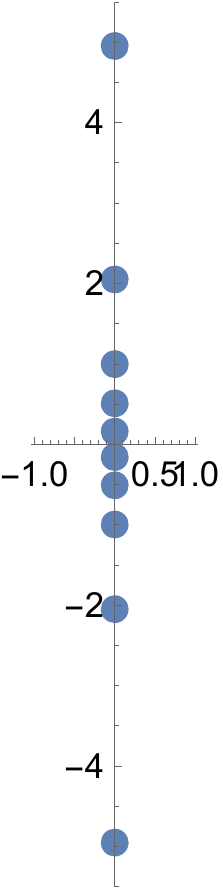}
    \includegraphics[height=6cm]{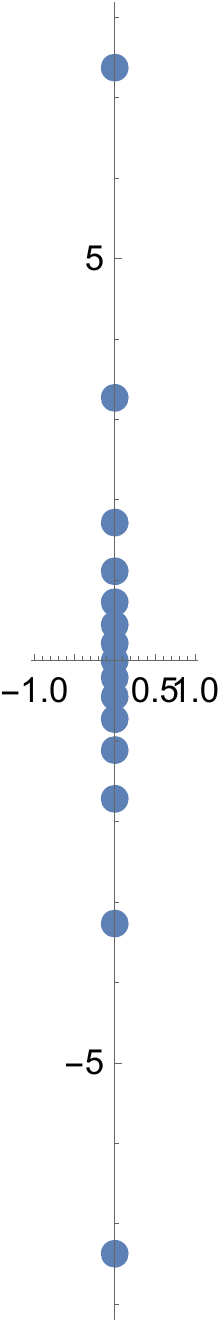}
    \includegraphics[height=6cm]{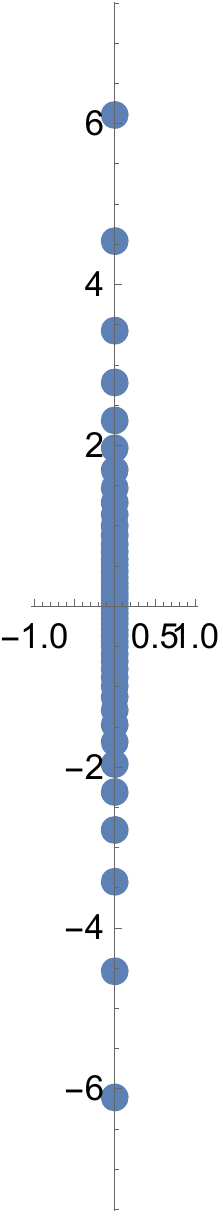}
    \caption{ Zeros of $f_m(z)$ for $m = 10, 15, 50$}
    \label{limiting}
\end{figure}
The points in the Figure \ref{limiting} become denser as $m$ increase.

Notice that 
\[
f_m(z) = \sum_{k=1}^3\alpha_k(z)\beta_k(z)^m \text{, for all } m\in \mathbb{N}
\]
where 
\begin{align*}
    \alpha_1(z) = &1 &\beta_1(z) = &z-1\\
    \alpha_2(z) = &1 &\beta_2(z) = &z\\
    \alpha_3(z) = &1 &\beta_3(z) = &z+1.
\end{align*}
For any $w\in\mathbb{C}$ with $|w| = 1$, we can observe that $|\beta_1(-1)| = 2$ and $|w\beta_2(-1)| = 1$.
Notice that, for any $w\in\mathbb{C}$ with $|w| = 1$, we can observe that $|\beta_2(-1)| = 1$ and $|w\beta_3(-1)| = 0$.
Likewise, for any $w\in\mathbb{C}$ with $|w| = 1$, we can observe that $|\beta_3(-1)| = 0$ and $|w\beta_1(-1)| = 2$.
So, there does not exist $i\neq j$ such that $\beta_i(z) = w\beta_j(z)$ for any $z\in D$, for some $w$ with $|w| = 1$.
This implies, by Theorem \ref{t6.0}, that $\liminf \mathcal{Z}(f_m) = \limsup \mathcal{Z}(f_m)$.
Thus $\lim \mathcal{Z}(f_m)$ exists.\\

Now let us find $\lim \mathcal{Z}(f_m)$.
Let $z^*\in \{z\in\mathbb{C}: \Re(z) = 0\}$ be arbitrary.
Then $z^* = yi$ for some $y\in\mathbb{R}$.
Observe that
\[
    |\beta_1(z^*)| = |yi - 1|  = \sqrt{y^2  + 1}    = |yi + 1|    = |\beta_3(z^*)|
\]
and that 
\[
|\beta_2(z^*)| = |yi| = |y| < \sqrt{y^2 + 1} = |\beta_1(z^*)| = |\beta_3(z^*)|.
\]
Then, by Theorem \ref{t6.0}, $z^*\in \lim \mathcal{Z}(f_m)$.
This implies that 
\[
\{z\in\mathbb{C}: \Re(z) = 0\}\subseteq\lim \mathcal{Z}(f_m).
\]

Let $z^*\in \{z\in\mathbb{C}: \Re(z) = 0\}^c$ be arbitrary.
Then $z^* = x + yi$ for some $x,y\in\mathbb{R}$ with $x\neq 0$.
There are 2 cases: $x < 0$ or $x > 0$.
If $x<0$, notice that 
\[
|x - 1| = -x + 1 > -x = |x|,
\]
and that
\[
|x - 1| = -x + 1 > x + 1,
\]
and that
\[
|x-1| = -x + 1 > -x-1.
\]
Consequently
\[
|x-1| > |x+1|.
\]
This implies that 
\begin{align*}
    \beta_1(z^*) = &|x-1 + yi|\\
    = &\sqrt{|x-1|^2 + y^2}\\
    > &\sqrt{|x|^2 + y^2}\\
    = &|x  + yi|\\
    = &|\beta_2(z^*)|
\end{align*}
and that
\begin{align*}
    \beta_1(z^*) = &|x-1 + yi|\\
    = &\sqrt{|x-1|^2 + y^2}\\
    > &\sqrt{|x + 1|^2 + y^2}\\
    = &|x+1  + yi|\\
    = &|\beta_3(z^*)|.
\end{align*}
Since $\alpha_1(z^*) = 1\neq 0$, Theorem \ref{t6.0} induces that $z^*\in (\lim \mathcal{Z}(f_m))^c$.

If $x>0$, notice that 
\[
|x + 1| = x + 1 > x = |x|.
\]
Similarly, observe that
\[
|x + 1| = x + 1 > x - 1,
\]
and that
\[
|x+1| = x + 1 > -x+1,
\]
and consequently
\[
|x+1| > |x-1|.
\]
This implies that 
\begin{align*}
    \beta_3(z^*) = &|x+1 + yi|\\
    = &\sqrt{|x+1|^2 + y^2}\\
    > &\sqrt{|x|^2 + y^2}\\
    = &|x  + yi|\\
    = &|\beta_2(z^*)|
\end{align*}
and that
\begin{align*}
    \beta_3(z^*) = &|x+1 + yi|\\
    = &\sqrt{|x+1|^2 + y^2}\\
    > &\sqrt{|x - 1|^2 + y^2}\\
    = &|x-1  + yi|\\
    = &|\beta_1(z^*)|.
\end{align*}
Since $\alpha_3(z^*) = 1\neq 0$, by Theorem \ref{t6.0}, induces that $z^*\in (\lim \mathcal{Z}(f_m))^c$.

Thus for both cases $x > 0$ and $x < 0$, we obtain $z^*\in(\lim \mathcal{Z}(f_m))^c$.
This implies that $ \{z\in\mathbb{C}: \Re(z) = 0\}^c\subseteq (\lim \mathcal{Z}(f_m))^c$.
It follows that 
\[
\{z\in\mathbb{C}: \Re(z) = 0\}\supseteq \lim \mathcal{Z}(f_m).
\]
Recall that $\{z\in\mathbb{C}: \Re(z) = 0\}\subseteq\lim \mathcal{Z}(f_m)$.
So we may conclude that
\[
\{z\in\mathbb{C}: \Re(z) = 0\} = \lim \mathcal{Z}(f_m).
\]

When we work on our main theorem, we will show that the set of points from the circle is $\lim \mathcal{Z}(P_m)$.
We will show this except for one limit point of the circle.
To show that the limit point is included in $\lim \mathcal{Z}(P_m)$, we need the following proposition.
\begin{prop}\label{p6.0}
Suppose $\{f_m\}_{m\geq 0}$ analytic in a region  $D$.
Then $\liminf \mathcal{Z}(f_m)$ is closed.
\end{prop}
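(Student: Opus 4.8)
The plan is to prove the equivalent statement that the complement $D\setminus\liminf\mathcal{Z}(f_m)$ is open (as a subset of $D$). So I would fix an arbitrary point $z^{*}\in D\setminus\liminf\mathcal{Z}(f_m)$ and exhibit an open ball around $z^{*}$ contained in the complement.

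First I would unwind the definition of $\liminf\mathcal{Z}(f_m)$ at the point $z^{*}$. Negating the defining clause ``$\forall r>0,\ \exists N_r\in\mathbb{N},\ \forall m>N_r,\ B(r,z^{*})\cap\mathcal{Z}(f_m)\neq\emptyset$'' produces a radius $r_0>0$ such that for every $N\in\mathbb{N}$ there exists $m>N$ with $B(r_0,z^{*})\cap\mathcal{Z}(f_m)=\emptyset$; informally, infinitely many of the $f_m$ have no zero in $B(r_0,z^{*})$. Since $D$ is open, I may also shrink $r_0$ so that $B(r_0,z^{*})\subseteq D$.

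Next I would show that every $w\in B(r_0/2,z^{*})$ also lies outside $\liminf\mathcal{Z}(f_m)$. The key elementary observation is that $|w-z^{*}|<r_0/2$ forces $B(r_0/2,w)\subseteq B(r_0,z^{*})$. Hence, for each $N\in\mathbb{N}$, choosing $m>N$ as above gives $B(r_0/2,w)\cap\mathcal{Z}(f_m)\subseteq B(r_0,z^{*})\cap\mathcal{Z}(f_m)=\emptyset$. Reading this against the definition of $\liminf\mathcal{Z}(f_m)$ with $r=r_0/2$, it says exactly that $w\notin\liminf\mathcal{Z}(f_m)$. Therefore $B(r_0/2,z^{*})\subseteq D\setminus\liminf\mathcal{Z}(f_m)$, so the complement is open and $\liminf\mathcal{Z}(f_m)$ is closed.

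I would remark that analyticity of the $f_m$ is not used at all — the argument works for any sequence of subsets $\mathcal{Z}(f_m)\subseteq D$ — so there is no genuine obstacle here beyond carefully negating the nested quantifiers $\forall r\,\exists N_r\,\forall m$ in the definition of $\liminf$ and passing to a half-radius ball to obtain an honest neighborhood of $z^{*}$.
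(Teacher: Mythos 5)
Your proof is correct. It takes the dual route to the paper's: the paper fixes a limit point $x'$ of $\liminf\mathcal{Z}(f_m)$, picks a nearby $x^{*}$ in the set, and uses the triangle inequality to transfer zeros from $B(r/2,x^{*})$ into $B(r,x')$, concluding that $x'$ itself satisfies the defining condition; you instead negate the nested quantifiers at a point of the complement and show the half-radius ball around it stays in the complement. The two arguments are mirror images --- both hinge on the same inclusion $B(r/2,w)\subseteq B(r,z^{*})$ for $|w-z^{*}|<r/2$ --- so neither buys anything substantive over the other, though your version does require the quantifier negation to be carried out correctly (which you do: ``$\exists r_0>0\ \forall N\ \exists m>N$ with $B(r_0,z^{*})\cap\mathcal{Z}(f_m)=\emptyset$'' is the right negation, and it is exactly what rules out $w\in\liminf\mathcal{Z}(f_m)$ at radius $r_0/2$). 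Your closing remark that analyticity is never used applies equally to the paper's proof; the statement is purely set-theoretic and holds for any sequence of subsets of $D$.
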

\begin{proof}
Let $x'$ be an arbitrary limit point of $\liminf \mathcal{Z}(f_m)$.
Let $r>0$ be arbitrary.
Given $x'$ is a limit point of $\liminf \mathcal{Z}(f_m)$, it follows that 
\[
B\left(\frac{r}{2},x'\right)\cap\liminf \mathcal{Z}(f_m)\neq \emptyset.
\]
Then there exists $x^*\in B\left(\frac{r}{2},x'\right)\cap\liminf \mathcal{Z}(f_m)$.
Given $x^*\in\liminf \mathcal{Z}(f_m)$, there exists $N\in\mathbb{N}$ such that, for any $n>N$,  $B\left(\frac{r}{2},x^*\right)\cap \mathcal{Z}(f_n)\neq\emptyset$.
Hence, for arbitrary $m>N$, there exists $x_m\in B\left(\frac{r}{2},x^*\right)\cap \mathcal{Z}(f_m)$.
It induces that $|x_m-x^*|<\frac{r}{2}$.
Recall that $x^*\in B\left(\frac{r}{2},x'\right)\cap\liminf \mathcal{Z}(f_m)$, which implies that $|x^*-x'|<\frac{r}{2}$.
Then we may deduce that 
\begin{align*}
    |x'-x_m| = &|x'-x^*+x^*-x_m|\\
    \leq &|x'-x^*| + |x^*-x_m|\\
    < &\frac{r}{2} + \frac{r}{2}\\
    = &r
\end{align*}
So $x_m\in B(r,x')$.
Recall that $x_m\in \mathcal{Z}(f_m)$.
Then $x_m\in B\left(r,x'\right)\cap \mathcal{Z}(f_m)$.
Thus $B\left(r,x'\right)\cap \mathcal{Z}(f_m)\neq\emptyset$.
Since we chose $m$ arbitrarily, it holds that 
\[
\forall m>N, B\left(r,x'\right)\cap \mathcal{Z}(f_m)\neq\emptyset.
\]
Here, for arbitrary $r>0$, we found $N\in\mathbb{N}$ such that $\forall m>N, B\left(r,x'\right)\cap \mathcal{Z}(f_m)\neq\emptyset$.
That is, $x'\in \liminf \mathcal{Z}(f_m)$.
\end{proof}
\begin{lem}\label{l6.0}
Let $a,b,c\in\mathbb{R}\backslash\{0\}$ where $b^2 - 4ac > 0$.
Suppose  $\{P_m(z)\}_{m\geq 0}$ is a sequence of functions of $z$ generated as follows:
\[
\sum_{m=0}^\infty P_m(z) t^m =\frac{1}{(at^2+bt+c)(1-tz)}.
\]
Then there exists $N\in\mathbb{N}$ such that 
\[
\mathcal{Z}(P_m)\cap B\left(\frac{1}{2}\left(\frac{1}{|\alpha|} + \frac{1}{|\beta|}\right),0\right) = \emptyset,
\]
for all $m>N$, where $\alpha,\beta$ are the zeros of $at^2+bt+c$ with $|\alpha|\leq|\beta|$.
\end{lem}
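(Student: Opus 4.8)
The plan is to combine the closed form for $P_m$ from Lemma \ref{l5.2} with a single two-term modulus estimate. Write $t_1=\alpha$, $t_2=\beta$ for the zeros of $at^2+bt+c$; these are real, distinct and nonzero because $a,c\neq 0$ and $b^2-4ac>0$, and since $|t_1|=|t_2|$ would force $t_1=-t_2$ and hence $b=0$, in fact $|t_1|<|t_2|$. By Lemma \ref{l5.2} the zeros of $P_m$ coincide with those of the polynomial $Q_m(z):=\sum_{k=0}^m(t_2^{m+1-k}-t_1^{m+1-k})\,t_1^kt_2^kz^k$, the overall factor $\bigl(a(t_2-t_1)t_1^{m+1}t_2^{m+1}\bigr)^{-1}$ being nonzero. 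The first step is to regroup this sum, using $t_2^{m+1-k}t_2^k=t_2^{m+1}$ and $t_1^{m+1-k}t_1^k=t_1^{m+1}$, into the clean form
\[
Q_m(z)=t_2^{m+1}\sum_{k=0}^m(t_1z)^k\;-\;t_1^{m+1}\sum_{k=0}^m(t_2z)^k.
\]

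Next, set $\rho:=\tfrac12(1/|t_1|+1/|t_2|)$, $q:=|t_1|\rho=\tfrac12(1+|t_1|/|t_2|)$, and $p:=|t_2|\rho=\tfrac12(|t_2|/|t_1|+1)$; then $q<1<p$ and $|t_1|\,p=|t_2|\,q=\tfrac12(|t_1|+|t_2|)$, which is the one identity that makes the estimate close. For $z$ with $|z|\le\rho$ one has $|t_1z|\le q<1$, so $\bigl|\sum_{k=0}^m(t_1z)^k\bigr|=|1-(t_1z)^{m+1}|/|1-t_1z|\ge(1-q^{m+1})/(1+q)$, while $\bigl|\sum_{k=0}^m(t_2z)^k\bigr|\le\sum_{k=0}^m p^k\le p^{m+1}/(p-1)$. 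Hence on $|z|\le\rho$ the first term of $Q_m$ has modulus at least $|t_2|^{m+1}(1-q^{m+1})/(1+q)$ and the second at most $|t_1|^{m+1}p^{m+1}/(p-1)=|t_2|^{m+1}q^{m+1}/(p-1)$, using $|t_1|p=|t_2|q$.

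Comparing the two bounds, the first term strictly dominates the second as soon as $q^{m+1}<(p-1)/(p+q)$, and since $q<1$ while $(p-1)/(p+q)$ is a fixed constant in $(0,1)$, this holds for all $m\ge N$ with a suitable $N$. For such $m$ and all $|z|\le\rho$ we get $Q_m(z)\neq 0$ by the triangle inequality, hence $P_m(z)\neq 0$; in particular $\mathcal{Z}(P_m)\cap B(\rho,0)=\emptyset$, which is slightly stronger than required (it in fact holds on the closed disk).

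I do not expect a real obstacle here: once $Q_m$ is in the two-geometric-sum form, everything is routine triangle-inequality bookkeeping, the only point needing a moment's thought being the cancellation $|t_1|p=|t_2|q=|t_1t_2|\rho$, which says that the geometric growth $p^{m+1}$ of the $t_2$-sum is exactly offset by the decay $(|t_1|/|t_2|)^{m+1}$ of its coefficient, leaving the subunit factor $q^{m+1}$. (Alternatively one could apply Rouch\'e's Theorem to the numerator $t_2^{m+1}-t_1^{m+1}+(t_1^{m+2}-t_2^{m+2})z+(t_2-t_1)t_1^{m+1}t_2^{m+1}z^{m+2}$ of Lemma \ref{l5.1}, comparing against its linear term; but then one must separately account for the spurious interior zero of the numerator at $1/t_2$, so the direct estimate above is cleaner.)
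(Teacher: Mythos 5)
Your proof is correct, and it takes a genuinely different route from the paper. The paper splits into two cases: for $ac>0$ it simply invokes Theorem \ref{c5.2} (the Enestr\"{o}m--Kakeya argument), which already excludes zeros from the larger ball $|z|<1/|\alpha|$ for every $m$; for $ac<0$ it normalizes via Corollary \ref{cor5.1}, applies Rouch\'{e}'s Theorem to the numerator $t_2^{m+1}-t_1^{m+1}+(t_1^{m+2}-t_2^{m+2})z+(t_2-t_1)t_1^{m+1}t_2^{m+1}z^{m+2}$ against its constant-plus-linear part on the circle $|z|=\tfrac12(|t_1|+|t_2|)$, and then must separately identify the one interior zero of the numerator as the spurious point $1/t_2$ (which is not a zero of $H_m$) before rescaling back --- exactly the complication you anticipated in your closing parenthetical. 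Your regrouping $Q_m(z)=t_2^{m+1}\sum_{k=0}^m(t_1z)^k-t_1^{m+1}\sum_{k=0}^m(t_2z)^k$ together with the cancellation $|t_1|p=|t_2|q$ gives a single uniform triangle-inequality estimate that handles both signs of $ac$ at once, needs no normalization, no Rouch\'{e}, and no bookkeeping of the spurious zero; it even yields the slightly stronger conclusion on the closed disk $|z|\le\rho$. What it gives up is only what the paper's $ac>0$ branch gets for free from Theorem \ref{c5.2}, namely exclusion from the larger ball of radius $1/|\alpha|$ and validity for all $m$ rather than all sufficiently large $m$ --- but neither is needed for the statement of Lemma \ref{l6.0}. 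All the individual steps check out: the zeros of $P_m$ are those of $Q_m$ because the prefactor in Lemma \ref{l5.2} is a nonzero constant, $|t_1|<|t_2|$ follows from $b\neq0$ exactly as you argue, and the threshold condition $q^{m+1}<(p-1)/(p+q)$ is eventually satisfied since $0<q<1$.
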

\begin{proof}
There are two cases: $ac>0$ or $ac<0$.
Let us first prove the case $ac>0$.
Choose $z_0\in B\left(\frac{1}{2}\left(\frac{1}{|\alpha|} + \frac{1}{|\beta|}\right),0\right)$ be arbitrary.
Then 
\[
|z_0|< \frac{1}{2}\left(\frac{1}{|\alpha|} + \frac{1}{|\beta|}\right).
\]
In view of $|\alpha|\leq |\beta|$, we may infer that 
\[
\frac{1}{|\alpha|} + \frac{1}{|\beta|} \leq \frac{2}{|\alpha|}.
\]
It follows that 
\[
\frac{1}{2}\left(\frac{1}{|\alpha|} + \frac{1}{|\beta|}\right) \leq \frac{1}{|\alpha|}.
\]
Consequently, we obtain
\[
|z_0| < \frac{1}{2}\left(\frac{1}{|\alpha|} + \frac{1}{|\beta|}\right) \leq \frac{1}{|\alpha|}.
\]
Theorem \ref{c5.2} implies that all the zeros of $P_m(z)$ are outside of the circle $|z| = \frac{1}{|\alpha|}$, for all $m=1,2,\dots$.
Hence $z_0$ is not a zero of $P_m(z)$, for all $m=1,2,\dots$.
This implies that 
\[
z_0\notin \bigcup_{m = 0}^{\infty}\mathcal{Z}(P_m).
\]
It concludes that 
\[
\mathcal{Z}(P_m)\cap B\left(\frac{1}{2}\left(\frac{1}{|\alpha|} + \frac{1}{|\beta|}\right),0\right) = \emptyset,
\]
for all $m\geq 0$.

Now let us prove the case $ac<0$.
We will consider $\{H_m(z)\}_{m\geq 0}$, the sequence of functions of $z$ generated as follows:
\[
\sum_{m=0}^\infty H_m(z) t^m =\frac{1}{\left(\frac{ac}{|ac|}t^2 + \frac{b}{\sqrt{|ac|}}t + 1\right)(1-tz)} = \frac{1}{\left(-t^2 + \frac{b}{\sqrt{-ac}}t + 1\right)(1-tz)}.
\]
Let us denote those zeros of $-t^2 + \frac{b}{\sqrt{-ac}}t + 1$ by $t_1,t_2$ where $|t_1|\leq|t_2|$.

We are to prove that $|t_1|\neq |t_2|$.
Suppose, by contradiction, that $|t_1| = |t_2|$.
Since $t_1,t_2$ are the zeros of $\frac{ac}{|ac|}t^2 + \frac{b}{\sqrt{|ac|}}t + 1$, by Vieta's Formula, $t_1t_2 = -1$.
Then $|t_1| = |t_2|$ implies that 
\[
1 = |t_1t_2| = |t_1||t_2| = |t_1|^2.
\]
This induces that $|t_1| = |t_2| = 1$.
For the case $ac < 0$, we may infer that 
\[
\left\{\begin{array}{cc} t_1 = &1\\ t_2 = &-1\end{array}\right. \text{ or } \left\{\begin{array}{cc} t_1 = &-1\\ t_2 = &1\end{array}\right.
\]
Notice that, by Vieta's Formula, 
\[
-\frac{b}{\sqrt{ac}} = t_1 + t_2 = 0.
\]
This implies that $b = 0$.
It contradicts against our condition $b \neq 0$.
Thus our contrary supposition is false.
So it holds that $|t_1|\neq |t_2|$.
Given that $|t_1|\leq|t_2|$, it follows that 
\[
|t_1| < |t_2|.
\]
With Proposition \ref{e5.3} and Lemma \ref{l5.2}, we may infer that, for $z\neq\left\{0,\frac{1}{t_1},\frac{1}{t_2}\right\}$,
\[
H_m(z) = \frac{t_2^{m+1} - t_1^{m+1} + (t_1^{m+2} - t_2^{m+2})z + (t_2-t_1)t_1^{m+1}t_2^{m+1}z^{m+2}}{-(t_2-t_1)(t_1z-1)(t_2z-1)t_1^{m+1}t_2^{m+1}}, m\geq 0.
\]
Vieta's Formula induces $t_1t_2 = -1$, which yields that 
\[
H_m(z) = \frac{t_2^{m+1} - t_1^{m+1} + (t_1^{m+2} - t_2^{m+2})z + (t_2-t_1)(-1)^{m+1}z^{m+2}}{-(t_2-t_1)(t_1z-1)(t_2z-1)(-1)^{m+1}}, m\geq 0.
\]
Our plan is to apply Rouch\'{e}'s Theorem to prove that there exists $N\in\mathbb{N}$ such that the numerator of $H_m(z)$ has only one zero inside the circle $|z| = \frac{|t_1| + |t_2|}{2}$ for all $m > N$.
Define
\begin{align*}
p(z) = &t_2^{m+1} - t_1^{m+1} + (t_1^{m+2} - t_2^{m+2})z + (t_2-t_1)(-1)^{m+1}z^{m+2}\\
q(z) = &-t_2^{m+1} + t_1^{m+1} - (t_1^{m+2} - t_2^{m+2})z.
\end{align*}
Notice that the zero of $q(z)$ is
\[
\frac{-t_2^{m+1} + t_1^{m+1}}{t_1^{m+2} - t_2^{m+2}}.
\]
The triangle inequality implies that 
\begin{equation}\label{e6.1}
\left|\frac{-t_2^{m+1} + t_1^{m+1}}{t_1^{m+2} - t_2^{m+2}}\right| \leq \frac{|t_2|^{m+1} + |t_1|^{m+1}}{|t_2|^{m+2} - |t_1|^{m+2}}.
\end{equation}
Since $|t_1t_2| = |-1| = 1$ and $|t_1|< |t_2|$, we may infer that $|t_1|  < 1$ and $|t_2| > 1$.
From $|t_2| > 1$, it follows that
\begin{equation}\label{e6.2}
|t_2|^2 - 1 > 0.
\end{equation}
From $|t_1|<1$, it follows the existence of $N_1\in\mathbb{N}$ such that 
\[
3|t_1|^{m+1} + |t_1|^{m+3} < |t_2|^2 - 1,
\]
for all $m>N_1$.
Since $|t_2| > 1$, the inequality above implies that, for any $m>N_1$,
\[
3|t_1|^{m+1} + |t_1|^{m+3} < |t_2|^{m+1}(|t_2|^2 - 1)=|t_2|^{m+3} - |t_2|^{m+1}.
\]
Adding both sides by $2|t_2|^{m+1} - |t_1|^{m+1} - |t_1|^{m+3}$, we may infer that
\begin{align*}
3|t_1|^{m+1} + |t_1|^{m+3} &<  |t_2|^{m+3} - |t_2|^{m+1}\\
&\Updownarrow\\
2|t_2|^{m+1} + 2|t_1|^{m+1} &< |t_2|^{m+3} - |t_1|^{m+1} + |t_2|^{m+1} - |t_1|^{m+3}\\
&\Updownarrow\\
2(|t_2|^{m+1} + |t_1|^{m+1}) &< \left[|t_2|^{m+2} - |t_1|^{m+2}\right]\cdot\left[|t_2| + |t_1|\right]\\
&\Updownarrow\\
\frac{|t_2|^{m+1} + |t_1|^{m+1}}{|t_2|^{m+2} - |t_1|^{m+2}} &< \frac{|t_2| + |t_1|}{2}.
\end{align*}
It follows from \eqref{e6.1} that $q(z)$ has its only one zero inside the circle $|z| = \frac{|t_2| + |t_1|}{2}$, for all $m>N_1$.

Since $|t_1| < |t_2|$, we may infer that $\frac{|t_1| + |t_2|}{2} < |t_2|$.
Then 
\begin{equation}\label{e6.3}
\frac{|t_1| + |t_2|}{2} = r|t_2|
\end{equation}
for some $0<r<1$.
Recall from \eqref{e6.2} that $|t_2|^2 - 1 > 0$.
Since $0<r<1$, there exists $N_2\in\mathbb{N}$ such that 
\[
r^{m+1} < \frac{|t_2|^2 - 1}{4|t_2| + 4|t_1|}\frac{1}{|t_2|},
\]
for all $m>N_2$.
Observe that
\begin{equation}\label{e6.4}
\frac{|t_2|^2 - 1}{2} = \frac{|t_2|^2 + 1 - 2}{2} = \frac{|t_2|^2 + |t_1|\cdot|t_2|}{2} - 1 = \frac{|t_2| + |t_1|}{2}|t_2| - 1.
\end{equation}
Then for any $m>N_2$,
\begin{align*}
r^{m+2} &< \frac{|t_2|^2 - 1}{4|t_2| + 4|t_1|}\frac{1}{|t_2|}\\
&\Updownarrow\\
r^{m+2} &< \frac{1}{2(|t_2| + |t_1|)}\left[\frac{|t_2|^2 - 1}{2}\right]\frac{1}{|t_2|}\\
&\Updownarrow\\
(|t_2| + |t_1|)r^{m+2} &< \frac{1}{2}\left[\left(\frac{|t_2| + |t_1|}{2}\right)|t_2| - 1\right]\frac{1}{|t_2|}\\
\end{align*}
Multiplying both sides by $|t_2|^{m+2}$, we obtain, for any $m>N_2$,
\[
(|t_2| + |t_1|)r^{m+2}|t_2|^{m+2} < \frac{1}{2}\left[\left(\frac{|t_2| + |t_1|}{2}\right)|t_2|^{m+2} - |t_2|^{m+1}\right].
\]
Equation \eqref{e6.3} induces that, for any $m>N_2$,
\begin{equation}\label{e6.5}
(|t_2| + |t_1|)\left(\frac{|t_1| + |t_2|}{2}\right)^{m+2} < \frac{1}{2}\left[\left(\frac{|t_2| + |t_1|}{2}\right)|t_2|^{m+2} - |t_2|^{m+1}\right].
\end{equation}
Since $|t_1| < 1 < |t_2|$, there exists $N_3\in\mathbb{N}$ such that 
\[
|t_1|^{m+2}\left(\frac{|t_2| + |t_1|}{2}\right) + |t_1|^{m+1} < \frac{|t_2|^2 - 1}{4},
\]
for all $m>N_3$.
By Equation \eqref{e6.4},
\begin{equation}\label{e6.6}
|t_1|^{m+2}\left(\frac{|t_2| + |t_1|}{2}\right) + |t_1|^{m+1} < \frac{|t_2|^2 - 1}{4}= \frac{1}{2}\left[\left(\frac{|t_2| + |t_1|}{2}\right)|t_2| - 1\right],
\end{equation}
for all $m>N_3$.
Define 
\[
N = \max(N_1,N_2,N_3).
\]
Let $m>N$, be arbitrary.
Inequality \eqref{e6.6} yields that 
\begin{align*}
&\left(\frac{|t_2| + |t_1|}{2}\right)|t_2|^{m+2} - |t_2|^{m+1} - \left[|t_1|^{m+2}\left(\frac{|t_2| + |t_1|}{2}\right) + |t_1|^{m+1}\right] \\
> &\left(\frac{|t_2| + |t_1|}{2}\right)|t_2|^{m+2} - |t_2|^{m+1} - \frac{1}{2}\left[\left(\frac{|t_2| + |t_1|}{2}\right)|t_2| - 1\right]\\
> &\left(\frac{|t_2| + |t_1|}{2}\right)|t_2|^{m+2} - |t_2|^{m+1} - \frac{|t_2|^{m+1}}{2}\left[\left(\frac{|t_2| + |t_1|}{2}\right)|t_2| - 1\right]\\
= &\frac{1}{2}\left[\left(\frac{|t_2| + |t_1|}{2}\right)|t_2|^{m+2} - |t_2|^{m+1}\right]
\end{align*}
Then by Inequality \eqref{e6.5},
\begin{align*}
&(|t_2| + |t_1|)\left(\frac{|t_1| + |t_2|}{2}\right)^{m+2} \\
< &\left(\frac{|t_2| + |t_1|}{2}\right)|t_2|^{m+2} - |t_2|^{m+1} - \left[|t_1|^{m+2}\left(\frac{|t_2| + |t_1|}{2}\right) + |t_1|^{m+1}\right].
\end{align*}
Applying triangle inequality, we may infer that 
\begin{align*}
&\left(\frac{|t_2| + |t_1|}{2}\right)|t_2|^{m+2} - |t_2|^{m+1} - \left[|t_1|^{m+2}\left(\frac{|t_2| + |t_1|}{2}\right) + |t_1|^{m+1}\right]\\
\leq &\left|-\frac{|t_2| + |t_1|}{2}t_2^{m+2}\right| - \left|t_2^{m+1} - t_1^{m+1} + \frac{|t_2| + |t_1|}{2}t_1^{m+2}\right|\\
\leq &\left|-\frac{|t_2| + |t_1|}{2}t_2^{m+2}+ \frac{|t_2| + |t_1|}{2}t_1^{m+2} + t_2^{m+1} - t_1^{m+1} \right|\\
= &\left|\frac{|t_2| + |t_1|}{2}(t_1^{m+2} - t_2^{m+2}) + t_2^{m+1} - t_1^{m+1} \right|.
\end{align*}
Consequently, 
\[
(|t_2| + |t_1|)\left(\frac{|t_1| + |t_2|}{2}\right)^{m+2} < \left|\frac{|t_2| + |t_1|}{2}(t_1^{m+2} - t_2^{m+2}) + t_2^{m+1} - t_1^{m+1} \right|.
\]
Since $t_1t_2 = -1$, it holds that $|t_2 - t_1| = |t_2| + |t_1|$.
Then 
\[
(|t_2- t_1|)\left(\frac{|t_1| + |t_2|}{2}\right)^{m+2} < \left|\frac{|t_2| + |t_1|}{2}(t_1^{m+2} - t_2^{m+2}) + t_2^{m+1} - t_1^{m+1} \right|.
\]
Thus, by Rouch\'{e}'s Theorem (Theorem \ref{rouche}), we conclude that 
\begin{align*}
p(z) = &t_2^{m+1} - t_1^{m+1} + (t_1^{m+2} - t_2^{m+2})z + (t_2-t_1)(-1)^{m+1}z^{m+2}\\
q(z) = &-t_2^{m+1} + t_1^{m+1} - (t_1^{m+2} - t_2^{m+2})z
\end{align*}
have exactly one zero inside the circle $|z| = \frac{|t_1| + |t_2|}{2}$.
Since $|t_1t_2| = 1$, it holds that 
\[
\left|\frac{1}{t_2}\right| = |t_1| = \frac{|t_1| + |t_1|}{2} < \frac{|t_1| + |t_2|}{2}.
\]
Notice that $\frac{1}{t_2}$ is a zero of $p(z)$.
Hence $\frac{1}{t_2}$ is the only zero of $p(z)$ inside the circle $|z| = \frac{|t_1| + |t_2|}{2}$.
If we substitute $\frac{1}{t_2}$ for $z$, we may infer that $\frac{1}{t_2}$ is not a zero of $H_m(z)$.
Then $H_m(z)$ does not have a zero inside the circle $|z| = \frac{|t_1| + |t_2|}{2}$, since, otherwise, $p(z)$ would have two zeros inside the circle.
Now let $m>N$ be an arbitrary, and $z'$ be arbitrary root of $P_m(z)$. 
Applying Corollary \ref{cor5.1}, we obtain 
\begin{align*}
|z'|\geq &\frac{\sqrt{|ac|}}{|c|}\frac{|t_1| + |t_2|}{2} \\
= &\frac{\sqrt{|ac|}}{|c|}\frac{1}{2}\left(\frac{1}{|t_1|} + \frac{1}{|t_2|}\right)\\
= &\frac{\sqrt{|ac|}}{|c|}\frac{1}{2}\left(\frac{2\sqrt{|ac|}}{\sqrt{b^2-4ac} - |b|}  + \frac{2\sqrt{|ac|}}{\sqrt{b^2-4ac} + |b|} \right)\\
= &\frac{1}{2}\left(\frac{2|a|}{\sqrt{b^2-4ac} - |b|}  + \frac{2|a|}{\sqrt{b^2-4ac} + |b|} \right)\\
= &\frac{1}{2}\left(\frac{1}{|\alpha|}  + \frac{1}{|\beta|} \right).
\end{align*}
It concludes that 
\[
\mathcal{Z}(P_m)\cap B\left(\frac{1}{2}\left(\frac{1}{|\alpha|} + \frac{1}{|\beta|}\right),0\right) = \emptyset,
\]
for all $m\geq N$.
\end{proof}

We are ready to prove the main theorem of this section and paper.
\begin{thm}\label{c6.0}
Let $a,b,c\in\mathbb{R}\backslash\{0\}$ where $b^2 - 4ac > 0$.
Suppose  $\{P_m(z)\}_{m\geq 0}$ is a sequence of functions of $z$ generated as follows:
\[
\sum_{m=0}^\infty P_m(z) t^m =\frac{1}{(at^2+bt+c)(1-tz)}.
\]
Then 
\[\lim \mathcal{Z}(P_m) = \left\{z\in\mathbb{C}: |z| = \frac{1}{|t_1|}\right\}
\]
where $t_1$ is the smallest (in modulus) zero of $at^2+bt+c$.
\end{thm}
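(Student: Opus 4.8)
\emph{Proof idea.} The plan is to reduce to a normalized generating function, put $P_m$ into the exponential form required by Theorem~\ref{t6.0}, apply that theorem on a punctured plane, and then patch in the single point where the representation degenerates. First, by Corollary~\ref{cor5.1} it suffices to prove the conclusion for the sequence $\{H_m(z)\}_{m\geq0}$ generated by $\frac{1}{\left(\frac{ac}{|ac|}t^{2}+\frac{b}{\sqrt{|ac|}}t+1\right)(1-tz)}$: writing $t_1,t_2$ for the zeros of the quadratic factor, one has $t_1t_2=\frac{|ac|}{ac}=\pm1$, and $b^{2}-4ac>0$ forces $t_1,t_2$ to be real and distinct. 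Moreover $|t_1|=|t_2|$ is impossible, since together with $|t_1t_2|=1$ it would give $|t_1|=|t_2|=1$, hence $\{t_1,t_2\}\subseteq\{1,-1\}$, which forces $b=0$ or $b^{2}-4ac=0$ --- both excluded. So we may assume $|t_1|<1<|t_2|$ and $1/|t_1|=|t_2|$; since the scaling in Corollary~\ref{cor5.1} carries $\{z:|z|=1/|t_1|\}$ onto $\{z:|z|=1/|\alpha|\}$, it is enough to show $\lim\mathcal{Z}(H_m)=\{z:|z|=1/|t_1|\}$.

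Second, using the partial-fraction expansion from the proof of Lemma~\ref{l5.1} (equivalently Lemma~\ref{l5.2}), on the region $D_0:=\mathbb{C}\setminus\{1/t_1,1/t_2\}$ we write
\[
H_{m}(z)=\alpha_{1}(z)\left(\frac{1}{t_{1}}\right)^{m}+\alpha_{2}(z)\left(\frac{1}{t_{2}}\right)^{m}+\alpha_{3}(z)\,z^{m},
\]
where $\alpha_1,\alpha_2,\alpha_3$ are rational functions analytic on $D_0$: $\alpha_1$ has its only pole at $1/t_1$ and no zeros, $\alpha_2$ has its only pole at $1/t_2$, and $\alpha_3$ is a nonzero constant times $z^{2}/((t_1z-1)(t_2z-1))$, so $\alpha_3$ vanishes only at $z=0$. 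Putting $\beta_1=1/t_1$, $\beta_2=1/t_2$, $\beta_3(z)=z$, the constants $\beta_1,\beta_2$ have distinct moduli and $\beta_3$ is non-constant, so no $\beta_i$ equals $w\beta_j$ on $D_0$ for $i\neq j$ and $|w|=1$; hence Theorem~\ref{t6.0} applies on $D_0$.

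Third, I would run the dichotomy of Theorem~\ref{t6.0}. Since $|\beta_1|=1/|t_1|>1/|t_2|=|\beta_2|$ for every $z$, two of the moduli $|\beta_1|,|\beta_2|,|\beta_3(z)|$ can be simultaneously maximal only when $|\beta_1|=|\beta_3(z)|$, i.e. exactly when $|z|=1/|t_1|$, and there $\alpha_1(z)\neq0$ and $\alpha_3(z)\neq0$ (as $z\neq0$); so every such $z$ in $D_0$ lies in $\lim\mathcal{Z}(H_m)$. The other branch (a strictly dominant $|\beta_i|$ with vanishing $\alpha_i$) is vacuous: $\alpha_1$ never vanishes, $|\beta_2|$ is never dominant, and $\alpha_3$ vanishes only at $z=0$, where $|\beta_3|$ is not dominant. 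Thus on $D_0$, $\liminf\mathcal{Z}(H_m)=\limsup\mathcal{Z}(H_m)=\{z:|z|=1/|t_1|\}\setminus\{1/t_1\}$. To upgrade this to the full circle on all of $\mathbb{C}$: Lemma~\ref{l6.0} (transported to $H_m$ via the scaling of Corollary~\ref{cor5.1}) shows that for all large $m$ the polynomial $H_m$ has no zero in the open disk of radius $\frac{1}{2}\left(\frac{1}{|t_1|}+\frac{1}{|t_2|}\right)$, so nothing in the interior of the circle --- in particular not $1/t_2$ --- lies in $\limsup\mathcal{Z}(H_m)$; and since the zeros of $H_m$ lying in $D_0$ are among its zeros in $\mathbb{C}$, the set $\liminf\mathcal{Z}(H_m)$ (over $\mathbb{C}$) contains the punctured circle $\{z:|z|=1/|t_1|\}\setminus\{1/t_1\}$. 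By Proposition~\ref{p6.0} this set is closed, so it also contains the limit point $1/t_1$; combining this with Proposition~\ref{p6.1} and the previous paragraph gives $\liminf\mathcal{Z}(H_m)=\limsup\mathcal{Z}(H_m)=\{z:|z|=1/|t_1|\}$, which finishes the proof after undoing the normalization.

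The step I expect to be the crux is not a computation but the handling of the exceptional point $1/t_1$: it lies on the target circle yet is exactly a pole of the decomposition $H_m=\sum_k\alpha_k\beta_k^{m}$, so Theorem~\ref{t6.0} cannot be invoked there directly. The remedy --- apply Theorem~\ref{t6.0} only on $\mathbb{C}\setminus\{1/t_1,1/t_2\}$, use Lemma~\ref{l6.0} to trap the zeros in an annulus around the circle (thereby also disposing of $1/t_2$), and then close up the punctured circle with Proposition~\ref{p6.0} --- is where the argument must be assembled carefully; the rest is the routine comparison of $1/|t_1|$, $1/|t_2|$, and $|z|$.
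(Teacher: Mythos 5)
Your proposal is correct and follows essentially the same route as the paper: the same three-term decomposition $\sum_{k}\alpha_k(z)\beta_k(z)^m$ fed into Theorem \ref{t6.0}, the same dominance comparison of $1/|t_1|$, $1/|t_2|$, $|z|$, Lemma \ref{l6.0} to rule out $1/t_2$, and Proposition \ref{p6.0} (with Proposition \ref{p6.1}) to recover the point $1/t_1$. The only difference is cosmetic: the paper first clears the denominator $(t_1z-1)(t_2z-1)$ and applies Theorem \ref{t6.0} to the resulting polynomial $Q_m$ on all of $\mathbb{C}$, where $1/t_2$ shows up as a spurious limit point (via $\alpha_1(1/t_2)=0$) and must then be excised, whereas you keep the rational partial-fraction coefficients and work on the punctured plane $\mathbb{C}\setminus\{1/t_1,1/t_2\}$; either way the two exceptional points are disposed of by the same auxiliary results.
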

\begin{proof}
If we denote $t_2$ for the other zero of $at^2+bt+c$, Proposition \ref{e5.3} and Lemma \ref{l5.2} imply that
\begin{equation}\label{e6.0}
\begin{split}
    &{a(t_2-t_1)(t_1z-1)(t_2z-1)t_1^{m+1}t_2^{m+1}}P_m(z)\\
    = &t_2^{m+1} - t_1^{m+1} + (t_1^{m+2} - t_2^{m+2})z + (t_2-t_1)t_1^{m+1}t_2^{m+1}z^{m+2}.
\end{split}
\end{equation}

Let us define, for each $m = 0,1,2,3,\dots$,
\[
Q_m(z):= t_2^{m+1} - t_1^{m+1} + (t_1^{m+2} - t_2^{m+2})z + (t_2-t_1)t_1^{m+1}t_2^{m+1}z^{m+2}.
\]

We are to prove that $|t_1|\neq |t_2|$.
Suppose, by contradiction, that $|t_1| = |t_2|$.
Since $t_1,t_2$ are the zeros of $at^2 + bt + c$, by Vieta's Formula, $t_1t_2 = \frac{c}{a}$.
Then $|t_1| = |t_2|$ implies that 
\[
\left|\frac{c}{a}\right| = |t_1t_2| = |t_1||t_2| = |t_1|^2.
\]
This induces that $|t_1| = |t_2| = \sqrt{\left|\frac{c}{a}\right|}$.
For the case $ac < 0$, we may infer that 
\[
\left\{\begin{array}{cc} t_1 = & \sqrt{\left|\frac{c}{a}\right|}\\ t_2 = &- \sqrt{\left|\frac{c}{a}\right|}\end{array}\right. \text{ or } \left\{\begin{array}{cc} t_1 = &- \sqrt{\left|\frac{c}{a}\right|}\\ t_2 = & \sqrt{\left|\frac{c}{a}\right|}\end{array}\right..
\]
Notice that, by Vieta's Formula, 
\[
-\frac{b}{a} = t_1 + t_2 = 0.
\]
This implies that $b = 0$, which contradicts against our condition $b \neq 0$.
Thus our contrary supposition is false.
So it holds that $|t_1|\neq |t_2|$.
For the case $ac > 0$, we may infer that 
\[
t_1 = t_2 = -\sqrt{\frac{c}{a}}, \text{ or } t_1 = t_2 = \sqrt{\frac{c}{a}}.
\]
By Vieta's Formula, it holds that
\[
-\frac{b}{a} = t_1 + t_2 = \pm 2\sqrt{\frac{c}{a}}.
\]
By squaring both sides, we may infer that
\[
\frac{b^2}{a^2} = 4\frac{c}{a} \Leftrightarrow b^2 - 4ac = 0.
\]
It contradicts against our condition $b^2-4ac > 0$.
Thus our contrary supposition is false.
So it holds that $|t_1|\neq |t_2|$.
Here we have, in both cases $ac < 0$ and $ac > 0$, obtain $|t_1|\neq |t_2|$.
Given that $|t_1|\leq|t_2|$, it follows that 
 we may conclude that $|t_1| < |t_2|$.

Let us define the entire functions
\begin{align*}
\alpha_1(z):= &t_2 - t_2^2z, &\alpha_2(z):= &-t_1+t_1^2z, &\alpha_3(z):= &t_1t_2^2z^2-t_1^2t_2z^2,\\
\beta_1(z):= &t_2, &\beta_2(z):= &t_1, &\beta_3(z):= &t_1t_2z.
\end{align*}
Observe that 
\begin{align*}
    &\sum_{k=1}^3\alpha_k(z)\beta_k(z)^m \\
    = &(t_2 - t_2^2z)t_2^m + (-t_1+t_1^2z)t_1^m + (t_1t_2^2z^2-t_1^2t_2z^2)t_1^mt_2^mz^m\\
    = &t_2^{m+1} - t_2^{m+2}z - t_1^{m+1} + t_1^{m+2}z + t_1^{m+1}t_2^{m+2}z^{m+2} - t_1^{m+2}t_2^{m+1}z^{m+2}\\
    = & t_2^{m+1} - t_1^{m+1} + (t_1^{m+2} - t_2^{m+2})z + (t_2-t_1)t_1^{m+1}t_2^{m+1}z^{m+2}\\
    = &Q_m(z).
\end{align*}

Now our goal is to apply Theorem \ref{t6.0} to prove that $\liminf \mathcal{Z}(Q_m) = \limsup \mathcal{Z}(Q_m)$ by first checking the first condition of this theorem.
Let us consider $\beta_1$ and $\beta_2$.
Let $w\in\mathbb{C}$ arbitrary such that $|w| = 1$.
Observe that $|\beta_1(z)| = |t_2|$ and $|w\beta_2(z)| = |t_1|$ for any $z\in\mathbb{C}$.
Then $|\beta_1(z)|\neq |w\beta_2(z)|$, for all $z\in\mathbb{C}$, and it implies that $\beta_1(z)\neq w\beta_2(z)$, for all $z\in\mathbb{C}$.

Now let us consider $\beta_1$ and $\beta_3$.
Let $w\in\mathbb{C}$ be arbitrary such that $|w| = 1$.
With $a,c\neq 0$, Vieta's Formula induces that $t_1,t_2\neq 0$.
So $\frac{1}{t_2}$ exists.
Observe that $\left|\beta_1\left(\frac{1}{t_2}\right)\right| = |t_2|$, and that $\left|w\beta_3\left(\frac{1}{t_2}\right)\right| = |t_1|$.
Hence $\beta_1\left(\frac{1}{t_2}\right)\neq w\beta_3\left(\frac{1}{t_2}\right)$. Similarly, we may deduce that $\beta_2\left(\frac{1}{t_1}\right)\neq w \beta_3\left(\frac{1}{t_1}\right)$ for any $w$ with $|w| = 1$.
Since the conditions of Theorem \ref{t6.0} are met, we apply this theorem and conclude $\liminf \mathcal{Z}(Q_m) = \limsup \mathcal{Z}(Q_m)$. 

The next goal is to prove that 
\[
\lim \mathcal{Z}(Q_m) = \left\{z\in\mathbb{C}: |z| = \frac{1}{|t_1|}\right\}\cup\left\{\frac{1}{t_2}\right\},
\]
by showing each set is a subset and a superset of the other.

Let $z^*\in \lim \mathcal{Z}(Q_m)$ be arbitrary.
Suppose, by contradiction, that $z^*\notin \left\{z\in\mathbb{C}; |z| = \frac{1}{|t_1|}\right\}\cup\left\{\frac{1}{t_2}\right\}$.
Then $|z^*|\neq \frac{1}{|t_1|}$, and there are 2 cases: $|z^*| < \frac{1}{|t_1|}$ or $|z^*| >\frac{1}{|t_1|}$.

If $|z^*| < \frac{1}{|t_1|}$, observe that $|\beta_3(z^*)| = |z^*|\cdot|t_1|\cdot|t_2| < |t_2| = |\beta_1(z^*)|$, and that $|\beta_2(z^*)| = |t_1| < |t_2| = |\beta_1(z^*)|$.
Thus $|\beta_1(z^*)| > |\beta_k(z^*)|$ for $k = 2,3$.
Notice that, since $z^*\neq\frac{1}{t_2}$,
\[
|\alpha_1(z^*)| = |t_2 - t_2^2z^*| \neq 0.
\]
This, by Theorem \ref{t6.0}, implies that $z^*\notin \lim \mathcal{Z}(Q_m)$, which contradicts our assumption.

If  $|z^*| > \frac{1}{|t_1|}$, observe that $|\beta_3(z^*)| = |z^*|\cdot|t_1|\cdot|t_2| > |t_2| = |\beta_1(z^*)|$.
Notice that $|\beta_2(z^*)| = |t_1| < |t_2| = |\beta_1(z^*)| < |\beta_3(z^*)|$.
Thus $|\beta_3(z^*)| > |\beta_k(z^*)|$ for $k = 1,2$.
Notice that 
\[
|\alpha_3(z^*)| = |t_1t_2^2z^{*2} - t_1^2t_2z^{*2}| = |t_1t_2|\cdot|t_2-t_1|\cdot|z^*|^2 > 0.
\]
This, by Theorem \ref{t6.0}, implies that $z^*\notin \lim \mathcal{Z}(Q_m)$, which contradicts our assumption.

Hence in both cases $|z^*| < \frac{1}{|t_1|}$ or $|z^*| >\frac{1}{|t_1|}$, we encounter contradictions, which induces that $z^*\in \left\{z\in\mathbb{C}; |z| = \frac{1}{|t_1|}\right\}\cup\left\{\frac{1}{t_2}\right\}$.
Thus $\lim \mathcal{Z}(Q_m) \subseteq \left\{z\in\mathbb{C}; |z| = \frac{1}{|t_1|}\right\}\cup\left\{\frac{1}{t_2}\right\}$.

Now let $z^*\in \left\{z\in\mathbb{C}; |z| = \frac{1}{|t_1|}\right\}\cup\left\{\frac{1}{t_2}\right\}$ be arbitrary.
First let us consider the case $z^*\in \left\{z\in\mathbb{C}; |z| = \frac{1}{|t_1|}\right\}$.
Observe that 
\[
|\beta_1(z^*)| = |t_2|\geq |t_1| = |\beta_2(z^*)|,
\]
and that 
\[
|\beta_3(z^*)| = |t_1|\cdot |t_2|\cdot|z^*| = |t_2| = |\beta_1(z^*)| \geq |\beta_2(z^*)|.
\]
Notice that 
\[
|\alpha_3(z^*)| = |t_1t_2^2z^{*2} - t_1^2t_2z^{*2}| = |t_1t_2|\cdot|t_2-t_1|\cdot|z^*|^2 > 0
\]
and
\[
|\alpha_1(z^*)| = |t_2-t_2^2z^*| \geq |t_2|^2|z^*| - |t_2| = |t_2|\frac{|t_2|}{|t_1|} - |t_2| > |t_2| - |t_2| = 0.
\]
Then Theorem \ref{t6.0} implies that $z^*\in \lim \mathcal{Z}(Q_m)$.
Now let us consider the case $z^* = \frac{1}{t_2}$.
Observe that 
\[
|\beta_3(z^*)| = |z^*|\cdot|t_1|\cdot|t_2|  = |t_1| < |t_2| = |\beta_1(z^*)|,
\]
and that 
\[
|\beta_2(z^*)| = |t_1| < |t_2| = |\beta_1(z^*)|.
\]
Thus $|\beta_1(z^*)| > |\beta_k(z^*)|$ for $k = 2,3$.
Notice that, since $z^* = \frac{1}{t_2}$,
\[
|\alpha_1(z^*)| = |t_2 - t_2^2z^*| = 0.
\]
This, by Theorem \ref{t6.0}, implies that $z^*\in \lim \mathcal{Z}(Q_m)$.

It concludes that 
\[
\lim \mathcal{Z}(Q_m) \supseteq \left\{z\in\mathbb{C}; |z| = \frac{1}{|t_1|}\right\}\cup\left\{\frac{1}{t_2}\right\}.
\]
Here we have shown that $\lim \mathcal{Z}(Q_m)$ and $\left\{z\in\mathbb{C}; |z| = \frac{1}{|t_1|}\right\}\cup\left\{\frac{1}{t_2}\right\}$ are subset and superset of each other.
Hence
\[
\lim \mathcal{Z}(Q_m) = \left\{z\in\mathbb{C}; |z| = \frac{1}{|t_1|}\right\}\cup\left\{\frac{1}{t_2}\right\}.
\]

The next step is to show 
\begin{enumerate}
\item $\limsup \mathcal{Z}(P_m) \subseteq \limsup \left.\mathcal{Z}(Q_m)\middle\backslash\left\{\frac{1}{t_2}\right\}\right.$, and\\
\item $\left.\liminf \mathcal{Z}(Q_m)\middle\backslash\left\{\frac{1}{t_2}\right\}\right. \subseteq \liminf \mathcal{Z}(P_m)$.
\end{enumerate}

Let $z^*\in \limsup \mathcal{Z}(P_m)$ be arbitrary.
Choose $r>0$ and $N\in\mathbb{N}$ arbitrarily.
Given $z^*\in \limsup \mathcal{Z}(P_m)$, there exists $n>N$ such that 
\[
B(r,z^*)\cap \mathcal{Z}(P_n) \neq \emptyset.
\]
In view of Equation \eqref{e6.0}, we may deduce that $\mathcal{Z}(P_m)\subseteq \mathcal{Z}(Q_m)$, for any $m\in\mathbb{N}$.
Then 
\[
B(r,z^*)\cap \mathcal{Z}(P_n) \subseteq B(r,z^*)\cap \mathcal{Z}(Q_n),
\]
which implies that 
\[
B(r,z^*)\cap \mathcal{Z}(Q_n) \neq \emptyset.
\]
This concludes that 
\[
\limsup \mathcal{Z}(P_m) \subseteq \limsup \mathcal{Z}(Q_m).
\]

We need to show that $\frac{1}{t_2}\notin\limsup \mathcal{Z}(P_m)$.
There are two cases: $ac > 0$ and $ac < 0$.
Let us consider the case $ac>0$.
In view of $|t_1| < |t_2|$, we may infer that $\frac{1}{t_2}\in B\left(\frac{1}{|t_1|},0\right)$.
Then there exists $r>0$ such that $B\left(r,\frac{1}{t_2}\right)\subseteq B\left(\frac{1}{|t_1|},0\right)$.
By Theorem \ref{c5.2}, we may infer that $B\left(\frac{1}{t_2},r\right)\cap \mathcal{Z}(P_m) =\emptyset$ for any $m\geq 0$.
This implies that 
\[
\frac{1}{t_2}\notin \limsup\mathcal{Z}(P_m).
\]
Now let us consider the case $ac<0$.
By Lemma \ref{l6.0}, there exists $N\in\mathbb{N}$ such that 
\[
\mathcal{Z}(P_m)\cap B\left(\frac{1}{2}\left(\frac{1}{|t_1|}  + \frac{1}{|t_2|} \right),0\right) = \emptyset,
\]
for any $m>N$.
Since $|t_1| < |t_2|$, we may infer that
\[
\left|\frac{1}{t_2}\right| = \frac{1}{2}\left(\frac{1}{|t_2|}  + \frac{1}{|t_2|} \right) < \frac{1}{2}\left(\frac{1}{|t_1|}  + \frac{1}{|t_2|} \right).
\]
Then there exists $r>0$ such that $B\left(r,\frac{1}{t_2}\right)\subseteq B\left(\frac{1}{2}\left(\frac{1}{|t_1|}  + \frac{1}{|t_2|} \right), 0\right)$.
It follows that 
\[
\mathcal{Z}(P_m)\cap B\left(r,\frac{1}{t_2}\right) = \emptyset,
\]
for all $m\geq N$.
Thus $\frac{1}{t_2}\notin \limsup\mathcal{Z}(P_m)$.
Here we have, in both cases $ac> 0$ and $ac < 0$, that $\frac{1}{t_2}\notin \limsup\mathcal{Z}(P_m)$.
Consequently, we obtain
\[
\limsup \mathcal{Z}(P_m) \subseteq \left.\limsup \mathcal{Z}(Q_m)\middle\backslash\left\{\frac{1}{t_2}\right\}\right..
\]

Let $z^*\in \left.\liminf \mathcal{Z}(Q_m)\middle\backslash\left\{\frac{1}{t_2},\frac{1}{t_1}\right\}\right.$ be arbitrary. 
Choose
\begin{equation}\label{e6.7}
0 < r < \min\left(\left|\frac{1}{t_1} - z^*\right|, \left|\frac{1}{t_2} - z^*\right|\right)
\end{equation}
arbitrarily.
Since $z^*\in \liminf \mathcal{Z}(Q_m)$, there exists $N\in\mathbb{N}$ such that 
\[
B(r,z^*)\cap \mathcal{Z}(Q_m)\neq\emptyset,
\]
for any $m>N$.
Let $n>N$ be arbitrary.
Then there exists $z_n\in B(r,z^*)\cap \mathcal{Z}(Q_n)$.
Since $z_n\in B(r,z^*)$, Inequality \eqref{e6.7} induces that
\[
|z_n - z^*| < r< \left|\frac{1}{t_1} - z^*\right|,
\]
This implies that $z_n\neq 1/t_1$.
In a similar way, we can deduce that $z_n\neq 1/t_2$.
Given that $z_n \in \mathcal{Z}(Q_n)$, $z_n$ is a root of $Q_n$.
Since $z_n\neq 1/t_1, 1/t_2$, Equation \eqref{e6.0} implies that  $z_n$ is a root of $P_n(z)$.
Hence $B(r,z^*)\cap \mathcal{Z}(P_n)\neq\emptyset.$
This implies that $z^*\in \liminf \mathcal{Z}(P_m)$.
Thus 
\[
\left.\left\{z\in\mathbb{C}; |z| = \frac{1}{|t_1|}\right\}\middle\backslash\left\{\frac{1}{t_2}, \frac{1}{t_1}\right\}\right.\subseteq \liminf \mathcal{Z}(P_m).
\]
By Proposition \ref{p6.0}, we may infer that $1/t_1\in \liminf \mathcal{Z}(P_m)$.
It follows that 
\[
\left.\liminf \mathcal{Z}(Q_m)\middle\backslash\left\{\frac{1}{t_2}\right\}\right. \subseteq \liminf \mathcal{Z}(P_m).
\]

Here we have 
\begin{enumerate}
\item $\limsup \mathcal{Z}(P_m) \subseteq \left.\limsup \mathcal{Z}(Q_m)\middle\backslash\left\{\frac{1}{t_2}\right\}\right.$,\\
\item $\left.\liminf \mathcal{Z}(Q_m)\middle\backslash\left\{\frac{1}{t_2}\right\}\right. \subseteq \liminf \mathcal{Z}(P_m)$,\\
\item  $\liminf \mathcal{Z}(P_m) \subseteq \limsup \mathcal{Z}(P_m)$ by Proposition \ref{p6.1}, and \\
\item $\liminf \mathcal{Z}(Q_m) = \limsup \mathcal{Z}(Q_m) = \lim\mathcal{Z}(Q_m)$.
\end{enumerate}
Consequently, we obtain
\[
\left.\lim\mathcal{Z}(Q_m)\middle\backslash\left\{\frac{1}{t_2}\right\}\right. \subseteq \liminf \mathcal{Z}(P_m) \subseteq \limsup \mathcal{Z}(P_m) \subseteq \left.\lim\mathcal{Z}(Q_m)\middle\backslash\left\{\frac{1}{t_2}\right\}\right..
\]
This implies that 
\[
\lim\mathcal{Z}(P_m) = \left\{z\in\mathbb{C}; |z| = \frac{1}{|t_1|}\right\}.\qedhere
\]
\end{proof}


\clearpage

\begin{center}
\section{CONCLUSIONS}
\end{center}
\thispagestyle{empty} 

For a long time, many mathematicians studied the pattern of zeros of functions.
This area of research is still active nowadays.
In this paper, we reviewed what has been studied in the area and made our own contribution to this subject.
In the development of our theorems, we use the classical Rouch\'{e}'s Theorem, Enestr\"{o}m-Kakeya's Theorem, and then study the limiting behaviors of the zeros of our sequence of polynomials.

Our main theorems were about the generating function
\[
\frac{1}{(at^2 + bt + c)(1-zt)}
\]
and a circle centered at the origin with radius $\frac{1}{|\alpha|}$ where $\alpha$ is a zero of $at^2 + bt + c$ with smallest modulus.
When we were working with Rouch\'{e}'s Theorem, we found that, when $ac < 0$, our zeros lied inside a closed disk, and when $ac>0$, the zeros lied outside an open ball.
Using Enestr\"{o}m-Kakeya Theorem, we prove that, when $ac>0$, the zeros are outside of the closed disk.
However, there is a problem in applying Enestr\"{o}m-Kakeya Theorem for the case $ac<0$.
For any $m\geq0$, the ratio of the coefficient of $z^{m-2}$ to the coefficient of $z^{m-1}$ is always greater than $1/|\alpha|$.

In this paper we cover the case $at^2 +bt + c$ having real zeros with different modulus.
In begining of Section `Zero Distribution of $\{P_m(z)\}_{m\geq 0}$ about a Circle', we saw that zeros are both inside and outside of the circle when $at^2 +bt + c$ has complex zeros.
With Rouch\'{e}'s Theorem, we might be able to obtain the ratio of the number of zeros inside to the number of zeros outside.
Or we might be able to count exact number of zeros inside and outside of the circle.

In Section `Application of Limit Distribution of Zeros', we only cover the case $at^2+bt+c$ has real zeros.
An intuition is saying we maybe able to handle the case of imaginary zeros.
\begin{conjecture}
Let  $\{P_m(z)\}_{m\geq 0}$ be a sequence of functions of $z$ generated as follows:
\[
\sum_{m=0}^\infty P_m(z) t^m =\frac{1}{(at^2+bt+c)(1-tz)}
\]
where $a,b,c\in \mathbb{R}\backslash\{0\}$ and $b^2-4ac\leq 0$.
Then $\lim \mathcal{Z}(P_m) = \left\{z\in\mathbb{C}; |z| = \frac{1}{|t_1|}\right\}$ where $t_1$ is the smallest (in modulus) zero of $at^2+bt+c$.
\end{conjecture}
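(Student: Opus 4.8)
The plan is to run the argument of Theorem~\ref{c6.0}. Since $b\neq0$ and $b^{2}-4ac\le0$ we automatically have $ac>0$; assume first $b^{2}-4ac<0$, so that $at^{2}+bt+c$ has conjugate non-real zeros $t_{1}=\rho e^{i\phi}$, $t_{2}=\rho e^{-i\phi}$ with $\rho=\sqrt{c/a}$, $\phi\in(0,\pi)$, and $1/|t_{1}|=1/\rho$ (the borderline case $b^{2}=4ac$ produces a real double root, for which Lemma~\ref{l5.2} does not apply and a separate partial-fraction computation is needed). Substituting $t_{1},t_{2}$ into Proposition~\ref{e5.3} and Lemma~\ref{l5.2} and collapsing the conjugate pairs gives
\[
P_{m}(z)=\frac{1}{a\rho^{m+2}\sin\phi}\sum_{k=0}^{m}\sin\!\big((m+1-k)\phi\big)(\rho z)^{k},
\]
so with $w=\rho z$ the zeros of $P_{m}$ are the zeros of $g_{m}(w):=\sum_{k=0}^{m}\sin((m+1-k)\phi)\,w^{k}$ and the target circle $|z|=1/\rho$ becomes $|w|=1$. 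Summing the two geometric series inside $g_{m}$ yields
\[
g_{m}(w)\,(w^{2}-2\cos\phi\,w+1)=\sin\phi\,w^{m+2}-\sin((m+2)\phi)\,w+\sin((m+1)\phi)=:V_{m}(w),
\]
whose quadratic factor has zeros $e^{\pm i\phi}$ on $|w|=1$; so it suffices to locate $\lim\mathcal{Z}(V_{m})$ for this trinomial-type family.

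\textbf{The attempt via Theorem~\ref{t6.0}, and the obstacle.} One writes $V_{m}(w)=\alpha_{1}(w)\beta_{1}(w)^{m}+\alpha_{2}(w)\beta_{2}(w)^{m}+\alpha_{3}(w)\beta_{3}(w)^{m}$ with $\beta_{1}(w)=w$, $\beta_{2}\equiv e^{i\phi}$, $\beta_{3}\equiv e^{-i\phi}$, $\alpha_{1}(w)=\sin\phi\,w^{2}$, $\alpha_{2}(w)=-(e^{2i\phi}w-e^{i\phi})/(2i)$, $\alpha_{3}(w)=(e^{-2i\phi}w-e^{-i\phi})/(2i)$. The difficulty---and I expect this to be essentially the entire content of the problem---is that $\beta_{2}$ and $\beta_{3}$ are constants of equal modulus with $\beta_{2}=e^{2i\phi}\beta_{3}$, so the non-degeneracy hypothesis of Theorem~\ref{t6.0} fails outright. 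The theorem therefore cannot be invoked; indeed a \emph{naive} application would put every point with $|w|\le1$ into $\lim\mathcal{Z}(V_{m})$ (on $|w|<1$ the tied pair is $\beta_{2},\beta_{3}$, and $\alpha_{2},\alpha_{3}$ have no zeros there), i.e.\ it would predict the whole closed disk, which is false. What is really needed is a refinement of the Beraha--Kahane--Weiss/Sokal mechanism that handles two dominant exponential bases of coincident modulus differing by a unimodular constant (a root of unity when $\phi/\pi\in\mathbb Q$).

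\textbf{The degeneracy is genuine.} Moreover this is not merely a technical gap: it changes the answer. On $|w|<1$ the term $\sin\phi\,w^{m+2}$ is exponentially small, so $V_{m}$ is there a small perturbation of the linear map $-\sin((m+2)\phi)w+\sin((m+1)\phi)$, whose zero $r_{m}=\sin((m+1)\phi)/\sin((m+2)\phi)$ is \emph{real}; a Rouch\'e estimate on a tiny circle about $r_{m}$ shows $V_{m}$ has a true zero within $o(1)$ of $r_{m}$ whenever $|r_{m}|<1$. If $\phi/\pi$ is irrational the numbers $r_{m}$ are dense in $\mathbb R$ (equidistribution of $(m+1)\phi\bmod2\pi$), forcing zeros of $P_{m}$ to accumulate on the whole real diameter $(-1/\rho,1/\rho)$; if $\phi/\pi$ is rational one at least has $P_{m}(0)=\sin((m+1)\phi)/(a\rho^{m+2}\sin\phi)=0$ for all $m$ in a fixed residue class, so $0\in\limsup\mathcal{Z}(P_{m})$. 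In both cases $\limsup\mathcal{Z}(P_{m})$ strictly contains $\{|z|=1/|t_{1}|\}$, hence $\lim\mathcal{Z}(P_{m})$ does not exist and the conjecture as literally stated is false.

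\textbf{What can be proved.} The corrected target I would pursue is: $\liminf\mathcal{Z}(P_{m})=\{z:|z|=1/|t_{1}|\}$, while $\limsup\mathcal{Z}(P_{m})$ equals this circle together with a symmetric (possibly degenerate) closed real subinterval of $[-1/|t_{1}|,1/|t_{1}|]$. Three steps: (i) $\limsup\mathcal{Z}(P_{m})\subseteq\{|z|\le1/\rho\}$, from the Rouch\'e argument used throughout the paper---for $|w|>1$ the term $\sin\phi\,w^{m+2}$ dominates the other two, whose moduli are bounded; (ii) inside the open disk every accumulation point of zeros is real, because the perturbing term $\sin\phi\,w^{m+2}\to0$ there; and (iii) $\{|z|=1/\rho\}\subseteq\liminf\mathcal{Z}(P_{m})$, for which the coincident-modulus refinement of Theorem~\ref{t6.0} (equidistribution of the ``bulk'' zeros on $|w|=1$ in spite of the tied bases) is exactly what is required, after which Propositions~\ref{p6.1} and~\ref{p6.0} restore the boundary points $1/t_{1},1/t_{2}$, which already lie on the circle. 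Step~(iii) is the hard part.
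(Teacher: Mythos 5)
There is no proof of this statement in the paper for your attempt to be measured against: it is stated as an open conjecture, and the authors themselves record precisely the obstruction you identify --- after reducing to $Q_m(z)=\sum_{k=1}^3\alpha_k(z)\beta_k(z)^m$, the two constant bases $\beta_1=t_2=\overline{t_1}$ and $\beta_2=t_1$ have equal modulus, so the non-degeneracy hypothesis of Theorem \ref{t6.0} fails. Your reduction is correct (the identity $g_m(w)\,(w^2-2\cos\phi\,w+1)=\sin\phi\,w^{m+2}-\sin((m+2)\phi)\,w+\sin((m+1)\phi)$ checks out, as does the rescaling $w=\rho z$), and your main substantive claim --- that the conjecture is false as literally stated --- is right and can be made completely concrete. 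Take $a=b=c=1$, so $t_{1,2}=e^{\pm 2\pi i/3}$ and the target circle is $|z|=1$. Writing $1/(1+t+t^2)=(1-t)/(1-t^3)=\sum_j c_jt^j$ with $c_j\in\{1,-1,0\}$ according to $j\bmod 3$, one has $P_m(z)=\sum_{k=0}^m c_{m-k}z^k$; thus $P_m(0)=c_m=0$ for every $m\equiv 2\pmod 3$, so $0\in\limsup\mathcal{Z}(P_m)$, while for $m\equiv 0\pmod 3$ the expansion begins $P_m(z)=1-z^2+\cdots$ and $|P_m(z)|\geq 1-\sum_{k\geq 2}2^{-k}=1/2>0$ on $|z|\leq 1/2$, so $0\notin\liminf\mathcal{Z}(P_m)$. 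Hence $\lim\mathcal{Z}(P_m)$ does not even exist in this example, and your equidistribution argument for irrational $\phi/\pi$ (interior zeros of $V_m$ tracking $r_m=\sin((m+1)\phi)/\sin((m+2)\phi)$ via Rouch\'e) extends the failure to that case.

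What you have not done is prove anything in the positive direction. Step (iii) of your program --- that the full circle lies in $\liminf\mathcal{Z}(P_m)$ --- runs exactly into the coincident-modulus degeneracy that blocks Theorem \ref{t6.0}, and you supply no substitute mechanism for it; steps (i) and (ii) are plausible but only sketched (the Rouch\'e domination of $\sin\phi\,w^{m+2}$ over the linear part for $|w|>1$ is not uniform down to $|w|=1$, so (i) needs the $\epsilon$-inflated contour used elsewhere in the paper). The fair verdict is therefore: you have independently diagnosed the same obstruction the authors flag, and you have effectively disproved the conjecture as stated --- a genuine contribution beyond what the paper contains --- but the amended statement you propose remains a program rather than a proof.
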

Computer experiments suggest that this conjecture is true.
However, we have not been able to apply Theorem \ref{t6.0} directly for the reason that we will now mention.
If we follow the procedure as in the proof of Theorem \ref{c6.0}, we need to apply Theorem \ref{t6.0} to the polynomial
\[
Q_m(z) = t_2^{m+1} - t_1^{m+1} + (t_1^{m+2} - t_2^{m+2})z + (t_2-t_1)t_1^{m+1}t_2^{m+1}z^{m+2}
\]
by setting 
\begin{align*}
\alpha_1(z):= &t_2 - t_2^2z &\alpha_2(z):= &-t_1+t_1^2z &\alpha_3(z):= &t_1t_2^2z^2-t_1^2t_2z^2\\
\beta_1(z):= &t_2 &\beta_2(z):= &t_1 &\beta_3(z):= &t_1t_2z
\end{align*}
However, in this conjecture, we may deduce that $|\beta_1(z)| = |t_1| = |t_2| = |\beta_2(z)|$, and this violates the conditions to apply Theorem \ref{t6.0}.

\clearpage



\begin{thebibliography}{99}
\addcontentsline{toc}{section}{REFERENCES} 
\singlespacing 
\thispagestyle{empty} 

\bibitem{Stephen} Fisher, Stephen D. \textit{Complex variables.} Second edition. The Wadsworth \& Brooks/Cole Mathematics Series. Wadsworth \& Brooks/Cole Advanced Books \& Software, Pacific Grove, CA, 1990.
\bibitem{John}  Conway, John B. \textit{Functions of one complex variable.} Second edition. Graduate Texts in Mathematics, 11. Springer-Verlag, New York-Berlin, 1978.
\bibitem{William} R.Wade, William. \textit{Introduction to Analysis} 4th Edition. Prentice Hall, 2014.
\bibitem{Tran} Tran, Khang. Connections between discriminants and the root distribution of polynomials with rational generating function. \textit{J. Math. Anal. Appl.} 410 (2014).
\bibitem{Sokal}  Sokal, Alan D. Chromatic roots are dense in the whole complex plane. \textit{Combin. Probab. Comput.} 13 (2004), no. 2, 221–261.
\bibitem{AT1}Tran, Khang; Zumba, Andres. Zeros of polynomials with four-term recurrence and linear coefficients. \textit{Ramanujan J.} 55 (2021), no. 2, 447–470. 
\bibitem{AT2}Tran, Khang; Zumba, Andres. Zeros of polynomials with four-term recurrence. \textit{Involve} 11 (2018), no. 3, 501–518.
\bibitem{FT1} Forgács, Tamás; Tran, Khang. Zeros of polynomials generated by a rational function with a hyperbolic-type denominator. \textit{Constr. Approx.} 46 (2017), no. 3, 617–643.
\bibitem{FT2} Forgács, Tamás; Tran, Khang. Hyperbolic polynomials and linear-type generating functions. \textit{J. Math. Anal. Appl.} 488 (2020), no. 2, 124085, 31 pp.
\bibitem{Paul}Glaister, Paul. Fibonacci Power Series. \textit{The Mathematical Gazette,} vol. 79, no. 486, Mathematical Association. 1995, pp. 521–25, 
\bibitem{Andrews} Andrews, George E.; Askey, Richard; Roy, Ranjan. \textit{Special functions.} Encyclopedia of Mathematics and its Applications, 71. Cambridge University Press, Cambridge, 1999.
\bibitem{Szego}Pritsker, Igor E.; Varga, Richard S. The Szegő curve, zero distribution and weighted approximation. \textit{Trans. Amer. Math. Soc.} 349 (1997), no. 10, 4085–4105. 
\bibitem{Kakeya}  Gardner, Robert B.; Govil, N. K. Eneström-Kakeya theorem and some of its generalizations. \textit{Current topics in pure and computational complex analysis,} 171–199, \textit{Trends Math., Birkhäuser/Springer, New Delhi,} 2014.

\end{thebibliography}
\end{document}